\providecommand{\tabularnewline}{\\}
\numberwithin{equation}{section}
\theoremstyle{plain}
\newtheorem{thm}{\protect\theoremname}[section]
\theoremstyle{plain}
\newtheorem{lem}[thm]{\protect\lemmaname}
\theoremstyle{plain}
\newtheorem{cor}[thm]{\protect\corollaryname}
\theoremstyle{plain}
\newtheorem{prop}[thm]{\protect\propositionname}
\theoremstyle{definition}
\newtheorem{problem}[thm]{\protect\problemname}
\theoremstyle{plain}
\newtheorem{conjecture}[thm]{\protect\conjecturename}
\tikzstyle{pathdefault}=[draw, line width=1, solid, color=black]
\tikzstyle{nodedefault}=[circle, inner sep=1.3, fill=black]
\tikzstyle{nodered}=[circle, inner sep=1.1, fill=red]
\tikzstyle{nodeblue}=[circle, inner sep=1.1, fill=blue]
\tikzstyle{empty}=[]
\tikzstyle{nodeellipsis}=[circle, inner sep=0.5, fill=black]
\tikzstyle{pathcolor1}=[draw, line width=1.5, solid, color=black]
\tikzstyle{pathcolor2}=[draw, line width=1, solid, color=blue]
\tikzstyle{pathcolorlight}=[draw, line width=1, dotted, color=lightgray]
\tikzstyle{arbpathcolor0}=[line width=1, dashdotted, color=black]
\tikzstyle{arbpathcolor1}=[line width=1, densely dashed, color=red]
\tikzstyle{arbpathdefault}=[line width=1, densely dotted, color=blue]
\newcounter{id}
\newcommand{\drawlinedotswithstyle}[4]{
 \def\x{{#3}}
 \def\y{{#4}}
 \tikzstyle{thispathstyle}=[#1]
 \tikzstyle{thisnodestyle}=[#2]
 \setcounter{id}{-1} 
 \foreach \j in {#3}{\stepcounter{id}} 
 \foreach \i in {1,...,\the\value{id}}{  
  \path[thispathstyle] (\x[\i],\y[\i]) --(\x[\i-1],\y[\i-1]); 
 }
 \foreach \i in {1,...,\the\value{id}}{  
  \node[thisnodestyle] at (\x[\i],\y[\i]) {}; 
 }
 \node[thisnodestyle] at (\x[0],\y[0]) {}; 
}
\DeclareDocumentCommand{\drawlinedots}{ O{pathdefault} O{nodedefault} m m}{\drawlinedotswithstyle{#1}{#2}{#3}{#4}}
\DeclareDocumentCommand{\drawlinedotsred}{ O{pathcolor1} O{nodered} m m}{\drawlinedotswithstyle{#1}{#2}{#3}{#4}}
\DeclareDocumentCommand{\drawlinedotsblue}{ O{pathcolor2} O{nodeblue} m m}{\drawlinedotswithstyle{#1}{#2}{#3}{#4}}
\let\originalleft\left
\let\originalright\right
\renewcommand{\left}{\mathopen{}\mathclose\bgroup\originalleft}
\renewcommand{\right}{\aftergroup\egroup\originalright}
\newcommand{\leqnomode}{\tagsleft@true\let\veqno\@@leqno}
\newcommand{\reqnomode}{\tagsleft@false\let\veqno\@@eqno}
\providecommand{\conjecturename}{Conjecture}
\providecommand{\corollaryname}{Corollary}
\providecommand{\lemmaname}{Lemma}
\providecommand{\problemname}{Problem}
\providecommand{\propositionname}{Proposition}
\providecommand{\theoremname}{Theorem}
\begin{document}
\global\long\def\Des{\operatorname{Des}}%

\global\long\def\pk{\operatorname{pk}}%

\global\long\def\des{\operatorname{des}}%

\global\long\def\lchd{\operatorname{lc}}%

\global\long\def\lbrch{\operatorname{lbrch}}%

\global\long\def\rbrch{\operatorname{rbrch}}%

\global\long\def\first{\operatorname{first}}%

\global\long\def\last{\operatorname{last}}%

\global\long\def\st{\operatorname{st}}%

\global\long\def\LRMin{\operatorname{LRMin}}%

\global\long\def\lrmax{\operatorname{lrmax}}%

\global\long\def\rlmin{\operatorname{rlmin}}%

\global\long\def\rlmax{\operatorname{rlmax}}%

\global\long\def\lrmin{\operatorname{lrmin}}%

\global\long\def\asc{\operatorname{asc}}%

\global\long\def\std{\operatorname{std}}%

\global\long\def\e{\operatorname{e}}%

\global\long\def\o{\operatorname{o}}%

\global\long\def\occ{\operatorname{occ}}%

\global\long\def\mi{\operatorname{mi}}%

\global\long\def\sn{\operatorname{sn}}%

\global\long\def\cn{\operatorname{cn}}%

\global\long\def\dn{\operatorname{dn}}%

\global\long\def\cpen{\operatorname{cpen}}%

\title{Restricted Jacobi permutations}
\author{Alyssa G.\ Henke, Kyle R.\ Hoffman, Derek H.\ Stephens,\\
Yongwei Yuan, and Yan Zhuang\\
Department of Mathematics and Computer Science\\
Davidson College\texttt{}~\\
\texttt{\small{}\{alhenke, kyhoffman, destephens, ivyuan, yazhuang\}@davidson.edu}}
\maketitle
\begin{abstract}
Jacobi permutations, introduced by Viennot in the context of Jacobi
elliptic functions, are counted by the Euler numbers $E_{n}$ appearing
in the series expansion $\sec x+\tan x=\sum_{n=0}^{\infty}E_{n}x^{n}/n!$.
We conduct a systematic study of pattern avoidance in Jacobi permutations,
achieving a complete enumeration of Jacobi permutations avoiding a
prescribed set of length 3 patterns. In the case of a single pattern
restriction, we obtain refined enumerations with respect to several
permutation statistics: the number of ascents (or descents), the number
of left-to-right minima, and the last letter. Bijections involving
certain subfamilies of binary trees and Dyck paths, as well as generating
function techniques, play important roles in our proofs.
\end{abstract}
\textbf{\small{}Keywords: }{\small{}Jacobi permutations, Euler numbers,
pattern avoidance, permutation statistics, alternating permutations,
Andr}\'{e}{\small{} permutations }{\let\thefootnote\relax\footnotetext{2020 \textit{Mathematics Subject Classification}. Primary 05A15; Secondary 05A05, 05A19, 05C05.}}

\tableofcontents{}

\section{Introduction}

Let $S$ be a finite set of positive integers. A \textit{permutation}
$\pi=\pi_{1}\pi_{2}\cdots\pi_{n}$ of $S$\textemdash where $n=\left|S\right|$\textemdash is
a linear arrangement of the elements of $S$. For example, $\pi=3174$
is a permutation of $S=\{1,3,4,7\}$. We think of permutations as
words over the alphabet $S$ without repetition, so the $\pi_{k}$
are called \textit{letters} of $\pi$ and $n$ the \textit{length}
of $\pi$. Let $\left|\pi\right|$ denote the length of $\pi$, so
that $\left|\pi\right|=\left|S\right|$ whenever $\pi$ is a permutation
of $S$. If $S$ is the empty set, then the only permutation of $S$
is the empty word $\varepsilon$. The symmetric group of permutations
of $[n]\coloneqq\{1,2,\dots,n\}$ is denoted by $\mathfrak{S}_{n}$.

The \textit{Euler numbers} $E_{n}$, defined as the coefficients in
the series expansion
\[
\sec x+\tan x=\sum_{n=0}^{\infty}E_{n}\frac{x^{n}}{n!},
\]
have many combinatorial interpretations, including several interesting
families of permutations. We say that $\pi$ is \textit{up-down alternating}
(or simply \textit{up-down}) if $\pi_{1}<\pi_{2}>\pi_{3}<\pi_{4}>\cdots$.
Let $\mathfrak{U}_{n}$ be the set of up-down permutations in $\mathfrak{S}_{n}$.
It is a classical result of Andr\'{e}~\cite{Andre1881} that $\left|\mathfrak{U}_{n}\right|=E_{n}$
for all $n\geq0$. By symmetry, $E_{n}$ also counts \textit{down-up}
(\textit{alternating}) permutations $\pi\in\mathfrak{S}_{n}$, which
instead satisfy $\pi_{1}>\pi_{2}<\pi_{3}>\pi_{4}<\cdots$. Additionally,
the Euler numbers are known to count Andr\'{e} permutations \cite{Foata1973},
a variant of Andr\'{e} permutations called simsun permutations \cite{Sundaram1994},
as well as the subject of the present paper: Jacobi permutations \cite{Viennot1980}.

Given a permutation $\pi$ of $S$ and a letter $x$ of $\pi$, let
$\rho_{\pi}(x)$ be the maximal consecutive subword of $\pi$ consisting
of letters immediately to the right of $x$ which are all larger than
$x$. As an example, Table \ref{tb-rho} shows $\rho_{\pi}(x)$ for
each letter of $\pi=417926$.
\begin{table}[!h]
\begin{centering}
\renewcommand{\arraystretch}{1.1}%
\begin{tabular}{>{\centering}p{27bp}|>{\centering}p{27bp}|>{\centering}p{27bp}|>{\centering}p{27bp}|>{\centering}p{27bp}|>{\centering}p{27bp}|>{\centering}p{27bp}}
$x$ & $4$ & $1$ & $7$ & $9$ & $2$ & $6$\tabularnewline
\hline 
$\rho_{\pi}(x)$ & $\varepsilon$ & $7926$ & $9$ & $\varepsilon$ & $6$ & $\varepsilon$\tabularnewline
\end{tabular}
\par\end{centering}
\caption{\label{tb-rho}The subwords $\rho_{\pi}(x)$ for the permutation $\pi=417926$.}
\end{table}

\noindent We say that $\pi$ is \textit{Jacobi} if $\left|\rho_{\pi}(x)\right|$
is even for all $x\in S$. Observe that $\pi=417926$ is not Jacobi
because $\left|\rho_{\pi}(7)\right|=\left|\rho_{\pi}(2)\right|=1$,
but $\pi=416972$ is Jacobi.

Jacobi permutations can also be defined recursively \cite[Lemme 4]{Viennot1980}
in the following way. First, the empty permutation is Jacobi. If $S$
is nonempty, then let $y=\min S$ be the smallest element of $S$,
and let us decompose a permutation $\pi$ of $S$ as $\pi=\alpha y\beta$;
here, $\alpha$ is the subword of letters to the left of $y$ and
$\beta$ is the subword of letters to the right of $y$. Then, $\pi$
is Jacobi if both $\alpha$ and $\beta$ are Jacobi and $\left|\beta\right|$
is even. We will use these two definitions of Jacobi permutations
interchangeably.

Let $\mathfrak{J}_{n}$ denote the set of Jacobi permutations in $\mathfrak{S}_{n}$.
See Table \ref{tb-jac} for all permutations in $\mathfrak{J}_{n}$
up to $n=5$.
\begin{table}[!h]
\begin{centering}
\renewcommand{\arraystretch}{1.1}%
\begin{tabular}{|c|c|c|c|c|c|cc|c|ccccc|}
\multicolumn{1}{c}{$\mathfrak{J}_{0}$} & \multicolumn{1}{c}{} & \multicolumn{1}{c}{$\mathfrak{J}_{1}$} & \multicolumn{1}{c}{} & \multicolumn{1}{c}{$\mathfrak{J}_{2}$} & \multicolumn{1}{c}{} & \multicolumn{2}{c}{$\mathfrak{J}_{3}$} & \multicolumn{1}{c}{} &  &  & $\mathfrak{J}_{4}$ &  & \multicolumn{1}{c}{}\tabularnewline
\cline{1-1} \cline{3-3} \cline{5-5} \cline{7-8} \cline{8-8} \cline{10-14} \cline{11-14} \cline{12-14} \cline{13-14} \cline{14-14} 
$\varepsilon$ & \quad{} & $1$ & \quad{} & $21$ & \quad{} & $321$ & $132$ & \quad{} & $4321$ & $2431$ & $4132$ & $3142$ & $2143$\tabularnewline
\cline{1-1} \cline{3-3} \cline{5-5} \cline{7-8} \cline{8-8} \cline{10-14} \cline{11-14} \cline{12-14} \cline{13-14} \cline{14-14} 
\end{tabular}\bigskip{}
\par\end{centering}
\begin{centering}
\renewcommand{\arraystretch}{1.1}%
\begin{tabular}{|cccccccc|}
\multicolumn{8}{c}{$\mathfrak{J}_{5}$}\tabularnewline
\hline 
$54321$ & $35421$ & $52431$ & $42531$ & $32541$ & $54132$ & $53142$ & $43152$\tabularnewline
$52143$ & $42153$ & $32154$ & $15432$ & $13542$ & $15243$ & $14253$ & $13254$\tabularnewline
\hline 
\end{tabular}
\par\end{centering}
\caption{\label{tb-jac}Permutations in $\mathfrak{J}_{n}$ up to $n=5$.}
\end{table}

The \textit{standardization} $\std(\pi)$ of a permutation $\pi$
of length $n$ is the permutation in $\mathfrak{S}_{n}$ obtained
by replacing the smallest letter of $\pi$ by 1, the second smallest
by 2, and so on. For example, we have $\std(417926)=315624$. Note
that $\pi$ is Jacobi if and only if its standardization $\std(\pi)$
is Jacobi, a fact that we will tacitly use throughout this paper.

Jacobi permutations were introduced by Viennot \cite{Viennot1980}
in the context of Jacobi elliptic functions.\footnote{See also Viennot's survey \cite{Viennot1982}.}
Viennot studied a sequence of polynomials $J_{n}(k^{2})$ appearing
in the series expansions of the functions $\sn$, $\cn$, and $\dn$,
giving them a combinatorial interpretation as encoding the distribution
of a certain statistic over Jacobi permutations. He gave two proofs
that Jacobi permutations are counted by Euler numbers, including one
utilizing a bijection between Jacobi and alternating permutations.
However, unlike alternating, Andr\'{e}, and simsun permutations,
very little seems to have been done with Jacobi permutations since
their introduction. 

Pattern avoidance in permutations has received widespread attention
in recent decades, and a major theme within its study is the enumeration
of special families of pattern-avoiding permutations. There is a sizable
literature on pattern-avoiding alternating permutations\textemdash see,
e.g., \cite{Chen2014,Lewis2009,Lewis2011,Lewis2012,Mansour2003,Yan2013}\textemdash and
pattern avoidance has also been studied in simsun permutations~\cite{Deutsch2012}.
We pursue a variation of this theme by studying pattern avoidance
in Jacobi permutations.

\subsection{Pattern avoidance and permutation statistics}

Given permutations $\sigma$ and $\pi$, an \textit{occurrence} of
$\sigma$ in $\pi$ is a subword of $\pi$ whose standardization is
$\sigma$, and we say that $\pi$ \textit{avoids} $\sigma$ (as a
\textit{pattern})\textemdash and that $\pi$ is \textit{$\sigma$-avoiding}\textemdash if
$\pi$ does not contain an occurrence of $\sigma$. For example, $164$
is an occurrence of $\sigma=132$ in $\pi=315624$, but $\pi=315624$
avoids $\sigma=321$. 

Let $\mathfrak{S}_{n}(\sigma)$ denote the set of permutations in
$\mathfrak{S}_{n}$ which avoid $\sigma$. As seen above, we have
$315624\in\mathfrak{S}_{6}(321)$. More generally, given a set $\Pi$
of patterns, let $\mathfrak{S}_{n}(\Pi)$ denote the set of permutations
in $\mathfrak{S}_{n}$ which avoid every pattern in $\Pi$. It is
well known that for all $\sigma\in\mathfrak{S}_{3}$ and all $n\geq0$,
the number of permutations in $\mathfrak{S}_{n}(\sigma)$ is the $n$th
Catalan number. The systematic study of pattern-avoiding permutations
was initiated by Simion and Schmidt~\cite{Simion1985}, who enumerated
$\mathfrak{S}_{n}(\Pi)$ for all length 3 pattern sets $\Pi\subseteq\mathfrak{S}_{3}$.

Let $\mathfrak{J}_{n}(\Pi)$ denote the set of Jacobi permutations
in $\mathfrak{J}_{n}$ which avoid every pattern in $\Pi$, and let
$j_{n}(\Pi)\coloneqq\left|\mathfrak{J}_{n}(\Pi)\right|$. We achieve
a complete enumeration of the Jacobi avoidance classes $\mathfrak{J}_{n}(\Pi)$
where $\Pi\subseteq\mathfrak{S}_{3}$. A summary of these results
is provided in Table \ref{tb-pa}.\footnote{We exclude from Table \ref{tb-pa} pattern sets $\Pi$ for which $j_{n}(\Pi)\leq1$,
aside from the singleton $\Pi=\{132\}.$}

En passant, we find that all permutations $\pi$ in $\mathfrak{J}_{n}(213)$,
$\mathfrak{J}_{n}(231)$, and $\mathfrak{J}_{n}(312)$ are \textit{doubly
Jacobi}: both $\pi$ and their inverse $\pi^{-1}$ are Jacobi. This
leads us to study doubly Jacobi permutations in the other single-pattern
Jacobi avoidance classes. Notably, $123$-avoiding doubly Jacobi permutations
are counted by secondary structure numbers (see Theorem \ref{t-123doubly}).

Furthermore, we study the distributions of several permutation statistics
over Jacobi avoidance classes. For the following definitions, let
$\pi$ be a permutation of length $n$.
\begin{itemize}
\item We say that $k\in[n-1]$ is an \textit{ascent} of $\pi$ if $\pi_{k}<\pi_{k+1}$
and a \textit{descent} of $\pi$ if $\pi_{k}>\pi_{k+1}$. Let $\asc(\pi)$
denote the number of ascents of $\pi$, and $\des(\pi)$ the number
of descents of $\pi$.\footnote{Since $\asc(\pi)+\des(\pi)=n-1$, the value of $\asc$ determines
that of $\des$ and vice versa, but in some cases it will be easier
to work with one statistic over the other.}
\item We say that $\pi_{k}$ is a \textit{left-to-right minimum} of $\pi$
if $\pi_{k}<\pi_{i}$ for all $i<k$. Let $\lrmin(\pi)$ denote the
number of left-to-right minima of $\pi$.
\item Let $\last(\pi)\coloneqq\pi_{n}$, the \textit{last letter} of $\pi$.
\end{itemize}
For example, if $\pi=315624$, then the ascents of $\pi$ are $2$,
$3$, and $5$, the descents of $\pi$ are $1$ and $4$, the left-to-right
minima of $\pi$ are $3$ and $1$, and the last letter of $\pi$
is $4$. Thus, we have $\asc(\pi)=3$, $\des(\pi)=2$, $\lrmin(\pi)=2$,
and $\last(\pi)=4$.

\begin{table}[H]
\begin{centering}
\vspace{10bp}
\renewcommand{\arraystretch}{1.8}%
\begin{tabular}{|>{\raggedright}p{1.4in}|>{\raggedright}m{2.5in}|>{\raggedright}p{1in}|>{\raggedright}p{0.7in}|}
\hline 
$\Pi$ & $j_{n}(\Pi)$ & Reference & OEIS \cite{oeis}\tabularnewline
\hline 
$\{213\}$ & \multirow{3}{2.5in}[-5bp]{$\begin{cases}
{\displaystyle \frac{1}{2n+1}{3n \choose n},}\vphantom{\phantom{\frac{\frac{dy}{dx}}{\frac{dy}{\frac{dy}{dx}}}}} & \text{if }n\text{ is even,}\\
{\displaystyle \frac{1}{2n+1}{3n+1 \choose n+1},\vphantom{\phantom{\frac{\frac{\frac{dy}{dx}}{dx}}{\frac{dy}{dx}}}}} & \text{if }n\text{ is odd.}
\end{cases}$} & \multirow{2}{1in}{Theorem \ref{t-213-312}} & \multirow{3}{0.7in}{A047749}\tabularnewline
\cline{1-1} 
$\{312\}$ &  &  & \tabularnewline
\cline{1-1} \cline{3-3} 
$\{231\}$ &  & Theorem \ref{t-231} & \tabularnewline
\hline 
$\{123\}\phantom{\frac{\frac{\frac{dy}{ds}}{\frac{dy}{ds}}}{\frac{\frac{dy}{ds}}{\frac{\frac{dy}{ds}}{dy}}}}$ & ${\displaystyle \sum_{k=0}^{\left\lfloor (n-1)/2\right\rfloor }\frac{1}{2k+1}{n-k-1 \choose k}{n \choose 2k}}$ & Theorem \ref{t-123} & A101785\tabularnewline
\hline 
$\{132\}$ & $1$ & Corollary \ref{c-132} & A000012\tabularnewline
\hline 
$\{321\}$ & \multirow{1}{2.5in}{${\displaystyle C_{\left\lfloor n/2\right\rfloor }}$ (Catalan numbers)} & \multirow{1}{1in}{Corollary \ref{c-321}} & \multirow{1}{0.7in}{A000108}\tabularnewline
\hline 
$\{123,213\}$ & \multirow{1}{2.5in}{$f_{n-1}$ (Fibonacci numbers)} & Theorem \ref{t-123-213} & A000045\tabularnewline
\hline 
$\{123,231\}$ & \multirow{2}{2.5in}{${\displaystyle 1+\left\lfloor \frac{(n-1)^{2}}{4}\right\rfloor }$} & \multirow{2}{1in}{Theorem \ref{t-123-231or312}} & \multirow{2}{0.7in}{A033638}\tabularnewline
\cline{1-1} 
$\{123,312\}$ &  &  & \tabularnewline
\hline 
$\{213,231\}$ & \multirow{3}{2.5in}{$2^{\left\lfloor (n-1)/2\right\rfloor }$} & \multirow{2}{1in}{Theorem \ref{t-213-231or312}} & \multirow{3}{0.7in}{A016116}\tabularnewline
\cline{1-1} 
$\{213,312\}$ &  &  & \tabularnewline
\cline{1-1} \cline{3-3} 
$\{231,312\}$ &  & \multirow{1}{1in}{Theorem \ref{t-231-312}} & \tabularnewline
\hline 
$\{123,213,231\}$ & \multirow{3}{2.5in}{${\displaystyle \left\lceil \frac{n}{2}\right\rceil }$} & \multirow{2}{1in}{Theorem \ref{t-123-213-231or312}} & \multirow{3}{0.7in}{A065033}\tabularnewline
\cline{1-1} 
$\{123,213,312\}$ &  &  & \tabularnewline
\cline{1-1} \cline{3-3} 
$\{123,231,312\}$ &  & Theorem \ref{t-123-231-312} & \tabularnewline
\hline 
$\{213,231,312\}$ & \multirow{2}{2.5in}{$\begin{cases}
1, & \text{if }n\geq2\text{ is even,}\\
{\displaystyle 2,} & \text{if }n\geq3\text{ is odd.}
\end{cases}$} & \multirow{2}{1in}{Theorem \ref{t-213-231-312}} & \multirow{2}{0.7in}{A000034}\tabularnewline
\cline{1-1} 
$\{123,213,231,312\}$ &  &  & \tabularnewline
\hline 
\end{tabular}
\par\end{centering}
\caption{\label{tb-pa}Summary of results on the enumeration of Jacobi avoidance
classes.}
\end{table}

We determine the distributions of $\asc$/$\des$, $\lrmin$, and
$\last$ over $\mathfrak{J}_{n}(\sigma)$ for each $\sigma\in\mathfrak{S}_{3}$,
as well as over the full set $\mathfrak{J}_{n}$ of Jacobi permutations.
(The only exception is the distribution of $\last$ over $\mathfrak{J}_{n}(123)$,
which is left as an open problem.) See Table~\ref{tb-stat} for an
abbreviated summary of our results in this direction. Most of these
take the form of a formula for the numbers
\[
j_{n,k}^{\st}(\sigma)\coloneqq\left|\{\,\pi\in\mathfrak{J}_{n}(\sigma):\st(\pi)=k\,\}\right|\quad\text{and}\quad j_{n,k}^{\st}\coloneqq\left|\{\,\pi\in\mathfrak{J}_{n}:\st(\pi)=k\,\}\right|
\]
(where $\st$ is one of the above statistics) or a generating function
for these numbers.
\begin{table}
\begin{centering}
\renewcommand{\arraystretch}{1.4}%
\begin{tabular}{|>{\centering}p{30bp}|>{\centering}m{75bp}|>{\centering}m{75bp}|>{\centering}m{75bp}|}
\hline 
$\sigma$ & $\asc$/$\des$ & $\lrmin$ & $\last$\tabularnewline
\hline 
none & Theorem \ref{t-asc} & Theorem \ref{t-lrmin} & Theorem \ref{t-last} \tabularnewline
\hline 
$213$ & \multirow{2}{75bp}{\hphantom{a}Theorem \ref{t-213-312-des}} & \multirow{2}{75bp}{\hphantom{a}Theorem \ref{t-213-312-lrmin}} & Theorem \ref{t-213last}\tabularnewline
\cline{1-1} \cline{4-4} 
$312$ &  &  & Theorem \ref{t-312last}\tabularnewline
\hline 
$231$ & Theorem \ref{t-231-des} & Theorem \ref{t-231-lrmin} & Theorem \ref{t-231-last}\tabularnewline
\hline 
$123$ & Theorem \ref{t-123asc} & Theorem \ref{t-123lrmin} & open\tabularnewline
\hline 
$132$ & trivial & trivial & trivial\tabularnewline
\hline 
$321$ & Corollary \ref{c-321-asc} & Corollary \ref{c-321-lrmin} & Theorem \ref{t-321-last}\tabularnewline
\hline 
\end{tabular}
\par\end{centering}
\caption{\label{tb-stat}Summary of results on distributions of statistics
over $\mathfrak{J}_{n}(\sigma)$.}
\end{table}

The general techniques used here are also applicable to related statistics,
such as the number of right-to-left minima, but we shall restrict
our attention to the above statistics to keep the scope of this paper
manageable.

\subsection{Outline}

The structure of this paper is as follows:
\begin{itemize}
\item We begin in Section \ref{s-Jac} by establishing a collection of basic
facts about Jacobi permutations which will be used later in this paper,
and then we determine the distribution of $\asc$, $\lrmin$, and
$\last$ over $\mathfrak{J}_{n}$.
\item Permutations in $\mathfrak{S}_{n}(213)$, $\mathfrak{S}_{n}(312)$,
and $\mathfrak{S}_{n}(231)$ are in bijection with binary trees. In
Section \ref{s-trees}, we introduce Jacobi trees and dual Jacobi
trees, which are obtained by restricting these bijections to Jacobi
permutations.
\item We prove our results for the avoidance classes $\mathfrak{J}_{n}(213)$
and $\mathfrak{J}_{n}(312)$ in Section \ref{s-213-312}, while we
study $\mathfrak{J}_{n}(231)$ in Section \ref{s-231}. Jacobi and
dual Jacobi trees are used throughout.
\item Section \ref{s-123} focuses on the avoidance class $\mathfrak{J}_{n}(123)$.
Our main tool here is a variant of a bijection due to Krattenthaler
\cite{Krattenthaler2001} from $123$-avoiding permutations to Dyck
paths.
\item The avoidance classes $\mathfrak{J}_{n}(132)$ and $\mathfrak{J}_{n}(321)$
are studied in Section \ref{s-132-321}, although these turn out to
be less interesting: there is only one permutation in $\mathfrak{J}_{n}(132)$
for each $n\geq0$, while the permutations in $\mathfrak{J}_{n}(321)$
are $321$-avoiding down-up permutations when $n$ is even, and are
in bijection with $321$-avoiding down-up permutations when $n$ is
odd.
\item In Section \ref{s-mult}, we enumerate all $\mathfrak{J}_{n}(\Pi)$
where $\Pi\subseteq\mathfrak{S}_{3}$ contains at least two patterns.
\item We conclude in Section \ref{s-conclusion} by presenting open problems
and conjectures for future study.
\end{itemize}
We give a few remarks on notation and terminology before continuing.
Throughout this paper, $S$ represents a finite set of positive integers.
We will write $\mathfrak{S}_{S}$ for the set of permutations of $S$,
and notations such as $\mathfrak{J}_{S}$, $\mathfrak{U}_{S}$, and
$\mathfrak{S}_{S}(\Pi)$ are defined in the analogous way. We occasionally
use the term ``permutation'' to refer only to standardized permutations\textemdash i.e.,
permutations in $\mathfrak{S}_{n}$\textemdash but at other times
more generally to permutations of a set $S$; which one we mean should
always be clear from context.

When decomposing permutations, we use lowercase Greek symbols for
subwords and lowercase Latin symbols for letters. For example, if
we write $\pi=\alpha x\tau y\beta$, then $x$ and $y$ are letters
of $\pi$, while $\alpha$ is the subword of all letters appearing
before $x$, $\tau$ is the subword of all letters appearing between
$x$ and $y$, and $\beta$ is the subword of all letters appearing
after $y$.

Finally, given a specific pattern set $\Pi$, we will omit the curly
braces enclosing the elements of $\Pi$ when writing out $\mathfrak{J}_{n}(\Pi)$
and similar notations. For example, we write $\mathfrak{J}_{n}(123,213)$
as opposed to $\mathfrak{J}_{n}(\{123,213\})$.

\section{\label{s-Jac}Jacobi permutations without restrictions}

Before imposing any pattern avoidance restrictions, let us prove some
basic facts about Jacobi permutations and study the distributions
of the statistics $\asc$, $\lrmin$, and $\last$ over $\mathfrak{J}_{n}$.

\subsection{Basic facts about Jacobi permutations}
\begin{lem}
\label{l-Jacobi}Let $\pi=\pi_{1}\pi_{2}\cdots\pi_{n}\in\mathfrak{J}_{S}$.
\begin{enumerate}
\item [\normalfont{(a)}]Any suffix of $\pi$ is Jacobi. That is, $\pi_{k}\pi_{k+1}\cdots\pi_{n}$
is Jacobi for each $k\in[n]$.
\item [\normalfont{(b)}]If $y$ is larger than every element of $S$, then
$y\pi$ is Jacobi.
\item [\normalfont{(c)}]If $\pi=y_{1}\tau^{(1)}y_{2}\tau^{(2)}\cdots y_{m}\tau^{(m)}$
where $y_{1},y_{2},\dots,y_{m}$ are the left-to-right minima of $\pi$,
then $\tau^{(1)},\tau^{(2)},\dots,\tau^{(m)}$ are Jacobi permutations
of even length.
\item [\normalfont{(d)}]If $n\geq2$, then $\pi_{n-1}>\pi_{n}$.
\end{enumerate}
\end{lem}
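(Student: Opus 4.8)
The plan is to derive parts (a), (b), and (d) straight from the non-recursive definition of ``Jacobi'', using the elementary observation that $\rho_\pi(x)$ is determined entirely by the letters lying to the right of $x$ in $\pi$, and to obtain part (c) from the recursive definition by induction on the number of left-to-right minima.

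For part (a), fix $k$ and let $\sigma=\pi_k\pi_{k+1}\cdots\pi_n$. For any letter $x=\pi_j$ with $j\ge k$, the letters appearing immediately to the right of $x$ in $\sigma$ are exactly $\pi_{j+1}\cdots\pi_n$, the same word as in $\pi$; hence $\rho_\sigma(x)=\rho_\pi(x)$, which has even length since $\pi$ is Jacobi. So $\sigma$ is Jacobi. Part (b) runs the same way: in $y\pi$ we have $\rho_{y\pi}(y)=\varepsilon$ because every letter to the right of $y$ is smaller than $y$, and for each letter $x$ of $\pi$ the word of letters to the right of $x$ is unchanged, so $\rho_{y\pi}(x)=\rho_\pi(x)$ is of even length; thus $y\pi$ is Jacobi. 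Part (d) is just the instance $x=\pi_{n-1}$: if $\pi_{n-1}<\pi_n$ then $\rho_\pi(\pi_{n-1})=\pi_n$ has length $1$, contradicting that $\pi$ is Jacobi, so $\pi_{n-1}>\pi_n$.

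For part (c), I would first record that every letter of $\tau^{(i)}$ exceeds $y_i$: the letters strictly between $y_i$ and $y_{i+1}$ (or those after $y_m$, when $i=m$) are by definition not left-to-right minima, and since the running minimum of the prefix ending just before $\tau^{(i)}$ equals $y_i$, no letter of $\tau^{(i)}$ can ever drop to or below $y_i$. Now induct on the number $m$ of left-to-right minima (the case $m\le 1$ being immediate). Since $y_m=\min S$, the recursive decomposition writes $\pi=\alpha\,y_m\,\beta$ with $\beta=\tau^{(m)}$ and $\alpha=y_1\tau^{(1)}\cdots y_{m-1}\tau^{(m-1)}$; the recursive definition then gives that $\tau^{(m)}=\beta$ is Jacobi of even length and that $\alpha$ is Jacobi. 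The left-to-right minima of $\alpha$ are precisely $y_1,\dots,y_{m-1}$, so applying the inductive hypothesis to $\alpha$ shows $\tau^{(1)},\dots,\tau^{(m-1)}$ are Jacobi of even length, which closes the induction. (If one prefers, the fact that each $\tau^{(i)}$ is Jacobi can instead be seen directly: for $x$ in $\tau^{(i)}$ the run $\rho_\pi(x)$ cannot extend beyond $\tau^{(i)}$, since the letter $y_{i+1}$ immediately following $\tau^{(i)}$ satisfies $y_{i+1}<y_i<x$, so $\rho_{\tau^{(i)}}(x)=\rho_\pi(x)$; only the even-length claim then needs the recursive definition.)

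The arguments are short, so I expect no genuine obstacle. The one place that needs attention is the bookkeeping in part (c): checking that every letter of $\tau^{(i)}$ really is larger than $y_i$ and that the prefix $\alpha$ in the recursive decomposition is exactly $y_1\tau^{(1)}\cdots y_{m-1}\tau^{(m-1)}$ with left-to-right minima $y_1,\dots,y_{m-1}$, so that the recursive ``peeling'' of the minimum stays aligned with the stated block decomposition.
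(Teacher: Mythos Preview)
Your arguments for (a), (b), and (d) are correct and essentially identical to the paper's. For (c), your inductive approach via the recursive definition is valid, but the paper takes a shorter direct route entirely within the non-recursive definition: since $y_k$ is a left-to-right minimum and $y_{k+1}<y_k$, one has $\rho_\pi(y_k)=\tau^{(k)}$, so $|\tau^{(k)}|$ is even because $\pi$ is Jacobi; and for each letter $x$ of $\tau^{(k)}$, the run $\rho_\pi(x)$ cannot pass $y_{k+1}$, so $\rho_{\tau^{(k)}}(x)=\rho_\pi(x)$ has even length, showing $\tau^{(k)}$ is Jacobi. In particular, your parenthetical remark that ``only the even-length claim then needs the recursive definition'' undersells the direct approach: the even-length claim follows immediately from $\rho_\pi(y_k)=\tau^{(k)}$, with no recursion or induction needed. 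Your induction buys nothing extra here, though it is perfectly correct.
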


By iterating Lemma \ref{l-Jacobi} (b), we see that the decreasing
permutation $n\cdots21$ is Jacobi for every $n\geq1$. 
\begin{proof}
Let $\beta=\pi_{k}\pi_{k+1}\cdots\pi_{n}$. We have $\rho_{\beta}(\pi_{i})=\rho_{\pi}(\pi_{i})$
for each $k\leq i\leq n$, and since $\pi$ is Jacobi, all of these
$\rho_{\pi}(\pi_{i})$ have even length. Therefore, $\beta$ is Jacobi,
which proves (a).

To prove (b), let $\tau=y\pi$. Then $\rho_{\tau}(y)=\varepsilon$
because $y$ is larger than all letters of $\pi$, whereas $\rho_{\tau}(\pi_{i})=\rho_{\pi}(\pi_{i})$
for each $i\in[n]$. Again, all of these $\rho_{\pi}(\pi_{i})$ have
even length, so $\tau$ is Jacobi.

For (c), fix $k\in[m]$. Since $y_{1},y_{2},\dots,y_{m}$ are the
left-to-right minima of $\pi$, we have that $\rho_{\pi}(y_{k})=\tau^{(k)}$
and $\rho_{\pi}(x)=\rho_{\tau^{(k)}}(x)$ for each letter $x$ of
$\tau^{(k)}$. Because $\pi$ is Jacobi, the former implies that $\tau^{(k)}$
is of even length, while the latter implies that $\tau^{(k)}$ is
Jacobi.

Finally, if $\pi_{n-1}<\pi_{n}$, then we would have $\rho_{\pi}(\pi_{n-1})=\pi_{n}$
which is of length 1; this contradicts $\pi$ being Jacobi. Thus,
(d) is proven.
\end{proof}
\begin{lem}
\label{l-lrminJacobi}Let $\pi=\alpha y\beta$ where $y$ is a left-to-right
minimum of $\pi$. If $\alpha$ and $y\beta$ are Jacobi, then $\pi$
is Jacobi.
\end{lem}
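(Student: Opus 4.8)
The plan is to verify directly from the original (non-recursive) definition of Jacobi permutations that $\left|\rho_{\pi}(x)\right|$ is even for every letter $x$ of $\pi$, splitting into cases according to whether $x$ lies in $\alpha$, equals $y$, or lies in $\beta$. In each case the goal is simply to identify $\rho_{\pi}(x)$ with a copy of $\rho$ taken inside one of the two subwords $\alpha$ and $y\beta$ that we are already assuming to be Jacobi.

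First I would handle the letters $x$ of $\alpha$. Since $y$ is a left-to-right minimum of $\pi$ and every letter of $\alpha$ precedes $y$, we have $x>y$ for each such $x$. Hence the maximal run of letters immediately to the right of $x$ in $\pi$ that are all larger than $x$ cannot extend past the end of $\alpha$\textemdash it is stopped, at the latest, by $y$\textemdash so $\rho_{\pi}(x)=\rho_{\alpha}(x)$, which has even length because $\alpha$ is Jacobi. Next, for $x=y$ and for $x$ a letter of $\beta$, the subword of letters appearing to the right of $x$ in $\pi$ is literally the same as the subword of letters appearing to the right of $x$ in $y\beta$; therefore $\rho_{\pi}(x)=\rho_{y\beta}(x)$, which has even length because $y\beta$ is Jacobi. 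Combining the three cases shows that $\left|\rho_{\pi}(x)\right|$ is even for all letters $x$ of $\pi$, so $\pi$ is Jacobi.

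Since each step merely rewrites $\rho_{\pi}(x)$ as an instance of $\rho$ in one of the two given Jacobi subwords, there is no real obstacle; the only point requiring (minor) care is the first case, where one must observe that the left-to-right minimum property of $y$ forces every letter of $\alpha$ to exceed $y$, and it is precisely this that prevents the run $\rho_{\pi}(x)$ from escaping $\alpha$. Alternatively, one could deduce the result from the left-to-right minima decomposition in Lemma \ref{l-Jacobi}(c) together with Lemma \ref{l-Jacobi}(b), but the direct computation above appears to be the shortest route.
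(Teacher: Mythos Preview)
Your proof is correct and follows essentially the same approach as the paper: both verify directly that $\rho_{\pi}(x)$ coincides with $\rho_{\alpha}(x)$ when $x$ lies in $\alpha$ (using that $y<x$) and with $\rho_{y\beta}(x)$ otherwise. Your version simply spells out in slightly more detail why the run cannot escape $\alpha$ and separates the ``otherwise'' case into $x=y$ and $x\in\beta$, but the argument is the same.
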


\begin{proof}
Let $x$ be an arbitrary letter of $\pi$. If $x$ is in $\alpha$,
then $\rho_{\pi}(x)=\rho_{\alpha}(x)$ because $y$ is a left-to-right
minimum of $\pi$. Otherwise, $\rho_{\pi}(x)=\rho_{y\beta}(x)$. Since
$\alpha$ and $y\beta$ are Jacobi, $\rho_{\alpha}(x)$ has even length
for every letter $x$ in $\alpha$, and $\rho_{y\beta}(x)$ has even
length for every letter $x$ in $y\beta$. Hence, $\rho_{\pi}(x)$
has even length for every letter $x$ of $\pi$.
\end{proof}
Mirroring the definition of left-to-right minimum, $\pi_{k}$ is called
a \textit{right-to-left minimum} of $\pi$ if $\pi_{k}<\pi_{i}$ for
all $i>k$.
\begin{lem}
\label{l-rlminJacobi}Let $\pi=\alpha y\beta$ where $y$ is a right-to-left
minimum of $\pi$. If $\alpha y$ and $\beta$ are Jacobi and $\beta$
is of even length, then $\pi$ is Jacobi.
\end{lem}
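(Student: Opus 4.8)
The plan is to verify directly from the definition that $\left|\rho_{\pi}(x)\right|$ is even for every letter $x$ of $\pi=\alpha y\beta$, splitting into cases according to whether $x$ lies in $\beta$, equals $y$, or lies in $\alpha$. This runs parallel to the proof of Lemma~\ref{l-lrminJacobi}, but with one extra subtlety in the last case that I flag below.

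The first two cases are immediate. If $x$ is a letter of $\beta$, then since $\beta$ is a suffix of $\pi$, every letter to the right of $x$ already lies in $\beta$, so $\rho_{\pi}(x)=\rho_{\beta}(x)$, which has even length because $\beta$ is Jacobi. If $x=y$, then since $y$ is a right-to-left minimum of $\pi$, every letter of $\beta$ exceeds $y$, so $\rho_{\pi}(y)=\beta$, which has even length by hypothesis.

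The real work is the case when $x$ is a letter of $\alpha$, where I would compare $\rho_{\pi}(x)$ with $\rho_{\alpha y}(x)$ by examining whether or not $y$ occurs in the latter. If $y$ is not a letter of $\rho_{\alpha y}(x)$, then the run emanating from $x$ terminates at the same letter in $\pi$ as in $\alpha y$ (either a letter of $\alpha$ smaller than $x$, or $y$ itself, which in this situation satisfies $y<x$), so $\rho_{\pi}(x)=\rho_{\alpha y}(x)$, which is even since $\alpha y$ is Jacobi. If instead $y$ is a letter of $\rho_{\alpha y}(x)$, then $y>x$ and every letter of $\alpha$ between $x$ and $y$ exceeds $x$; moreover, since $y$ is a right-to-left minimum, every letter of $\beta$ exceeds $y$ and hence exceeds $x$. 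Consequently the run from $x$ in $\pi$ passes through $y$ and then through all of $\beta$, giving $\rho_{\pi}(x)=\rho_{\alpha y}(x)\,\beta$, whose length is $\left|\rho_{\alpha y}(x)\right|+\left|\beta\right|$, a sum of two even numbers.

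The point that needs care — and the reason the hypothesis that $\left|\beta\right|$ is even cannot be dropped — is exactly this last subcase: whereas a left-to-right minimum cleanly severs runs that begin in $\alpha$ from those in $y\beta$ (which is what makes Lemma~\ref{l-lrminJacobi} go through with no condition on $\beta$), a right-to-left minimum $y$ does \emph{not} cut off a run that starts in $\alpha$ and passes through $y$; such a run swallows all of $\beta$, so evenness of $\left|\beta\right|$ is precisely what preserves the parity. Once this is understood, combining the three cases shows that $\left|\rho_{\pi}(x)\right|$ is even for every letter $x$ of $\pi$, i.e., $\pi$ is Jacobi.
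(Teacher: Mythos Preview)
Your proof is correct and follows essentially the same approach as the paper's own proof: a direct case analysis on whether $x$ lies in $\beta$, equals $y$, or lies in $\alpha$, with the last case further split according to whether $y$ belongs to $\rho_{\alpha y}(x)$. Your additional commentary on why the evenness of $\left|\beta\right|$ is indispensable (contrasting with Lemma~\ref{l-lrminJacobi}) is a nice touch that the paper does not include, but the underlying argument is the same.
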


\begin{proof}
Since $\alpha y$ and $\beta$ are Jacobi, $\rho_{\alpha y}(x)$ has
even length for every letter $x$ in $\alpha y$, and similarly with
$\rho_{\beta}(x)$. Now, let $x$ be an arbitrary letter of $\pi$.
Consider the following cases:
\begin{itemize}
\item If $x$ is in $\beta$, then $\rho_{\pi}(x)=\rho_{\beta}(x)$, which
has even length.
\item If $x=y$, then because $y$ is a right-to-left minimum of $\pi$,
we have $\rho_{\pi}(x)=\beta$, which is of even length.
\item If $\rho_{\alpha y}(x)$ contains $y$, then $\rho_{\pi}(x)=\rho_{\alpha y}(x)\beta$
because $y$ is a right-to-left minimum of $\pi$. Since $\rho_{\alpha y}(x)$
and $\beta$ are both of even length, the same is true of $\rho_{\pi}(x)$.
\item Otherwise, if $x$ is a letter in $\alpha$ and $\rho_{\alpha y}(x)$
contains $y$, then $\rho_{\pi}(x)=\rho_{\alpha y}(x)$, which has
even length.
\end{itemize}
Therefore, $\pi$ is Jacobi.
\end{proof}

\subsection{\label{ss-ascents}Ascents}

We now study the distribution of several statistics over $\mathfrak{J}_{n}$,
beginning with the number of ascents. Let
\[
J^{\asc}(t,x)\coloneqq\sum_{n=0}^{\infty}\sum_{\pi\in\mathfrak{J}_{n}}t^{\asc(\pi)}\frac{x^{n}}{n!}=\sum_{n=0}^{\infty}\sum_{k=0}^{n}j_{n,k}^{\asc}t^{k}\frac{x^{n}}{n!}.
\]
be the bivariate generating function for Jacobi permutations with
respect to ascents. See Table \ref{tb-asc} for the coefficients of
this generating function up to $n=9$.
\begin{table}
\begin{centering}
\renewcommand{\arraystretch}{1.1}%
\begin{tabular}{|>{\centering}p{23bp}|>{\centering}p{23bp}|>{\centering}p{23bp}|>{\centering}p{23bp}|>{\centering}p{23bp}|>{\centering}p{23bp}|}
\hline 
$n\backslash k$ & $0$ & $1$ & $2$ & $3$ & $4$\tabularnewline
\hline 
$0$ & $1$ &  &  &  & \tabularnewline
\hline 
$1$ & $1$ &  &  &  & \tabularnewline
\hline 
$2$ & $1$ &  &  &  & \tabularnewline
\hline 
$3$ & $1$ & $1$ &  &  & \tabularnewline
\hline 
$4$ & $1$ & $4$ &  &  & \tabularnewline
\hline 
$5$ & $1$ & $11$ & $4$ &  & \tabularnewline
\hline 
$6$ & $1$ & $26$ & $34$ &  & \tabularnewline
\hline 
$7$ & $1$ & $57$ & $180$ & $34$ & \tabularnewline
\hline 
$8$ & $1$ & $120$ & $768$ & $496$ & \tabularnewline
\hline 
$9$ & $1$ & $247$ & $2904$ & $4288$ & $496$\tabularnewline
\hline 
\end{tabular}
\par\end{centering}
\caption{\label{tb-asc}The numbers $j_{n,k}^{\protect\asc}$ up to $n=9$.}
\end{table}

\begin{thm}
\label{t-asc}We have ${\displaystyle J^{\asc}(t,x)=1+\frac{2(e^{rx}-1)}{1+r+(r-1)e^{rx}}}$,
where $r=\sqrt{1-2t}$.
\end{thm}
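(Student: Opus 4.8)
The plan is to exploit the recursive description of Jacobi permutations recalled in the introduction: if $S$ is nonempty and $y=\min S$, then a permutation $\pi$ of $S$ decomposes uniquely as $\pi=\alpha y\beta$, and $\pi$ is Jacobi if and only if $\alpha$ and $\beta$ are Jacobi with $\left|\beta\right|$ even. I would first record how ascents transform under this decomposition. Because $y$ is the minimum of $S$, the step from $y$ to the first letter of $\beta$ (when $\beta\neq\varepsilon$) is always an ascent, while the step from the last letter of $\alpha$ to $y$ (when $\alpha\neq\varepsilon$) is always a descent; hence, regardless of whether $\alpha$ or $\beta$ is empty,
\[
\asc(\pi)=\asc(\alpha)+\asc(\beta)+[\beta\neq\varepsilon].
\]
Passing to exponential generating functions (the distinguished minimum $y$ contributes a $\partial/\partial x$, and the partition of the remaining letters between $\alpha$ and $\beta$ gives a binomial convolution), this yields
\[
J'=J\cdot W,\qquad J(0)=1,
\]
where $'$ denotes $\partial/\partial x$, $W\coloneqq 1+t(J_{e}-1)$, and $J_{e}$ is the even-degree-in-$x$ part of $J$ (i.e., the exponential generating function for even-length Jacobi permutations, weighted by $t^{\asc}$). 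The summand $1$ in $W$ records the case $\beta=\varepsilon$, and the factor $t$ records the term $[\beta\neq\varepsilon]$.

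To close this equation, observe that $J_{e}$ involves only even powers of $x$, so $W(-x)=W(x)$, and therefore
\[
\frac{d}{dx}\bigl(J(x)\,J(-x)\bigr)=J(x)\,J(-x)\bigl(W(x)-W(-x)\bigr)=0,
\]
whence $J(x)\,J(-x)=J(0)^{2}=1$. In particular $J_{e}=\tfrac{1}{2}\bigl(J(x)+J(-x)\bigr)=\tfrac{1}{2}\bigl(J+J^{-1}\bigr)$, and substituting this into $W$ turns $J'=J\cdot W$ into the Riccati equation
\[
J'=(1-t)\,J+\frac{t}{2}\bigl(J^{2}+1\bigr),\qquad J(0)=1.
\]
(Equivalently, one can split $J=J_{e}+J_{o}$ into even- and odd-length parts, note $J_{e}'=J_{o}W$ and $J_{o}'=J_{e}W$, and deduce $\bigl((J_{e}+J_{o})(J_{e}-J_{o})\bigr)'=0$ directly.)

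The last step is to solve the Riccati equation. Using the standard substitution $J=-\tfrac{2}{t}\,u'/u$ converts it into the linear, constant-coefficient equation $u''-(1-t)\,u'+\tfrac{t^{2}}{4}\,u=0$, whose characteristic roots are $\tfrac{1}{2}\bigl((1-t)\pm r\bigr)=\tfrac{1}{4}(1\pm r)^{2}$ with $r=\sqrt{1-2t}$. Writing the general solution as a combination of $\exp\bigl(\tfrac{(1\pm r)^{2}}{4}x\bigr)$, the condition $J(0)=1$ forces the coefficients to be proportional to $1-r$ and $-(1+r)$; factoring out a common exponential and using $(1-r)(1+r)=2t$ collapses the expression to
\[
J=\frac{(1+r)\,e^{rx}-(1-r)}{(1+r)-(1-r)\,e^{rx}}=1+\frac{2(e^{rx}-1)}{1+r+(r-1)\,e^{rx}},
\]
which is the claimed formula. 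For the write-up one could short-circuit this computation: simply verify that the stated $J$ satisfies the Riccati equation and $J(0)=1$, then invoke uniqueness of the formal power series solution.

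I expect the only delicate part to be the combinatorial setup rather than the analysis: one must check the ascent identity in the boundary cases ($\alpha=\varepsilon$ or $\beta=\varepsilon$) and confirm that the minimum-decomposition is a genuine bijection, so that the translation to the functional equation $J'=JW$ (the derivative coming from the distinguished minimum, the binomial convolution from the partition of letters) is legitimate. The key structural observation---that no separate generating function for odd-length Jacobi permutations is needed, because the evenness of $W$ already forces $J(x)J(-x)=1$---is what makes the system collapse to a single Riccati equation; everything after that is routine.
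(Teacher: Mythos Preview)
Your proposal is correct. The combinatorial setup is identical to the paper's: both use the decomposition $\pi=\alpha y\beta$ about the minimum and the ascent identity $\asc(\pi)=\asc(\alpha)+\asc(\beta)+[\beta\neq\varepsilon]$. The difference is in the analysis. The paper keeps $J_{\e}$ and $J_{\o}$ separate, writes down the coupled system
\[
J_{\e}'=J_{\o}\bigl(1+t(J_{\e}-1)\bigr),\qquad J_{\o}'=J_{\e}\bigl(1+t(J_{\e}-1)\bigr),
\]
and then simply asserts that solving it yields the stated formula. You instead add the two equations to get $J'=JW$ with $W=1+t(J_{\e}-1)$, and then use the parity observation $W(-x)=W(x)$ to deduce $J(x)J(-x)\equiv 1$, which lets you eliminate $J_{\e}=\tfrac12(J+J^{-1})$ and collapse everything to a single Riccati equation for $J$. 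This is a genuine simplification over the paper's presentation: it explains \emph{why} the system is solvable in closed form and makes the solution step explicit, whereas the paper leaves the solving as a black box. The trade-off is that the paper's system is slightly more transparent combinatorially (one equation per parity class), while yours is analytically cleaner.
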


\begin{proof}
Let
\[
J_{\e}^{\asc}=J_{\e}^{\asc}(t,x)\coloneqq\sum_{n=0}^{\infty}\sum_{\pi\in\mathfrak{J}_{2n}}t^{\asc(\pi)}\frac{x^{n}}{n!}\quad\text{and}\quad J_{\o}^{\asc}=J_{\o}^{\asc}(t,x)\coloneqq\sum_{n=0}^{\infty}\sum_{\pi\in\mathfrak{J}_{2n+1}}t^{\asc(\pi)}\frac{x^{n}}{n!}.
\]
We claim that $J_{\e}^{\asc}$ and $J_{\o}^{\asc}$ satisfy the following
system of differential equations:
\begin{align}
\frac{\partial J_{\e}^{\asc}}{\partial x} & =J_{\o}^{\asc}+tJ_{\o}^{\asc}(J_{\e}^{\asc}-1)\label{e-asc1}\\
\frac{\partial J_{\o}^{\asc}}{\partial x} & =J_{\e}^{\asc}+tJ_{\e}^{\asc}(J_{\e}^{\asc}-1).\label{e-asc2}
\end{align}
To see why these equations hold, first note that every nonempty Jacobi
permutation of even length can be written as $\pi=\alpha1\beta$ where
$\alpha$ and $\beta$ are Jacobi, $\alpha$ is of odd length, and
$\beta$ is of even length. When $\beta$ is empty, we have $\asc(\pi)=\asc(\alpha)$,
so this case contributes the term $J_{\o}^{\asc}$ to (\ref{e-asc1}).
When $\beta$ is nonempty, we have $\asc(\pi)=\asc(\alpha)+\asc(\beta)+1$,
which results in the contribution $tJ_{\o}^{\asc}(J_{\e}^{\asc}-1)$.
Thus we have (\ref{e-asc1}), and (\ref{e-asc2}) is obtained using
the same reasoning except that both $\alpha$ and $\beta$ are of
even length.

Solving the above system with the initial conditions $J_{\e}^{\asc}(t,0)=1$
and $J_{\o}^{\asc}(t,0)=0$, adding the solutions, and simplifying
yields the desired expression for $J^{\asc}(t,x)$.
\end{proof}
The generating function $J^{\asc}(t,x)$ has appeared in an equivalent
form in the study of Andr\'{e} permutations and Andr\'{e} polynomials.
Following Foata and Han \cite{Foata[20142016]}, \textit{Andr\'{e}
permutations} (\textit{of the first kind}) are defined recursively
as follows. First, the empty permutation is Andr\'{e}. If $\pi\in\mathfrak{S}_{S}$
where $S$ is nonempty, then write $\pi=\alpha y\beta$ where $y=\min S$;
then $\pi$ is Andr\'{e} if both $\alpha$ and $\beta$ are Andr\'{e},
and if the largest letter in the concatenation $\alpha\beta$ is a
letter of $\beta$. Several equivalent definitions of Andr\'{e} permutation
can be found in the literature, e.g., in \cite[Definitions 2.1 and 2.2]{Foata[20142016]}.
Let $\mathfrak{A}_{n}$ be the set of Andr\'{e} permutations in $\mathfrak{S}_{n}$.

Foata and Sch\"utzenberger \cite{Foata1973} introduced a family
$D_{n}(s,t)$ of polynomials\textemdash called \textit{Andr\'{e}
polynomials}\textemdash whose $s=1$ evaluation has exponential generating
function
\begin{equation}
\sum_{n=0}^{\infty}D_{n}(1,t)\frac{x^{n}}{n!}=\frac{r(1+we^{rx})}{1-we^{rx}}\label{e-Degf}
\end{equation}
where $r=\sqrt{1-2t}$ and $w=(1-r)/(1+r)$. In particular, they showed
that these polynomials count Andr\'{e} permutations by descents:
\begin{equation}
D_{n}(1,t)=\sum_{\pi\in\mathfrak{A}_{n}}t^{\des(\pi)+1}\label{e-Ddes}
\end{equation}
for all $n\geq1$. In fact, because
\[
\frac{1}{t}\left(\frac{r(1+we^{rx})}{1-we^{rx}}-1\right)=\frac{2(e^{rx}-1)}{1+r+(r-1)e^{rx}},
\]
it follows from Theorem \ref{t-asc}, Equation (\ref{e-Degf}), and
Equation (\ref{e-Ddes}) that $\asc$ has the same distribution over
$\mathfrak{J}_{n}$ as $\des$ does over $\mathfrak{A}_{n}$. We state
this as a corollary below.
\begin{cor}
\label{c-jacandreasc}For all $n\geq0$, the distribution of $\asc$
over $\mathfrak{J}_{n}$ is equal to the distribution of $\des$ over
$\mathfrak{A}_{n}$.
\end{cor}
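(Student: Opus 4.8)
The plan is to recast the corollary as an identity of exponential generating functions and then invoke the formulas already assembled in the excerpt. Introduce the bivariate EGF for Andr\'{e} permutations refined by descents,
\[
A^{\des}(t,x)\coloneqq\sum_{n=0}^{\infty}\sum_{\pi\in\mathfrak{A}_{n}}t^{\des(\pi)}\frac{x^{n}}{n!}.
\]
Since extracting the coefficient of $x^{n}/n!$ from $A^{\des}(t,x)$ yields $\sum_{\pi\in\mathfrak{A}_{n}}t^{\des(\pi)}$, and similarly $J^{\asc}(t,x)$ encodes the distribution of $\asc$ over $\mathfrak{J}_{n}$, it suffices to prove the single identity $A^{\des}(t,x)=J^{\asc}(t,x)$.

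First I would peel off the $n=0$ term, which is necessary because Equation (\ref{e-Ddes}) is stated only for $n\geq1$. The empty permutation is Andr\'{e} with no descents, so it contributes $1$, and for each $n\geq1$ Equation (\ref{e-Ddes}) gives $\sum_{\pi\in\mathfrak{A}_{n}}t^{\des(\pi)}=t^{-1}D_{n}(1,t)$. Hence
\[
A^{\des}(t,x)=1+\frac{1}{t}\left(\sum_{n=0}^{\infty}D_{n}(1,t)\frac{x^{n}}{n!}-D_{0}(1,t)\right).
\]
A brief computation with $w=(1-r)/(1+r)$ shows that setting $x=0$ in the right-hand side of (\ref{e-Degf}) gives $r(1+w)/(1-w)=1$, so $D_{0}(1,t)=1$; substituting (\ref{e-Degf}) then produces $A^{\des}(t,x)=1+t^{-1}\bigl(r(1+we^{rx})/(1-we^{rx})-1\bigr)$.

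Finally, the algebraic identity recorded immediately before the statement of the corollary rewrites $t^{-1}\bigl(r(1+we^{rx})/(1-we^{rx})-1\bigr)$ as $2(e^{rx}-1)/\bigl(1+r+(r-1)e^{rx}\bigr)$, so $A^{\des}(t,x)=1+2(e^{rx}-1)/(1+r+(r-1)e^{rx})$, which is precisely the expression for $J^{\asc}(t,x)$ in Theorem \ref{t-asc}. Comparing coefficients of $x^{n}/n!$ then gives the equality of distributions claimed. I do not anticipate a genuine obstacle here: the only point requiring care is the bookkeeping of the $n=0$ terms (where (\ref{e-Ddes}) fails, since $D_{0}(1,t)=1\neq t$), and verifying the closed-form identity itself — clearing denominators using $(1+r)w=1-r$ — is routine algebra.
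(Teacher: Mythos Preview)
Your proposal is correct and follows essentially the same route as the paper: both arguments combine Theorem~\ref{t-asc}, Equation~(\ref{e-Degf}), and Equation~(\ref{e-Ddes}) via the algebraic identity displayed just before the corollary to equate the two bivariate EGFs. Your version is slightly more explicit about the $n=0$ bookkeeping and the evaluation $D_{0}(1,t)=1$, but the substance is the same.
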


\subsection{Left-to-right minima}

Our next statistic of interest is the number of left-to-right minima.
Let 
\[
J^{\lrmin}(t,x)\coloneqq\sum_{n=0}^{\infty}\sum_{\pi\in\mathfrak{J}_{n}}t^{\lrmin(\pi)}\frac{x^{n}}{n!}=\sum_{n=0}^{\infty}\sum_{k=0}^{n}j_{n,k}^{\lrmin}t^{k}\frac{x^{n}}{n!};
\]
see Table \ref{tb-lrmin} for the first several coefficients. This
generating function has a remarkably elegant expression which we state
below.
\begin{table}
\begin{centering}
\renewcommand{\arraystretch}{1.1}%
\begin{tabular}{|>{\centering}p{23bp}|>{\centering}p{23bp}|>{\centering}p{23bp}|>{\centering}p{23bp}|>{\centering}p{23bp}|>{\centering}p{23bp}|>{\centering}p{23bp}|>{\centering}p{23bp}|>{\centering}p{23bp}|>{\centering}p{23bp}|>{\centering}p{23bp}|}
\hline 
$n\backslash k$ & $0$ & $1$ & $2$ & $3$ & $4$ & $5$ & $6$ & $7$ & $8$ & $9$\tabularnewline
\hline 
$0$ & $1$ &  &  &  &  &  &  &  &  & \tabularnewline
\hline 
$1$ & $0$ & $1$ &  &  &  &  &  &  &  & \tabularnewline
\hline 
$2$ & $0$ & $0$ & $1$ &  &  &  &  &  &  & \tabularnewline
\hline 
$3$ & $0$ & $1$ & $0$ & $1$ &  &  &  &  &  & \tabularnewline
\hline 
$4$ & $0$ & $0$ & $4$ & $0$ & $1$ &  &  &  &  & \tabularnewline
\hline 
$5$ & $0$ & $5$ & $0$ & $10$ & $0$ & $1$ &  &  &  & \tabularnewline
\hline 
$6$ & $0$ & $0$ & $40$ & $0$ & $20$ & $0$ & $1$ &  &  & \tabularnewline
\hline 
$7$ & $0$ & $61$ & $0$ & $175$ & $0$ & $35$ & $0$ & $1$ &  & \tabularnewline
\hline 
$8$ & $0$ & $0$ & $768$ & $0$ & $560$ & $0$ & $56$ & $0$ & $1$ & \tabularnewline
\hline 
$9$ & $0$ & $1385$ & $0$ & $4996$ & $0$ & $1470$ & $0$ & $84$ & $0$ & $1$\tabularnewline
\hline 
\end{tabular}
\par\end{centering}
\caption{\label{tb-lrmin}The numbers $j_{n,k}^{\protect\lrmin}$ up to $n=9$.}
\end{table}

\begin{thm}
\label{t-lrmin}We have $J^{\lrmin}(t,x)=(\sec x+\tan x)^{t}$.
\end{thm}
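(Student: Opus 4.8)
The plan is to decompose each Jacobi permutation according to its left-to-right minima into ``blocks,'' convert this decomposition into a differential equation for $J^{\lrmin}$, and solve it.

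First, write $\pi = y_1\tau^{(1)}y_2\tau^{(2)}\cdots y_m\tau^{(m)} \in \mathfrak{J}_n$, where $y_1 > y_2 > \cdots > y_m$ are the left-to-right minima of $\pi$. By Lemma~\ref{l-Jacobi}(c), each $\tau^{(i)}$ is a Jacobi permutation of even length; a short argument shows in addition that every letter of $\tau^{(i)}$ exceeds $y_i$, since otherwise the leftmost letter of $\tau^{(i)}$ smaller than $y_i$ would itself be a left-to-right minimum of $\pi$ (everything preceding it being larger). Hence the \emph{block} $B_i \coloneqq y_i\tau^{(i)}$ begins with its own minimum $y_i$, so by the recursive definition $B_i$ is a Jacobi permutation of odd length --- and conversely such a block is exactly a Jacobi permutation of even length with a new minimum prepended. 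An induction using Lemma~\ref{l-lrminJacobi} then shows that a concatenation $B_1B_2\cdots B_m$ of blocks is Jacobi if and only if, for every $i$, the smallest letter of $B_1\cup\cdots\cup B_i$ lies in $B_i$; that is, each new block introduces a new overall minimum. Under this description, $\lrmin(\pi) = m$.

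Next, I pass to exponential generating functions. Since $|\mathfrak{J}_{2k}| = E_{2k}$ and $\sec$ is the even part of $\sec x+\tan x$, the EGF counting a single block by length is $b(x) = \sum_{k\ge0} E_{2k}\,x^{2k+1}/(2k+1)! = \int_0^x \sec u\,du$. Now every nonempty Jacobi permutation of $[n]$ is uniquely a (possibly empty) Jacobi permutation followed by one further block, namely the one containing the letter $1$. Summing over the choice of this last block --- where fixing the least label $1$ in the appended factor turns an ordinary product of EGFs into a ``least-label'' product, a routine manipulation --- gives
\[
J^{\lrmin}(t,x) = 1 + t\int_0^x J^{\lrmin}(t,u)\,\sec u\,du,
\]
the factor $t$ recording the extra left-to-right minimum. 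Differentiating yields the linear ODE $\partial_x J^{\lrmin} = t\,\sec(x)\,J^{\lrmin}$ with initial condition $J^{\lrmin}(t,0)=1$, whose solution is $J^{\lrmin}(t,x) = \exp\!\left(t\int_0^x \sec u\,du\right)$.

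Finally, from $(\sec x+\tan x)' = \sec x\,(\sec x+\tan x)$ one gets $\int_0^x \sec u\,du = \log(\sec x+\tan x)$, and therefore $J^{\lrmin}(t,x) = (\sec x+\tan x)^t$, as claimed. I expect the main obstacle to be the first step: cleanly proving the converse half of the block characterization (that a concatenation of blocks with the ``prefix-minimum'' property is genuinely Jacobi) and organizing it so that the passage to generating functions is transparent; in particular, one must recognize that the resulting combinatorial structure is a least-label product rather than a plain sequence or set. Once that is in place, the generating-function bookkeeping and the first-order ODE are entirely routine.
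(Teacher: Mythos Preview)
Your proof is correct and rests on the same decomposition as the paper's: write each Jacobi permutation as a concatenation of blocks $y_i\tau^{(i)}$ indexed by its left-to-right minima, with each $\tau^{(i)}$ Jacobi of even length, and observe that the block EGF is $\int_0^x \sec u\,du=\log(\sec x+\tan x)$. The only difference is in the final bookkeeping: the paper recognizes this as an instance of the exponential formula (the block order is forced by the minima, so the structure is a set of blocks) and writes $J^{\lrmin}=\exp\bigl(t\log(\sec x+\tan x)\bigr)$ in one stroke, whereas you peel off the block containing $1$ to obtain the ODE $\partial_x J^{\lrmin}=t\sec(x)\,J^{\lrmin}$ and solve it. These are equivalent---your ODE is precisely the derivative of the exponential-formula identity---so the two arguments are minor variants of one another rather than genuinely different routes.
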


While this theorem can be proven by solving a system of differential
equations as we did for ascents, we shall instead take a more direct
approach utilizing the exponential formula \cite[Section 5.1]{Stanley2024}.
\begin{proof}
Recall from Lemma \ref{l-Jacobi} (c) that every $\pi\in\mathfrak{J}_{n}$
can be written as
\[
\pi=y_{1}\tau^{(1)}y_{2}\tau^{(2)}\cdots y_{m}\tau^{(m)}
\]
where $y_{1},y_{2},\dots,y_{m}$ are the left-to-right minima of $\pi$
and $\tau^{(1)},\tau^{(2)},\dots,\tau^{(m)}$ are Jacobi permutations
of even length. Conversely, we can uniquely obtain all permutations
$\pi\in\mathfrak{J}_{n}$ with exactly $m$ left-to-right minima in
the following way: 
\begin{enumerate}
\item Partition $[n]$ into subsets $B_{1},B_{2},\dots,B_{m}$ of odd size,
where the subsets are listed in decreasing order of their smallest
element.

For example, taking $n=9$ and $m=3$, we can take $B_{1}=\{8\}$,
$B_{2}=\{3,4,6,7,9\}$, and $B_{3}=\{1,2,5\}$.
\item For each $k\in[m]$, let $y_{k}$ be the smallest element of $B_{k}$,
and choose a Jacobi permutation $\tau^{(k)}$ on the remaining elements
of $B_{k}$. 

Continuing the example, we have $y_{1}=8$, $y_{2}=3$, and $y_{3}=1$,
and take $\tau^{(1)}=\varepsilon$, $\tau^{(2)}=6974$, and $\tau^{(3)}=52$.
\item Take $\pi=y_{1}\tau^{(1)}y_{2}\tau^{(2)}\cdots y_{m}\tau^{(m)}$. 

Continuing the example, we have $\pi=836974152$.
\end{enumerate}
By construction, the left-to-right minima of $\pi$ are $y_{1},y_{2},\dots,y_{m}$.
Moreover, for all $k\in[m]$, we have $\rho_{\pi}(y_{k})=\tau^{(k)}$
and $\rho_{\pi}(x)=\rho_{\tau^{(k)}}(x)$ for each letter $x$ of
$\tau^{(k)}$. Since the $\tau^{(k)}$ and $\rho_{\tau^{(k)}}(x)$
are all of even length, $\pi$ is indeed Jacobi. This gives us a decomposition
of permutations in $\mathfrak{J}_{n}$ into ``irreducible'' factors.

Because $\sec x$ is the exponential generating function for even-length
Jacobi permutations, it follows that
\[
\int_{0}^{x}\sec u\,du=\log(\sec x+\tan x)
\]
is the exponential generating function for the irreducibles, so we
have
\[
J^{\lrmin}(t,x)=\exp(t\log(\sec x+\tan x))=(\sec x+\tan x)^{t}
\]
by the exponential formula.
\end{proof}

\subsection{Last letter}

\begin{table}
\begin{centering}
\renewcommand{\arraystretch}{1.1}%
\begin{tabular}{|>{\centering}p{24bp}|>{\centering}p{24bp}|>{\centering}p{24bp}|>{\centering}p{24bp}|>{\centering}p{24bp}|>{\centering}p{24bp}|>{\centering}p{24bp}|>{\centering}p{24bp}|>{\centering}p{24bp}|}
\hline 
$n\backslash k$ & $1$ & $2$ & $3$ & $4$ & $5$ & $6$ & $7$ & $8$\tabularnewline
\hline 
$1$ & $1$ &  &  &  &  &  &  & \tabularnewline
\hline 
$2$ & $1$ &  &  &  &  &  &  & \tabularnewline
\hline 
$3$ & $1$ & $1$ &  &  &  &  &  & \tabularnewline
\hline 
$4$ & $2$ & $2$ & $1$ &  &  &  &  & \tabularnewline
\hline 
$5$ & $5$ & $5$ & $4$ & $2$ &  &  &  & \tabularnewline
\hline 
$6$ & $16$ & $16$ & $14$ & $10$ & $5$ &  &  & \tabularnewline
\hline 
$7$ & $61$ & $61$ & $56$ & $46$ & $32$ & $16$ &  & \tabularnewline
\hline 
$8$ & $272$ & $272$ & $256$ & $224$ & $178$ & $122$ & $61$ & \tabularnewline
\hline 
$9$ & $1385$ & $1385$ & $1324$ & $1202$ & $1024$ & $800$ & $544$ & $272$\tabularnewline
\hline 
\end{tabular}
\par\end{centering}
\caption{\label{tb-entringer}The Entringer numbers $E_{n,k}$ up to $n=9$.}
\end{table}

The \textit{Entringer number} $E_{n,k}$ \cite{Entringer1966} is
defined to be the number of up-down alternating permutations $\pi\in\mathfrak{U}_{n}$
with $\pi_{1}=k$. In other words, by introducing the statistic $\first(\pi)\coloneqq\pi_{1}$,
the Entringer numbers give the distribution of $\first$ over $\mathfrak{U}_{n}$.
It can be shown bijectively that these numbers satisfy the recurrence
\[
E_{n,k}=E_{n-1,n-k}+E_{n,k+1}
\]
for all $n\geq2$ and $k\geq1$.

Entringer numbers capture the distribution of various other statistics
over objects counted by Euler numbers. For example, $E_{n,k}$ is
also the number of Andr\'{e} permutations $\pi\in\mathfrak{A}_{n}$
with $\pi_{1}=k$, as well as the number of $\pi\in\mathfrak{A}_{n}$
with $\pi_{n-1}=n-k$ \cite[Theorem 1.1]{Foata[20142016]}. Our next
result shows that $\last$ has the same distribution over $\mathfrak{J}_{n}$.
\begin{thm}
\label{t-last}For all $n,k\geq1$, we have $j_{n,k}^{\last}=E_{n,k}$.
Consequently, the distribution of $\last$ over $\mathfrak{J}_{n}$
is equal to the distribution of $\first$ over $\mathfrak{U}_{n}$
or $\mathfrak{A}_{n}$.
\end{thm}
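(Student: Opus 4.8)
The plan is to show that the array $\bigl(j_{n,k}^{\last}\bigr)$ obeys the same initial data and recurrence as the Entringer numbers $E_{n,k}$; then $j_{n,k}^{\last}=E_{n,k}$ follows by induction on $n$ (and, within a fixed $n$, downward induction on $k$), and the claim about $\mathfrak{U}_n$ and $\mathfrak{A}_n$ is immediate from the interpretations of $E_{n,k}$ recalled just before the theorem. The boundary data is easy: $j_{1,1}^{\last}=1=E_{1,1}$; for $n\geq2$, Lemma \ref{l-Jacobi}(d) gives $\pi_{n-1}>\pi_n$, so $j_{n,k}^{\last}=0$ for $k\geq n$, matching $E_{n,k}=0$; and, using the decomposition $\pi=\alpha y\beta$ with $y=1$, a Jacobi permutation of $[n]$ ends in $1$ exactly when $\beta=\varepsilon$, so $j_{n,1}^{\last}=\bigl|\mathfrak{J}_{n-1}\bigr|=E_{n-1}=E_{n,1}$. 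The crux is thus the recurrence $j_{n,k}^{\last}=j_{n-1,n-k}^{\last}+j_{n,k+1}^{\last}$ for $n\geq2$ and $1\leq k\leq n-1$, which I would prove by constructing a bijection
\[
\{\pi\in\mathfrak{J}_n:\last(\pi)=k\}\ \longleftrightarrow\ \{\sigma\in\mathfrak{J}_{n-1}:\last(\sigma)=n-k\}\ \sqcup\ \{\pi\in\mathfrak{J}_n:\last(\pi)=k+1\}.
\]

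For the length-preserving part, the natural tool is the involution $\tau$ swapping the values $k$ and $k+1$ in $\pi$. A short run-length computation shows that the only $\rho_\pi(x)$ affected by $\tau$ are $\rho_\pi(k)$ and $\rho_\pi(k+1)$, and that $\tau(\pi)$ is again Jacobi precisely when it is \emph{not} the case that every entry lying strictly between the position of $k+1$ and the final position exceeds $k+1$. Calling these two alternatives Case~A and Case~B, one checks that $\tau$ restricts to a bijection between the Case-A permutations in $\{\last=k\}$ and those in $\{\last=k+1\}$, leaving over, on the $\{\last=k\}$ side, exactly the Case-B permutations; these have the explicit shape $\pi=\gamma\,(k{+}1)\,\delta\,k$ with $|\delta|$ even and all entries of $\delta$ exceeding $k+1$. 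It remains to match this Case-B family with $\{\sigma\in\mathfrak{J}_{n-1}:\last(\sigma)=n-k\}$ together with the (analogously shaped) Case-B leftovers on the $\{\last=k+1\}$ side, which I would do by a further, recursive application of the swap-and-split idea, invoking Lemmas \ref{l-lrminJacobi} and \ref{l-rlminJacobi} to rearrange the block $\delta$ and to control when the first or the largest letter may be deleted without destroying the Jacobi property.

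The main obstacle is precisely this last identification: verifying the behaviour of $\tau$ is routine run-length bookkeeping, but showing that the Case-B permutations reorganize to give \emph{exactly} a Jacobi permutation of length $n-1$ with last letter $n-k$ --- a ``complemented'' value, so the rearrangement cannot be a mere deletion, as one already sees from $52143\in\mathfrak{J}_5$ --- along with the complementary Case-B family, requires careful handling of how the rearrangement interacts with standardization and with Lemma \ref{l-Jacobi}(d). An alternative that avoids the recurrence altogether would be to build a bijection $\mathfrak{J}_n\to\mathfrak{A}_n$ (or to analyze Viennot's bijection between Jacobi and alternating permutations) carrying $\last$ to $\first$, after which the theorem would follow directly from the Entringer interpretations already cited.
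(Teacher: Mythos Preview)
Your boundary checks are correct, and the swap involution $\tau$ is a reasonable starting point, but the proof as written has a genuine gap precisely where you flag ``the main obstacle'': you never actually construct the bijection handling the Case~B permutations. Saying you ``would do [it] by a further, recursive application of the swap-and-split idea'' is not a proof; the complemented target value $n-k$ means the required map cannot be a local swap or deletion, and you give no concrete construction or verification. Without this, the recurrence is not established and the argument is incomplete.

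The paper takes exactly the ``alternative'' you mention in your final sentence, and it is considerably simpler than the recurrence route. It defines $\Phi\colon\mathfrak{J}_S\to\mathfrak{U}_S$ recursively by $\Phi(\varepsilon)=\varepsilon$ and, for $\pi=\alpha y\beta$ with $y=\min S$,
\[
\Phi(\pi)\coloneqq\Phi(\beta)\,y\,\Phi(\alpha)^{c}.
\]
A short induction shows $\Phi(\pi)$ is up-down (since $\Phi(\beta)$ is up-down of even length, $\Phi(\alpha)^c$ is down-up, and $y$ is the global minimum) and that $\first(\Phi(\pi))=\last(\pi)$ (equal to $\last(\beta)$ if $\beta\neq\varepsilon$, and to $y$ otherwise). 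The inverse is immediate from the same decomposition of up-down permutations. This avoids the Entringer recurrence entirely and sidesteps the Case~B bookkeeping that stalls your approach.
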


For the proof of this theorem, we will need the notion of the \textit{complement}
$\pi^{c}$ of a permutation $\pi$, which is obtained by (simultaneously)
replacing the $i$th smallest letter of $\pi$ with the $i$th largest
letter of $\pi$ for all $i$. For example, given $\pi=319754$, we
have $\pi^{c}=791345$. Note that $\pi$ is up-down if and only if
$\pi^{c}$ is down-up.
\begin{proof}
We provide a bijection $\Phi\colon\mathfrak{J}_{S}\rightarrow\mathfrak{U}_{S}$\textemdash which
is a variant of a map due to Viennot \cite[Section 3]{Viennot1980}\textemdash such
that, for all $\pi\in\mathfrak{J}_{S}$, the first letter of $\Phi(\pi)$
is equal to the last letter of $\pi$. (This condition is vacuously
satisfied for $S=\emptyset$, since the empty permutation has no first
or last letter.) Specializing to $S=[n]$ then implies the result.

We define $\Phi$ recursively as follows. First, $\Phi$ sends the
empty permutation to itself. Otherwise, if $\pi$ is nonempty, then
\[
\Phi(\pi)\coloneqq\Phi(\beta)y\Phi(\alpha)^{c}
\]
where $\pi=\alpha y\beta$ and $y=\min S$. We use induction to show
that $\Phi$ has the desired properties.

Assume that whenever $\left|S^{\prime}\right|<n$, the map $\Phi$
is a bijection from $\mathfrak{J}_{S^{\prime}}$ to $\mathfrak{U}_{S^{\prime}}$
such that $\first(\Phi(\tau))=\last(\tau)$ for all $\tau\in\mathfrak{J}_{S^{\prime}}$.
Now, let $\pi$ be a Jacobi permutation of a set $S$ of size $n$,
and write $\pi=\alpha y\beta$ as described above. In particular,
we know that $\beta$ is of even length because $\pi$ is Jacobi.
Then by the induction hypothesis, both $\Phi(\alpha)$ and $\Phi(\beta)$
are up-down permutations on their respective letter sets\textemdash so
$\Phi(\alpha)^{c}$ is down-up\textemdash and $\first(\Phi(\beta))=\last(\beta)$
if $\beta$ is nonempty. Since $\Phi(\beta)$ is up-down of even length,
$\Phi(\alpha)^{c}$ is down-up, and $y$ is smaller than all letters
in $\Phi(\beta)$ and $\Phi(\alpha)^{c}$, it follows that the concatenation
$\Phi(\pi)=\Phi(\beta)y\Phi(\alpha)^{c}$ is an up-down permutation
on $S$. Also, we have 
\[
\first(\Phi(\pi))=\first(\Phi(\beta))=\last(\beta)=\last(\pi)
\]
if $\beta$ is nonempty, and 
\begin{alignat*}{1}
\first(\Phi(\pi)) & =y=\last(\pi)
\end{alignat*}
otherwise. Finally, if $\pi\in\mathfrak{U}_{S}$ with $\pi=\bar{\beta}y\bar{\alpha}$,
then $\Phi^{-1}(\pi)=\Phi^{-1}(\bar{\alpha}^{c})y\Phi^{-1}(\bar{\beta})$.
Thus, $\Phi\colon\mathfrak{J}_{S}\rightarrow\mathfrak{U}_{S}$ is
a bijection satisfying $\first(\Phi(\pi))=\last(\pi)$ for all $\pi\in\mathfrak{J}_{S}$.
\end{proof}

\section{\label{s-trees}Jacobi permutations and binary trees}

Our study of $213$-, $312$-, and $231$-avoiding Jacobi permutations
will extensively use tree representations of these permutations, based
on classical correspondences between permutations and increasing/decreasing
binary trees. The purpose of this section is to recall these correspondences
and their relevant properties, and to examine their restriction to
Jacobi permutations.

\subsection{Binary trees and traversals}

A \textit{binary tree} is a rooted tree in which each node has at
most two children, which are distinguished as the \textit{left child}
and the \textit{right child}. Each node of a binary tree has a \textit{left
subtree} and a \textit{right subtree}, both of which are (possibly
empty) binary trees. As such, binary trees have a simple recursive
decomposition: a binary tree is either the empty tree, or it consists
of a root with a left subtree and a right subtree. The number of nodes
of a binary tree $T$ is called its \textit{size}, which we will denote
by $\left|T\right|$.

A binary tree can either have labeled nodes or unlabeled nodes. A
binary tree \textit{on} $S$ is a binary tree of size $\left|S\right|$
whose nodes are labeled with the elements of $S$. Such a binary tree
is naturally associated with three permutations of $S$, each of which
is obtained by a depth-first traversal of the tree but with nodes
read in a different order. Each of the three traversals is defined
recursively, starting at the root. In an \textit{in-order traversal},
we do the following:
\begin{enumerate}
\item [\normalfont{(1)}]Recursively traverse the current node's left subtree.
\item [\normalfont{(2)}]Record the label of the current node. (The node
is said to be \textit{visited} at this step.)
\item [\normalfont{(3)}]Recursively traverse the current node's right subtree.
\end{enumerate}
In a \textit{pre-order traversal}, the order of steps (1) and (2)
are swapped, whereas in a \textit{post-order traversal}, the order
of steps (2) and (3) are swapped. See Figure \ref{f-treebij} for
an example of a binary tree on $S=[9]$ and the permutations obtained
via each of these three traversals.
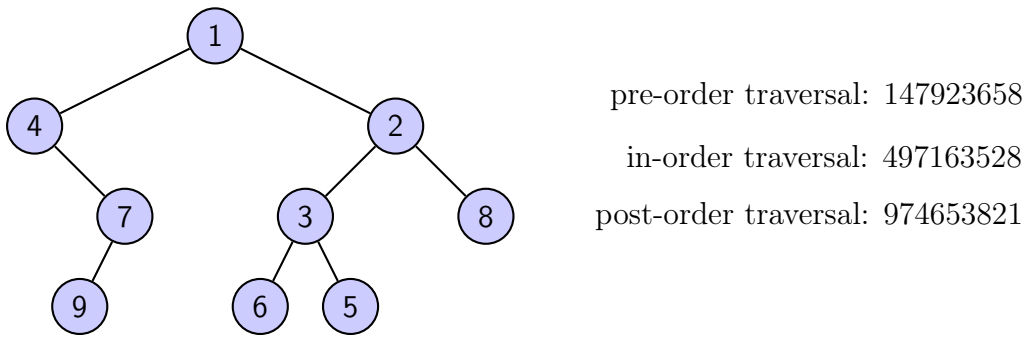
\begin{figure}
\noindent \begin{centering}
\begin{center}
\begin{tikzpicture}[scale=0.4,auto,   thick,branch/.style={circle,fill=blue!20,draw,font=\sffamily,minimum size=1.5em},subtree/.style={circle,draw,dashed,font=\sffamily,minimum size=2em}]

\node[branch] (1) at (10,10) {1};

\node[branch] (4) at (4,7) {4};
\node[branch] (7) at (7,4) {7};
\node[branch] (9) at (5.5,1) {9};

\node[branch] (2) at (16,7) {2};
\node[branch] (3) at (13,4) {3};
\node[branch] (8) at (19,4) {8};
\node[branch] (6) at (11.5,1) {6};
\node[branch] (5) at (14.5,1) {5};

\foreach \from/\to in {1/4, 4/7, 7/9, 1/2, 2/3, 2/8, 3/6, 3/5}
\draw (\from) -- (\to);

\node at (30,8) {pre-order traversal: $147923658$};
\node at (30.25,6) {in-order traversal: $497163528$};
\node at (29.75,4) {post-order traversal: $974653821$};

\end{tikzpicture}
\end{center}
\par\end{centering}
\caption{\label{f-treebij}A binary tree $T$ and its three associated permutations.
Since $T$ is an increasing binary tree, we have $T=\Theta(\pi)$
where $\pi=497163528$.}
\end{figure}

From this point forward, we will identify nodes with their labels
when working with labeled binary trees. That is, if a node is labeled
$x$, then we will refer to the node itself as $x$.

An \textit{increasing binary tree} on $S$ is a binary tree on $S$
such that $x<y$ whenever $y$ is a child of $x$. For example, the
binary tree in Figure \ref{f-treebij} is increasing.\textit{ Decreasing
binary trees} are defined analogously.

The classical bijection $\Theta$ between permutations on $S$ and
increasing binary trees on $S$ is defined as follows. If $\pi$ is
the empty permutation, then $\Theta(\pi)$ is the empty tree. Otherwise,
let $y=\min S$, and write $\pi=\alpha y\beta$. Then we define $\Theta(\pi)$
as the increasing binary tree obtained by taking $y$ as the root,
attaching $\Theta(\alpha)$ as the left subtree of $y$, and attaching
$\Theta(\beta)$ as the right subtree of $y$. Note that $\pi$ is
precisely the permutation obtained from $\Theta(\pi)$ upon performing
an in-order traversal; see Figure \ref{f-treebij} for an example.

\subsection{Permutation statistics and tree statistics}

Given a permutation statistic $\st$, one can often find a statistic
$\st^{\prime}$ on binary trees for which $\st(\pi)=\st^{\prime}(\Theta(\pi))$.
Here we examine such correspondences for the permutation statistics
studied in this paper.

We define the \textit{left branch} of a binary tree recursively as
consisting of the root along with the left child of each node on the
left branch; the \textit{right branch} is defined in the analogous
way. For example, the left branch of the tree in Figure \ref{f-treebij}
consists of the nodes labeled $1$ and $4$, whereas the right branch
consists of the nodes labeled $1$, $2$, and $8$. Observe that $1$
and $4$ are precisely the left-to-right minima of $\pi=497163528$.
\begin{prop}
\label{p-branch}Let $\pi\in\mathfrak{S}_{S}$. Then $x\in S$ is
on the left branch of $\Theta(\pi)$ if and only if $x$ is a left-to-right
minimum of $\pi$.
\end{prop}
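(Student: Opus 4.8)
The plan is to argue by induction on $n=\left|S\right|$, exploiting the recursive definitions of both $\Theta$ and the left branch. The base case $S=\emptyset$ is immediate: then $\Theta(\pi)$ is the empty tree and $\pi$ has no letters, so both sides of the claimed equivalence are vacuous.

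For the inductive step, write $\pi=\alpha y\beta$ with $y=\min S$, as in the definition of $\Theta$. By construction $\Theta(\pi)$ has root $y$, left subtree $\Theta(\alpha)$, and right subtree $\Theta(\beta)$; hence the left branch of $\Theta(\pi)$ consists of $y$ together with the nodes on the left branch of $\Theta(\alpha)$ (this covers the case $\alpha=\varepsilon$ as well, where the left branch of the empty tree is empty). By the inductive hypothesis applied to $\alpha$, the set of nodes on the left branch of $\Theta(\alpha)$ is exactly the set of left-to-right minima of $\alpha$. So the left branch of $\Theta(\pi)$ equals $\{y\}$ together with the set of left-to-right minima of $\alpha$.

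It then remains to check that this set is precisely the set of left-to-right minima of $\pi$. Since $y=\min S$, it is smaller than every letter preceding it, so $y$ is a left-to-right minimum of $\pi$; and no letter of $\beta$ can be a left-to-right minimum of $\pi$, since $y$ occurs before it and is smaller. Finally, every letter $x$ of $\alpha$ has all of its predecessors in $\pi$ lying inside $\alpha$, so $x$ is a left-to-right minimum of $\pi$ if and only if it is a left-to-right minimum of $\alpha$. Combining these three observations shows that the left-to-right minima of $\pi$ are exactly $\{y\}$ together with those of $\alpha$, which matches the description of the left branch of $\Theta(\pi)$ above and completes the induction.

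There is no genuine obstacle here; the only point requiring a moment's care is the clean splitting of the left-to-right minima of $\pi=\alpha y\beta$ into $y$ and the left-to-right minima of $\alpha$, which hinges on $y$ being the global minimum of $S$. One could alternatively give a direct (non-inductive) proof by unwinding what it means for a node to lie on the left branch of $\Theta(\pi)$, but the induction above is cleaner and parallels the recursive setup the paper has already established.
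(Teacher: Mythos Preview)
Your proof is correct, but it takes a different route from the paper's. The paper argues directly: if $x$ lies on the left branch, then the nodes visited before $x$ in an in-order traversal are exactly those in the left subtree of $x$, and these are all larger than $x$ since $\Theta(\pi)$ is increasing; conversely, any node not on the left branch lies in the right subtree of some left-branch node $y$, and then $y<x$ appears before $x$ in $\pi$. Your argument instead inducts along the recursive definition of $\Theta$, reducing to the observation that the left-to-right minima of $\pi=\alpha y\beta$ (with $y=\min S$) are precisely $y$ together with the left-to-right minima of $\alpha$. Both approaches are clean; yours meshes naturally with the recursive framework already in place, while the paper's traversal-based reasoning has the mild advantage of reusing the same viewpoint needed for the adjacent Proposition~\ref{p-leftchild}.
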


\begin{proof}
Suppose that $x$ is on the left branch of $\Theta(\pi)$. Observe
that, in an in-order traversal of $\Theta(\pi)$, the nodes which
are read before $x$ are precisely those in the left subtree of $x$,
so these are the only letters that appear before $x$ in $\pi$. Because
$\Theta(\pi)$ is an increasing tree, $x$ is smaller than every node
in its left subtree, so $x$ is a left-to-right minimum of $\pi$.

For the converse, we will use the fact that every node of $\Theta(\pi)$
is either on the left branch of $\Theta(\pi)$ or is in the right
subtree of a node on the left branch. So, assuming that $x$ is not
on the left branch of $\Theta(\pi)$, we have that $x$ is in the
right subtree of a node $y$. This means that the letter $x$ appears
after $y$ in $\pi$, and also that $y<x$ because $\Theta(\pi)$
is an increasing tree. Therefore, $x$ is not a left-to-right minimum
of $\pi$.
\end{proof}
Let us call the letter $\pi_{k}$ of a permutation $\pi$ a \textit{descent
bottom} if $k-1$ is a descent of $\pi$. (In other words, $\pi_{k}$
is the ``bottom'' of a pair of letters corresponding to a descent
of $\pi$.) For example, the descent bottoms of $\pi=497163528$ are
$7$, $1$, $3$, and $2$, which are precisely the nodes with left
children in the tree $\Theta(\pi)$ displayed in Figure \ref{f-treebij}.
\begin{prop}
\label{p-leftchild}Let $\pi\in\mathfrak{S}_{S}$. Then $x\in S$
has a left child in $\Theta(\pi)$ if and only if $x$ is a descent
bottom of $\pi$.
\end{prop}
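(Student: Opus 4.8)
The plan is to mirror the proof of Proposition \ref{p-branch}, exploiting the recursive structure of the bijection $\Theta$. Recall that $x$ has a left child in $\Theta(\pi)$ precisely when $\Theta(\pi)$ has a nonempty left subtree rooted at $x$, and a descent bottom of $\pi$ is a letter $\pi_k$ with $\pi_{k-1} > \pi_k$. The key observation is that in an in-order traversal of $\Theta(\pi)$, the node immediately preceding $x$ (if any) is the \emph{rightmost} node of $x$'s left subtree, while if $x$ has no left subtree then the node immediately preceding $x$ is an ancestor of $x$ that lies above and to the left. So I want to pin down the letter appearing immediately to the left of $x$ in $\pi$ and compare it to $x$ in each of these two cases.

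First I would set up notation: write $\pi_k = x$, and suppose $k \geq 2$ so that $x$ is not the first letter. Let $w = \pi_{k-1}$ be the letter immediately to the left of $x$. The claim reduces to: $x$ has a left child in $\Theta(\pi)$ if and only if $w > x$. For the forward direction, suppose $x$ has a left child $z$ in $\Theta(\pi)$. Then $z$ is the root of $x$'s (nonempty) left subtree, and the last node visited in an in-order traversal of that subtree — which is the rightmost descendant along right branches starting from $z$ — is exactly the node visited immediately before $x$, hence equals $w$. Since $\Theta(\pi)$ is an increasing tree and $w$ lies in the subtree rooted at $x$, we get $w > x$, so $x$ is a descent bottom. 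For the converse, suppose $x$ has no left child; then the left subtree of $x$ is empty, so the node visited immediately before $x$ in the in-order traversal is an ancestor $y$ of $x$ such that $x$ lies in $y$'s right subtree (this uses the same structural fact invoked in Proposition \ref{p-branch}: the predecessor of a node with empty left subtree is the lowest ancestor from which one descends to the right). Then $w = y < x$ because the tree is increasing, so $x$ is not a descent bottom. Finally, when $k = 1$ (so $x = \pi_1$ is the first letter and hence not a descent bottom), $x$ is the minimum of $S$ and sits at the root with empty left subtree, so it has no left child either; this edge case is handled trivially.

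The main obstacle — really the only subtlety — is making the in-order-traversal predecessor statements precise, i.e., carefully justifying that the node visited immediately before $x$ is the rightmost node of $x$'s left subtree when that subtree is nonempty, and is the appropriate ancestor otherwise. This is a standard fact about in-order traversals of binary trees, and it can be argued cleanly by induction on the tree using the recursive decomposition (root, left subtree, right subtree) together with the recursive definition of $\Theta$: in $\pi = \alpha y \beta$, the letters of $\alpha$ precede $y$ and those of $\beta$ follow, and $\Theta(\alpha)$, $\Theta(\beta)$ are the left and right subtrees of the root $y$. Alternatively, and perhaps more in the spirit of the surrounding text, one can bypass traversals entirely: argue directly from the recursive definition of $\Theta$ that $x$ has a left subtree in $\Theta(\pi)$ iff, writing the relevant recursive decomposition around $x$, there is a letter immediately to the left of $x$ that is larger than $x$. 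I would likely present the induction on $|S|$ via the decomposition $\pi = \alpha y \beta$: the root $y$ has a left child iff $\alpha$ is nonempty iff $\alpha$ ends in a letter larger than $y$ (every letter of $\alpha$ exceeds $y = \min S$), so $y$ is a descent bottom iff $\alpha \neq \varepsilon$; and for $x \neq y$, the statement for $x$ in $\pi$ is equivalent to the statement for $x$ in whichever of $\alpha$ or $\beta$ contains it, since left children and the descent-bottom property of $x$ are both inherited unchanged (the letter immediately left of $x$ in $\pi$ is the same as in $\alpha$ or $\beta$, except possibly when $x$ is the first letter of $\beta$, in which case its predecessor is $y = \min S < x$, consistent with $x$ having empty left subtree as the root of $\Theta(\beta)$). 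Wrapping up these cases gives the result.
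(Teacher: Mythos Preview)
Your in-order-predecessor argument is exactly the paper's approach, and the two directions match almost word for word. One slip to fix: in the $k=1$ case you assert that $\pi_1=\min S$ and sits at the root, but this is false in general (e.g., $\pi=312$ has $\pi_1=3$); what is true is that $\pi_1$ is the bottommost node of the left branch of $\Theta(\pi)$ (the first node visited in-order) and therefore has no left child. The paper folds this edge case into the converse by observing that a node with no left child which lies on the left branch is necessarily the first letter of $\pi$.
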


\begin{proof}
Suppose that $x\in S$ has a left child in $\Theta(\pi)$, i.e., the
left subtree of $x$ is nonempty. Then, in an in-order traversal of
$\Theta(\pi)$, the node read immediately before $x$ is in the left
subtree of $x$, and this is the letter which immediately precedes
$x$ in $\pi$. Because $\Theta(\pi)$ is an increasing tree, this
letter is larger than $x$, so $x$ is a descent bottom of $\pi$.

Conversely, suppose that $x\in S$ does not have a left child in $\Theta(\pi)$.
If $x$ is on the left branch of $\Theta(\pi)$, then this means that
$x$ is the first letter of $\pi$, in which case it is not a descent
bottom. If $x$ is not on the left branch of $\Theta(\pi)$, then
the node read immediately before $x$ in an in-order traversal is
an ancestor of $x$, which is less than $x$ by virtue of $\Theta(\pi)$
being an increasing tree. Since this is the letter immediately preceding
$x$ in $\pi$, it follows that $x$ is not a descent bottom of $\pi$.
\end{proof}
Let $\lbrch(T)$ denote the number of nodes on the left branch of
a binary tree $T$, and define $\rbrch(T)$ in the same way but for
the right branch. Later, we'll refer to $\lbrch(T)$ as the \textit{size}
of the left branch of $T$, and similarly with $\rbrch(T)$. Also,
let $\lchd(T)$ denote the number of left children in $T$. Then the
following is immediate from Propositions \ref{p-branch}\textendash \ref{p-leftchild}.
\begin{cor}
\label{c-branchild}Let $\pi\in\mathfrak{S}_{S}$. Then 
\begin{enumerate}
\item [\normalfont{(a)}] $\lrmin(\pi)=\lbrch(\Theta(\pi))$ and
\item [\normalfont{(b)}] $\des(\pi)=\lchd(\Theta(\pi))$.
\end{enumerate}
\end{cor}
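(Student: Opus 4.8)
The plan is to read off both identities directly from Propositions~\ref{p-branch} and~\ref{p-leftchild}, which already establish the relevant correspondences at the level of sets; all that remains is to pass from these bijective descriptions to a count.

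For part~(a), Proposition~\ref{p-branch} says that the set of letters of $\pi$ that are left-to-right minima coincides with the set of nodes lying on the left branch of $\Theta(\pi)$. Since $\lrmin(\pi)$ is by definition the cardinality of the former and $\lbrch(\Theta(\pi))$ the cardinality of the latter, taking the size of both sides of this set equality gives $\lrmin(\pi)=\lbrch(\Theta(\pi))$.

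For part~(b), the one extra observation is that $k\mapsto\pi_{k+1}$ is a bijection from the set of descents of $\pi$ onto the set of descent bottoms of $\pi$: each descent $k\in[n-1]$ yields the descent bottom $\pi_{k+1}$, and conversely each descent bottom $\pi_{j}$ determines the unique descent $j-1$. Hence $\des(\pi)$ equals the number of descent bottoms of $\pi$, which by Proposition~\ref{p-leftchild} equals the number of nodes of $\Theta(\pi)$ having a left child, namely $\lchd(\Theta(\pi))$. There is no real obstacle here --- the corollary is simply the numerical shadow of the two propositions --- and the only step needing a moment's care is this elementary descent/descent-bottom bijection in part~(b).
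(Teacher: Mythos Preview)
Your proof is correct and is exactly the ``immediate'' derivation the paper has in mind: both parts follow by counting the sets characterized in Propositions~\ref{p-branch} and~\ref{p-leftchild}, with the only extra remark being the trivial bijection between descents and descent bottoms needed for~(b).
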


While permutations in $\mathfrak{S}_{S}$ are in bijection with increasing
binary trees on $S$, the $213$-avoiding permutations in $\mathfrak{S}_{S}$
and the $312$-avoiding permutations in $\mathfrak{S}_{S}$ are each
in bijection with \textit{unlabeled} binary trees on $\left|S\right|$
nodes. For example, notice that the permutation $\pi=645132$ is $213$-avoiding,
and performing a post-order traversal of its corresponding tree $\Theta(\pi)$
yields the decreasing permutation $654321$. Similarly, performing
a pre-order traversal of the tree corresponding to the $312$-avoiding
permutation $324165$ yields the increasing permutation $123456$.
(See Figure \ref{f-treeav}.) In fact, it can be verified that a permutation
$\pi$ of $S$ is $213$-avoiding if and only if a post-order traversal
of $\Theta(\pi)$ yields the elements of $S$ in decreasing order,
and that $\pi$ is $312$-avoiding if and only if a pre-order traversal
of $\Theta(\pi)$ yields the elements of $S$ in increasing order.
Thus, if we know in advance that $\pi$ is $213$- or $312$-avoiding,
then we can remove the labels of $\Theta(\pi)$ and still be able
to recover $\pi$ by traversing the tree appropriately.
\begin{figure}
\noindent \begin{centering}
\begin{center}
\begin{tikzpicture}[scale=0.4,auto,   thick,branch/.style={circle,fill=blue!20,draw,font=\sffamily,minimum size=1.5em},subtree/.style={circle,draw,dashed,font=\sffamily,minimum size=2em}]

\node[branch] (1) at (10,7) {1};
\node[branch] (4) at (7,4) {4};
\node[branch] (6) at (5.5,1) {6};
\node[branch] (5) at (8.5,1) {5};
\node[branch] (2) at (13,4) {2};
\node[branch] (3) at (11.5,1) {3};

\node[branch] (u1) at (24.5,7) {};
\node[branch] (u5) at (21.5,4) {};
\node[branch] (u6) at (20,1) {};
\node[branch] (u3) at (23,1) {};
\node[branch] (u2) at (27.5,4) {};
\node[branch] (u4) at (26,1) {};

\node[branch] (1') at (39,7) {1};
\node[branch] (2') at (36,4) {2};
\node[branch] (3') at (34.5,1) {3};
\node[branch] (4') at (37.5,1) {4};
\node[branch] (5') at (42,4) {5};
\node[branch] (6') at (40.5,1) {6};

\foreach \from/\to in {1/4,4/6,4/5,1/2,2/3,u1/u5,u5/u6,u1/u2,u2/u4,u5/u3,1'/2',2'/3',2'/4',1'/5',5'/6'}
\draw (\from) -- (\to);

\end{tikzpicture}
\end{center}
\par\end{centering}
\caption{\label{f-treeav}The increasing binary trees $\Theta(\pi)$ and $\Theta(\tau)$
for the $213$-avoiding permutation $\pi=645132$ (left) and the $312$-avoiding
permutation $\tau=324165$ (right), along with their corresponding
unlabeled binary tree $T$ (middle).}
\end{figure}
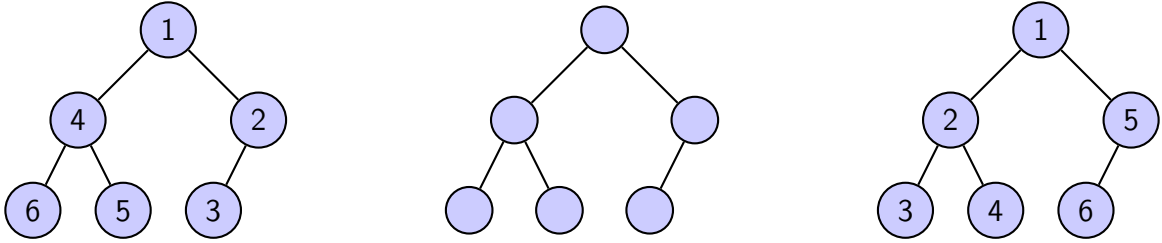

The next proposition allows us to determine the last letter of a $213$-
or $312$-avoiding permutation in $\mathfrak{S}_{n}$ from its tree.
The result for $312$-avoiding permutations $\pi$ involves the number
of nodes in a certain subtree of $\Theta(\pi)$ which we shall now
define. Given a nonempty binary tree $T$, the \textit{terminus} of
$T$ is the bottommost node of the right branch of $T$, and the \textit{undergrowth}
of $T$ is the left subtree of its terminus. For example, the undergrowth
of the tree in Figure \ref{f-treebij} is the empty tree, whereas
the undergrowth of the tree in Figure \ref{f-treeav} consists of
a single node, labeled $3$ in $\Theta(\pi)$ and $6$ in $\Theta(\tau)$.
\begin{prop}
\label{p-213-312-last}Let $\pi\in\mathfrak{S}_{n}$.
\begin{enumerate}
\item [\normalfont{(a)}] If $\pi$ is $213$-avoiding, then $\last(\pi)=\rbrch(\Theta(\pi))$.
\item [\normalfont{(b)}] If $\pi$ is $312$-avoiding, then $\last(\pi)=n-\left|U\right|$,
where $U$ is the undergrowth of $\Theta(\pi)$.
\end{enumerate}
\end{prop}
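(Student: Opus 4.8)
The plan is to handle the two parts separately, each by induction on $n$ using the recursive decomposition $\pi = \alpha y \beta$ where $y = \min S$, which corresponds to the tree $\Theta(\pi)$ having root $y$ with left subtree $\Theta(\alpha)$ and right subtree $\Theta(\beta)$. Throughout I would use that the last letter of $\pi$ is read last in an in-order traversal, so $\last(\pi) = \last(\beta)$ when $\beta$ is nonempty, and $\last(\pi) = y = \min S$ when $\beta = \varepsilon$; correspondingly, the right branch of $\Theta(\pi)$ is $y$ followed by the right branch of $\Theta(\beta)$, and the terminus and undergrowth of $\Theta(\pi)$ coincide with those of $\Theta(\beta)$ whenever $\beta$ is nonempty.

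For part (a), suppose $\pi$ is $213$-avoiding. The key observation is that $\alpha$ must consist of letters all larger than every letter of $\beta$: if some letter of $\alpha$ were smaller than some letter of $\beta$, then together with $y$ (which is smaller than both and sits between them) we would get a $213$ pattern — wait, more carefully, a letter $a$ in $\alpha$ with $a$ not larger than some $b$ in $\beta$ would give, since $y < a$ and $y$ precedes both, the pattern $a, y, b$ which standardizes to... I would instead argue directly: in a $213$-avoiding permutation with $\min S = y$ at position after $\alpha$, every letter of $\alpha$ exceeds every letter of $\beta$ (otherwise $\alpha$ contributes the ``$2$'', $\beta$ the ``$1$'' is impossible since $y$ is the min; rather the relevant pattern uses $y$ as the ``$1$''). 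Granting this, $\beta$ is $213$-avoiding on the smallest $|\beta|$ letters of $S \setminus \{y\}$, so $\std(\beta)$ is $213$-avoiding and $\last(\beta) = \rbrch(\Theta(\beta))$ by induction, giving $\last(\pi) = \last(\beta)$ directly (no shift, since $\beta$ occupies an initial segment of values above... actually $\beta$ uses values $y+1, \dots, y+|\beta|$ only if... hmm). The cleaner route: reduce to standardized permutations, so $S = [n]$; then $\alpha$ uses the top $|\alpha|$ values and $\beta$ uses $\{2, \dots, |\beta|+1\}$, so $\last(\pi) = \last(\beta) = \rbrch(\Theta(\beta)) + $ (possible shift). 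I would sort out the exact bookkeeping here, but the structure is: $\rbrch(\Theta(\pi)) = 1 + \rbrch(\Theta(\beta))$ and the value-shift between $\beta$ and its standardization is exactly $1$, matching. When $\beta = \varepsilon$ both sides equal $1 = \last(\pi) = \rbrch(\Theta(\pi))$.

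For part (b), suppose $\pi$ is $312$-avoiding. Here the dual structural fact holds: every letter of $\beta$ exceeds every letter of $\alpha$ except that $\alpha = \varepsilon$ is forced — no: in a $312$-avoider with the minimum appearing after $\alpha$, we need $\alpha$ itself to avoid $312$ and moreover $\alpha$ must be increasing relative to... The right claim is that every letter of $\alpha$ is smaller than every letter of $\beta$ (else $a \in \alpha$, $y$, $b \in \beta$ with $b < a$ gives the pattern $a\,y\,b \sim 312$, wait $y$ is smallest so $a\,y\,b$ standardizes to $3\,1\,2$ exactly when $y<b<a$ — yes that is a $312$). So $\beta$ uses the top values and $\alpha$ uses values $2, \dots, |\alpha|+1$. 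Then $\last(\pi) = \last(\beta)$, and by induction $\last(\beta) = |\beta| - |U_\beta|$ where $U_\beta$ is the undergrowth of $\Theta(\beta)$; since the undergrowth of $\Theta(\pi)$ equals $U_\beta$ (the terminus lies in the right subtree) and since $\beta$'s values are shifted up by $n - |\beta|$, we get $\last(\pi) = (|\beta| - |U_\beta|) + (n - |\beta|) = n - |U|$. The base case $\beta = \varepsilon$: then $\Theta(\pi)$'s terminus is the root $y = 1$, its undergrowth is $\Theta(\alpha)$ of size $n-1$, and $\last(\pi) = 1 = n - (n-1)$, as required.

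The main obstacle I anticipate is getting the value-shift bookkeeping exactly right — confirming in each case that the ``$+1$'' (part (a)) or ``$+(n-|\beta|)$'' (part (b)) shift between the relevant suffix subword and its standardization precisely accounts for the difference between the recursive formula on $\Theta(\beta)$ and the claimed formula on $\Theta(\pi)$. Establishing the structural lemmas (all of $\alpha$ above all of $\beta$ for $213$-avoidance; all of $\alpha$ below all of $\beta$ for $312$-avoidance) is routine pattern-checking, but everything downstream depends on stating them correctly, so I would prove those carefully first.
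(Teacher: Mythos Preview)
Your inductive approach is sound and the bookkeeping does work out exactly as you anticipate (the structural lemmas are correct: for $213$-avoidance, any $a\in\alpha$ and $b\in\beta$ with $a<b$ would give the subsequence $a,y,b$ with $y<a<b$, a $213$; dually for $312$-avoidance). However, the paper takes a different and shorter route that avoids induction entirely. It uses the traversal characterizations of $213$- and $312$-avoidance directly: for (a), the terminus of $\Theta(\pi)$ is the last node visited in in-order (hence equals $\last(\pi)$), and in a \emph{post-order} traversal the only nodes visited after the terminus are the remaining $\rbrch(\Theta(\pi))-1$ nodes on the right branch; since post-order yields $n\cdots 21$ for $213$-avoiding $\pi$, the terminus is labeled $\rbrch(\Theta(\pi))$. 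For (b), the same terminus argument combined with the fact that pre-order yields $12\cdots n$ for $312$-avoiding $\pi$, and that the nodes visited after the terminus in pre-order are exactly those in the undergrowth, gives $\last(\pi)=n-|U|$ immediately. Your argument is more elementary in that it only needs the value-split lemma rather than the full traversal characterization, but the paper's approach completely sidesteps the value-shift accounting that you correctly flag as the main source of friction.
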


\begin{proof}
Suppose that $\pi\in\mathfrak{S}_{n}$ is $213$-avoiding, and let
$k=\rbrch(\Theta(\pi))$. The last letter of $\pi$ is the last node
visited in an in-order traversal of $\Theta(\pi)$, which is the terminus
of $\Theta(\pi)$; call this node $x$. On the other hand, in a post-order
traversal of $\Theta(\pi)$, the only nodes visited after $x$ are
the other nodes on the right branch; in other words, $x$ is the $k$-to-last
node visited. Recall that a post-order traversal of $\Theta(\pi)$
necessarily yields the decreasing permutation $n\cdots21$. Therefore,
we have $x=k$, which proves (a).

For (b), suppose that $\pi\in\mathfrak{S}_{n}$ is $312$-avoiding,
and let $k=\left|U\right|$. As above, let $x$ be the terminus of
$\Theta(\pi)$, which is the last letter of $\pi$. In a pre-order
traversal of $\Theta(\pi)$, the only nodes visited after $x$ are
those in the undergrowth $U$, so $x$ is the ($n-k$)th node visited.
Because a pre-order traversal of $\Theta(\pi)$ yields $12\cdots n$,
it follows that $x=n-k$, and we are done.
\end{proof}

\subsection{Jacobi trees}

All of what has been discussed so far hold for permutations in general.
Let us now describe the trees that correspond specifically to Jacobi
permutations. 

We say that an increasing binary tree $T$ on $S$ is a \textit{labeled}
\textit{Jacobi tree} if $T=\Theta(\pi)$ for some Jacobi permutation
$\pi$ of $S$. An \textit{unlabeled Jacobi tree} is an unlabeled
binary tree obtained by taking a labeled Jacobi tree and removing
the labels. We will usually be working with unlabeled Jacobi trees
(as opposed to labeled ones), so we simply call these \textit{Jacobi
trees} from this point forward. Let $\mathcal{J}_{n}$ denote the
set of Jacobi trees on $n$ nodes. From the discussion earlier in
this section, applying $\Theta$ and removing the labels gives us
a bijection between $\mathfrak{J}_{n}(213)$ and $\mathcal{J}_{n}$,
and between $\mathfrak{J}_{n}(312)$ and $\mathcal{J}_{n}$.

Observe that Jacobi trees are precisely unlabeled binary trees in
which every right subtree is of even size. Equivalently, we have the
following recursive decomposition for Jacobi trees: every nonempty
Jacobi tree consists of a root whose left and right subtrees are both
Jacobi trees and the right subtree is of even size.

The enumeration of Jacobi trees is closely related to that of (\textit{unlabeled})
\textit{ternary trees}, which are defined in the same way as (unlabeled)
binary trees, except that each node has three subtrees, distinguished
as the \textit{left subtree}, \textit{middle subtree}, and \textit{right
subtree}. It is well known \cite[A001764 and A006013]{oeis} that,
for all $n\geq0$, there are $\frac{1}{2n+1}{3n \choose n}$ ternary
trees on $n$ nodes and $\frac{1}{2n+1}{3n+1 \choose n+1}$ pairs
of ternary trees with $n$ nodes in total. As shown below, these numbers
also enumerate Jacobi trees of even and odd size, respectively.
\begin{thm}
\label{t-jactree}For all $n\geq0$, we have
\[
\left|\mathcal{J}_{2n}\right|=\frac{1}{2n+1}{3n \choose n}\quad\text{and}\quad\left|\mathcal{J}_{2n+1}\right|=\frac{1}{2n+1}{3n+1 \choose n+1}.
\]
\end{thm}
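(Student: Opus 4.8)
The plan is to exploit the recursive decomposition of Jacobi trees established just before the theorem: every nonempty Jacobi tree is a root with a left subtree that is an arbitrary Jacobi tree and a right subtree that is a Jacobi tree of \emph{even} size. Introduce the generating functions $F_{\e}(x)=\sum_{n\geq 0}\left|\mathcal{J}_{2n}\right|x^{n}$ and $F_{\o}(x)=\sum_{n\geq 0}\left|\mathcal{J}_{2n+1}\right|x^{n}$, tracking size by pairs. A nonempty Jacobi tree of even size $2n$ arises either from a left subtree of odd size and a right subtree of (positive) even size, or from a left subtree of even size and a right subtree of even size where the left contributes the ``extra'' parity; sorting the cases by the parity of the left subtree and the (always even) right subtree, I expect to land on the system
\begin{align*}
F_{\e} &= 1 + x\,F_{\o}\,F_{\e} + x\,F_{\e}\,F_{\e},\\
F_{\o} &= x\,F_{\e}\,F_{\o} + F_{\o}\,\bigl(F_{\e}-1\bigr)\cdot(\text{something}),
\end{align*}
which on careful bookkeeping should collapse — writing $G(x)=F_{\e}(x)+x\,F_{\o}(x)^{2}$-type combinations — to a single algebraic equation. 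The cleanest route is probably to package both series into one: the total generating function $\mathcal{J}(x)=\sum_{n\geq0}\left|\mathcal{J}_{n}\right|x^{n}=F_{\e}(x^{2})+x\,F_{\o}(x^{2})$ satisfies, from the decomposition ``root $\times$ (any Jacobi tree) $\times$ (even-size Jacobi tree),'' the functional equation $\mathcal{J}(x)=1+x\,\mathcal{J}(x)\,F_{\e}(x^{2})$, i.e. $\mathcal{J}=1+x\,\mathcal{J}\cdot\tfrac12(\mathcal{J}(x)+\mathcal{J}(-x))$.

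From here I would pass to the two half-series. Setting $A=F_{\e}$, $B=F_{\o}$ and grinding the even/odd parts of $\mathcal{J}=1+x\mathcal{J}\cdot A$ gives $A=1+x A^{2}+x^{2}A B^{2}$ wait—more carefully $A = 1 + x(A^{2}+xB^{2}\cdot x?)$; in any case the system reduces to $A=1+xA^{2}+x^{2}B^{2}A$? I would instead recognize the target: ternary trees $T(x)=\sum\frac1{2n+1}\binom{3n}{n}x^{n}$ satisfy $T=1+xT^{3}$, and pairs of ternary trees $P(x)=\sum\frac1{2n+1}\binom{3n+1}{n+1}x^{n}$ satisfy $P=T^{2}$ (equivalently $P=1+xT^{2}P$, the ``two trees sharing structure'' identity, well-documented in \cite{oeis}). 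So the goal is to show $F_{\e}=T$ and $F_{\o}=P=T^{2}$. The natural bijective strategy: send a Jacobi tree to a ternary tree (when the size is even) by reading off, at each node, the left subtree, a ``split'' of the even-size right subtree into its first principal even chunk and the rest, thereby producing three ternary subtrees. Concretely, a right subtree $R$ of even size is itself a Jacobi tree, so $R$ has a root with left child $R_{L}$ (Jacobi, any size) and right child $R_{R}$ (Jacobi, even size); this built-in ternary branching (current-node-left, $R_{L}$, $R_{R}$) is exactly what converts an even Jacobi tree into a ternary tree, and an odd one into an ordered pair.

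I would therefore prove the theorem by establishing, via this decomposition, the generating-function identities $F_{\e}=1+xF_{\e}F_{\e}+x F_{\e}F_{\o}$ and an analogous one for $F_{\o}$ — carefully tracking that the right subtree always has even size so it contributes an $x$ per pair while the left subtree contributes nothing beyond its own series — then eliminating to get $F_{\e}(1-xF_{\e})=1+xF_{\e}F_{\o}$ and a companion relation, and finally checking that $(T,T^{2})$ solves this system, with uniqueness of the formal power series solution (given the constant terms $F_{\e}(0)=1$, $F_{\o}(0)=1$) forcing $F_{\e}=T$, $F_{\o}=T^{2}$. Extracting coefficients via the known closed forms for $T$ and $T^{2}$ (or a Lagrange inversion on $T=1+xT^{3}$) then yields the stated binomial expressions. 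The main obstacle I anticipate is getting the parity bookkeeping in the decomposition exactly right — in particular making sure the ``even right subtree, arbitrary left subtree'' rule is translated into the $F_{\e}$/$F_{\o}$ system without an off-by-one in the power of $x$, and confirming that the resulting algebraic system genuinely has $T=1+xT^{3}$ and $P=T^{2}$ as its solution rather than some Galois conjugate; a clean way to sidestep part of this is to give the explicit bijection to (pairs of) ternary trees sketched above and verify it is size-preserving and invertible, which also makes the even/odd split transparent.
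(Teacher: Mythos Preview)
Your overall strategy---reduce to the ternary-tree functional equation $T=1+xT^{3}$ and identify $F_{\o}=T^{2}$---is sound and would give a valid (generating-function) proof, different from the paper's direct bijection. But as written the proposal has two concrete errors that need fixing.

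First, your functional equations are wrong. With $F_{\e}(x)=\sum_{n}\lvert\mathcal{J}_{2n}\rvert x^{n}$ and $F_{\o}(x)=\sum_{n}\lvert\mathcal{J}_{2n+1}\rvert x^{n}$, a nonempty even-size Jacobi tree has an \emph{odd} left subtree and an even right subtree (since $2n=1+\text{odd}+\text{even}$), while an odd-size Jacobi tree has two even subtrees. Thus the correct system is simply
\[
F_{\e}=1+xF_{\o}F_{\e},\qquad F_{\o}=F_{\e}^{2},
\]
which immediately yields $F_{\e}=1+xF_{\e}^{3}$, i.e.\ $F_{\e}=T$ and $F_{\o}=T^{2}$. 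Your displayed system $F_{\e}=1+xF_{\o}F_{\e}+xF_{\e}F_{\e}$ contains a spurious term (an even tree cannot have even left subtree), and the subsequent equations with ``wait'' and question marks never recover the right relations.

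Second, your bijection sketch decomposes at the wrong pair of nodes. You propose splitting an even tree $T$ at its root and the root of its right subtree $R$, producing the triple $(\text{left of root},\,R_{L},\,R_{R})$; but the left subtree of the root is \emph{odd}, and $R_{L}$ is odd as well, so this triple does not recurse via the same map. The paper's bijection instead collapses the root $v$ \emph{and its left child} $w$ into one ternary node, taking $(L,R,V)=(\text{left of }w,\,\text{right of }w,\,\text{right of }v)$; all three are even (since the subtree at $w$ has odd size and $R$ is even, forcing $L$ even), so the recursion is clean. For odd $T$, the two even subtrees at the root map to a pair $(\lambda(L),\lambda(R))$ of ternary trees. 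That is the missing idea in your bijective attempt.
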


Of course, Theorem \ref{t-jactree} directly translates to an enumeration
for the Jacobi avoidance classes $\mathfrak{J}_{n}(213)$ and $\mathfrak{J}_{n}(312)$;
this will be stated explicitly in Theorem \ref{t-213-312}.
\begin{proof}
To prove the result for even-sized Jacobi trees, we give a recursive
bijection $\lambda$ between $\mathcal{J}_{2n}$ and ternary trees
on $n$ nodes. As the base case, $\lambda$ sends the empty tree to
the empty tree. Now, let $T$ be a nonempty Jacobi tree of size $2n$.
The root $v$ of $T$ must have a left child $w$, or else the right
subtree of $T$ would have odd size. Let $L$ denote the left subtree
of $w$, let $R$ denote the right subtree of $w$, and let $V$ be
the right subtree of $v$. Note that $L$, $R$, and $V$ all have
even size; after all, $R$ and $V$ are right subtrees, and $T$ would
have an odd number of nodes if $\left|L\right|$ were odd. Thus, we
may define $\lambda(T)$ to be the ternary tree whose root has left
subtree $\lambda(L)$, middle subtree $\lambda(R)$, and right subtree
$\lambda(V)$; see Figure \ref{f-terbij} for an illustration. Since
$\lambda(L)$, $\lambda(R)$, and $\lambda(V)$ have half as many
nodes as $L$, $R$, and $V$, it follows that $\lambda(T)$ is a
ternary tree on $n$ nodes.
\begin{figure}
\noindent \begin{centering}
\begin{center}
\begin{tikzpicture}[scale=0.4,auto,   thick,branch/.style={circle,fill=blue!20,draw,font=\sffamily,minimum size=1em},subtree/.style={circle,draw,dashed,font=\sffamily,minimum size=2em}]

\node (arrow) at (16.5,8.2) {\large $\lambda$};
\node (arrow) at (16.5,7) {\huge $\longmapsto$};

\node[branch] (1) at (10,10) {};
\node[branch] (2) at (8,8) {};
\node[subtree] (3) at (6,6) {$L$};
\node[subtree] (4) at (10,6) {$R$};
\node[subtree] (5) at (12,8) {$V$};

\node[branch] (6) at (25,10) {};
\node[subtree] (7) at (21,6.5) {\footnotesize $\lambda(L)$};
\node[subtree] (8) at (25,6.5) {\footnotesize $\lambda(R)$};
\node[subtree] (9) at (29,6.5) {\footnotesize $\lambda(V)$};

\foreach \from/\to in {1/2}
\draw (\from) -- (\to);

\foreach \from/\to in {2/3, 2/4, 1/5, 6/7, 6/8, 6/9}
\draw[dashed] (\from) -- (\to);

\end{tikzpicture}
\end{center}
\par\end{centering}
\caption{\label{f-terbij}The bijection $\lambda$.}
\end{figure}
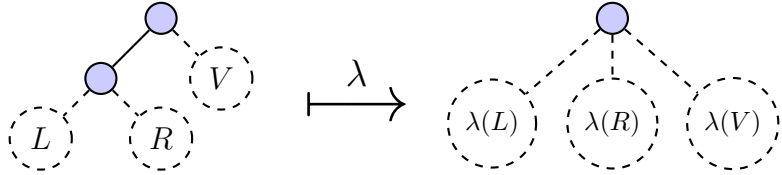

Now, let $T\in\mathcal{J}_{2n+1}$, and let $L$ and $R$ be the left
and right subtrees of its root. By similar reasoning as above, both
$L$ and $R$ must have even size, and $\lambda(L)$ and $\lambda(R)$
have $n$ nodes in total. Then $T\mapsto(\lambda(L),\lambda(R))$
is a bijection between $\mathcal{J}_{2n+1}$ and pairs of ternary
trees with $n$ nodes in total; this yields the enumeration for odd-sized
Jacobi trees.
\end{proof}
Later, we will make use of another recursive decomposition for Jacobi
trees that we call the \textit{undergrowth decomposition}: every nonempty
Jacobi tree $T$ either consists of a root with a left subtree $U$
and an empty right subtree, or can be obtained by taking a nonempty
Jacobi tree $T^{\prime}$, attaching a new node as the right child
of the terminus of $T^{\prime}$, and attaching a Jacobi tree $U$
of odd size\footnote{Note that $U$ must have odd size, or else the terminus of $T^{\prime}$
would have a right subtree of odd size in $T$.} as the left subtree of this new terminus. See Figure \ref{f-rbranchdecomp}
for an illustration of this decomposition. We call this the ``undergrowth
decomposition'' because the tree $U$ in the decomposition is precisely
the undergrowth of $T$.
\begin{figure}
\noindent \begin{centering}
\begin{center}
\begin{tikzpicture}[scale=0.4,auto,   thick,branch/.style={circle,fill=blue!20,draw,font=\sffamily,minimum size=1em},subtree/.style={circle,draw,dashed,font=\sffamily,minimum size=2em}]

\begin{scope}[shift={(2.5,-14.5)},rotate=-45]
\draw[dashed,red!50,fill=red!10] (0,30) ellipse (6.5 and 4.5);
\end{scope}

\node (T') at (27,7.2) {$T'$};

\node at (4,3) {\large $T = $};
\node at (16,3) {\large or};

\node[branch] (1) at (11.5,4) {};
\node[subtree] (1') at (9.5,2) {$U$};

\node[branch] (6) at (22,10) {};
\node[branch] (7) at (24,8) {};
\node[branch] (8) at (27,5) {};
\node[branch] (9) at (31,1) {};
\node[subtree] (6') at (20,8) {};
\node[subtree] (7') at (22,6) {};
\node[subtree] (8') at (25,3) {};
\node[subtree] (9') at (29,-1) {$U$};
\node at (26.25,-1) {\footnotesize (odd)};

\foreach \from/\to in {6/7}
\draw (\from) -- (\to);

\foreach \from/\to in {1/1',7/8,8/9,6/6',7/7',8/8',9/9'}
\draw[dashed] (\from) -- (\to);

\end{tikzpicture}
\end{center}
\par\end{centering}
\caption{\label{f-rbranchdecomp}The undergrowth decomposition of a nonempty
Jacobi tree $T$.}
\end{figure}
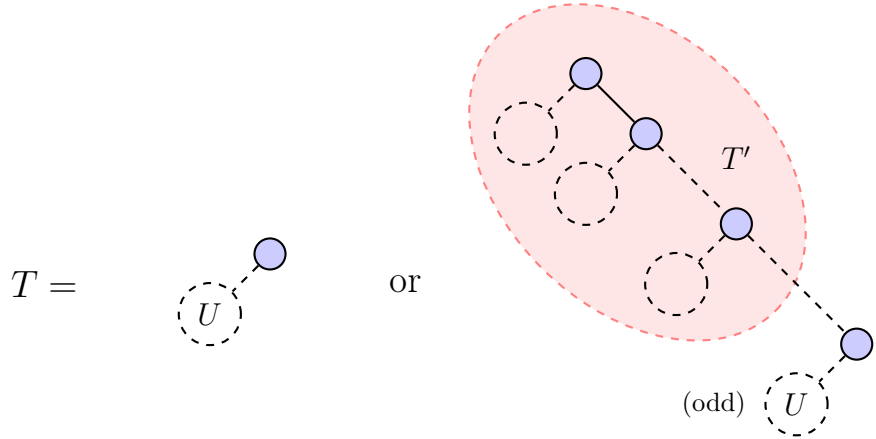

Recall that the proof of Theorem \ref{t-lrmin} utilized a unique
factorization of Jacobi permutations into ``irreducibles'', whose
first letters were the left-to-right minima. There is an analogous
factorization where the last letters of the irreducibles are right-to-left
minima, and all irreducibles other than the first are of even length;
this factorization corresponds to the undergrowth decomposition defined
above.

\subsection{Dual Jacobi trees}

While Jacobi trees are useful for studying $213$- and $312$-avoiding
Jacobi permutations, we will need a different model for Jacobi permutations
avoiding $231$. To that end, let $\tilde{\Theta}$ be the bijection
from permutations on $S$ to decreasing binary trees on $S$, defined
in the same way as $\Theta$ except that we take $y=\max S$ (as opposed
to $y=\min S$). As with $\Theta$, an in-order traversal of $\tilde{\Theta}(\pi)$
recovers the permutation $\pi$. It is readily checked that a permutation
$\pi$ of $S$ is $231$-avoiding if and only if a post-order traversal
of $\tilde{\Theta}(\pi)$ yields the entries of $S$ in increasing
order, which implies that 231-avoiding permutations of $S$ are in
bijection with unlabeled binary trees of size $\left|S\right|$. See
Figure \ref{f-treeav2} for an example.
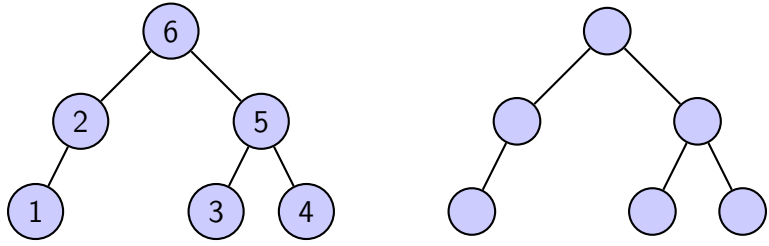
\begin{figure}
\noindent \begin{centering}
\begin{center}
\begin{tikzpicture}[scale=0.4,auto,   thick,branch/.style={circle,fill=blue!20,draw,font=\sffamily,minimum size=1.5em},subtree/.style={circle,draw,dashed,font=\sffamily,minimum size=2em}]

\node[branch] (1) at (10,7) {6};
\node[branch] (5) at (7,4) {2};
\node[branch] (6) at (5.5,1) {1};
\node[branch] (2) at (13,4) {5};
\node[branch] (4) at (11.5,1) {3};
\node[branch] (3) at (14.5,1) {4};

\node[branch] (u1) at (24.5,7) {};
\node[branch] (u5) at (21.5,4) {};
\node[branch] (u6) at (20,1) {};
\node[branch] (u2) at (27.5,4) {};
\node[branch] (u4) at (26,1) {};
\node[branch] (u3) at (29,1) {};

\foreach \from/\to in {1/5,5/6,1/2,2/4,2/3,u1/u5,u5/u6,u1/u2,u2/u4,u2/u3}
\draw (\from) -- (\to);

\end{tikzpicture}
\end{center}
\par\end{centering}
\caption{\label{f-treeav2}The decreasing binary tree $\tilde{\Theta}(\pi)$
for the $231$-avoiding permutation $\pi=126354$, along with the
corresponding unlabeled binary tree $T$.}
\end{figure}

The next proposition is proven similarly to Proposition \ref{p-213-312-last};
we will not give the details.
\begin{prop}
\label{p-231-last}If $\pi\in\mathfrak{S}_{n}(231)$, then $\last(\pi)=n-\rbrch(\tilde{\Theta}(\pi))+1$.
\end{prop}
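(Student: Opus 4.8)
The plan is to transcribe the proof of Proposition~\ref{p-213-312-last}(a), replacing $\Theta$ with $\tilde{\Theta}$ and the decreasing post-order reading with the increasing one. First I would note that $\last(\pi)=\pi_{n}$ is the last node visited in an in-order traversal of $\tilde{\Theta}(\pi)$, and that this node is precisely the terminus $x$ of $\tilde{\Theta}(\pi)$: an in-order traversal ends by descending the right branch as far as it goes, terminating at its bottommost node.

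Next, set $k=\rbrch(\tilde{\Theta}(\pi))$ and consider a post-order traversal of $\tilde{\Theta}(\pi)$. The key point is that, once the terminus $x$ has been visited, the remaining nodes visited are exactly the other $k-1$ nodes on the right branch, encountered from the bottom up and ending with the root; consequently $x$ is the $k$-th node from the end of the post-order reading. This is the only place requiring a little care: one checks that after $x$ has been visited the traversal climbs the right branch one node at a time with nothing in between, since for each right-branch node the left subtree was traversed earlier and its right subtree consists precisely of the right-branch nodes already visited below it.

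Finally, because $\pi$ avoids $231$, a post-order traversal of $\tilde{\Theta}(\pi)$ lists the elements of $[n]$ in increasing order $1,2,\dots,n$. Hence the $k$-th node from the end of this reading is $n-k+1$, so $x=n-k+1$, which is exactly $\last(\pi)=n-\rbrch(\tilde{\Theta}(\pi))+1$. I do not expect a genuine obstacle: the argument is a direct analogue of the $213$-avoiding case, and the only subtlety is the bookkeeping in comparing the in-order and post-order visit orders along the right branch.
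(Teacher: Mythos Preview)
Your proposal is correct and is exactly the adaptation of the proof of Proposition~\ref{p-213-312-last}(a) that the paper has in mind; the paper itself omits the details and simply remarks that the proof is similar. Your identification of the terminus as the last in-order node, the fact that the remaining post-order nodes after the terminus are precisely the other $k-1$ right-branch nodes, and the use of the increasing post-order reading for $231$-avoiding $\pi$ are all the right ingredients.
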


Let us now restrict to Jacobi permutations. We say that a decreasing
binary tree $T$ on $S$ is a \textit{labeled dual} \textit{Jacobi
tree} if $T=\tilde{\Theta}(\pi)$ for some $\pi\in\mathfrak{J}_{S}$.
An unlabeled binary tree is called an (\textit{unlabeled})\textit{
dual Jacobi tree} if it can be obtained by taking a labeled dual Jacobi
tree and removing the labels. Let $\mathcal{\tilde{J}}_{n}$ denote
the set of dual Jacobi trees on $n$ nodes. Then applying $\tilde{\Theta}$
and removing the labels gives a bijection between $\mathfrak{J}_{n}(231)$
and $\mathcal{\tilde{J}}_{n}$.

Later, we will show that $\mathfrak{J}_{n}(312)$ and $\mathfrak{J}_{n}(231)$
are in bijection, which induces a bijection between $\mathcal{J}_{n}$
and $\mathcal{\tilde{J}}_{n}$. This, along with Theorem \ref{t-jactree},
will imply the following. 
\begin{thm}
\label{t-dualjactree}For all $n\geq0$, we have
\[
\vert\mathcal{\tilde{J}}_{2n}\vert=\frac{1}{2n+1}{3n \choose n}\quad\text{and}\quad\vert\mathcal{\tilde{J}}_{2n+1}\vert=\frac{1}{2n+1}{3n+1 \choose n+1}.
\]
\end{thm}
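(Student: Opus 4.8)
The plan is to reduce the count of dual Jacobi trees to that of Jacobi trees, which Theorem~\ref{t-jactree} has already settled. Recall from the discussion earlier in this section that applying $\tilde{\Theta}$ and deleting labels is a bijection from $\mathfrak{J}_{n}(231)$ onto $\mathcal{\tilde{J}}_{n}$, while applying $\Theta$ and deleting labels is a bijection from $\mathfrak{J}_{n}(312)$ onto $\mathcal{J}_{n}$; hence $\vert\mathcal{\tilde{J}}_{n}\vert=\vert\mathfrak{J}_{n}(231)\vert$ and $\vert\mathcal{J}_{n}\vert=\vert\mathfrak{J}_{n}(312)\vert$. So it suffices to produce a bijection between $\mathfrak{J}_{n}(231)$ and $\mathfrak{J}_{n}(312)$: composing it with the two tree bijections above then yields a bijection $\mathcal{\tilde{J}}_{n}\to\mathcal{J}_{n}$, and the two formulas of Theorem~\ref{t-jactree} transfer verbatim.

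For the bijection between the two avoidance classes I would use the inverse map $\pi\mapsto\pi^{-1}$. Since $231^{-1}=312$, a permutation contains $231$ if and only if its inverse contains $312$, so $\pi\mapsto\pi^{-1}$ is already a bijection from $\mathfrak{S}_{n}(231)$ onto $\mathfrak{S}_{n}(312)$; the only thing left to verify is that it carries Jacobi permutations to Jacobi permutations within these classes. This is precisely the assertion (noted in the introduction) that every permutation in $\mathfrak{J}_{n}(231)$ and every permutation in $\mathfrak{J}_{n}(312)$ is \emph{doubly Jacobi}, i.e., both it and its inverse are Jacobi. I would establish this using the tree models of Sections~\ref{s-213-312} and \ref{s-231}: describe the shape of $\Theta(\pi)$ and of $\tilde{\Theta}(\pi)$ for $\pi$ in these classes, read $\pi^{-1}$ off the tree, and check the evenness of each subword $\rho_{\pi^{-1}}(x)$ directly, or equivalently via the recursive ($\min$-root) characterization of Jacobi permutations. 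Once $\pi\mapsto\pi^{-1}$ is seen to restrict to a bijection $\mathfrak{J}_{n}(312)\to\mathfrak{J}_{n}(231)$, we obtain $\vert\mathcal{\tilde{J}}_{n}\vert=\vert\mathcal{J}_{n}\vert$.

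The theorem then follows at once: by Theorem~\ref{t-jactree} we have $\vert\mathcal{J}_{2n}\vert=\frac{1}{2n+1}\binom{3n}{n}$ and $\vert\mathcal{J}_{2n+1}\vert=\frac{1}{2n+1}\binom{3n+1}{n+1}$, so the same two formulas hold for $\vert\mathcal{\tilde{J}}_{2n}\vert$ and $\vert\mathcal{\tilde{J}}_{2n+1}\vert$. I expect the main obstacle to be the doubly-Jacobi claim: inversion manifestly fails to preserve the Jacobi property on all of $\mathfrak{S}_{n}$, so the argument must genuinely exploit $231$- or $312$-avoidance. An alternative route, bypassing inversion entirely, would be to construct a direct bijection $\mathcal{J}_{n}\to\mathcal{\tilde{J}}_{n}$ by matching recursive decompositions of the two tree families---for instance pairing the undergrowth decomposition of Jacobi trees against an analogous decomposition of dual Jacobi trees; here the delicate point is that the defining relation ``every right subtree has even size'' translates awkwardly under the $\max$-rooted tree $\tilde{\Theta}$, so pinning down the correct recursive structure of $\mathcal{\tilde{J}}_{n}$ is itself the crux.
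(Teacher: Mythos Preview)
Your proposal is correct and matches the paper's approach: the paper likewise deduces Theorem~\ref{t-dualjactree} from Theorem~\ref{t-jactree} by showing that $\pi\mapsto\pi^{-1}$ restricts to a bijection $\mathfrak{J}_{n}(312)\to\mathfrak{J}_{n}(231)$ (this is Theorem~\ref{t-312-231-inv}), and then composing with the tree bijections. The paper's proof of the doubly-Jacobi step is carried out via induction on the $\min$-root decomposition together with the direct/skew-sum identities for inverses (Lemmas~\ref{l-invconcat1}--\ref{l-invconcat2}) and the basic Jacobi lemmas, rather than by tree manipulation---i.e., precisely the second of your two suggested routes.
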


Our goal for the remainder of this section is to develop a recursive
decomposition for dual Jacobi trees which we shall use when studying
$231$-avoiding Jacobi permutations. We begin with two lemmas.
\begin{lem}
\label{l-231dualrho}Let $\pi\in\mathfrak{S}_{S}(231)$, let $x\in S$,
and let $T=\tilde{\Theta}(\pi)$.
\begin{enumerate}
\item [\normalfont{(a)}] If $x$ has a right child in $T$, then $\rho_{\pi}(x)$
is empty.
\item [\normalfont{(b)}] If $x$ does not have a right child in $\tilde{\Theta}(\pi)$,
then $x$ is a right-to-left minimum of $\pi$, so $\rho_{\pi}(x)$
consists of all subsequent nodes visited in an in-order traversal
of $\tilde{\Theta}(\pi)$.
\end{enumerate}
\end{lem}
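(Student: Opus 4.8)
The plan is to argue both parts directly from the recursive structure of the decreasing binary tree $T = \tilde{\Theta}(\pi)$ and the fact that an in-order traversal of $T$ recovers $\pi$. Recall that in $\tilde{\Theta}$ each node is larger than both of its children, so along any path descending from the root the labels strictly decrease; in particular, the right child of a node $x$ (if it exists) is smaller than $x$, and every node in the right subtree of $x$ is smaller than $x$. For part (a), suppose $x$ has a right child $z$ in $T$. In an in-order traversal, the node visited immediately after $x$ is the leftmost node of the right subtree of $x$ — which is a descendant of $z$ (or $z$ itself) — and hence has a label strictly smaller than $x$. Therefore the letter immediately following $x$ in $\pi$ is less than $x$, so by definition $\rho_\pi(x) = \varepsilon$.

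For part (b), suppose $x$ has no right child in $T$. I claim $x$ is a right-to-left minimum of $\pi$; equivalently, every node visited after $x$ in the in-order traversal has a label larger than $x$. The nodes visited after $x$ in an in-order traversal, when $x$ has no right subtree, are exactly the proper ancestors $y$ of $x$ for which $x$ lies in the left subtree of $y$, together with the entire right subtree of each such $y$. Each such ancestor $y$ satisfies $y > x$ since $T$ is decreasing and $y$ is an ancestor of $x$; and every node in the right subtree of such a $y$ is smaller than $y$ but — here is the point — must still be larger than $x$, for otherwise we would produce a forbidden $231$ pattern. Concretely, let $y$ be the lowest such ancestor (so $x$ is in $y$'s left subtree) and let $w$ be the right child of $y$; then in $\pi$ we read $\dots w \dots x \dots$ is false — rather $x$ precedes $w$ — so the relevant triple is $(y, x, w)$ appearing in that left-to-right order in $\pi$ with $y > x$, and if $w < x$ we would have $y > w$ and $w < x < y$, i.e.\ the subword $y\,x\,w$ standardizes to $231$, contradicting $\pi \in \mathfrak{S}_S(231)$. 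Iterating this observation up the chain of ancestors (using transitivity of the pattern argument: once all of $y$'s right subtree exceeds $x$, the same reasoning at the next ancestor $y'$ shows $y' > x$ and every node in $y'$'s right subtree exceeds $x$) shows that all nodes visited after $x$ exceed $x$, so $x$ is a right-to-left minimum. Consequently every letter to the right of $x$ in $\pi$ is larger than $x$, so $\rho_\pi(x)$ is the entire suffix following $x$, which is exactly the sequence of remaining nodes in the in-order traversal.

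The only subtle point — and thus the main obstacle — is the $231$-avoidance step in part (b): one must verify carefully that the in-order-successors of a node $x$ lacking a right child really are exhausted by ancestors and their right subtrees, and that $231$-avoidance forces every such node above $x$ rather than merely the immediate right child of the lowest relevant ancestor. This is handled by the inductive climb up the ancestor chain sketched above, each step invoking a single application of the forbidden-pattern condition on a triple of the form (ancestor, $x$, node in the ancestor's right subtree). Part (a) is immediate by contrast, requiring only the decreasing property of $T$. Once part (b) establishes that $x$ is a right-to-left minimum, the stated description of $\rho_\pi(x)$ as ``all subsequent nodes visited in an in-order traversal'' follows with no further work, since $\rho_\pi(x)$ is by definition the maximal run of larger letters immediately following $x$, and here that run is the whole suffix.
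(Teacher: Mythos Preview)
Your argument for part (a) is fine and matches the paper's. In part (b), your approach is sound but the key step is written incorrectly. You say the relevant triple appears in $\pi$ in the order $y,x,w$; that is false. Since $x$ lies in the \emph{left} subtree of the ancestor $y$, in-order traversal visits $x$ \emph{before} $y$, and $w$ (being in the right subtree of $y$) is visited after $y$. So the order in $\pi$ is $x,y,w$. With $w<x<y$, the subword $xyw$ standardizes to $231$, which is the contradiction you want; your written subword $yxw$ with $y>x>w$ would standardize to $321$, which is not forbidden. Once you fix the order, the same one-line check works uniformly for \emph{every} node $w$ in the right subtree of \emph{every} ancestor $y$ with $x$ in its left subtree, so the ``iterating up the chain'' scaffolding is unnecessary: for each such $w$ the triple $x,y,w$ directly witnesses a $231$ if $w<x$.

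By contrast, the paper proves (b) via the contrapositive, using the global fact that for $\pi\in\mathfrak{S}_S(231)$ a post-order traversal of $\tilde{\Theta}(\pi)$ outputs the letters in increasing order. If $x$ is not a right-to-left minimum, pick $y<x$ appearing after $x$ in $\pi$; then $y$ follows $x$ in in-order but precedes $x$ in post-order, and the nodes with that property are exactly those in the right subtree of $x$. This avoids any ancestor-chain bookkeeping. Your (corrected) argument is more elementary in that it uses only a single forbidden pattern directly, while the paper's argument is shorter because it leverages the post-order characterization of $231$-avoidance already set up earlier.
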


\begin{proof}
Suppose that $x$ has a right child in $\tilde{\Theta}(\pi)$. Then,
the letter immediately following $x$ in $\pi$ is a node in the right
subtree of $x$, and this node must be smaller than $x$ because $\tilde{\Theta}(\pi)$
is a decreasing binary tree. Therefore, $\rho_{\pi}(x)$ is empty,
which proves (a).

For a contrapositive proof of (b), suppose that $x$ is not a right-to-left
minimum of $\pi$, so there exists a letter $y<x$ appearing after
$x$ in $\pi$. Due to their relative positions in $\pi$, we see
that $y$ is visited after $x$ in an in-order traversal of $\tilde{\Theta}(\pi)$.
Recall that a post-order traversal of $\tilde{\Theta}(\pi)$ where
$\pi$ is a 231-avoiding permutation returns the letters in increasing
order; since $y<x$, this means that $y$ is visited before $x$ in
a post-order traversal of $\tilde{\Theta}(\pi)$. The nodes visited
after $x$ in an in-order traversal but before $x$ in a post-order
traversal are precisely those in the right subtree of $x$. Thus,
the right subtree of $x$ is nonempty, so $x$ has a right child in
$\tilde{\Theta}(\pi)$.
\end{proof}
\begin{lem}
\label{l-dualug}If $T$ is a dual Jacobi tree, then the undergrowth
of $T$ is empty.
\end{lem}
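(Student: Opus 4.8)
The plan is to argue directly from the permutation side, using the correspondence $T=\tilde\Theta(\pi)$ for some $\pi\in\mathfrak J_S(231)$ together with the characterization of $\rho_\pi$ in terms of the tree structure (Lemma \ref{l-231dualrho}). Recall that the undergrowth of $T$ is the left subtree of its terminus, i.e., the left subtree of the bottommost node on the right branch. Since an in-order traversal of $\tilde\Theta(\pi)$ recovers $\pi$, the terminus of $T$ is exactly the last letter $\pi_n$ of $\pi$. The first step is to identify the undergrowth of $T$ with the set of letters of $\pi$ that lie immediately to the left of $\pi_n$ and are larger than $\pi_n$: these are precisely the nodes visited right before $\pi_n$ in an in-order traversal that belong to the left subtree of $\pi_n$, and because $\tilde\Theta(\pi)$ is a decreasing tree, every such node is larger than $\pi_n$. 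In other words, if $U$ denotes the undergrowth, then $|U|$ equals the length of the maximal run of letters directly preceding $\pi_n$ that exceed $\pi_n$.

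The second step is to reinterpret that run as $\rho$ of an appropriate letter. Write $\pi = \gamma u \delta\, \pi_n$, where $u$ is the letter immediately before the run $\delta$ (so $\delta$ consists of the $|U|$ letters preceding $\pi_n$, all greater than $\pi_n$) and $u < \min(\delta)$ — or $\gamma u$ is empty if the run extends to the front of $\pi$. Actually, the cleaner route is: let $w$ be the leftmost letter of the undergrowth run. By Lemma \ref{l-231dualrho}(b), since the terminus $\pi_n$ has no right child, $\pi_n$ is a right-to-left minimum, hence $\rho_\pi(\pi_n)$ consists of everything after $\pi_n$, namely nothing, which is even — consistent but uninformative. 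Instead I would look at the letter $y$ whose right subtree, in $T$, is the chain from $y$ down the right branch to $\pi_n$ together with $U$: more concretely, let $p$ be the parent of the subtree rooted at the node just above $\pi_n$ on the right branch — the point is to exhibit a letter $x$ of $\pi$ for which $\rho_\pi(x)$ contains, or equals, $U$ together with possibly an extra letter, forcing a parity constraint. The slick version: let $x$ be the left neighbor in $\pi$ of the first letter of $U$'s run; then by Lemma \ref{l-231dualrho}, either $x$ has a right child (so $\rho_\pi(x)=\varepsilon$, impossible since the run is nonempty and lies to the right of $x$ with larger values — wait, need $x<$ those values), or $x$ has no right child and $\rho_\pi(x)$ equals the run $U$ followed by $\pi_n$; but $\pi_n$ is smaller than every letter of the run (being the terminus, it is below them in the decreasing tree, hence less), so in fact $\rho_\pi(x)$ is exactly the run $U$, of length $|U|$ — and since $\pi$ is Jacobi, $|U|$ is even. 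That shows $|U|$ is even but not that $|U|=0$, which is \emph{not} what the lemma claims — so I must sharpen the choice of $x$.

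The correct sharpening: take $x=\pi_{n-1}$, the letter immediately before the terminus $\pi_n$. If $U$ is nonempty, then $\pi_{n-1}$ is the bottom node of $U$ visited last in the in-order traversal of $U$, and in particular $\pi_{n-1}>\pi_n$ while $\pi_{n-1}$ is on the right branch \emph{of} $U$... hmm. Let me instead use: if $U\ne\varnothing$, consider the terminus $t'$ of $U$ itself (the bottommost right-branch node of $U$); it is visited in-order immediately before $\pi_n$, so $t'=\pi_{n-1}$, and $t'$ has no right child in $U$, but in $T$ the node $\pi_n$ is \emph{not} in the right subtree of $t'$ — so $t'$ still has no right child in $T$, hence by Lemma \ref{l-231dualrho}(b) $t'=\pi_{n-1}$ is a right-to-left minimum of $\pi$, giving $\rho_\pi(\pi_{n-1})=\pi_n$, of length $1$ — contradicting $\pi\in\mathfrak J_S$. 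Therefore $U$ must be empty. The main obstacle, and the part requiring care, is exactly this last combinatorial bookkeeping: correctly locating $\pi_{n-1}$ inside the undergrowth and verifying that it retains "no right child" status in the full tree $T$, so that Lemma \ref{l-231dualrho}(b) applies and yields the length-one $\rho$, contradicting the Jacobi condition via Lemma \ref{l-Jacobi}(d). I would phrase the final write-up almost entirely through Lemma \ref{l-Jacobi}(d) (which says $\pi_{n-1}>\pi_n$ forces no contradiction by itself) combined with the right-to-left-minimum observation — indeed the cleanest statement is: if $U\ne\varnothing$ then $\pi_{n-1}$, being the in-order predecessor of the right-to-left minimum $\pi_n$ and itself lacking a right child in $T$, is also a right-to-left minimum, whence $\rho_\pi(\pi_{n-1})$ has length $1$, contradicting that $\pi$ is Jacobi.
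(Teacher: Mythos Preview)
Your proposal, after the exploratory detours, lands on exactly the paper's argument: assume the undergrowth $U$ is nonempty, take $x$ to be the terminus of $U$ (which you correctly identify as $\pi_{n-1}$), observe that $x$ has no right child in $T$, apply Lemma~\ref{l-231dualrho}\,(b) to conclude $x$ is a right-to-left minimum of $\pi$, and deduce $\rho_\pi(x)=\pi_n$ has length~$1$, contradicting the Jacobi condition. The earlier attempts in your write-up (looking at the left neighbor of the run, etc.) are unnecessary and can be cut; the final paragraph is the proof, and it matches the paper's.
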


\begin{proof}
Let $T$ be a dual Jacobi tree and $\pi$ the corresponding $231$-avoiding
Jacobi permutation. Suppose instead that the undergrowth $U$ of $T$
is nonempty, and let $x$ be the terminus of $U$. By Lemma \ref{l-231dualrho}
(b), $\rho_{\pi}(x)$ consists of a single letter\textemdash the terminus
of $T$\textemdash which contradicts $\pi$ being a Jacobi permutation.
\end{proof}
\begin{prop}
\label{p-dualodd}Let $T^{\prime}$ be a nonempty dual Jacobi tree
with terminus $u$, and let $T^{\prime\prime}$ be a dual Jacobi tree
of odd size. Moreover, let $T$ be the tree obtained from $T^{\prime}$
by attaching $T^{\prime\prime}$ as the left subtree of $u$, and
attaching a new node $v$ as the right child of $u$. Then $T$ is
a dual Jacobi tree.
\end{prop}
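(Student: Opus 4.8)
The plan is to verify the defining property of a labeled dual Jacobi tree directly: I must produce a $231$-avoiding Jacobi permutation $\pi$ whose decreasing binary tree $\tilde\Theta(\pi)$ is $T$ (after relabeling, since the dual Jacobi tree property is invariant under standardization). First I would fix labeled dual Jacobi trees realizing $T'$ and $T''$ — that is, a $231$-avoiding $\pi'\in\mathfrak{J}_{S'}$ with $\tilde\Theta(\pi')=T'$ and a $231$-avoiding $\sigma\in\mathfrak{J}_{S''}$ with $\tilde\Theta(\sigma)=T''$ — and then describe the permutation that $T$ should correspond to. Since $\tilde\Theta$ builds the tree by placing the \emph{largest} element at the root and reading in-order, the new node $v$ (the right child of the old terminus $u$) must receive a label smaller than everything in the $T''$-subtree and smaller than $u$. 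Concretely, if $\pi' = \alpha\, u$ (so $u=\last(\pi')=\min S'$ by Lemma~\ref{l-Jacobi}(d) applied in the dual setting, i.e.\ $u$ is the terminus) then I would set $\pi = \alpha\, u\, \sigma'\, v$ on a suitably shifted alphabet, where $\sigma'$ is an order-isomorphic copy of $\sigma$ chosen so its letters all exceed $u$ but are arranged to sit \emph{below} $u$ in the tree, and $v$ is a brand-new smallest letter. (The exact alphabet bookkeeping is routine and I would not grind through it.)

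Next I would check three things about this $\pi$. \textbf{(i)} $\tilde\Theta(\pi) = T$: this follows because inserting $\sigma'$ immediately after $u$ and before $v$, with all of $\sigma'$ smaller than $u$ in value, makes $\tilde\Theta(\sigma')$ become the left subtree of $u$, while the new smallest letter $v$ becomes the bottommost node of the right branch — exactly the construction of $T$ from $T'$. Here I would invoke Lemma~\ref{l-231dualrho} to control where things land, together with the recursive description of $\tilde\Theta$. \textbf{(ii)} $\pi$ is $231$-avoiding: any occurrence of $231$ must either lie entirely in $\alpha\, u$ (impossible, as $\pi'$ avoids $231$), entirely in $\sigma'$ (impossible), or straddle the blocks; but $\sigma'\, v$ consists of letters all smaller than every letter of $\alpha$, so no ``2'' from $\alpha\, u$ can have a ``1'' after it from $\sigma'\, v$ in a way creating a $231$ — a short case analysis on which block each of the three entries comes from closes this. \textbf{(iii)} $\pi$ is Jacobi: this is where I would lean on the recursive/suffix lemmas. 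The tail $\sigma'\, v$ has $v$ as a right-to-left minimum, $\sigma'$ is Jacobi, and $|\sigma'\, v|$... here I need parity. Since $|T''|$ is odd, $|\sigma'| = |T''|$ is odd, so $|\sigma'\, v|$ is \emph{even} — and this is exactly why the hypothesis ``$T''$ of odd size'' is needed. Then Lemma~\ref{l-rlminJacobi}, applied with the right-to-left minimum being the old terminus $u$ of $\pi'$ and the even-length suffix $\sigma'\, v$, shows $\pi$ is Jacobi, using that $\pi' = \alpha u$ is Jacobi.

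I expect the main obstacle to be pinning down the precise labels and the precise decomposition point cleanly enough that both the ``$\tilde\Theta(\pi)=T$'' claim and the Jacobi-ness claim fall out without excessive index-chasing — in particular, correctly identifying that the terminus $u$ of $T'$ plays the role of the right-to-left minimum $y$ in Lemma~\ref{l-rlminJacobi}, and that the suffix whose evenness matters is $\sigma' v$ rather than $\sigma'$ alone. An alternative, possibly cleaner, route is to avoid explicit permutations entirely: characterize dual Jacobi trees as those unlabeled binary trees arising as $\tilde\Theta(\pi)$ for $231$-avoiding Jacobi $\pi$, translate the Jacobi condition into a tree condition via Lemma~\ref{l-231dualrho} (a node without a right child must be the in-order-last node or have an even number of in-order successors, which one can phrase recursively on the right branch), and then verify that the tree $T$ built in the proposition satisfies that tree-theoretic condition given that $T'$ does and $|T''|$ is odd. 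I would likely present whichever of these makes the parity bookkeeping shortest, but the permutation-level argument above is the more self-contained one and is what I would write up.
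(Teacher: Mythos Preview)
Your primary permutation-level argument contains a structural error: in an in-order traversal the left subtree of a node is visited \emph{before} the node, so with $T''$ attached as the left subtree of $u$ and $v$ as its right child, the corresponding permutation reads $\pi = \alpha\,\sigma'\, u\, v$, not $\alpha\, u\, \sigma'\, v$. Several of your labeling claims then unravel: the terminus label is not $\min S'$ in general (e.g.\ $\pi' = 4132 \in \mathfrak{J}_4(231)$ has terminus labeled $2$), and for $231$-avoidance the post-order must be increasing, which forces the $T''$-labels to lie \emph{below} $v$ and $v$ below $u$---the opposite of what you assert. With the corrected word, $u$ is not a right-to-left minimum of $\pi$ (since $v<u$ sits to its right), so your intended invocation of Lemma~\ref{l-rlminJacobi} with $y=u$ and even suffix $\sigma' v$ fails outright.

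The alternative you sketch at the end---using Lemma~\ref{l-231dualrho} to translate the Jacobi condition into a parity check on $|\rho_T(x)|$ and running a node-by-node case analysis---is precisely the paper's proof. It is short: $\rho_T(u)$ and $\rho_T(v)$ are empty; for $x$ in $T'$ with $\rho_{T'}(x)$ nonempty one finds $|\rho_T(x)| = |\rho_{T'}(x)| + |T''| + 1$, even since $|T''|$ is odd; and for $x$ in $T''$ with $\rho_{T''}(x)$ nonempty one finds $|\rho_T(x)| = |\rho_{T''}(x)| + 2$. Your permutation route could in principle be repaired with the correct word $\alpha\sigma' u v$ and a different decomposition point, but the bookkeeping is strictly heavier than the tree-level argument you already identified.
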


Note that we are implicitly using Lemma \ref{l-dualug} in the statement
of Proposition \ref{p-dualodd}, as it guarantees that $u$ does not
have a left subtree prior to attaching $T^{\prime\prime}$.
\begin{proof}
For convenience, if $\pi$ is the 231-avoiding permutation corresponding
to $T$, then let us write $\rho_{T}$ in place of $\rho_{\pi}$.
We wish to show that $\rho_{T}(x)$ has even length for every node
$x$ in $T$, and we will do this through cases, appealing to Lemma
\ref{l-231dualrho} in each one.
\begin{itemize}
\item Note that $\rho_{T}(u)$ is empty because $u$ has a right child,
and that $\rho_{T}(v)$ is empty because it is the last node visited
in an in-order traversal of $T$.
\item If $x\neq u$ is a node in $T^{\prime}$ and $\rho_{T^{\prime}}(x)$
is empty, then $\rho_{T}(x)$ is empty because we did not remove any
right children in constructing $T$.
\item If $x$ is a node in $T^{\prime\prime}$ and $\rho_{T^{\prime\prime}}(x)$
is empty, then $\rho_{T}(x)$ is empty for the same reason as above.
\item If $x$ is a node in $T^{\prime}$ and $\rho_{T^{\prime}}(x)$ is
nonempty, then $\rho_{T}(x)$ contains every node in $\rho_{T^{\prime}}(x)$
along with all nodes from $T^{\prime\prime}$ and $v$, as these are
the nodes visited after $x$ in an in-order traversal of $T$. It
follows that
\[
\left|\rho_{T}(x)\right|=\left|\rho_{T^{\prime}}(x)\right|+\left|T^{\prime\prime}\right|+1,
\]
which is even because $\left|\rho_{T^{\prime}}(x)\right|$ is even
and $\left|T^{\prime\prime}\right|$ is odd.
\item If $x$ is a node in $T^{\prime\prime}$ and $\rho_{T^{\prime\prime}}(x)$
is nonempty, then for the same reason as above, $\rho_{T}(x)$ contains
every node in $\rho_{T^{\prime\prime}}(x)$ along with $u$ and $v$.
So, we have 
\[
\left|\rho_{T}(x)\right|=\left|\rho_{T^{\prime\prime}}(x)\right|+2,
\]
which is even because $\left|\rho_{T^{\prime\prime}}(x)\right|$ is
even.\qedhere
\end{itemize}
\end{proof}
We will omit the proofs of the next two propositions as they are similar
to that of Proposition~\ref{p-dualodd}.
\begin{prop}
Let $T^{\prime}$ be a dual Jacobi tree, let $u$ be the terminus
of $T^{\prime}$ \textup{(}if $T^{\prime}$ is nonempty\textup{)},
and let $T^{\prime\prime}$ be a dual Jacobi tree of even size. Moreover,
let $T$ be the tree obtained in the following way:
\begin{itemize}
\item If $T^{\prime}$ is nonempty, attach a new node $v$ as the right
child of $u$, attach $T^{\prime\prime}$ as the left subtree of $v$,
and attach a new node $w$ as the right child of $v$.
\item If $T^{\prime}$ is empty, let $v$ be the root of $T$, attach $T^{\prime\prime}$
as the left subtree of $v$, and attach a new node $w$ as the right
child of $v$.
\end{itemize}
Then $T$ is a dual Jacobi tree.
\end{prop}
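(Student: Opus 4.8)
The plan is to adapt the proof of Proposition~\ref{p-dualodd}, working entirely with the tree. Since $231$-avoiding permutations of a set are in bijection with unlabeled binary trees via $\tilde{\Theta}$ (together with label removal), the tree $T$ built by the construction corresponds to a unique $\pi\in\mathfrak{S}_{n}(231)$, where $n=\left|T\right|$, and all that remains is to check that $\pi$ is Jacobi. Writing $\rho_{T}$ for $\rho_{\pi}$ as in the proof of Proposition~\ref{p-dualodd}, this means verifying that $\left|\rho_{T}(x)\right|$ is even for every node $x$ of $T$, and the workhorse is Lemma~\ref{l-231dualrho}: if $x$ has a right child in $T$ then $\rho_{T}(x)=\varepsilon$, and otherwise $\rho_{T}(x)$ consists of exactly the nodes visited after $x$ in an in-order traversal of $T$.

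First I would pin down the in-order traversal of $T$. When $T^{\prime}$ is nonempty, its terminus $u$ is the last node visited in an in-order traversal of $T^{\prime}$, and by Lemma~\ref{l-dualug} the undergrowth of $T^{\prime}$ is empty, so $u$ has no left subtree; hence grafting $v$, $T^{\prime\prime}$, and $w$ onto $u$ as prescribed inserts all the new nodes immediately after the $T^{\prime}$-portion of the traversal. So the in-order traversal of $T$ reads the in-order traversal of $T^{\prime}$, then that of $T^{\prime\prime}$, then $v$, then $w$, and the terminus of $T$ is $w$. (When $T^{\prime}$ is empty the traversal is just the in-order traversal of $T^{\prime\prime}$, then $v$, then $w$.) Note also that no node of $T^{\prime\prime}$ changes its right-child status in passing to $T$, and among the nodes of $T^{\prime}$ only $u$ does, gaining the right child $v$.

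Then I would run the case analysis on a node $x$ of $T$. If $x$ has a right child in $T$ --- this covers $v$, the node $u$ when $T^{\prime}$ is nonempty, and every node of $T^{\prime}$ or of $T^{\prime\prime}$ that already had a right child in the respective tree --- then $\rho_{T}(x)=\varepsilon$, of even length. The node $w$ is visited last, so $\rho_{T}(w)=\varepsilon$ too. If $x$ lies in $T^{\prime\prime}$ with no right child in $T^{\prime\prime}$, then $\rho_{T}(x)$ is $\rho_{T^{\prime\prime}}(x)$ followed by $v$ and $w$, so $\left|\rho_{T}(x)\right|=\left|\rho_{T^{\prime\prime}}(x)\right|+2$, which is even since $T^{\prime\prime}$ is a dual Jacobi tree. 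Finally, if $x$ lies in $T^{\prime}$, $x\neq u$, and $x$ has no right child in $T^{\prime}$, then $\rho_{T}(x)$ is $\rho_{T^{\prime}}(x)$ followed by every node of $T^{\prime\prime}$, then $v$, then $w$, so $\left|\rho_{T}(x)\right|=\left|\rho_{T^{\prime}}(x)\right|+\left|T^{\prime\prime}\right|+2$, which is even because $\left|\rho_{T^{\prime}}(x)\right|$ is even ($T^{\prime}$ being a dual Jacobi tree) and $\left|T^{\prime\prime}\right|$ is even by hypothesis. These cases exhaust the nodes of $T$, so $\pi$ is Jacobi and $T$ is a dual Jacobi tree; the empty-$T^{\prime}$ case is the degenerate specialization in which $v$ is the root and the cases involving $T^{\prime}$ become vacuous.

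The whole argument is essentially bookkeeping, mirroring the proof of Proposition~\ref{p-dualodd}; the only steps needing care are correctly reading off the in-order traversal of $T$ (which is where Lemma~\ref{l-dualug} is used, to know $u$ has no left subtree) and keeping track of parities, so I expect no genuine obstacle.
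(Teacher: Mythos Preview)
Your proof is correct and follows precisely the approach the paper intends: the paper omits the proof of this proposition, stating only that it is similar to that of Proposition~\ref{p-dualodd}, and your case analysis mirrors that proof with the appropriate parity adjustments. One minor remark: your invocation of Lemma~\ref{l-dualug} is not actually needed to read off the in-order traversal of $T$ (the terminus $u$ is the last node visited in the in-order traversal of $T^{\prime}$ regardless of whether it has a left subtree), though it does no harm.
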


\begin{prop}
\label{p-dualrev}Let $T$ be a dual Jacobi tree with at least 2 nodes
on its right branch, let $v$ be the terminus of $T$, let $u$ be
the parent of $v$, and let $T^{\prime\prime}$ be the left subtree
of $u$. Moreover, let $T^{\prime}$ be the tree obtained in the following
way: 
\begin{itemize}
\item If $T^{\prime\prime}$ is of even size, remove $u$, $v$, and $T^{\prime\prime}$
from $T$ to form $T^{\prime}$.
\item If $T^{\prime\prime}$ is of odd size, remove $v$ and $T^{\prime\prime}$
from $T$ to form $T^{\prime}$.
\end{itemize}
Then $T^{\prime}$ and $T^{\prime\prime}$ are dual Jacobi trees.
\end{prop}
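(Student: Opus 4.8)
The plan is to follow the template of the proof of Proposition~\ref{p-dualodd}: pass to the $231$-avoiding permutation $\pi$ with $\tilde{\Theta}(\pi)=T$, realize $T^{\prime}$ and $T^{\prime\prime}$ as $\tilde{\Theta}$ of explicit subwords of $\pi$, and then verify that those subwords are Jacobi by a case analysis on their letters, invoking Lemma~\ref{l-231dualrho} at each step.

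First I would record the shape of $\pi$. Since the undergrowth of $T$ is empty (Lemma~\ref{l-dualug}) and $v$ is the right child of $u$, an in-order traversal of $T$ visits the nodes of $T^{\prime\prime}$, then $u$, then $v$ last; writing $m=\lvert T^{\prime\prime}\rvert$, this says $\pi=\gamma\,\tau^{\prime\prime}\,u\,v$, where $\tau^{\prime\prime}$ is the length-$m$ subword of $\pi$ on the nodes of $T^{\prime\prime}$ and $\gamma$ is a prefix of $\pi$. Nodes of a subtree occupy a contiguous block in an in-order traversal, so $\tau^{\prime\prime}$ is a $231$-avoiding permutation of its letter set with $\tilde{\Theta}(\tau^{\prime\prime})=T^{\prime\prime}$; similarly $\tilde{\Theta}(\gamma)=T^{\prime}$ when $m$ is even, while when $m$ is odd the node $u$ survives in $T^{\prime}$ as a childless terminus and the in-order word of $T^{\prime}$ is the subword $\gamma\,u$. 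Since every subword of $\pi$ is $231$-avoiding, it remains only to check the Jacobi condition for $\tau^{\prime\prime}$ and for $\gamma$ (respectively $\gamma\,u$). The degenerate case in which the right branch of $T$ has exactly two nodes---so that $u$ is the root and $\gamma=\varepsilon$---should be disposed of at the outset: there $T^{\prime}$ is the empty tree ($m$ even) or the single-node tree ($m$ odd), both trivially dual Jacobi.

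For $\tau^{\prime\prime}$: if $x\in T^{\prime\prime}$ has a right child, then $\rho_{\pi}(x)=\rho_{\tau^{\prime\prime}}(x)=\varepsilon$ by Lemma~\ref{l-231dualrho}(a); otherwise Lemma~\ref{l-231dualrho}(b) makes $\rho_{\pi}(x)$ equal to the letters of $\tau^{\prime\prime}$ after $x$ followed by $u$ and $v$, while $\rho_{\tau^{\prime\prime}}(x)$ is just the letters of $\tau^{\prime\prime}$ after $x$, so $\lvert\rho_{\pi}(x)\rvert=\lvert\rho_{\tau^{\prime\prime}}(x)\rvert+2$. Either way $\lvert\rho_{\tau^{\prime\prime}}(x)\rvert$ has the same parity as $\lvert\rho_{\pi}(x)\rvert$, which is even because $\pi$ is Jacobi; hence $T^{\prime\prime}$ is a dual Jacobi tree. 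The argument for $T^{\prime}$ is the same up to an offset: for a letter $x$ of $\gamma$ (resp.\ $\gamma\,u$) with no right child in $T^{\prime}$, the letters of $\pi$ after $x$ are those after $x$ in $\gamma$ (resp.\ $\gamma\,u$) together with $m+2$ further letters $\tau^{\prime\prime}\,u\,v$ (resp.\ the $m+1$ further letters $\tau^{\prime\prime}\,v$); since $m$ is even in the first case and odd in the second, the offset is even in both, so parities again match.

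The one point that is not pure parity bookkeeping---and the main obstacle---is the node $u^{\prime}$ that is the parent of $u$ on the right branch of $T$ (this node exists precisely when the right branch has at least three nodes). When $m$ is even, passing to $T^{\prime}$ deletes $u$ and hence strips $u^{\prime}$ of its right child, so the relevant right-run switches from $\rho_{\pi}(u^{\prime})=\varepsilon$ to $\rho_{\gamma}(u^{\prime})$, which a priori could be long. This causes no trouble because $u^{\prime}$ is exactly the terminus of $T^{\prime}$, hence the last letter of $\gamma$, so $\rho_{\gamma}(u^{\prime})=\varepsilon$. I would flag this node explicitly, and similarly note that $u$, which becomes the terminus of $T^{\prime}$ when $m$ is odd, has empty right-run in $\gamma\,u$ for the same reason; at every other node the parity count above applies verbatim, and off the flagged nodes the presence or absence of a right child is unchanged when passing from $T$ to $T^{\prime}$, so Lemma~\ref{l-231dualrho} applies cleanly to both trees.
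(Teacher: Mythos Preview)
Your proposal is correct and is precisely the approach the paper intends: the paper omits the proof of Proposition~\ref{p-dualrev}, saying only that it is ``similar to that of Proposition~\ref{p-dualodd}''---namely, a node-by-node parity check on $\rho$ via Lemma~\ref{l-231dualrho}. You carry out exactly this case analysis in the reverse direction, and your explicit treatment of the boundary nodes (the new terminus $u'$ when $m$ is even, and $u$ when $m$ is odd) is the right way to handle the one place where ``has a right child'' changes in passing from $T$ to $T'$.
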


Propositions \ref{p-dualodd}\textendash \ref{p-dualrev} justify
the following recursive decomposition for dual Jacobi trees: Every
nonempty dual Jacobi tree either consists of a single node, consists
of a root with a left dual Jacobi subtree of even size and whose right
child is the terminus, or is built up from dual Jacobi trees in the
way described in these propositions. See Figure \ref{f-dualdecomp}
for an illustration. This decomposition can be thought of as an analogue
of the undergrowth decomposition for (ordinary) Jacobi trees, although
here $T^{\prime\prime}$ is not the undergrowth of $T$.\footnote{One might instead call $T^{\prime\prime}$ the \textit{penundergrowth}
of $T$.}
\begin{figure}[!h]
\noindent \begin{centering}
\begin{center}
\begin{tikzpicture}[scale=0.4,auto,   thick,branch/.style={circle,fill=blue!20,draw,font=\sffamily,minimum size=1em},subtree/.style={circle,draw,dashed,font=\sffamily,minimum size=2em}]


\node[trapezium, trapezium left angle=45, trapezium right angle=90,dashed,draw=red!50,fill=red!10, minimum width=2.5cm, minimum height=2.3cm, rounded corners=.5cm, rotate=135] at (15.5,6) {};

\node[trapezium, trapezium left angle=45, trapezium right angle=90,dashed,draw=red!50,fill=red!10, minimum width=2.5cm, minimum height=2.3cm, rounded corners=.5cm, rotate=135] at (26.5,8) {};

\node at (18.5,3.5) {$T'$};
\node at (29.5,5.5) {$T'$};

\node at (0,2) {\large $T = $};

\node at (5,2) {\large or};
\node at (12.5,2) {\large or};
\node at (25.5,2) {\large or};

\node[branch] (0) at (3,2) {};

\node[branch] (1) at (9,2) {};
\node[branch] (2) at (11,0) {};
\node[subtree] (1') at (7,0) {$T''$};
\node at (7,-2) {\footnotesize (even)};

\node[branch] (3) at (16,8) {};
\node[branch] (4) at (18,6) {};
\node[branch] (5) at (21,3) {};
\node[branch] (6) at (24,0) {};
\node[subtree] (3') at (14,6) {};
\node[subtree] (4') at (16,4) {};
\node[subtree] (5') at (18,0) {$T''$};
\node at (18,-2) {\footnotesize (odd)};

\node[branch] (7) at (27,10) {};
\node[branch] (8) at (29,8) {};
\node[branch] (9) at (32,5) {};
\node[branch] (10) at (35,2) {};
\node[branch] (11) at (37,0) {};
\node[subtree] (7') at (25,8) {};
\node[subtree] (8') at (27,6) {};
\node[subtree] (10') at (33,0) {$T''$};
\node at (33,-2) {\footnotesize (even)};

\foreach \from/\to in {1/2,3/4,7/8,10/11}
\draw (\from) -- (\to);

\foreach \from/\to in {1/1',4/5,5/6,3/3',4/4',5/5',8/9,9/10,7/7',8/8',10/10'}
\draw[dashed] (\from) -- (\to);

\end{tikzpicture}
\end{center}
\par\end{centering}
\caption{\label{f-dualdecomp}Recursive decomposition of a nonempty dual Jacobi
tree $T$.}
\end{figure}
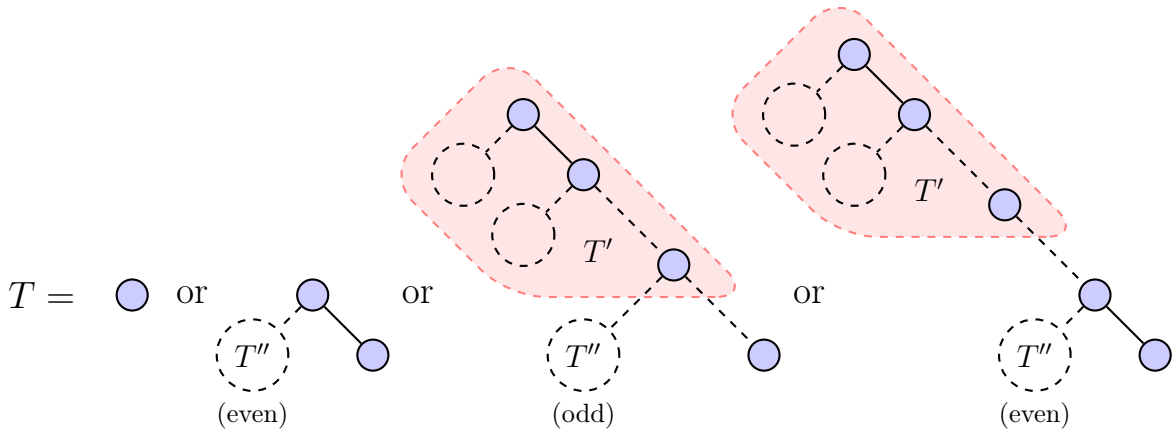

\section{\label{s-213-312}\texorpdfstring{$213$}{213}- and \texorpdfstring{$312$}{312}-avoiding
Jacobi permutations}

This section focuses on the Jacobi avoidance classes $\mathfrak{J}_{n}(213)$
and $\mathfrak{J}_{n}(312)$. We begin by restating Theorem \ref{t-jactree}
as an enumeration for $\mathfrak{J}_{n}(213)$ and $\mathfrak{J}_{n}(312)$.
\begin{thm}
\label{t-213-312}For all $n\geq0$, we have
\[
j_{2n}(213)=j_{2n}(312)=\frac{1}{2n+1}{3n \choose n}\quad\text{and}\quad j_{2n+1}(213)=j_{2n+1}(312)=\frac{1}{2n+1}{3n+1 \choose n+1}.
\]
\end{thm}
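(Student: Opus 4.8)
The plan is to deduce Theorem~\ref{t-213-312} directly from Theorem~\ref{t-jactree} together with the bijective correspondences recorded in Section~\ref{s-trees}, so the argument should be very short. First I would recall that composing the classical bijection $\Theta$ with label removal yields a bijection between $\mathfrak{J}_{n}(213)$ and the set $\mathcal{J}_{n}$ of Jacobi trees on $n$ nodes, and that the very same construction yields a bijection between $\mathfrak{J}_{n}(312)$ and $\mathcal{J}_{n}$. These are exactly the restrictions, to Jacobi permutations, of the bijections between $213$-avoiding (respectively $312$-avoiding) permutations of $S$ and unlabeled binary trees on $\left|S\right|$ nodes, using the fact that $\pi$ is Jacobi if and only if $\Theta(\pi)$ is a labeled Jacobi tree. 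In particular, $j_{n}(213)=j_{n}(312)=\left|\mathcal{J}_{n}\right|$ for every $n\geq0$.

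Next I would simply invoke the two evaluations of $\left|\mathcal{J}_{n}\right|$ from Theorem~\ref{t-jactree}, namely $\left|\mathcal{J}_{2n}\right|=\frac{1}{2n+1}\binom{3n}{n}$ and $\left|\mathcal{J}_{2n+1}\right|=\frac{1}{2n+1}\binom{3n+1}{n+1}$, and substitute them into the equalities above. This reproduces the claimed formulas verbatim, completing the proof.

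Since all the substantive combinatorics---the recursive bijection $\lambda$ from $\mathcal{J}_{2n}$ to ternary trees on $n$ nodes, and the bijection from $\mathcal{J}_{2n+1}$ to pairs of ternary trees with $n$ nodes in total---has already been carried out in the proof of Theorem~\ref{t-jactree}, there is no real obstacle here; this statement is essentially a translation of that theorem into the language of pattern-avoiding permutations. The only point worth stating explicitly is that the correspondence for $\mathfrak{J}_{n}(312)$ passes through the \emph{same} tree model $\mathcal{J}_{n}$ as the one for $\mathfrak{J}_{n}(213)$, so the equalities $j_{n}(213)=j_{n}(312)$ hold on the nose as identities of a single cardinality, with no separate argument needed.
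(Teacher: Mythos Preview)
Your proposal is correct and matches the paper's own treatment: the paper explicitly presents Theorem~\ref{t-213-312} as a restatement of Theorem~\ref{t-jactree}, relying on the bijections $\mathfrak{J}_{n}(213)\leftrightarrow\mathcal{J}_{n}\leftrightarrow\mathfrak{J}_{n}(312)$ established in Section~\ref{s-trees}. There is nothing to add.
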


Using results from Section \ref{s-trees} relating certain permutation
statistics to tree statistics, we can determine the distributions
of these permutation statistics over $\mathfrak{J}_{n}(213)$ and
$\mathfrak{J}_{n}(312)$ by studying distributions of the corresponding
statistics over $\mathcal{J}_{n}$. We shall take this approach in
this section to count $213$- and $312$-avoiding Jacobi permutations
by the number of descents, the number of left-to-right minima, and
the value of the last letter.\footnote{While it is possible to prove the results in this section by working
directly with permutations, we found it easier to formulate our bijections
using Jacobi trees. (Some of our bijections do not seem to have simple
descriptions in terms of the permutations themselves.) Another advantage
of this approach is that it allows us to prove most of our results
simultaneously for $\mathfrak{J}_{n}(213)$ and $\mathfrak{J}_{n}(312)$.}
\begin{table}[H]
\begin{centering}
\renewcommand{\arraystretch}{1.1}%
\begin{tabular}{c|>{\centering}p{20bp}|>{\centering}p{20bp}|>{\centering}p{20bp}|>{\centering}p{20bp}|>{\centering}p{20bp}|>{\centering}p{20bp}|>{\centering}p{20bp}|>{\centering}p{20bp}|>{\centering}p{20bp}|>{\centering}p{20bp}|>{\centering}p{20bp}|>{\centering}p{20bp}}
$n$ & $0$ & $1$ & $2$ & $3$ & $4$ & $5$ & $6$ & $7$ & $8$ & $9$ & $10$ & $11$\tabularnewline
\hline 
$j_{n}(213)$ & $1$ & $1$ & $1$ & $2$ & $3$ & $7$ & $12$ & $30$ & $55$ & $143$ & $273$ & $728$\tabularnewline
\end{tabular}
\par\end{centering}
\caption{\label{tb-213-312}The numbers $j_{n}(213)=j_{n}(312)$ up to $n=11$.}
\end{table}

\subsection{Descents}

Counting Jacobi trees by the number of left children will allow us
to achieve a refined enumeration of $213$- and $312$-avoiding Jacobi
permutations by the number of descents. Our goal is to prove the following.
\begin{thm}
\label{t-213-312-des}We have
\begin{align*}
j_{2n,k}^{\des}(213)=j_{2n,k}^{\des}(312) & =\frac{1}{n}{n \choose k-n}{2n \choose k+1} & \text{for all }n\geq1\text{ and }k\geq0;\\
j_{2n+1,k}^{\des}(213)=j_{2n+1,k}^{\des}(312) & =\frac{1}{n+1}{n+1 \choose k-n}{2n \choose k} & \text{for all }n,k\geq0.
\end{align*}
\begin{table}
\begin{centering}
\renewcommand{\arraystretch}{1.1}%
\begin{tabular}{|>{\centering}p{20bp}|>{\centering}p{20bp}|>{\centering}p{20bp}|>{\centering}p{20bp}|>{\centering}p{20bp}|>{\centering}p{20bp}|>{\centering}p{20bp}|>{\centering}p{20bp}|>{\centering}p{20bp}|>{\centering}p{20bp}|}
\hline 
$n\backslash k$ & $0$ & $1$ & $2$ & $3$ & $4$ & $5$ & $6$ & $7$ & $8$\tabularnewline
\hline 
$0$ & $1$ &  &  &  &  &  &  &  & \tabularnewline
\hline 
$1$ & $1$ &  &  &  &  &  &  &  & \tabularnewline
\hline 
$2$ & $0$ & $1$ &  &  &  &  &  &  & \tabularnewline
\hline 
$3$ & $0$ & $1$ & $1$ &  &  &  &  &  & \tabularnewline
\hline 
$4$ & $0$ & $0$ & $2$ & $1$ &  &  &  &  & \tabularnewline
\hline 
$5$ & $0$ & $0$ & $2$ & $4$ & $1$ &  &  &  & \tabularnewline
\hline 
$6$ & $0$ & $0$ & $0$ & $5$ & $6$ & $1$ &  &  & \tabularnewline
\hline 
$7$ & $0$ & $0$ & $0$ & $5$ & $15$ & $9$ & $1$ &  & \tabularnewline
\hline 
$8$ & $0$ & $0$ & $0$ & $0$ & $14$ & $28$ & $12$ & $1$ & \tabularnewline
\hline 
$9$ & $0$ & $0$ & $0$ & $0$ & $14$ & $56$ & $56$ & $16$ & $1$\tabularnewline
\hline 
\end{tabular}
\par\end{centering}
\caption{\label{tb-213-312-des}The numbers $j_{n,k}^{\protect\des}(213)=j_{n,k}^{\protect\des}(312)$
up to $n=9$.}
\end{table}
\end{thm}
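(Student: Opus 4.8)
The plan is to pass to trees and then to generating functions. By Corollary~\ref{c-branchild}(b) we have $\des(\pi)=\lchd(\Theta(\pi))$, and composing $\Theta$ with the deletion of node labels gives bijections $\mathfrak{J}_{n}(213)\to\mathcal{J}_{n}$ and $\mathfrak{J}_{n}(312)\to\mathcal{J}_{n}$ under which descents become left children. So it suffices to show that the number $c_{m,k}$ of Jacobi trees on $m$ nodes with exactly $k$ left children equals $\tfrac1n\binom{n}{k-n}\binom{2n}{k+1}$ when $m=2n$ (with $n\ge1$) and $\tfrac1{n+1}\binom{n+1}{k-n}\binom{2n}{k}$ when $m=2n+1$. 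I would encode the $c_{m,k}$ in the bivariate generating function $F(x,t)=\sum_{T\in\mathcal{J}}x^{|T|}t^{\lchd(T)}$ and split it as $F=F_{\e}+F_{\o}$ according to the parity of $|T|$.

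The recursive decomposition of Jacobi trees---a nonempty Jacobi tree consists of a root with a Jacobi left subtree $A$ and a Jacobi right subtree $B$ of even size, with $\lchd=\lchd(A)+\lchd(B)+[A\ne\varnothing]$---contributes the factor $1+t(F-1)$ for the left subtree and $F_{\e}$ for the right subtree, yielding $F-1=xF_{\e}\bigl(1+t(F-1)\bigr)$. Separating the even and odd powers of $x$ turns this into the system
\begin{align*}
F_{\e} & =1+xtF_{\e}F_{\o},\\
F_{\o} & =xF_{\e}\bigl(1+t(F_{\e}-1)\bigr),
\end{align*}
and substituting the second equation into the first, with $u\coloneqq F_{\e}-1$, collapses everything to the single algebraic equation $u=tx^{2}(1+u)^{2}(1+tu)$.

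The even case now follows from Lagrange inversion in the variable $x^{2}$: one gets $[x^{2n}]u=\tfrac{t^{n}}{n}[u^{n-1}](1+u)^{2n}(1+tu)^{n}$, and expanding $(1+u)^{2n}$ and $(1+tu)^{n}$ and reading off the coefficient of $t^{k}$ gives $c_{2n,k}=\tfrac1n\binom{2n}{k+1}\binom{n}{k-n}$. For the odd case, the equation for $u$ lets us rewrite $(1+u)(1+tu)=\tfrac{u}{tx^{2}(1+u)}$, hence
\[
F_{\o}=x(1+u)(1+tu)=\frac{u}{xt(1+u)},\qquad\text{so}\qquad xtF_{\o}=\frac{u}{1+u}=1-\frac{1}{1+u}.
\]
Applying Lagrange inversion to $1/(1+u)$ (using $\tfrac{d}{du}\tfrac1{1+u}=-\tfrac1{(1+u)^{2}}$) gives $[x^{2n+2}]\tfrac1{1+u}=-\tfrac{t^{n+1}}{n+1}[u^{n}](1+u)^{2n}(1+tu)^{n+1}$, so $[x^{2n+1}]F_{\o}=\tfrac{t^{n}}{n+1}[u^{n}](1+u)^{2n}(1+tu)^{n+1}$; extracting $[t^{k}]$ yields $c_{2n+1,k}=\tfrac1{n+1}\binom{2n}{k}\binom{n+1}{k-n}$ (this derivation is valid for all $n\ge0$).

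I expect the main work to be organizational rather than conceptual: deriving the system correctly from the recursive decomposition (keeping the parity split and the $[A\ne\varnothing]$ term straight) and, in the odd case, noticing the rewriting $xtF_{\o}=1-1/(1+u)$, which is exactly what makes the coefficient extraction painless instead of leaving a messy three-term binomial identity to verify. As a sanity check, one can instead refine the bijection $\lambda$ of Theorem~\ref{t-jactree}: for an even-sized Jacobi tree $T$, $\lchd(T)$ equals $|\lambda(T)|$ plus the number of left children of the ternary tree $\lambda(T)$, which recovers the even case by a routine count of ternary trees by left children.
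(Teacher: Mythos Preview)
Your proof is correct and shares the same setup as the paper: both pass to Jacobi trees via Corollary~\ref{c-branchild}(b) and derive the identical system $F_{\e}=1+txF_{\e}F_{\o}$, $F_{\o}=xF_{\e}\bigl(1+t(F_{\e}-1)\bigr)$. The divergence is in how the coefficients are extracted. You substitute to get the single equation $u=tx^{2}(1+u)^{2}(1+tu)$ for $u=F_{\e}-1$ and then apply Lagrange inversion directly, using the rewriting $xtF_{\o}=1-\tfrac{1}{1+u}$ to handle the odd case cleanly. The paper instead recognizes $F_{\e}$ and $F_{\o}$ as substitutions of two generating functions $U$ and $V$ already in the literature (Lemma~\ref{l-jactreeUV}): $U$ counts ternary trees by middle edges and $V$ counts certain Feynman diagrams, and the paper simply quotes their known coefficient formulas \eqref{e-Ucoeff}--\eqref{e-Vcoeff} from the OEIS. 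Your route is more self-contained---no appeal to external coefficient formulas---while the paper's route buys an explicit link to ternary trees and to \cite{Molinari2005}. Incidentally, your closing sanity check via $\lambda$ is essentially the bijective shadow of the paper's identification $F_{\e}=1+tx^{2}U(t,tx^{2})$: the statement $\lchd(T)=|\lambda(T)|+\#\{\text{left children of }\lambda(T)\}$ is exactly what that substitution encodes (with left and middle children interchangeable by symmetry).
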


Define the generating functions
\[
F_{\e}=F_{\e}(t,x)\coloneqq\sum_{n=0}^{\infty}\sum_{T\in\mathcal{J}_{2n}}t^{\lchd(T)}x^{2n}\qquad\text{and}\qquad F_{\o}=F_{\o}(t,x)\coloneqq\sum_{n=0}^{\infty}\sum_{T\in\mathcal{J}_{2n+1}}t^{\lchd(T)}x^{2n+1},
\]
which count even- and odd-sized Jacobi trees, respectively, by the
number of left children. The next lemma provides several functional
equations satisfied by these generating functions.
\begin{lem}
\label{l-jactreefunceq}We have \leqnomode 
\begin{align*}
\tag{{a}}F_{\e} & =1+tx^{2}(1-t)F_{\e}^{2}+t^{2}x^{2}F_{\e}^{3},\\
\tag{{b}}F_{\e} & =\frac{1}{1-txF_{\o}},\\
\tag{{c}}F_{\o} & =xF_{\e}+tx(F_{\e}-1)F_{\e},\quad\text{and}\\
\tag{{d}}F_{\o} & =x+(t-1)tx^{2}F_{\o}+2txF_{\o}^{2}-t^{2}x^{2}F_{\o}^{3}.
\end{align*}
\end{lem}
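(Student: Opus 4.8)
The plan is to prove (b) and (c) directly from the basic recursive decomposition of Jacobi trees recalled just before the lemma, and then to deduce (a) and (d) from (b) and (c) by eliminating $F_{\o}$ and $F_{\e}$ respectively. Recall that a nonempty Jacobi tree consists of a root with a left subtree $L$ that is a Jacobi tree and a right subtree $R$ that is an even-sized Jacobi tree, and that $\lchd$ counts left children. For (b): if the whole tree has even size $2n$, then $|L| = 2n-1-|R|$ is odd, so $L$ is a nonempty odd-sized Jacobi tree and in particular the root has a left child. Assigning one $x$ per node, a factor $t$ for the root's left child, $F_{\o}$ for $L$, and $F_{\e}$ for $R$, I get $F_{\e} = 1 + txF_{\o}F_{\e}$, which rearranges to (b). For (c): if the whole tree has odd size, then $|L|$ is even, and I would split into the case where $L$ is empty (the root has no left child, contribution $xF_{\e}$) and the case where $L$ is nonempty (the root has a left child, contribution $tx(F_{\e}-1)F_{\e}$), which gives (c).

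For (a), the quickest route is algebraic: from (b) we have $txF_{\o} = 1 - F_{\e}^{-1}$, hence $F_{\o} = (F_{\e}-1)/(txF_{\e})$, and substituting this into (c) and clearing the factor $txF_{\e}$ yields $F_{\e} - 1 = tx^{2}F_{\e}^{2} + t^{2}x^{2}F_{\e}^{2}(F_{\e}-1)$, which is exactly (a) after expanding. I would also mention the bijective alternative, since (a) is essentially the left-child refinement of the bijection $\lambda$ from Theorem~\ref{t-jactree}: decomposing a nonempty even-sized Jacobi tree as a root $v$, its forced left child $w$, and the three even-sized Jacobi subtrees $L$, $R$, $V$ (left of $w$, right of $w$, right of $v$) as in Figure~\ref{f-terbij}, and casing on whether $L$ is empty to decide whether $w$ has a left child, reproduces the right-hand side of (a) term by term. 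For (d) I would instead eliminate $F_{\e}$: substituting $F_{\e} = 1/(1-txF_{\o})$ from (b) into (c), clearing the denominator $(1-txF_{\o})^{2}$, and collecting terms gives $F_{\o}(1-txF_{\o})^{2} = x\bigl(1 - tx(1-t)F_{\o}\bigr)$, and expanding both sides yields (d).

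The one step requiring genuine care is the bookkeeping of left children in the combinatorial arguments: in both (c) and the bijective proof of (a) one must split on whether a designated subtree is empty in order to know whether the adjacent node contributes a factor of $t$, and it is easy to drop the resulting correction term — the $-t^{2}x^{2}F_{\e}^{2}$ in (a), equivalently the $(1-t)$ factor. The passages to (a) and (d) by elimination are otherwise routine, but one must simplify carefully to land on precisely the stated forms rather than an algebraically equivalent but differently grouped relation.
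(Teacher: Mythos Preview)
Your proposal is correct and follows essentially the same approach as the paper: both derive the key relation $F_{\e}=1+txF_{\o}F_{\e}$ and equation (c) directly from the root decomposition of even- and odd-sized Jacobi trees, and then obtain (a) and (d) by the same eliminations (substituting (c) to get (a), and (b) into (c) to get (d)). The only cosmetic differences are the order of presentation and your optional bijective aside for (a).
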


\begin{proof}
We first prove (c). Let $T$ be a Jacobi tree of odd size. Then $T$
consists of a root whose left subtree $L$ and right subtree $R$
are both Jacobi trees of even size. The case where $L$ is empty contributes
the term $xF_{\e}$, since $\lchd(T)=\lchd(R)$. If $L$ is nonempty,
then $\lchd(T)=\lchd(L)+\lchd(R)+1$, so this case contributes $tx(F_{\e}-1)F_{\e}$.
Summing these two terms yields (c).

To prove (a) and (b), now let $T$ be a Jacobi tree of even size.
The case where $T$ is empty contributes the term 1. If $T$ is nonempty,
then $T$ consists of a root whose left subtree $L$ is a Jacobi tree
of odd size and whose right subtree $R$ is a Jacobi tree of even
size. In particular, $L$ is nonempty, so $\lchd(T)=\lchd(L)+\lchd(R)+1$.
Thus, this case contributes $txF_{\o}F_{\e}$, and we have 
\begin{align}
F_{\e} & =1+txF_{\o}F_{\e}.\label{e-213dese3}
\end{align}
Solving for $F_{\e}$ in (\ref{e-213dese3}) yields (b). Furthermore,
substituting (c) into (\ref{e-213dese3}) and simplifying results
in (a).

Finally, (d) can be obtained from substituting (b) into (c) and performing
some routine algebraic manipulations.
\end{proof}
In order to obtain formulas for their coefficients, we will relate
$F_{\e}$ and $F_{\o}$ to two generating functions which have previously
appeared in the literature. Let $U=U(t,x)$ and $V=V(t,x)$ be the
unique formal power series solutions to the functional equations 
\begin{align}
U & =(1+txU)(1+xU)^{2}\qquad\text{and}\label{e-Ufunc}\\
V(1-xV)^{2} & =1+(t-1)xV,\label{e-Vfunc}
\end{align}
respectively. The coefficient of $t^{k}x^{n}$ in $U$ counts ternary
trees on $n$ edges with $k$ middle children\textemdash as can be
seen directly from (\ref{e-Ufunc})\textemdash and these coefficients
are given by the formula
\begin{equation}
[t^{k}x^{n}]\,U=\frac{1}{n+1}{n+1 \choose k}{2(n+1) \choose n-k}\label{e-Ucoeff}
\end{equation}
\cite[A120986]{oeis}. The coefficients of $V$, on the other hand,
count a certain family of Feynman diagrams refined by the number of
fermionic loops \cite{Molinari2005}, and are given by

\begin{equation}
[t^{k}x^{n}]\,V=\frac{1}{n+1}{n+1 \choose k}{2n \choose n+k}\label{e-Vcoeff}
\end{equation}
\cite[A286784]{oeis}.
\begin{lem}
\label{l-jactreeUV}We have \leqnomode
\begin{align*}
\tag{{a}}F_{\e}(t,x) & =1+tx^{2}U(t,tx^{2})\quad\text{and}\\
\tag{{b}}F_{\o}(t,x) & =xV(t,tx^{2}).
\end{align*}
\end{lem}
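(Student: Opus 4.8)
The plan is to verify that the two proposed expressions satisfy the functional equations from Lemma~\ref{l-jactreefunceq} that characterize $F_{\e}$ and $F_{\o}$ among formal power series. Set $\hat{F}_{\e}:=1+tx^{2}U(t,tx^{2})$ and $\hat{F}_{\o}:=xV(t,tx^{2})$. I will show that $\hat{F}_{\e}$ satisfies part (a) of Lemma~\ref{l-jactreefunceq} and that $\hat{F}_{\o}$ satisfies part (d). Since the right-hand sides of those two equations depend on their unknowns only through terms carrying a strictly positive power of $x$ beyond the ``leading'' $1$ and $x$ respectively, each equation has a unique formal power series solution, so $\hat{F}_{\e}=F_{\e}$ and $\hat{F}_{\o}=F_{\o}$, which is exactly the assertion of Lemma~\ref{l-jactreeUV}.

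First I would note that the compositions make sense: setting $x=0$ in (\ref{e-Ufunc}) and (\ref{e-Vfunc}) shows that $U(t,x)$ and $V(t,x)$ each have constant term $1$, and since $tx^{2}$ has order $2$ in $x$, both $U(t,tx^{2})$ and $V(t,tx^{2})$ are well-defined power series; consequently $\hat{F}_{\e}=1+tx^{2}+\cdots$ and $\hat{F}_{\o}=x+\cdots$ have the same leading behavior as $F_{\e}$ and $F_{\o}$. For part (a), write $u:=U(t,tx^{2})$, so $\hat{F}_{\e}=1+tx^{2}u$; substituting $x\mapsto tx^{2}$ into (\ref{e-Ufunc}) gives $u=(1+t^{2}x^{2}u)(1+tx^{2}u)^{2}$. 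Using $1+tx^{2}u=\hat{F}_{\e}$, $1+t^{2}x^{2}u=1+t(\hat{F}_{\e}-1)$, and $tx^{2}u=\hat{F}_{\e}-1$, this rearranges to $\hat{F}_{\e}-1=tx^{2}\bigl(1-t+t\hat{F}_{\e}\bigr)\hat{F}_{\e}^{2}$, which expands to exactly $\hat{F}_{\e}=1+tx^{2}(1-t)\hat{F}_{\e}^{2}+t^{2}x^{2}\hat{F}_{\e}^{3}$, i.e., Lemma~\ref{l-jactreefunceq}(a). For part (b), write $v:=V(t,tx^{2})$, so $\hat{F}_{\o}=xv$; substituting $x\mapsto tx^{2}$ into (\ref{e-Vfunc}) gives $v(1-tx^{2}v)^{2}=1+(t-1)tx^{2}v$. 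Since $tx^{2}v=tx\hat{F}_{\o}$ and $v=\hat{F}_{\o}/x$, multiplying through by $x$ turns this into $\hat{F}_{\o}(1-tx\hat{F}_{\o})^{2}=x+(t-1)tx^{2}\hat{F}_{\o}$, and expanding the square and rearranging yields $\hat{F}_{\o}=x+(t-1)tx^{2}\hat{F}_{\o}+2tx\hat{F}_{\o}^{2}-t^{2}x^{2}\hat{F}_{\o}^{3}$, i.e., Lemma~\ref{l-jactreefunceq}(d).

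This argument is essentially computational, so there is no serious obstacle; the only points requiring a little care are (i) confirming that Lemma~\ref{l-jactreefunceq}(a) and~(d) each determine a unique power series solution---for (a) this is because every occurrence of the unknown beyond the constant $1$ is multiplied by $x^{2}$, so the coefficients can be solved for recursively, while for (d) the equation forces the solution to have zero constant term and linear coefficient $1$, after which each coefficient is again determined by lower-order ones---and (ii) carrying out the elimination of $u$ and $v$ without algebraic slips. Once these are in hand, the uniqueness of the solutions to (a) and (d) immediately gives $F_{\e}=\hat{F}_{\e}$ and $F_{\o}=\hat{F}_{\o}$. As a consistency check one may also verify that $\hat{F}_{\e}$ and $\hat{F}_{\o}$ satisfy the remaining relations (b) and (c) of Lemma~\ref{l-jactreefunceq}, though this is not needed for the proof.
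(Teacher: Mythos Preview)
Your proposal is correct and follows essentially the same approach as the paper: both substitute $x\mapsto tx^{2}$ into the defining equations (\ref{e-Ufunc}) and (\ref{e-Vfunc}), rewrite in terms of the candidate expressions, and compare against Lemma~\ref{l-jactreefunceq}(a) and~(d). Your version is slightly more explicit about the uniqueness of the power series solutions, which the paper takes for granted when it ``compares'' and ``concludes.''
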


\begin{proof}
To prove (a), first we replace each instance of $x$ with $tx^{2}$
in (\ref{e-Ufunc}) to obtain
\begin{equation}
U(t,tx^{2})=(1+t^{2}x^{2}U(t,tx^{2}))(1+tx^{2}U(t,tx^{2}))^{2}.\label{e-Ufuncsub}
\end{equation}
Let $\bar{U}=\bar{U}(t,x)\coloneqq1+tx^{2}U(t,tx^{2})$. Substituting
$U(t,tx^{2})=(\bar{U}-1)/tx^{2}$ into (\ref{e-Ufuncsub}) gives us
\[
\frac{\bar{U}-1}{tx^{2}}=\left(1+t^{2}x^{2}\left(\frac{\bar{U}-1}{tx^{2}}\right)\right)\left(1+tx^{2}\left(\frac{\bar{U}-1}{tx^{2}}\right)\right)^{2}.
\]
Multiplying both sides by $tx^{2}$, adding $1$ to both sides, and
simplifying yields 
\[
\bar{U}=1+tx^{2}(1-t)\bar{U}^{2}+t^{2}x^{2}\bar{U}^{3};
\]
comparing with Lemma \ref{l-jactreefunceq} (a), we conclude $F_{\e}(t,x)=\bar{U}=1+tx^{2}U(t,tx^{2})$.

Next, we prove (b). We make the substitution $x\mapsto tx^{2}$ in
(\ref{e-Vfunc}), giving us 
\begin{equation}
V(t,tx^{2})(1-tx^{2}V(t,tx^{2}))^{2}=1+(t-1)tx^{2}V(t,tx^{2}).\label{e-Vfuncsub}
\end{equation}
Let $\bar{V}=\bar{V}(t,x)\coloneqq xV(t,tx^{2})$, so that $V(t,tx^{2})=x^{-1}\bar{V}$.
Then (\ref{e-Vfuncsub}) becomes
\begin{equation}
\bar{V}(1-tx\bar{V})^{2}=x+(t-1)tx^{2}\bar{V}.\label{e-Vbar}
\end{equation}
Expanding the left-hand side of (\ref{e-Vbar}) and rearranging yields
\[
\bar{V}=x+(t-1)tx^{2}\bar{V}+2tx\bar{V}^{2}-t^{2}x^{2}\bar{V}^{3},
\]
which proves $F_{\o}(t,x)=\bar{V}=xV(t,tx^{2})$ upon comparing with
Lemma \ref{l-jactreefunceq} (d).
\end{proof}
We are now ready to prove Theorem \ref{t-213-312-des}.
\begin{proof}[Proof of Theorem \ref{t-213-312-des}]
Using Lemma \ref{l-jactreeUV} (a) and Equation (\ref{e-Ucoeff}),
we obtain
\begin{align}
F_{\e}(t,x)=1+tx^{2}U(t,tx^{2}) & =1+\sum_{n=0}^{\infty}\sum_{k=0}^{n}\frac{1}{n+1}{n+1 \choose k}{2(n+1) \choose n-k}t^{n+k+1}x^{2n+2}\nonumber \\
 & =1+\sum_{n=1}^{\infty}\sum_{k=0}^{n-1}\frac{1}{n}{n \choose k}{2n \choose n-1-k}t^{n+k}x^{2n}\nonumber \\
 & =1+\sum_{n=1}^{\infty}\sum_{k=n}^{2n-1}\frac{1}{n}{n \choose k-n}{2n \choose 2n-1-k}t^{k}x^{2n}\nonumber \\
 & =1+\sum_{n=1}^{\infty}\sum_{k=n}^{2n-1}\frac{1}{n}{n \choose k-n}{2n \choose k+1}t^{k}x^{2n}.\label{e-Fe}
\end{align}
Similarly, by Lemma \ref{l-jactreeUV} (b) and Equation (\ref{e-Vcoeff}),
we have
\begin{align}
F_{\o}(t,x)=xV(t,tx^{2}) & =\sum_{n=0}^{\infty}\sum_{k=0}^{n}\frac{1}{n+1}{n+1 \choose k}{2n \choose n+k}t^{n+k}x^{2n+1}\nonumber \\
 & =\sum_{n=0}^{\infty}\sum_{k=n}^{2n}\frac{1}{n+1}{n+1 \choose k-n}{2n \choose k}t^{k}x^{2n+1}.\label{e-Fo}
\end{align}
Moreover, we have
\[
j_{2n,k}^{\des}(213)=j_{2n,k}^{\des}(312)=[t^{k}x^{2n}]\,F_{\e}(t,x)
\]
and 
\[
j_{2n+1,k}^{\des}(213)=j_{2n+1,k}^{\des}(312)=[t^{k}x^{2n+1}]\,F_{\e}(t,x)
\]
from Corollary \ref{c-branchild} (b); combining these with (\ref{e-Fe})
and (\ref{e-Fo}) completes the proof.
\end{proof}

\subsection{Left-to-right minima}

Our next result counts permutations in $\mathfrak{J}_{n}(213)$ and
$\mathfrak{J}_{n}(312)$ by the number of left-to-right minima.
\begin{thm}
\label{t-213-312-lrmin}For all $n\geq k\geq1$, we have
\[
j_{n,k}^{\lrmin}(213)=j_{n,k}^{\lrmin}(312)=\frac{2k}{3n-k}{\frac{3n-k}{2} \choose n}
\]
if $n$ and $k$ have the same parity, and $j_{n,k}^{\lrmin}(213)=j_{n,k}^{\lrmin}(312)=0$
otherwise.
\begin{table}
\begin{centering}
\renewcommand{\arraystretch}{1.1}%
\begin{tabular}{|>{\centering}p{20bp}|>{\centering}p{20bp}|>{\centering}p{20bp}|>{\centering}p{20bp}|>{\centering}p{20bp}|>{\centering}p{20bp}|>{\centering}p{20bp}|>{\centering}p{20bp}|>{\centering}p{20bp}|>{\centering}p{20bp}|>{\centering}p{20bp}|}
\hline 
$n\backslash k$ & $0$ & $1$ & $2$ & $3$ & $4$ & $5$ & $6$ & $7$ & $8$ & $9$\tabularnewline
\hline 
$0$ & $1$ &  &  &  &  &  &  &  &  & \tabularnewline
\hline 
$1$ & $0$ & $1$ &  &  &  &  &  &  &  & \tabularnewline
\hline 
$2$ & $0$ & $0$ & $1$ &  &  &  &  &  &  & \tabularnewline
\hline 
$3$ & $0$ & $1$ & $0$ & $1$ &  &  &  &  &  & \tabularnewline
\hline 
$4$ & $0$ & $0$ & $2$ & $0$ & $1$ &  &  &  &  & \tabularnewline
\hline 
$5$ & $0$ & $3$ & $0$ & $3$ & $0$ & $1$ &  &  &  & \tabularnewline
\hline 
$6$ & $0$ & $0$ & $7$ & $0$ & $4$ & $0$ & $1$ &  &  & \tabularnewline
\hline 
$7$ & $0$ & $12$ & $0$ & $12$ & $0$ & $5$ & $0$ & $1$ &  & \tabularnewline
\hline 
$8$ & $0$ & $0$ & $30$ & $0$ & $18$ & $0$ & $6$ & $0$ & $1$ & \tabularnewline
\hline 
$9$ & $0$ & $55$ & $0$ & $55$ & $0$ & $25$ & $0$ & $7$ & $0$ & $1$\tabularnewline
\hline 
\end{tabular}
\par\end{centering}
\caption{\label{tb-213-312-lrmin}The numbers $j_{n,k}^{\protect\lrmin}(213)=j_{n,k}^{\protect\lrmin}(312)$
up to $n=9$.}
\end{table}
\end{thm}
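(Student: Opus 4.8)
The plan is to transfer the problem to Jacobi trees and then to a Lagrange inversion. By Corollary~\ref{c-branchild}(a) we have $\lrmin(\pi)=\lbrch(\Theta(\pi))$, and applying $\Theta$ and deleting labels is a bijection from $\mathfrak{J}_{n}(213)$ onto $\mathcal{J}_{n}$ and from $\mathfrak{J}_{n}(312)$ onto $\mathcal{J}_{n}$; hence it suffices to enumerate Jacobi trees on $n$ nodes by the size of the left branch. To that end I would introduce the bivariate generating function
\[
G=G(t,x)\coloneqq\sum_{n\geq0}\sum_{T\in\mathcal{J}_{n}}t^{\lbrch(T)}x^{n},
\]
together with the size generating function $p=p(x)\coloneqq\sum_{m\geq0}\left|\mathcal{J}_{2m}\right|x^{2m}$ for even-sized Jacobi trees. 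By Lemma~\ref{l-jactreefunceq}(a) evaluated at $t=1$ (or directly from Theorem~\ref{t-jactree}), $p$ satisfies $p=1+x^{2}p^{3}$, so $p(x)=\sum_{m\geq0}\frac{1}{2m+1}\binom{3m}{m}x^{2m}=\widehat{p}(x^{2})$, where $\widehat{p}=\widehat{p}(z)$ is the ternary tree series with $\widehat{p}=1+z\widehat{p}^{3}$.

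Next I would derive a functional equation for $G$ from the recursive decomposition of Jacobi trees: a nonempty Jacobi tree is a root with an arbitrary Jacobi tree $L$ as its left subtree and an even-sized Jacobi tree $R$ as its right subtree. Since the left branch of $T$ consists of the root together with the left branch of $L$, while $R$ contributes nothing to the left branch, the root contributes a factor $tx$, the left subtree contributes $G$, and the right subtree contributes $p$. This gives
\[
G=1+tx\,G\,p,\qquad\text{hence}\qquad G=\frac{1}{1-txp}=\sum_{k\geq0}t^{k}x^{k}p(x)^{k}.
\]
Extracting coefficients, $j_{n,k}^{\lrmin}(213)=j_{n,k}^{\lrmin}(312)=[t^{k}x^{n}]\,G=[x^{n-k}]\,p(x)^{k}$. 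Because $p(x)^{k}=\widehat{p}(x^{2})^{k}$ is a series in $x^{2}$, this coefficient vanishes unless $n\equiv k\pmod 2$, which yields the stated parity condition; and when $n-k=2j$ is even, $j_{n,k}^{\lrmin}=[z^{j}]\,\widehat{p}(z)^{k}$.

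Finally I would evaluate $[z^{j}]\widehat{p}(z)^{k}$ by Lagrange inversion. Setting $R=\widehat{p}-1$, so that $R=z(1+R)^{3}$, the Lagrange--B\"urmann formula applied to $(1+R)^{k}=\widehat{p}^{k}$ yields, for $k\geq1$,
\[
[z^{j}]\,\widehat{p}(z)^{k}=\frac{k}{j}[u^{j-1}](1+u)^{3j+k-1}=\frac{k}{j}\binom{3j+k-1}{j-1}=\frac{k}{3j+k}\binom{3j+k}{j}.
\]
Substituting $j=(n-k)/2$ gives $3j+k=(3n-k)/2$, and the complementation $\binom{(3n-k)/2}{(n-k)/2}=\binom{(3n-k)/2}{n}$ then produces $j_{n,k}^{\lrmin}=\frac{2k}{3n-k}\binom{(3n-k)/2}{n}$, as claimed. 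The only step requiring real care is the bookkeeping in the functional equation for $G$ — keeping the full $\lbrch$-weight on the left subtree while counting the right subtree by size alone — after which the coefficient extraction and Lagrange inversion are routine.
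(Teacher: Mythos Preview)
Your proof is correct, but it takes a genuinely different route from the paper. Both arguments begin by passing to Jacobi trees via Corollary~\ref{c-branchild}(a), but from there the paper proceeds bijectively: it proves the recurrence
\[
j_{n,k}^{\lrmin}(213)=j_{n-1,k-1}^{\lrmin}(213)+j_{n,k+2}^{\lrmin}(213)
\]
by an explicit bijection $\psi\colon\mathcal{J}_{n,k}\to\mathcal{J}_{n-1,k-1}\sqcup\mathcal{J}_{n,k+2}$ on Jacobi trees (case-splitting on whether the bottom node of the left branch has a right child), and then verifies the closed form by induction on this recurrence. You instead derive the bivariate generating function $G=1/(1-txp)$ directly from the root decomposition, extract $[t^{k}x^{n}]G=[x^{n-k}]p^{k}$, and finish with a Lagrange inversion on the ternary-tree series $\widehat{p}=1+z\widehat{p}^{3}$. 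Your method is shorter and entirely self-contained once the tree translation is in hand; the paper's bijection, on the other hand, explains the recurrence combinatorially and is later reused (together with an analogous map $\varphi$) to relate the $\lrmin$ and $\last$ distributions in Proposition~\ref{p-213-last-lrmin}. One minor point: your Lagrange step is stated for $j\geq 1$, so the boundary case $n=k$ (i.e.\ $j=0$) should strictly be checked separately, though it is immediate since $[z^{0}]\widehat{p}^{k}=1$ agrees with the formula.
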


From Table \ref{tb-213-312-lrmin}, the reader may notice a pattern
captured by the following recurrence.
\begin{prop}
\label{p-213-312-lrmin-rec}The numbers $j_{n,k}^{\lrmin}(213)=j_{n,k}^{\lrmin}(312)$
satisfy
\[
j_{n,k}^{\lrmin}(213)=j_{n-1,k-1}^{\lrmin}(213)+j_{n,k+2}^{\lrmin}(213)
\]
for all $n\geq k\geq1$.
\end{prop}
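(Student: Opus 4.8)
The plan is to pass to Jacobi trees. By Corollary~\ref{c-branchild}(a) together with the bijection between $\mathfrak{J}_{n}(213)$ (or $\mathfrak{J}_{n}(312)$) and $\mathcal{J}_{n}$ induced by $\Theta$, the quantity $j_{n,k}^{\lrmin}(213)=j_{n,k}^{\lrmin}(312)$ equals the number $a_{n,k}$ of Jacobi trees $T$ on $n$ nodes with $\lbrch(T)=k$, so it suffices to prove $a_{n,k}=a_{n-1,k-1}+a_{n,k+2}$ for all $n\geq k\geq 1$.

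First I would record a parity fact: for every Jacobi tree $T$, one has $\lbrch(T)\equiv|T|\pmod 2$. This follows by induction on $|T|$: writing a nonempty $T$ as a root with left subtree $L$ and right subtree $R$, we have $|T|=1+|L|+|R|$ and $\lbrch(T)=1+\lbrch(L)$, so $|T|-\lbrch(T)=(|L|-\lbrch(L))+|R|$, which is even since $|R|$ is even (every right subtree of a Jacobi tree has even size) and $|L|-\lbrch(L)$ is even by induction. In particular, when $n\not\equiv k\pmod 2$ all three terms of the asserted recurrence vanish, so we may assume $n\equiv k\pmod 2$.

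The key step is a bijection $\Psi_{k}$, for each $k\geq 1$, from $\{T\in\mathcal{J}_{n}:\lbrch(T)=k\}$ to $\{T'\in\mathcal{J}_{n-1}:\lbrch(T')\geq k-1\}$. Let $v_{1},\dots,v_{k}$ be the left branch of $T$; then $v_{k}$ has no left child, and its right subtree $R$ has even size. If $R$ is empty, $\Psi_{k}$ deletes the leaf $v_{k}$; otherwise $\Psi_{k}$ deletes $v_{k}$ and promotes $R$ to be the left subtree of $v_{k-1}$ (taking $R$ itself as the new tree when $k=1$). In either case the output is a Jacobi tree on $n-1$ nodes, with left branch of size $k-1$ in the first case and $(k-1)+\lbrch(R)\geq k+1$ in the second, since a nonempty even-sized Jacobi tree has left branch of size at least $2$. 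The inverse takes $T'\in\mathcal{J}_{n-1}$ with $\lbrch(T')=\ell\geq k-1$; by the parity fact $\ell\equiv k-1\pmod 2$, so $\ell\neq k$. If $\ell=k-1$, attach a new leaf as the left child of the bottom of the left branch of $T'$; if $\ell\geq k+1$, detach the subtree $S$ rooted at the $k$th left-branch node of $T'$ and reattach it as the right subtree of a fresh node inserted as the left child of the $(k-1)$st left-branch node. The step I expect to be the main obstacle is verifying that $S$ has even size (needed so that the reconstructed tree is Jacobi); this comes from an induction up the left branch of $T'$ showing that the subtree rooted at the $j$th left-branch node has size of parity opposite to that of $\ell-j$, combined with $k\not\equiv\ell\pmod 2$.

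Finally, $\Psi_{k}$ gives $a_{n,k}=\#\{T'\in\mathcal{J}_{n-1}:\lbrch(T')\geq k-1\}$. Since $n-1\equiv k-1\pmod 2$, no tree in $\mathcal{J}_{n-1}$ has left branch of size exactly $k$, so this set is the disjoint union of $\{T':\lbrch(T')=k-1\}$, of size $a_{n-1,k-1}$, and $\{T':\lbrch(T')\geq k+1\}$, which has size $a_{n,k+2}$ by applying $\Psi_{k+2}$; combining these yields the recurrence. As an alternative (and a useful sanity check), since Theorem~\ref{t-213-312-lrmin} is already available, one can instead verify the identity directly from the closed form via the substitution $m=(3n-k)/2$ and a few routine binomial manipulations.
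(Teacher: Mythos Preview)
Your main argument via the bijection $\Psi_{k}$ is correct and is a genuinely different route from the paper's. The paper builds a single bijection $\psi\colon\mathcal{J}_{n,k}\to\mathcal{J}_{n-1,k-1}\sqcup\mathcal{J}_{n,k+2}$: when the bottom left-branch node $u$ has empty right subtree it is simply deleted (matching your first case), but when the right subtree is nonempty the paper performs a local rearrangement \emph{staying inside} $\mathcal{J}_{n}$\textemdash taking the right child $v$ of $u$, its left child $w$, and the three even subtrees $L,R,V$ hanging from $w$ and $v$, and rewiring them as right subtrees of $u$ and two new nodes $v',w'$ extending the left branch, landing directly in $\mathcal{J}_{n,k+2}$. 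Your map $\Psi_{k}$ is simpler (delete $v_{k}$ and promote its right subtree to the left), but it lands in $\{T'\in\mathcal{J}_{n-1}:\lbrch(T')\geq k-1\}$; you then recover the recurrence by invoking the same construction a second time at level $k+2$ and using the parity fact $\lbrch(T)\equiv|T|\pmod 2$ to see there is no contribution at level $k$. In short, you trade the paper's more intricate one-shot bijection for a simpler bijection used twice plus an explicit parity lemma; both are clean, and your version makes the role of parity more visible. (One cosmetic point: your description of $\Psi_{k}^{-1}$ refers to the ``$(k-1)$st left-branch node'' even when $k=1$; you handled the forward direction of that case separately, and the inverse should be phrased the same way, with the fresh node becoming the new root.)

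One caveat on your closing sentence: appealing to the closed form in Theorem~\ref{t-213-312-lrmin} as an ``alternative'' proof is circular in this paper's logical order, since that theorem is deduced \emph{from} the present proposition. It is fine as a sanity check, but it is not an independent argument here.
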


In fact, Theorem \ref{t-213-312-lrmin} follows readily from an induction
argument using Proposition~\ref{p-213-312-lrmin-rec}. We will omit
the details of this induction, and instead focus our attention on
proving Proposition~\ref{p-213-312-lrmin-rec}.
\begin{proof}
For convenience, let $\mathcal{J}_{n,k}$ denote the set of trees
in $\mathcal{J}_{n}$ that have a left branch of size $k$. By Corollary
\ref{c-branchild} (a), it suffices to construct a bijection $\psi\colon\mathcal{J}_{n,k}\rightarrow\mathcal{J}_{n-1,k-1}\sqcup\mathcal{J}_{n,k+2}$.

Given $T\in\mathcal{J}_{n,k}$, let $u$ denote the bottommost node
in its left branch. If the right subtree of $u$ is empty, then $\psi(T)\in\mathcal{J}_{n-1,k-1}$
is defined to be the Jacobi tree obtained from $T$ upon removing
$u$.

Now, suppose the right subtree of $u$ is nonempty. Let $v$ denote
the right child of $u$. If $v$ does not have a left child, then
$u$ would have a right subtree of odd size (consisting of $v$ and
the right subtree of $v$), a contradiction. So, let $w$ denote the
left child of $v$; also let $L$ be the left subtree of $w$, let
$R$ be the right subtree of $w$, and let $V$ be the right subtree
of $v$. By a similar argument, $L$ must have even size or else the
right subtree of $u$ would have odd size. We construct $\psi(T)$
from $T$ in the following way:
\begin{enumerate}
\item Remove $v$ and all its descendants from $T$.
\item Add a new node $v^{\prime}$ as the left child of $u$, and a new
node $w^{\prime}$ as the left child of $v^{\prime}$.
\item Attach $L$, $R$, and $V$ as the right subtrees of $w^{\prime}$,
$v^{\prime}$, and $u$, respectively.
\end{enumerate}
See Figure \ref{f-lrminbij} for an illustration. Observe that we
have increased the size of the left branch by 2, and that $\psi(T)$
is a Jacobi tree because $L$, $R$, and $V$ all have even size.
Therefore, $\psi(T)\in\mathcal{J}_{n,k+2}$ in this case.
\begin{figure}[!h]
\noindent \begin{centering}
\begin{center}
\begin{tikzpicture}[scale=0.4,auto,   thick,branch/.style={circle,fill=blue!20,draw,font=\sffamily,minimum size=1em},subtree/.style={circle,draw,dashed,font=\sffamily,minimum size=2em}]

\node (arrow) at (16,26.2) {\large $\psi$};
\node (arrow) at (16,25) {\huge $\longmapsto$};

\node (arrow) at (16,14.2) {\large $\psi$};
\node (arrow) at (16,13) {\huge $\longmapsto$};

\node[branch] (1) at (10,30) {};
\node[branch] (2) at (8,28) {};
\node[branch] (3) at (5,25) {};
\node[branch] (4) at (3,23) {};
\node[subtree] (1') at (12,28) {};
\node[subtree] (2') at (10,26) {};
\node[subtree] (3') at (7,23) {};

\node[branch] (5) at (25,30) {};
\node[branch] (6) at (23,28) {};
\node[branch] (7) at (20,25) {};
\node[subtree] (5') at (27,28) {};
\node[subtree] (6') at (25,26) {};
\node[subtree] (7') at (22,23) {};

\node[branch] (8) at (10,19) {};
\node[branch] (9) at (8,17) {};
\node[branch] (10) at (5,14) {};
\node[branch] (11) at (3,12) {};
\node[branch] (12) at (5,10) {};
\node[branch] (13) at (3,8) {};
\node[subtree] (8') at (12,17) {};
\node[subtree] (9') at (10,15) {};
\node[subtree] (10') at (7,12) {};
\node[subtree] (11') at (7,8) {$V$};
\node[subtree] (12') at (1,6) {$L$};
\node[subtree] (13') at (5,6) {$R$};
\node at (9.75,8) {\footnotesize (even)};
\node at (1,4) {\footnotesize (even)};
\node at (5,4) {\footnotesize (even)};

\node[branch] (14) at (30,19) {};
\node[branch] (15) at (28,17) {};
\node[branch] (16) at (25,14) {};
\node[branch] (17) at (23,12) {};
\node[branch] (18) at (21,10) {};
\node[branch] (19) at (19,8) {};
\node[subtree] (14') at (32,17) {};
\node[subtree] (15') at (30,15) {};
\node[subtree] (16') at (27,12) {};
\node[subtree] (17') at (25,10) {$V$};
\node[subtree] (18') at (23,8) {$R$};
\node[subtree] (19') at (21,6) {$L$};

\foreach \from/\to in {1/2,3/4,5/6,8/9,10/11,11/12,12/13,14/15,16/17,17/18,18/19}
\draw (\from) -- (\to);

\foreach \from/\to in {2/3,1/1',2/2',3/3',6/7,5/5',6/6',7/7',9/10,8/8',9/9',10/10',12/11',13/12',13/13',15/16,14/14',15/15',16/16',17/17',18/18',19/19'}
\draw[dashed] (\from) -- (\to);

\end{tikzpicture}
\end{center}
\par\end{centering}
\caption{\label{f-lrminbij}The bijection $\psi$.}
\end{figure}
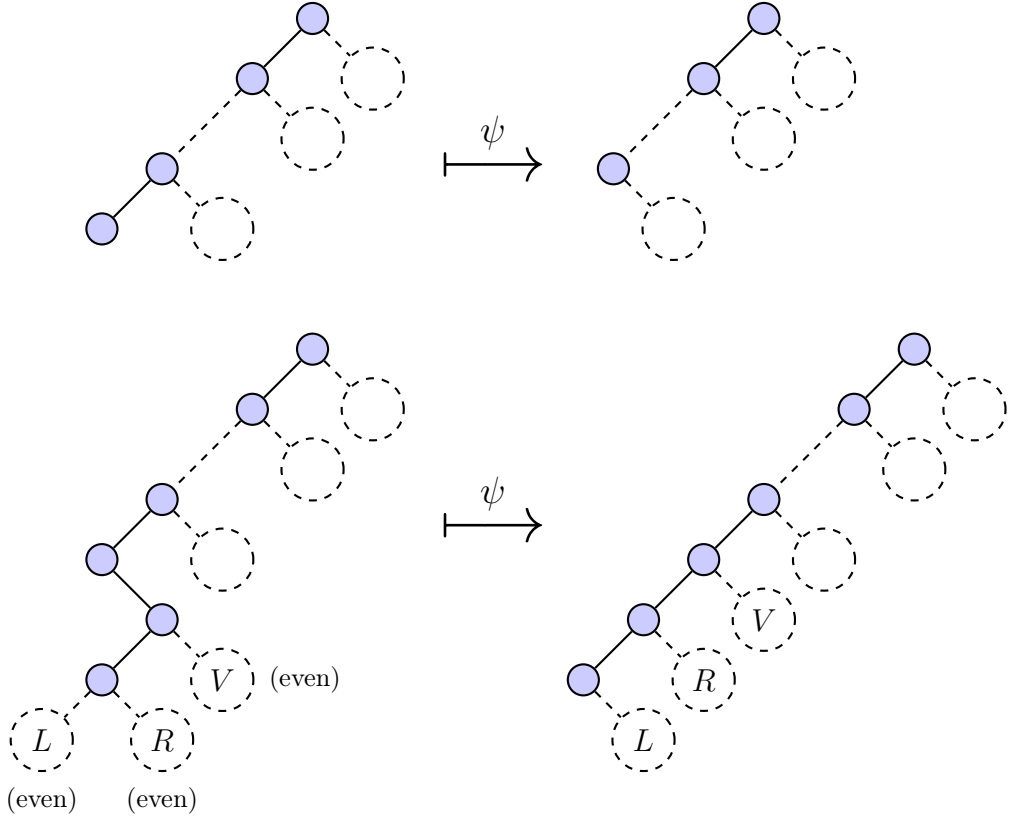

Now, suppose that $T\in\mathcal{J}_{n-1,k-1}\sqcup\mathcal{J}_{n,k+2}$.
We can recover the corresponding tree in $\mathcal{J}_{n,k}$ by either
adding a node to the bottom of the left branch if $T\in\mathcal{J}_{n-1,k-1}$,
or by reversing the steps in the above procedure for $T\in\mathcal{J}_{n,k+2}$.
We conclude that $\psi$ is a bijection.
\end{proof}

\subsection{Last letter}

Our last two theorems of this section give the distributions of $\last$
over $\mathfrak{J}_{n}(213)$ and $\mathfrak{J}_{n}(312)$.
\begin{thm}
\label{t-213last}For all $n\geq k\geq1$, we have
\[
j_{2n,k}^{\last}(213)=\frac{2k}{3n-k}{3n-k \choose 2n}\qquad\text{and}\qquad j_{2n-1,k}^{\last}(213)=\frac{2k-1}{3n-k-1}{3n-k-1 \choose 2n-1}.
\]
\end{thm}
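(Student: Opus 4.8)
The plan is to reduce the statement to a coefficient extraction for Jacobi trees and then apply Lagrange inversion. By Proposition~\ref{p-213-312-last}(a) together with the bijection between $\mathfrak{J}_n(213)$ and $\mathcal{J}_n$, the number $j_{n,k}^{\last}(213)$ equals the number of Jacobi trees on $n$ nodes whose right branch has exactly $k$ nodes; write $\mathcal{J}_n^{(k)}$ for this set. First I would peel a tree $T\in\mathcal{J}_n^{(k)}$ apart along its right branch. Iterating the undergrowth decomposition of Section~\ref{s-trees}---each application appends a new terminus as a right child, lengthening the right branch by one---shows that $T$ is equivalent to a tuple $(L_1,\dots,L_k)$, where $L_i$ is the left subtree of the $i$th node on the right branch: here $L_1$ is an arbitrary Jacobi tree, each of $L_2,\dots,L_k$ is a Jacobi tree of odd size, and $n=k+|L_1|+\cdots+|L_k|$. (Alternatively this can be checked directly: every left subtree of a right-branch node is automatically Jacobi; the Jacobi condition on the right subtrees sitting along the branch forces $|L_2|,\dots,|L_k|$ to be odd; and the parity of $|L_1|$ is then pinned down by that of $n$.)

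Next I would pass to generating functions. Let $h=h(x)=\sum_{n\ge0}|\mathcal{J}_{2n}|\,x^{2n}$ and $g=g(x)=\sum_{n\ge0}|\mathcal{J}_{2n+1}|\,x^{2n+1}$, so $h=F_{\e}(1,x)$ and $g=F_{\o}(1,x)$; setting $t=1$ in Lemma~\ref{l-jactreefunceq}(a) and (c) gives $h=1+x^2h^3$ and $g=xh^2$. The decomposition above translates into
\[
\sum_{n\ge0}|\mathcal{J}_n^{(k)}|\,x^n=x^k(h+g)\,g^{k-1}=x^kh\,g^{k-1}+x^kg^k.
\]
Because $h$ has only even powers of $x$ and $g=xh^2$ only odd powers, the term $x^kg^k=x^{2k}h^{2k}$ contributes only even powers and $x^kh\,g^{k-1}=x^{2k-1}h^{2k-1}$ only odd powers, so
\[
j_{2n,k}^{\last}(213)=[x^{2n}]\,x^{2k}h^{2k}=[x^{2n-2k}]\,h^{2k}=[y^{\,n-k}]\,C(y)^{2k}
\]
and
\[
j_{2n-1,k}^{\last}(213)=[x^{2n-1}]\,x^{2k-1}h^{2k-1}=[x^{2n-2k}]\,h^{2k-1}=[y^{\,n-k}]\,C(y)^{2k-1},
\]
where $y=x^2$ and $C=C(y)$ is the power series with $C=1+yC^3$ (note $h(x)=C(x^2)$, consistent with $|\mathcal{J}_{2m}|=\tfrac{1}{2m+1}\binom{3m}{m}$).

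Finally I would evaluate $[y^m]\,C^\ell$ by Lagrange inversion. Writing $C=1+E$, the equation $C=1+yC^3$ becomes $E=y(1+E)^3$, whence for $\ell\ge1$ and $m\ge1$,
\[
[y^m]\,C^\ell=\frac{\ell}{m}\,[E^{m-1}]\,(1+E)^{3m+\ell-1}=\frac{\ell}{m}\binom{3m+\ell-1}{m-1}=\frac{\ell}{3m+\ell}\binom{3m+\ell}{m},
\]
and the last expression also returns the correct value $1$ at $m=0$. Taking $(\ell,m)=(2k,\,n-k)$ and $(\ell,m)=(2k-1,\,n-k)$ gives $j_{2n,k}^{\last}(213)=\tfrac{2k}{3n-k}\binom{3n-k}{n-k}$ and $j_{2n-1,k}^{\last}(213)=\tfrac{2k-1}{3n-k-1}\binom{3n-k-1}{n-k}$, which turn into the stated formulas upon writing $\binom{N}{n-k}=\binom{N}{N-(n-k)}$ with $N=3n-k$ and $N=3n-k-1$, since $(3n-k)-(n-k)=2n$ and $(3n-k-1)-(n-k)=2n-1$. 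The one place where care is genuinely needed is the right-branch decomposition together with the parity bookkeeping that fixes the sizes of the $L_i$; everything after that is routine coefficient extraction.
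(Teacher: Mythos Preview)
Your proof is correct, and it takes a genuinely different route from the paper's.

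The paper proves Theorem~\ref{t-213last} by first establishing Proposition~\ref{p-213-last-lrmin}, a bijection $\varphi\colon\mathcal{J}_n\to\mathcal{J}_n$ that converts the right-branch size to (roughly) half the left-branch size, and then invoking the already-proved formula for $j_{n,k}^{\lrmin}(213)$ from Theorem~\ref{t-213-312-lrmin}. In other words, the paper reduces $\last$ to $\lrmin$ and borrows the answer from there. Your argument instead attacks $j_{n,k}^{\last}(213)$ directly: you peel a Jacobi tree along its right branch using the undergrowth decomposition, read off the generating function $x^k(h+g)g^{k-1}$, split by parity via $g=xh^2$, and finish with Lagrange inversion on $C=1+yC^3$. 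This is more self-contained (it does not depend on the $\lrmin$ result or on the somewhat intricate recursive bijection $\varphi$), and it makes transparent why the answer is a single ``Raney-type'' coefficient $\tfrac{\ell}{3m+\ell}\binom{3m+\ell}{m}$. On the other hand, the paper's approach buys an extra equidistribution statement: it explains bijectively why the $\last$-distribution on $\mathfrak{J}_n(213)$ is a reindexing of the $\lrmin$-distribution, which your direct computation does not reveal.
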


\begin{thm}
\label{t-312last}For all $n\geq k\geq1$, we have 
\[
j_{2n,2k-1}^{\last}(312)=\frac{{3k-3 \choose k-1}{3n-3k+1 \choose n-k+1}}{(2k-1)(2n-2k+1)}\qquad\text{and}\qquad j_{2n+1,2k}^{\last}(312)=\frac{{3k-2 \choose k}{3n-3k+1 \choose n-k+1}}{(2k-1)(2n-2k+1)}.
\]
In addition, we have 
\[
j_{2n+1,1}^{\last}(312)=\frac{1}{2n+1}{3n \choose n}
\]
for all $n\geq0$ and, unless $k=1$, $j_{n,k}^{\last}(312)=0$ whenever
$n$ and $k$ have the same parity.
\begin{table}
\begin{centering}
\renewcommand{\arraystretch}{1.1}%
\begin{tabular}{|>{\centering}p{20bp}|>{\centering}p{14bp}|>{\centering}p{14bp}|>{\centering}p{14bp}|>{\centering}p{14bp}|>{\centering}p{14bp}|>{\centering}p{25bp}|>{\centering}p{20bp}|>{\centering}p{14bp}|>{\centering}p{14bp}|>{\centering}p{14bp}|>{\centering}p{14bp}|>{\centering}p{14bp}|>{\centering}p{14bp}|>{\centering}p{14bp}|>{\centering}p{14bp}|}
\multicolumn{6}{c}{$j_{n,k}^{\last}(213)$} & \multicolumn{1}{>{\centering}p{25bp}}{$\phantom{{\displaystyle \frac{dy}{dx}}}$} & \multicolumn{9}{c}{$j_{n,k}^{\last}(312)$}\tabularnewline
\cline{1-6} \cline{2-6} \cline{3-6} \cline{4-6} \cline{5-6} \cline{6-6} \cline{8-16} \cline{9-16} \cline{10-16} \cline{11-16} \cline{12-16} \cline{13-16} \cline{14-16} \cline{15-16} \cline{16-16} 
$n\backslash k$ & $1$ & $2$ & $3$ & $4$ & $5$ &  & $n\backslash k$ & $1$ & $2$ & $3$ & $4$ & $5$ & $6$ & $7$ & $8$\tabularnewline
\cline{1-6} \cline{2-6} \cline{3-6} \cline{4-6} \cline{5-6} \cline{6-6} \cline{8-16} \cline{9-16} \cline{10-16} \cline{11-16} \cline{12-16} \cline{13-16} \cline{14-16} \cline{15-16} \cline{16-16} 
$1$ & $1$ &  &  &  &  &  & $1$ & $1$ &  &  &  &  &  &  & \tabularnewline
\cline{1-6} \cline{2-6} \cline{3-6} \cline{4-6} \cline{5-6} \cline{6-6} \cline{8-16} \cline{9-16} \cline{10-16} \cline{11-16} \cline{12-16} \cline{13-16} \cline{14-16} \cline{15-16} \cline{16-16} 
$2$ & $1$ &  &  &  &  &  & $2$ & $1$ &  &  &  &  &  &  & \tabularnewline
\cline{1-6} \cline{2-6} \cline{3-6} \cline{4-6} \cline{5-6} \cline{6-6} \cline{8-16} \cline{9-16} \cline{10-16} \cline{11-16} \cline{12-16} \cline{13-16} \cline{14-16} \cline{15-16} \cline{16-16} 
$3$ & $1$ & $1$ &  &  &  &  & $3$ & $1$ & $1$ &  &  &  &  &  & \tabularnewline
\cline{1-6} \cline{2-6} \cline{3-6} \cline{4-6} \cline{5-6} \cline{6-6} \cline{8-16} \cline{9-16} \cline{10-16} \cline{11-16} \cline{12-16} \cline{13-16} \cline{14-16} \cline{15-16} \cline{16-16} 
$4$ & $2$ & $1$ &  &  &  &  & $4$ & $2$ & $0$ & $1$ &  &  &  &  & \tabularnewline
\cline{1-6} \cline{2-6} \cline{3-6} \cline{4-6} \cline{5-6} \cline{6-6} \cline{8-16} \cline{9-16} \cline{10-16} \cline{11-16} \cline{12-16} \cline{13-16} \cline{14-16} \cline{15-16} \cline{16-16} 
$5$ & $3$ & $3$ & $1$ &  &  &  & $5$ & $3$ & $2$ & $0$ & $2$ &  &  &  & \tabularnewline
\cline{1-6} \cline{2-6} \cline{3-6} \cline{4-6} \cline{5-6} \cline{6-6} \cline{8-16} \cline{9-16} \cline{10-16} \cline{11-16} \cline{12-16} \cline{13-16} \cline{14-16} \cline{15-16} \cline{16-16} 
$6$ & $7$ & $4$ & $1$ &  &  &  & $6$ & $7$ & $0$ & $2$ & $0$ & $3$ &  &  & \tabularnewline
\cline{1-6} \cline{2-6} \cline{3-6} \cline{4-6} \cline{5-6} \cline{6-6} \cline{8-16} \cline{9-16} \cline{10-16} \cline{11-16} \cline{12-16} \cline{13-16} \cline{14-16} \cline{15-16} \cline{16-16} 
$7$ & $12$ & $12$ & $5$ & $1$ &  &  & $7$ & $12$ & $7$ & $0$ & $4$ & $0$ & $7$ &  & \tabularnewline
\cline{1-6} \cline{2-6} \cline{3-6} \cline{4-6} \cline{5-6} \cline{6-6} \cline{8-16} \cline{9-16} \cline{10-16} \cline{11-16} \cline{12-16} \cline{13-16} \cline{14-16} \cline{15-16} \cline{16-16} 
$8$ & $30$ & $18$ & $6$ & $1$ &  &  & $8$ & $30$ & $0$ & $7$ & $0$ & $6$ & $0$ & $12$ & \tabularnewline
\cline{1-6} \cline{2-6} \cline{3-6} \cline{4-6} \cline{5-6} \cline{6-6} \cline{8-16} \cline{9-16} \cline{10-16} \cline{11-16} \cline{12-16} \cline{13-16} \cline{14-16} \cline{15-16} \cline{16-16} 
$9$ & $55$ & $55$ & $25$ & $7$ & $1$ &  & $9$ & $55$ & $30$ & $0$ & $14$ & $0$ & $14$ & $0$ & $30$\tabularnewline
\cline{1-6} \cline{2-6} \cline{3-6} \cline{4-6} \cline{5-6} \cline{6-6} \cline{8-16} \cline{9-16} \cline{10-16} \cline{11-16} \cline{12-16} \cline{13-16} \cline{14-16} \cline{15-16} \cline{16-16} 
\end{tabular}
\par\end{centering}
\caption{\label{tb-213-last}The numbers $j_{n,k}^{\protect\last}(213)$ and
$j_{n,k}^{\protect\last}(312)$ up to $n=9$.}
\end{table}
\end{thm}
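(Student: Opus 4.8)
The plan is to transfer the problem to Jacobi trees and then count them according to the size of their undergrowth. First I would combine the bijection between $\mathfrak{J}_{n}(312)$ and $\mathcal{J}_{n}$ (obtained by applying $\Theta$ and forgetting labels) with Proposition~\ref{p-213-312-last}(b), which gives $\last(\pi)=n-|U|$ where $U$ is the undergrowth of $\Theta(\pi)$. Since the undergrowth of a binary tree and its size are determined by the unlabeled tree, this reduces the theorem to computing, for each $m$, the number $a_{n,m}$ of Jacobi trees on $n$ nodes whose undergrowth has size $m$, after which $j_{n,k}^{\last}(312)=a_{n,n-k}$.

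Next I would invoke the undergrowth decomposition of Jacobi trees, in which the distinguished subtree $U$ is exactly the undergrowth of the tree being decomposed. A nonempty Jacobi tree $T$ on $n$ nodes is either (i) a root with an arbitrary Jacobi subtree $U$ as its left subtree and empty right subtree, which forces $|U|=n-1$; or (ii) built from a nonempty Jacobi tree $T'$ of size $n-1-|U|$ together with a Jacobi tree $U$ of odd size, by attaching a new node as the right child of the terminus of $T'$ and then attaching $U$ as the left subtree of this new node. These cases are mutually exclusive, so $a_{n,n-1}=|\mathcal{J}_{n-1}|$, while for $m\leq n-2$ we have $a_{n,m}=|\mathcal{J}_{n-1-m}|\,|\mathcal{J}_{m}|$ if $m$ is odd and $a_{n,m}=0$ if $m$ is even. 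Writing $m=n-k$, this says $j_{n,1}^{\last}(312)=|\mathcal{J}_{n-1}|$, and for $k\geq 2$ that $j_{n,k}^{\last}(312)=|\mathcal{J}_{k-1}|\,|\mathcal{J}_{n-k}|$ when $n-k$ is odd and $j_{n,k}^{\last}(312)=0$ when $n-k$ is even. The latter is precisely the asserted vanishing when $k\neq 1$ and $n,k$ have equal parity; note that for $k=1$ and $n$ odd it is case (i) that applies, which is why the formula for $j_{2n+1,1}^{\last}(312)$ must be stated separately.

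Finally I would substitute the closed forms $|\mathcal{J}_{2j}|=\tfrac{1}{2j+1}\binom{3j}{j}$ and $|\mathcal{J}_{2j+1}|=\tfrac{1}{2j+1}\binom{3j+1}{j+1}$ from Theorem~\ref{t-jactree}. The case $k=1$ with $n$ odd yields $j_{2n+1,1}^{\last}(312)=|\mathcal{J}_{2n}|=\tfrac{1}{2n+1}\binom{3n}{n}$. For the two product formulas, the condition ``$n-k$ odd'' means the length and the last letter have opposite parity, so I would split into the sub-case of even length $2n$ with odd last letter $2k-1$ (here $k-1$ is even, so $|\mathcal{J}_{k-1}|$ uses the first closed form) and the sub-case of odd length $2n+1$ with even last letter $2k$ (here $k-1$ is odd, so $|\mathcal{J}_{k-1}|$ uses the second), plug in the appropriate formula for each factor, and re-index. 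This produces the two displayed expressions; I would also check that their $k=1$ specializations coincide with $j_{n,1}^{\last}(312)=|\mathcal{J}_{n-1}|$, so that the formulas in fact hold for all $n\geq k\geq 1$. I do not anticipate a genuine obstacle here: the load-bearing ingredients\textemdash the tree bijection, Proposition~\ref{p-213-312-last}(b), the undergrowth decomposition, and Theorem~\ref{t-jactree}\textemdash are all already available, so what remains is careful parity bookkeeping together with verifying the $k=1$ boundary case.
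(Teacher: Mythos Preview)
Your proposal is correct and follows essentially the same approach as the paper: both use Proposition~\ref{p-213-312-last}(b) to translate $\last$ into undergrowth size, then apply the undergrowth decomposition together with Theorem~\ref{t-jactree}. The only difference is packaging: the paper routes the computation through the generating function $J^{\last}(t,x;312)=txG(x)+tx(G(tx)-1)G_{\o}(x)$ and then extracts coefficients, whereas you read off the product $|\mathcal{J}_{k-1}|\cdot|\mathcal{J}_{n-k}|$ directly from the decomposition, which is slightly more streamlined but not a genuinely different route.
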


See Table \ref{tb-213-last} for the numbers $j_{n,k}^{\last}(213)$
and $j_{n,k}^{\last}(312)$ up to $n=9$. Comparing the $j_{n,k}^{\last}(213)$
with the $j_{n,k}^{\lrmin}(213)=j_{n,k}^{\lrmin}(312)$ from Table
\ref{tb-213-312-lrmin} reveals that they are shifts of each other,
which we prove below.
\begin{prop}
\label{p-213-last-lrmin}For all $n\geq k\geq1$, we have
\[
j_{n,k}^{\last}(213)=\begin{cases}
{\displaystyle j_{n,2k}^{\lrmin}(213),} & \text{if }n\text{ is even,}\\
{\displaystyle j_{n,2k-1}^{\lrmin}(213),} & \text{if }n\text{ is odd.}
\end{cases}
\]
\end{prop}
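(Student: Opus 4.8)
The plan is to transfer the statement to the language of Jacobi trees. Applying $\Theta$ and forgetting labels restricts to a bijection $\mathfrak{J}_{n}(213)\to\mathcal{J}_{n}$, and under this bijection $\last(\pi)=\rbrch(\Theta(\pi))$ by Proposition~\ref{p-213-312-last}(a) while $\lrmin(\pi)=\lbrch(\Theta(\pi))$ by Corollary~\ref{c-branchild}(a). Hence it suffices to construct, for each $n\ge k\ge 1$, a bijection
\[
\{\,T\in\mathcal{J}_{n}:\rbrch(T)=k\,\}\ \longrightarrow\ \{\,T\in\mathcal{J}_{n}:\lbrch(T)=\ell\,\},
\]
where $\ell=2k$ if $n$ is even and $\ell=2k-1$ if $n$ is odd. (Combined with Theorem~\ref{t-213-312-lrmin}, this will also prove Theorem~\ref{t-213last}.)

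The construction rests on two elementary decompositions. Reading down the right branch $v_{1}$ (the root), $v_{2},\dots,v_{k}$ (the terminus) of a Jacobi tree $T$ and recording the left subtree $L_{i}$ of each $v_{i}$ sets up a bijection between Jacobi trees on $n$ nodes with right branch of size $k$ and sequences $(L_{1},\dots,L_{k})$ of Jacobi trees with $k+\sum_{i}|L_{i}|=n$ in which $|L_{2}|,\dots,|L_{k}|$ are odd: the right subtree of $v_{i}$ is the subtree rooted at $v_{i+1}$ (or the empty tree when $i=k$), and requiring all of these to have even size forces the stated parities by a bottom-up computation, while every other evenness condition defining a Jacobi tree is internal to some $L_{i}$. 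Symmetrically, reading down the left branch $u_{1},\dots,u_{\ell}$ and recording the right subtree $R_{i}$ of each $u_{i}$ gives a bijection between Jacobi trees on $n$ nodes with left branch of size $\ell$ and sequences $(R_{1},\dots,R_{\ell})$ of even-size Jacobi trees with $\ell+\sum_{i}|R_{i}|=n$. Finally, detaching the root of an odd-size Jacobi tree is a bijection $\phi$ from odd-size Jacobi trees to ordered pairs of even-size Jacobi trees, since the right subtree of the root has even size while the whole tree is odd, so the left subtree of the root is even as well.

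Now start from $T$ with $\rbrch(T)=k$ and its right-branch sequence $(L_{1},\dots,L_{k})$. The identity $k+\sum_{i}|L_{i}|=n$ together with the oddness of $|L_{2}|,\dots,|L_{k}|$ forces $|L_{1}|\equiv n-1\pmod 2$. If $n$ is even, then $L_{1}$ is odd as well, so applying $\phi$ to each $L_{i}$ and concatenating the resulting pairs gives a sequence of $2k$ even-size Jacobi trees whose sizes sum to $\sum_{i}(|L_{i}|-1)=n-2k$; reading this as a left-branch sequence produces a Jacobi tree on $2k+(n-2k)=n$ nodes with left branch of size $2k$. If $n$ is odd, then $L_{1}$ is even, so we apply $\phi$ only to $L_{2},\dots,L_{k}$ and prepend $L_{1}$, obtaining a sequence of $1+2(k-1)=2k-1$ even-size Jacobi trees summing to $|L_{1}|+\sum_{i\ge 2}(|L_{i}|-1)=n-2k+1$, hence a Jacobi tree on $n$ nodes with left branch of size $2k-1$. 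All steps are reversible, so this is the required bijection and Proposition~\ref{p-213-last-lrmin} follows. The only point demanding care is the parity and size bookkeeping in this last step; once the two branch decompositions and the map $\phi$ are in place everything else is routine, and in particular nothing extra must be checked when a fiber is empty, since the decompositions are genuine bijections that make both sides empty simultaneously.
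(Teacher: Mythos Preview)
Your proof is correct. Both you and the paper build a bijection on $\mathcal{J}_{n}$ exchanging the right-branch and left-branch statistics by pairing two even-size Jacobi subtrees into one odd-size Jacobi subtree via root attachment/detachment (your map $\phi$). The paper presents this recursively: it peels off the two bottommost left-branch nodes with their (even) right subtrees $U$ and $V$, applies the map to the remainder, and then reattaches $\phi^{-1}(U,V)$ as the undergrowth of a new terminus; the odd-$n$ case requires a separate base case that swaps the root's right subtree to the left. You instead give the same map in one shot via the explicit right-branch and left-branch decompositions into sequences of subtrees, which makes the parity bookkeeping transparent and handles the odd-$n$ case uniformly (simply leave $L_{1}$ unpaired). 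Unrolling the paper's recursion recovers exactly your description up to the order in which each pair of even trees is fed to $\phi^{-1}$, so the two constructions are essentially the same bijection in different clothing; your flattened presentation is a bit cleaner.
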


\begin{proof}
By Corollary \ref{c-branchild} (a) and Proposition \ref{p-213-312-last}
(a), it suffices to construct a recursive bijection $\varphi\colon\mathcal{J}_{n}\rightarrow\mathcal{J}_{n}$
satisfying $\lbrch(T)=2\rbrch(\varphi(T))$ when $n$ is even and
$\lbrch(T)=2\rbrch(\varphi(T))-1$ when $n$ is odd.

Let us first define $\varphi$ for when $n$ is even. As the base
case, $\varphi$ sends the empty tree to the empty tree. Now, let
$T\in\mathcal{J}_{n}$ with $\lbrch(T)=2k$ for some $k\geq1$. Let
$u$ denote the bottommost node on the left branch of $T$, and let
$v$ denote its parent (the second bottommost node on that branch).
Moreover, let $U$ denote the right subtree of $u$ and $V$ the right
subtree of $v$; both have an even number of nodes. Remove the nodes
$u$ and $v$ (and their right subtrees) from $T$ to form the Jacobi
tree $T^{\prime}$, which has an even number of nodes because we have
deleted an even number of nodes from $T$. Then $\varphi(T^{\prime})$
is a Jacobi tree whose right branch has $k-1$ nodes. We construct
$\varphi(T)$ with the following steps:
\begin{enumerate}
\item Take the Jacobi tree $\varphi(T^{\prime})$ and add a new node $u^{\prime}$
to its right branch.
\item Add a new node $v^{\prime}$ as the left child of $u^{\prime}$.
\item Attach $U$ as the left subtree of $v^{\prime}$, and $V$ as the
right subtree of $v^{\prime}$.
\end{enumerate}
Note that $v^{\prime}$ along with its subtrees $U$ and $V$ form
the undergrowth of $\varphi(T)$ as defined in Section \ref{s-trees}.
See Figure \ref{f-lrminlastbij} for an illustration. 
\begin{figure}[!h]
\noindent \begin{centering}
\begin{center}
\begin{tikzpicture}[scale=0.4,auto,   thick,branch/.style={circle,fill=blue!20,draw,font=\sffamily,minimum size=1em},subtree/.style={circle,draw,dashed,font=\sffamily,minimum size=2em}]

\begin{scope}[shift={(1.4,-0.3)},rotate=45]
\draw[dashed,red!50,fill=red!10] (10,0) ellipse (6.5 and 4.5);
\end{scope}

\begin{scope}[shift={(2.5,-14.5)},rotate=-45]
\draw[dashed,red!50,fill=red!10] (0,30) ellipse (6.5 and 4.5);
\end{scope}

\node (T') at (5.2,7.2) {$T'$};
\node (T'') at (27,7.2) {$\varphi(T')$};

\node at (16,2.2) {\large $\varphi$};
\node at (16,1) {\huge $\longmapsto$};

\node[branch] (1) at (10,10) {};
\node[branch] (2) at (8,8) {};
\node[branch] (3) at (5,5) {};
\node[branch] (4) at (1,1) {};
\node[branch] (5) at (-1,-1) {};
\node[subtree] (1') at (12,8) {};
\node[subtree] (2') at (10,6) {};
\node[subtree] (3') at (7,3) {};
\node[subtree] (4') at (3,-1) {$V$};
\node[subtree] (5') at (1,-3) {$U$};
\node at (5.75,-1) {\footnotesize (even)};
\node at (1,-5) {\footnotesize (even)};

\node[branch] (6) at (22,10) {};
\node[branch] (7) at (24,8) {};
\node[branch] (8) at (27,5) {};
\node[branch] (9) at (31,1) {};
\node[branch] (0) at (29,-1) {};
\node[subtree] (6') at (20,8) {};
\node[subtree] (7') at (22,6) {};
\node[subtree] (8') at (25,3) {};
\node[subtree] (9') at (27,-3) {$U$};
\node[subtree] (0') at (31,-3) {$V$};

\foreach \from/\to in {1/2,4/5,6/7,9/0}
\draw (\from) -- (\to);

\foreach \from/\to in {2/3,3/4,1/1',2/2',3/3',4/4',5/5',7/8,8/9,6/6',7/7',8/8',0/9',0/0'}
\draw[dashed] (\from) -- (\to);

\end{tikzpicture}
\end{center}
\par\end{centering}
\caption{\label{f-lrminlastbij}The bijection $\varphi$.}
\end{figure}
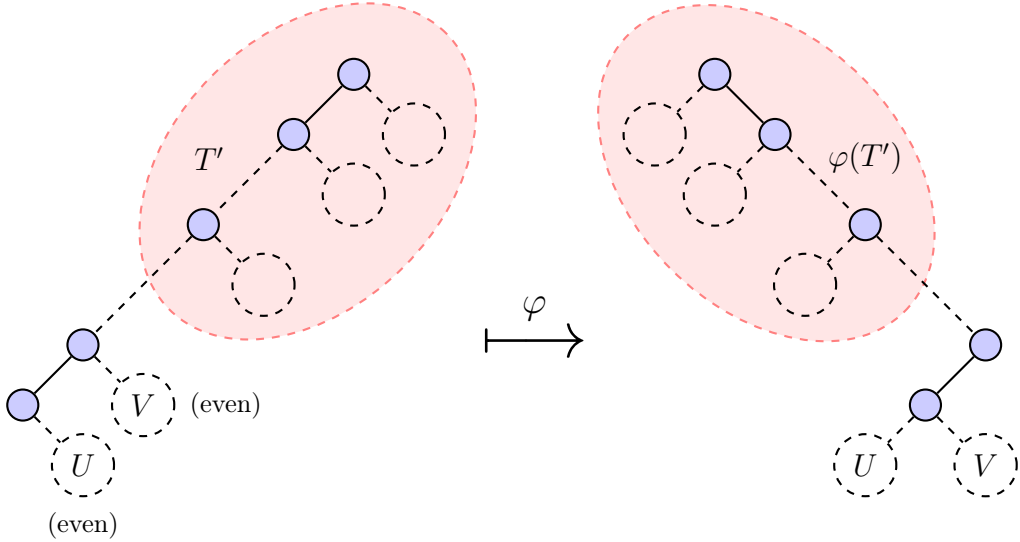

Given that $\varphi(T^{\prime})$ is a Jacobi tree and that $U$ and
$V$ each have an even number of nodes, it is evident that $\varphi(T)$
is a Jacobi tree as well. Furthermore, since the right branch of $\varphi(T^{\prime})$
has $k-1$ nodes, the right branch of $\varphi(T)$ has $k$ nodes,
so $\lbrch(T)=2\rbrch(\varphi(T))$ is satisfied. Finally, we see
from the undergrowth decomposition that every nonempty Jacobi tree
of even size is of the form depicted by the rightmost tree of Figure
\ref{f-lrminlastbij}, with both $U$ and $V$ of even size. Thus,
$\varphi$ is a bijection when $n$ is even.

Next, let us define $\varphi$ for when $n$ is odd. As the base case,
let $T$ be any tree in $\mathcal{J}_{n}$ whose left branch has a
single node\textemdash namely, the root of $T$. Then $T$ corresponds
to a Jacobi permutation $\pi$ that begins with the letter 1: $\pi=1\pi_{2}\pi_{3}\cdots\pi_{n}$.
We shall define $\varphi(T)$ so that it is the tree corresponding
to $\pi_{2}\pi_{3}\cdots\pi_{n}1$: remove the right subtree of the
root of $T$, and attach it back as the left subtree. Then $\varphi(T)$
is a Jacobi tree because $\pi_{2}\pi_{3}\cdots\pi_{n}1$ is a Jacobi
permutation, and $\varphi(T)$ has a single node on its right branch
as desired. For $T\in\mathcal{J}_{n}$ with $\lbrch(T)=2k-1$ where
$k\geq2$, we define $\varphi(T)$ in the same way as the even case.
Observe that $\varphi$ restricts to a bijection between trees in
${\cal J}_{n}$ whose left branch has a single node and those whose
right branch has a single node. Additionally, by similar reasoning
as the even case, the Jacobi trees whose right branch has at least
two nodes are all of the form of the rightmost tree of Figure \ref{f-lrminlastbij},
and it follows that $\varphi$ is a bijection when $n$ is odd.
\end{proof}
Theorem \ref{t-213last} is obtained by combining Theorem \ref{t-213-312-lrmin}
with Proposition \ref{p-213-last-lrmin}. Let us now move on to the
proof of Theorem \ref{t-312last}.
\begin{proof}[Proof of Theorem \ref{t-312last}]
Let 
\[
G_{\e}(x)\coloneqq\sum_{n=0}^{\infty}\left|\mathcal{J}_{2n}\right|x^{2n}\qquad\text{and}\qquad G_{\o}(x)\coloneqq\sum_{n=0}^{\infty}\left|\mathcal{J}_{2n+1}\right|x^{2n+1}
\]
be the ordinary generating functions for Jacobi trees of even and
odd size, respectively, and let $G(x)\coloneqq G_{\e}(x)+G_{\o}(x)$.
From Theorem \ref{t-jactree}, we have
\[
G_{\e}(x)=\sum_{n=0}^{\infty}\frac{1}{2n+1}{3n \choose n}x^{2n}\qquad\text{and}\qquad G_{\o}(x)=\sum_{n=0}^{\infty}\frac{1}{2n+1}{3n+1 \choose n+1}x^{2n+1}.
\]

To determine the numbers $j_{n,k}^{\last}(312)$, we shall derive
an expression for the generating function 
\[
J^{\last}(t,x;312)\coloneqq\sum_{n=1}^{\infty}\sum_{\pi\in\mathfrak{J}_{n}(312)}t^{\last(\pi)}x^{n}=\sum_{n=1}^{\infty}\sum_{k=1}^{n}j_{n,k}^{\last}(312)t^{k}x^{n}.
\]
By Proposition \ref{p-213-312-last} (b), we seek a weighted count
of Jacobi trees such that each node of the undergrowth contributes
a weight of $x$ and each other node contributes a weight of $tx$.
Using the undergrowth decomposition, we have {\allowdisplaybreaks
\begin{align*}
J^{\last}(t,x;312) & =txG(x)+tx(G(tx)-1)G_{\o}(x)\\
 & =txG_{\e}(x)+txG_{\o}(x)+txG(tx)G_{\o}(x)-txG_{\o}(x)\\
 & =txG_{\e}(x)+txG(tx)G_{\o}(x)\\
 & =\sum_{n=0}^{\infty}\frac{1}{2n+1}{3n \choose n}tx^{2n+1}+\sum_{n=0}^{\infty}\sum_{k=0}^{n}\frac{{3k \choose k}{3(n-k)+1 \choose n-k+1}}{(2k+1)(2(n-k)+1)}t^{2k+1}x^{2n+2}\\
 & \qquad\qquad+\sum_{n=0}^{\infty}\sum_{k=0}^{n}\frac{{3k+1 \choose k+1}{3(n-k)+1 \choose n-k+1}}{(2k+1)(2(n-k)+1)}t^{2k+2}x^{2n+3},
\end{align*}
}from which the desired result follows.
\end{proof}

\section{\label{s-231}\texorpdfstring{$231$}{231}-avoiding Jacobi permutations}

Through Jacobi trees, we saw that $\mathfrak{J}_{n}(213)$ and $\mathfrak{J}_{n}(312)$
are in bijection, and that the statistics $\des$ and $\lrmin$ each
has the same distribution over $\mathfrak{J}_{n}(213)$ as it does
over $\mathfrak{J}_{n}(312)$. In this section, we shall show that
$\mathfrak{J}_{n}(231)$ has the same enumeration and the same distributions
of $\des$ and $\lrmin$ as these two Jacobi avoidance classes, and
we will then use dual Jacobi trees to obtain a formula for counting
permutations in $\mathfrak{J}_{n}(231)$ with respect to the last
letter.

Inverses of permutations will play an essential role in this section,
so let us start by recalling some relevant properties of the inverse.

\subsection{\label{ss-arrayinv}Permutation arrays and inverses}

A permutation $\pi\in\mathfrak{S}_{n}$ is displayed diagrammatically
by plotting each of the points $(k,\pi_{k})$ in an $n$-by-$n$ grid,
where the first coordinate indicates the column and the second indicates
the row, with columns numbered 1 through $n$ from left to right,
and rows numbered 1 through $n$ from bottom to top. The resulting
diagram is called the \textit{array} of $\pi$.

Let $\pi^{-1}$ denote the inverse of $\pi\in\mathfrak{S}_{n}$. We
adopt the convention that the inverse of the empty permutation is
itself, and that if $\pi\in\mathfrak{S}_{S}$, then $\pi^{-1}$ is
the unique permutation of $S$ whose standardization is equal to $\std(\pi)^{-1}$.
For instance, we have $(72485)^{-1}=45827$. Inverses of permutations
in $\mathfrak{S}_{n}$ have a nice description in terms of their permutation
arrays: the inverse of $\pi\in\mathfrak{S}_{n}$ is the permutation
whose array is obtained by reflecting the array of $\pi$ about the
main diagonal. See Figure \ref{f-inverse} for an example.
\begin{figure}
\begin{center}
\begin{tikzpicture}[scale=0.6]       
\foreach \x in {0,1,...,6}             
{                 
\draw (-3, -3 + \x) -- (3, -3 + \x);             
}
\foreach \x in {0,1,...,6}             
{                 
\draw (-3 + \x, -3) -- (-3 + \x, 3);             
}         
\foreach \x in {1,...,6}             
{                 
\draw (-3.5 + \x, -3.5) node {\small \x};             
}                      
\foreach \x in {1,...,6}             
{                 
\draw (-3.5, -3.5 + \x) node {\small \x};             
}
        
\filldraw[black](-2.5, -1.5) circle [radius=3pt]                    
(-1.5, 1.5) circle [radius=3pt]                    
(-.5, -.5) circle [radius=3pt]                    
(.5, 2.5) circle [radius=3pt]                    
(1.5, .5) circle [radius=3pt]                    
(2.5, -2.5) circle [radius=3pt];         
\draw[dotted] (-3, -3) -- (3, 3);

\foreach \x in {0,1,...,6}             
{                 
\draw (8, -3 + \x) -- (14, -3 + \x);             
}
\foreach \x in {0,1,...,6}             
{                 
\draw (8 + \x, -3) -- (8 + \x, 3);             
}         
\foreach \x in {1,...,6}             
{                 
\draw (7.5 + \x, -3.5) node {\small \x};
}                      
\foreach \x in {1,...,6}             
{                 
\draw (7.5, -3.5 + \x) node {\small \x};
}
        
\filldraw[black](8.5, 2.5) circle [radius=3pt]                    
(9.5, -2.5) circle [radius=3pt]                    
(10.5, -.5) circle [radius=3pt]                    
(11.5, 1.5) circle [radius=3pt]                    
(12.5, -1.5) circle [radius=3pt]                    
(13.5, .5) circle [radius=3pt];    
\draw[dotted] (8, -3) -- (14, 3);
                 
\end{tikzpicture}
\end{center}
\vspace{-10bp}

\caption{\label{f-inverse}The arrays of $\pi=253641$ (left) and its inverse
$\pi^{-1}=613524$ (right).}
\end{figure}
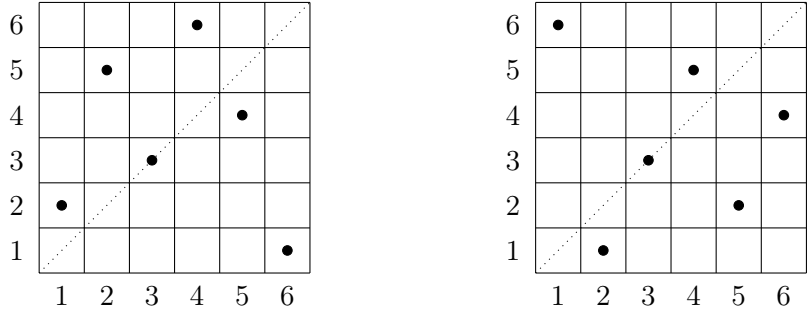

We will need a couple of facts about the inverses of certain kinds
of permutations, which are most conveniently expressed using the terminology
of direct and skew sums. Given $\alpha\in\mathfrak{S}_{m}$ and $\beta\in\mathfrak{S}_{n}$,
the \textit{direct sum} $\alpha\oplus\beta$ and the \textit{skew
sum} $\alpha\ominus\beta$ are defined to be the permutations in $\mathfrak{S}_{m+n}$
whose letters are given by
\[
(\alpha\oplus\beta)_{k}\coloneqq\begin{cases}
\alpha_{k}, & \text{if }1\leq k\leq m,\\
\beta_{k-m}+m, & \text{if }m+1\leq k\leq m+n,
\end{cases}
\]
and 
\[
(\alpha\ominus\beta)_{k}\coloneqq\begin{cases}
\alpha_{k}+n, & \text{if }1\leq k\leq m,\\
\beta_{k-m}, & \text{if }m+1\leq k\leq m+n.
\end{cases}
\]
For example, if $\alpha=132$ and $\beta=21$, then $\alpha\oplus\beta=13254$
and $\alpha\ominus\beta=35421$. We note that $\alpha\oplus\beta$
can more intuitively be described as the permutation whose array is
obtained by juxtaposing the arrays of $\alpha$ and $\beta$ diagonally,
and similarly with $\alpha\ominus\beta$ except that they are juxtaposed
antidiagonally.

The next two lemmas, well known in the permutation patterns literature,
are best explained using permutation arrays. We provide a visual depiction
of these lemmas in Figure \ref{f-directskew} in lieu of proofs.
\begin{lem}
\label{l-invconcat1}If $\pi=\alpha\oplus\beta$, then $\pi^{-1}=\alpha^{-1}\oplus\beta^{-1}$.
\end{lem}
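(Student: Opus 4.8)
The plan is to prove this directly from the definitions of the direct sum and the inverse. Since the inverse of a permutation of an arbitrary set $S$ is defined via standardization, and $\alpha\oplus\beta$ is already standardized whenever $\alpha$ and $\beta$ are, it suffices to treat the case $\alpha\in\mathfrak{S}_{m}$ and $\beta\in\mathfrak{S}_{n}$, so that $\pi=\alpha\oplus\beta\in\mathfrak{S}_{m+n}$. The key structural observation is that $\pi$ preserves the partition of $[m+n]$ into $\{1,\dots,m\}$ and $\{m+1,\dots,m+n\}$ \emph{both} as a set of positions and as a set of values: by definition $\pi_{k}=\alpha_{k}$ for $1\leq k\leq m$, while $\pi_{k}=\beta_{k-m}+m$ for $m+1\leq k\leq m+n$, so the first $m$ positions map onto $\{1,\dots,m\}$ and the last $n$ positions map onto $\{m+1,\dots,m+n\}$.

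Given this, I would compute $\pi^{-1}_{j}$ by cases. If $1\leq j\leq m$, then the unique $k$ with $\pi_{k}=j$ must satisfy $k\leq m$ (since $\pi_{k}>m$ for $k>m$), and for such $k$ we have $\alpha_{k}=j$, whence $\pi^{-1}_{j}=\alpha^{-1}_{j}$. If $m+1\leq j\leq m+n$, then the unique $k$ with $\pi_{k}=j$ must satisfy $k>m$ (since $\pi_{k}\leq m$ for $k\leq m$), and for such $k$ we have $\beta_{k-m}=j-m$, whence $k-m=\beta^{-1}_{j-m}$ and therefore $\pi^{-1}_{j}=\beta^{-1}_{j-m}+m$. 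Comparing these two formulas with the definition of the direct sum gives $\pi^{-1}=\alpha^{-1}\oplus\beta^{-1}$ exactly.

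Alternatively—and this is the viewpoint matching the permutation-array language introduced just above—one can argue pictorially, which is presumably what Figure \ref{f-directskew} depicts: the array of $\alpha\oplus\beta$ consists of the array of $\alpha$ occupying the lower-left $m\times m$ block of the $(m+n)\times(m+n)$ grid and the array of $\beta$ occupying the upper-right $n\times n$ block, each block straddling the main diagonal. Reflecting the whole array about the main diagonal maps each of these two blocks to itself, and restricted to each block the reflection is precisely reflection about that block's own diagonal; since such a reflection produces the array of the inverse, the lower-left block becomes the array of $\alpha^{-1}$ and the upper-right block becomes the array of $\beta^{-1}$, so the reflected array is that of $\alpha^{-1}\oplus\beta^{-1}$. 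There is no genuine obstacle in either approach: the only point requiring care is bookkeeping the index offset $+m$ consistently in both the position and the value, together with the observation that the two halves of $\pi$ cannot interact. (The companion statement for skew sums follows by the same bookkeeping, with an antidiagonal reflection in place of the diagonal one.)
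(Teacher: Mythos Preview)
Your proposal is correct, and your pictorial alternative is exactly the paper's approach: the paper gives no written proof at all, stating only that the lemma ``is best explained using permutation arrays'' and referring to Figure~\ref{f-directskew} ``in lieu of proofs.'' Your direct computational argument is a welcome spelling-out of what the figure encodes, but there is no substantive difference in strategy.
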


\begin{lem}
\label{l-invconcat2}If $\pi=\alpha\ominus\beta$, then $\pi^{-1}=\beta^{-1}\ominus\alpha^{-1}$.
\end{lem}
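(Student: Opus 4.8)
The plan is to prove the identity by a direct computation from the definition of the inverse, viewing one-line permutations as functions on $[m+n]$. Write $\alpha \in \mathfrak{S}_m$ and $\beta \in \mathfrak{S}_n$, with $\pi = \alpha \ominus \beta$ as in the statement, and set $\sigma \coloneqq \beta^{-1} \ominus \alpha^{-1}$, which is a permutation in $\mathfrak{S}_{m+n}$. Since $\pi^{-1}$ and $\sigma$ both lie in $\mathfrak{S}_{m+n}$, it suffices to verify that $\pi \circ \sigma$ is the identity permutation, i.e.\ that $\pi_{\sigma_j} = j$ for every $j \in [m+n]$.

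First I would spell out the explicit formulas coming from the definition of the skew sum: $\pi_k = \alpha_k + n$ for $1 \le k \le m$ and $\pi_k = \beta_{k-m}$ for $m+1 \le k \le m+n$, while $\sigma_j = \beta^{-1}_j + m$ for $1 \le j \le n$ and $\sigma_j = \alpha^{-1}_{j-n}$ for $n+1 \le j \le m+n$. Then I would split into two cases. If $1 \le j \le n$, then $\sigma_j = \beta^{-1}_j + m$ lies in $\{m+1,\dots,m+n\}$ because $\beta^{-1}_j \in [n]$, so $\pi_{\sigma_j} = \beta_{\sigma_j - m} = \beta_{\beta^{-1}_j} = j$. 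If instead $n+1 \le j \le m+n$, write $i \coloneqq j - n \in [m]$; then $\sigma_j = \alpha^{-1}_i \in [m]$, hence $\pi_{\sigma_j} = \alpha_{\sigma_j} + n = \alpha_{\alpha^{-1}_i} + n = i + n = j$. In both cases $\pi_{\sigma_j} = j$, so $\pi \circ \sigma = \mathrm{id}$ and therefore $\sigma = \pi^{-1}$, which is exactly the claim. (The companion Lemma \ref{l-invconcat1} is proven the same way, using $\pi_k = \alpha_k$ on $[m]$ and $\pi_k = \beta_{k-m} + m$ afterwards.)

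There is no genuine obstacle here; the lemma is elementary, and the only thing one must watch is the index bookkeeping in the case split and the orientation convention for arrays. Indeed, the alternative route---the one indicated by Figure \ref{f-directskew}---is purely pictorial: the array of $\alpha \ominus \beta$ has the array of $\alpha$ sitting in its top-left block and the array of $\beta$ in its bottom-right block, and reflecting the whole array about the main diagonal (which is precisely how one forms the array of the inverse) carries the $\alpha$-block to the bottom-right and the $\beta$-block to the top-left, restricting to the within-block diagonal reflection on each, so that the result is exactly the array of $\beta^{-1} \ominus \alpha^{-1}$. I would favor the short algebraic verification above for a fully rigorous write-up, keeping the figure as the intuition.
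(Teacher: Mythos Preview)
Your proof is correct. The paper does not give a formal argument for this lemma at all; it simply points to Figure~\ref{f-directskew} and states that the result is well known and best seen via permutation arrays. Your algebraic verification that $\pi\circ\sigma=\mathrm{id}$ is a clean, fully rigorous version of exactly the pictorial intuition the paper invokes (and which you also describe in your final paragraph), so there is nothing to add.
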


\begin{figure}
\begin{center}
\begin{tikzpicture}[scale=0.6]       

\node at (-4.55,0) {$\alpha \oplus \beta=$};
\draw (-3,-3) -- (3,-3);
\draw (-3,3) -- (3,3);
\draw (-3,-3) -- (-3,3);
\draw (3,-3) -- (3,3);
\draw[color=gray] (-1,-3) -- (-1,3);
\draw[color=gray] (-3,-1) -- (3,-1);              
\draw[dotted] (-3, -3) -- (3, 3);
\node at (-2,-2) {$\alpha$};
\node at (1,1) {$\beta$};

\node at (6.85,0) {$(\alpha \oplus \beta)^{-1}=$};
\draw (9,-3) -- (15,-3);
\draw (9,3) -- (15,3);
\draw (9,-3) -- (9,3);
\draw (15,-3) -- (15,3);
\draw[color=gray] (11,-3) -- (11,3);
\draw[color=gray] (9,-1) -- (15,-1);
\draw[dotted] (9, -3) -- (15, 3);
\node at (10,-2) {$\alpha^{-1}$};
\node at (13,1) {$\beta^{-1}$};

\node at (-4.55,-8) {$\alpha \ominus \beta=$};
\draw (-3,-11) -- (3,-11);
\draw (-3,-5) -- (3,-5);
\draw (-3,-11) -- (-3,-5);
\draw (3,-11) -- (3,-5);
\draw[color=gray] (-1,-11) -- (-1,-5);
\draw[color=gray] (-3,-7) -- (3,-7);              
\draw[dotted] (-3,-11) -- (3,-5);
\node at (-2,-6) {$\alpha$};
\node at (1,-9) {$\beta$};

\node at (6.85,-8) {$(\alpha \ominus \beta)^{-1}=$};
\draw (9,-11) -- (15,-11);
\draw (9,-5) -- (15,-5);
\draw (9,-11) -- (9,-5);
\draw (15,-11) -- (15,-5);
\draw[color=gray] (13,-11) -- (13,-5);
\draw[color=gray] (9,-9) -- (15,-9);
\draw[dotted] (9,-11) -- (15,-5);
\node at (11,-7) {$\beta^{-1}$};
\node at (14,-10) {$\alpha^{-1}$};

\end{tikzpicture}
\end{center}

\caption{\label{f-directskew}A direct sum, a skew sum, and their inverses.}
\end{figure}
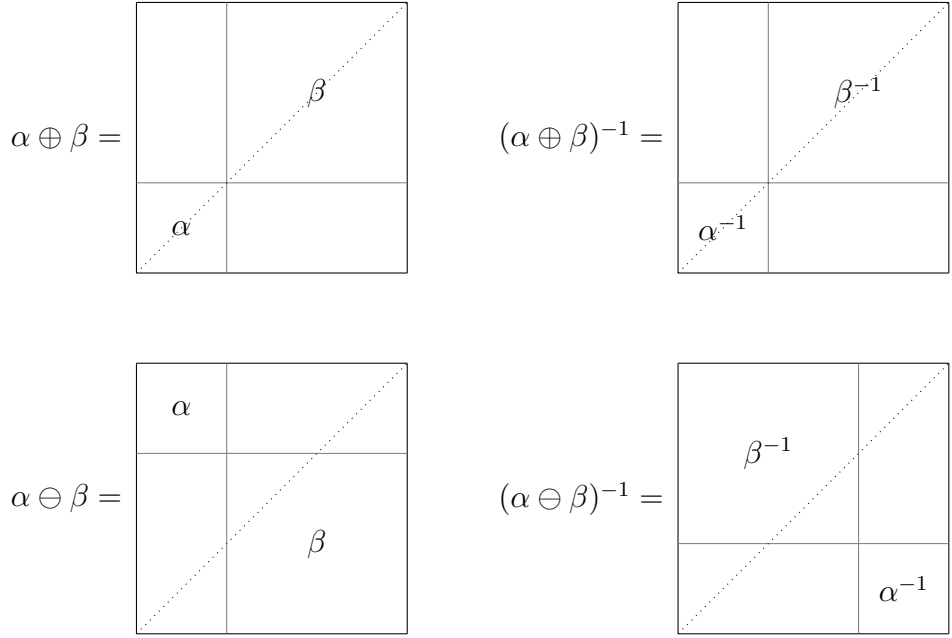

The following two facts, also readily seen through permutation arrays,
will be useful later.
\begin{lem}
\label{l-lrminv}A letter $\pi_{k}$ is a left-to-right minimum of
$\pi\in\mathfrak{S}_{n}$ if and only if $k$ is a left-to-right minimum
of $\pi^{-1}$.
\end{lem}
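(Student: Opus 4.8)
The plan is to argue entirely through permutation arrays. Recall that $\pi_{k}$ is a left-to-right minimum of $\pi$ precisely when no letter $\pi_{i}$ with $i<k$ satisfies $\pi_{i}<\pi_{k}$; in terms of the array of $\pi$, this says that the point $(k,\pi_{k})$ has no point of the array lying both strictly to its left and strictly below it. Equivalently (since distinct points of a permutation array never share a row or a column), $(k,\pi_{k})$ is a \emph{minimal} point of the array of $\pi$ with respect to the coordinatewise partial order on $\mathbb{Z}^{2}$.

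First I would record what reflection about the main diagonal does: it sends each point $(k,\pi_{k})$ of the array of $\pi$ to the point $(\pi_{k},k)$, and the resulting set of points is exactly the array of $\pi^{-1}$ (this is the description of $\pi^{-1}$ illustrated in Figure \ref{f-inverse}). The map $(a,b)\mapsto(b,a)$ is an order-isomorphism of $\mathbb{Z}^{2}$ with the coordinatewise order, so it carries the minimal points of the array of $\pi$ bijectively onto the minimal points of the array of $\pi^{-1}$. Hence $(k,\pi_{k})$ is minimal in the array of $\pi$ if and only if $(\pi_{k},k)$ is minimal in the array of $\pi^{-1}$.

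It then remains to translate the right-hand condition back into the language of left-to-right minima. The point $(\pi_{k},k)$ is the unique point of the array of $\pi^{-1}$ in column $\pi_{k}$, and its height is $\pi^{-1}_{\pi_{k}}=k$; so saying that this point is minimal in the array of $\pi^{-1}$ is exactly saying that the letter $k=\pi^{-1}_{\pi_{k}}$ is a left-to-right minimum of $\pi^{-1}$. Chaining the three observations gives the stated equivalence.

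There is essentially no real obstacle here; the only point requiring care is the bookkeeping in the last step, namely that a left-to-right minimum \emph{letter} $\pi_{k}$ of $\pi$ corresponds to the left-to-right minimum \emph{letter} $k$ of $\pi^{-1}$ (which occupies position $\pi_{k}$ in $\pi^{-1}$), and not to the index $k$ of some position. If one prefers to avoid the array language entirely, the same argument can be phrased directly: $\pi_{k}$ is a left-to-right minimum of $\pi$ iff no value smaller than $\pi_{k}$ occurs among $\pi_{1},\dots,\pi_{k}$, which by passing to $\pi^{-1}$ says that no position smaller than $\pi_{k}$ carries a value $\le k$ in $\pi^{-1}$, i.e., that $k$ is a left-to-right minimum of $\pi^{-1}$.
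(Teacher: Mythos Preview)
Your proof is correct and follows exactly the approach the paper indicates: the lemma is stated without a written proof, merely noted as ``readily seen through permutation arrays,'' and your argument via minimal points under the coordinatewise order and the diagonal reflection is precisely the intended justification. The bookkeeping in your final paragraph is also accurate.
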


\begin{lem}
\label{l-pavinv}A permutation $\pi$ avoids $\sigma$ if and only
if $\pi^{-1}$ avoids $\sigma^{-1}$.
\end{lem}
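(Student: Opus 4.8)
The plan is to read this off from the description of the inverse in terms of permutation arrays. Since avoidance and the pattern relation are both defined through standardization, and $\std(\pi^{-1})=\std(\pi)^{-1}$ by our convention, it suffices to treat the case $\pi\in\mathfrak{S}_{n}$. Recall that the array of $\pi^{-1}$ is the reflection of the array of $\pi$ about the main diagonal, so the point $(k,\pi_{k})$ is carried to $(\pi_{k},k)$.

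First I would record the array description of an occurrence: a subword $\pi_{i_{1}}\pi_{i_{2}}\cdots\pi_{i_{m}}$ (with $i_{1}<i_{2}<\cdots<i_{m}$) is an occurrence of $\sigma\in\mathfrak{S}_{m}$ in $\pi$ precisely when the $m$ points $(i_{1},\pi_{i_{1}}),\dots,(i_{m},\pi_{i_{m}})$ of the array of $\pi$, listed in increasing order of first coordinate, have second coordinates whose relative order matches $\sigma_{1},\dots,\sigma_{m}$. Reflecting these points about the main diagonal yields the points $(\pi_{i_{1}},i_{1}),\dots,(\pi_{i_{m}},i_{m})$ of the array of $\pi^{-1}$. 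Now listing these reflected points in increasing order of first coordinate is the same as listing the values $\pi_{i_{1}},\dots,\pi_{i_{m}}$ in increasing order, and the associated sequence of second coordinates is exactly the word obtained by applying $\sigma^{-1}$ to $i_{1},\dots,i_{m}$; hence this sequence standardizes to $\sigma^{-1}$. Thus diagonal reflection sends occurrences of $\sigma$ in $\pi$ to occurrences of $\sigma^{-1}$ in $\pi^{-1}$, and since reflection is an involution it restricts to a bijection between the two sets of occurrences. In particular, $\pi$ contains $\sigma$ if and only if $\pi^{-1}$ contains $\sigma^{-1}$, and taking contrapositives gives the stated equivalence for avoidance.

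The only real content is the middle step---checking that ``sort the reflected points by first coordinate and read off the second coordinates'' computes $\sigma^{-1}$ rather than some other rearrangement of $\sigma$---which amounts to the observation that passing to inverses commutes with restricting to a chosen set of positions and values. Once this is spelled out (or simply illustrated in the style the paper uses for Lemmas~\ref{l-invconcat1}--\ref{l-invconcat2} via Figure~\ref{f-directskew}), the lemma is immediate, so I anticipate no genuine difficulty beyond being careful with indices.
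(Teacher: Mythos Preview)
Your argument is correct and follows exactly the route the paper indicates: the paper does not give a proof but simply remarks that this fact is ``readily seen through permutation arrays,'' and you have carefully written out the diagonal-reflection argument that this remark points to. Your verification that sorting the reflected points by first coordinate and standardizing the second coordinates yields $\sigma^{-1}$ is the substantive step, and it is handled correctly.
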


Lastly, when a direct sum $\alpha\oplus\beta$ is Jacobi, the next
lemma provides some information about its summands $\alpha$ and $\beta$.
\begin{lem}
\label{l-prefixJacobi}Suppose that $\pi=\alpha\oplus\beta$ is Jacobi.
\begin{enumerate}
\item [\normalfont{(a)}]If $\alpha$ is nonempty, then $\beta$ has even
length.
\item [\normalfont{(b)}]The permutation $\alpha$ is Jacobi.
\end{enumerate}
\end{lem}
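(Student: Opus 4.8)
The plan is to work directly from the $\rho$-based definition of Jacobi permutations, exploiting the single structural feature of a direct sum that we need: in $\pi=\alpha\oplus\beta$, if we set $m=\left|\alpha\right|$ and $n=\left|\beta\right|$, then as a word $\pi=\alpha_{1}\cdots\alpha_{m}(\beta_{1}+m)\cdots(\beta_{n}+m)$, so every letter of the $\beta$-block comes to the right of, and is strictly larger than, every letter of the $\alpha$-block. All the computations of $\rho_{\pi}$ below are immediate consequences of this.

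For part (a), assume $\alpha$ is nonempty and look at its last letter $\alpha_{m}$. Everything appearing to the right of $\alpha_{m}$ in $\pi$ lies in the $\beta$-block and hence exceeds $\alpha_{m}$, so $\rho_{\pi}(\alpha_{m})$ is exactly the $\beta$-block, of length $n$. Since $\pi$ is Jacobi, $\left|\rho_{\pi}(\alpha_{m})\right|=n$ is even.

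For part (b), the case $\alpha=\varepsilon$ is trivial, so assume $\alpha$ is nonempty; by part (a), $n$ is even. Fix a letter $x=\alpha_{k}$ of $\alpha$ and split into two cases depending on whether $\alpha_{k+1},\dots,\alpha_{m}$ are all larger than $x$. If they are, then $\rho_{\pi}(x)$ consists of $\alpha_{k+1}\cdots\alpha_{m}$ followed by the entire $\beta$-block (whose letters all exceed $x$), whence $\left|\rho_{\pi}(x)\right|=\left|\rho_{\alpha}(x)\right|+n$; as $\left|\rho_{\pi}(x)\right|$ and $n$ are even, so is $\left|\rho_{\alpha}(x)\right|$. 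Otherwise, let $\alpha_{j}$ with $k<j\le m$ be the first letter of $\alpha$ after $x$ that is smaller than $x$; then $\rho_{\pi}(x)=\alpha_{k+1}\cdots\alpha_{j-1}=\rho_{\alpha}(x)$, which has even length because $\left|\rho_{\pi}(x)\right|$ does. In either case $\left|\rho_{\alpha}(x)\right|$ is even, so $\alpha$ is Jacobi.

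The argument is elementary, and I do not expect a real obstacle; the one point worth care is that part (b) genuinely uses part (a) — precisely in the case where $\rho_{\alpha}(x)$ runs all the way to the end of $\alpha$ and thus "spills over" into the $\beta$-block — so the two parts should be established in this order. (One could alternatively try to deduce (b) from Lemma \ref{l-Jacobi}, but since $\alpha$ is a prefix rather than a suffix of $\pi$, the direct $\rho$-computation above is the most economical route.)
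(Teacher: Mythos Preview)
Your proof is correct and essentially identical to the paper's: both argue directly from the $\rho$-definition, use the last letter of $\alpha$ to obtain (a), and then split (b) into the two cases according to whether $\rho_{\alpha}(x)$ reaches the end of $\alpha$ (equivalently, whether the paper's last letter $y$ lies in $\rho_{\alpha}(x)$). The only difference is notational---you index the case split by whether all of $\alpha_{k+1},\dots,\alpha_{m}$ exceed $x$, while the paper phrases it as whether $y\in\rho_{\alpha}(x)$---but the logic is the same.
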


\begin{proof}
Let $\beta^{\prime}$ denote the permutation obtained from $\beta$
by adding $\left|\alpha\right|$ to every letter, so that $\pi$ is
the concatenation $\pi=\alpha\beta^{\prime}$.

Note that (b) is trivial if $\alpha$ is empty, so assume that $\alpha$
is nonempty for both (a) and (b). Let $y$ be the last letter of $\alpha$.
Since $y$ is smaller than every letter of $\beta^{\prime}$, we have
$\rho_{\pi}(y)=\beta^{\prime}$. Then $\rho_{\pi}(y)=\beta^{\prime}$
has even length because $\pi$ is Jacobi, and we have shown (a).

Now, let $x$ be an arbitrary letter of $\alpha$ other than the last
letter $y$. Suppose that $y$ is in $\rho_{\alpha}(x)$. Because
every letter of $\beta^{\prime}$ is larger than $x$, we know that
$\rho_{\pi}(x)$ is the concatenation $\rho_{\pi}(x)=\rho_{\alpha}(x)\beta^{\prime}.$
Since $\rho_{\pi}(x)$ and $\beta^{\prime}$ both have even length,
it follows that $\rho_{\alpha}(x)$ has even length as well. In the
case that $\rho_{\alpha}(x)$ does not contain $y$, we have $\rho_{\alpha}(x)=\rho_{\pi}(x)$,
which is of even length. Thus, (b) is proven.
\end{proof}

\subsection{\label{ss-213-231-312-doubly}\texorpdfstring{$213$}{213}-, \texorpdfstring{$231$}{231}-,
and \texorpdfstring{$312$}{312}-avoiding Jacobi permutations are
doubly Jacobi}

Because $312$ and $231$ are inverses, we know from Lemma \ref{l-pavinv}
that $\pi\in\mathfrak{S}_{n}$ avoids $312$ if and only if $\pi^{-1}$
avoids $231$. Perhaps surprisingly, the inverse preserves the Jacobi
property on these two avoidance classes.
\begin{thm}
\label{t-312-231-inv}For all $n\geq0$, a permutation $\pi\in\mathfrak{S}_{n}(312)$
is Jacobi if and only if $\pi^{-1}\in\mathfrak{S}_{n}(231)$ is Jacobi.
Consequently, $\pi\mapsto\pi^{-1}$ is a bijection from $\mathfrak{J}_{n}(312)$
to $\mathfrak{J}_{n}(231)$.
\end{thm}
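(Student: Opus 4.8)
The plan is to induct on $n$ and exploit the recursive block structure of $312$-avoiding permutations. Since $312^{-1}=231$, Lemma~\ref{l-pavinv} already guarantees that $\pi\in\mathfrak{S}_n(312)$ forces $\pi^{-1}\in\mathfrak{S}_n(231)$, so the only substantive claim is the biconditional that $\pi$ is Jacobi if and only if $\pi^{-1}$ is Jacobi; granting this, $\pi\mapsto\pi^{-1}$ is an involution on $\mathfrak{S}_n$ already mapping $\mathfrak{S}_n(312)$ onto $\mathfrak{S}_n(231)$, so it restricts to the desired bijection $\mathfrak{J}_n(312)\to\mathfrak{J}_n(231)$. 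First I would record the standard decomposition of a nonempty $\pi\in\mathfrak{S}_n(312)$: if $1$ occupies position $k$ in $\pi$, then (to avoid $312$ with $1$ as the middle letter) every letter left of $1$ is smaller than every letter right of $1$, whence $\pi=(\sigma\ominus1)\oplus\tau$ for unique $\sigma\in\mathfrak{S}_{k-1}(312)$ and $\tau\in\mathfrak{S}_{n-k}(312)$; conversely every permutation of this shape is $312$-avoiding.

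The next step is to pin down when a direct or skew sum is Jacobi. Using the recursive definition of Jacobi permutations together with Lemma~\ref{l-Jacobi}(a) and Lemma~\ref{l-prefixJacobi}, one checks two elementary facts: $\alpha\oplus\beta$ is Jacobi if and only if $\alpha$ and $\beta$ are both Jacobi and either $\alpha=\varepsilon$ or $|\beta|$ is even; and $\alpha\ominus\beta$ is Jacobi if and only if $\alpha$ and $\beta$ are both Jacobi, with no parity requirement (in $\alpha\ominus\beta$ no letter of $\beta$ lies both to the right of and above any letter of $\alpha$, while $\beta$ is a suffix). Applying these to $\pi=(\sigma\ominus1)\oplus\tau$, and noting that the summand $\sigma\ominus1$ is nonempty, yields the criterion: $\pi$ is Jacobi if and only if $\sigma$ is Jacobi, $\tau$ is Jacobi, and $|\tau|=n-k$ is even.

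Finally I would pass to the inverse. By Lemmas~\ref{l-invconcat1} and~\ref{l-invconcat2}, $\pi^{-1}=\big((\sigma\ominus1)\oplus\tau\big)^{-1}=(\sigma\ominus1)^{-1}\oplus\tau^{-1}=(1\ominus\sigma^{-1})\oplus\tau^{-1}$, which is again of the form ``nonempty summand $\oplus$ something'', so the same two sum facts give: $\pi^{-1}$ is Jacobi if and only if $\sigma^{-1}$ is Jacobi, $\tau^{-1}$ is Jacobi, and $|\tau^{-1}|=n-k$ is even. Since $\sigma$ and $\tau$ are $312$-avoiding of sizes $k-1<n$ and $n-k<n$, the induction hypothesis says $\sigma$ is Jacobi iff $\sigma^{-1}$ is Jacobi and $\tau$ is Jacobi iff $\tau^{-1}$ is Jacobi; hence the two criteria coincide, closing the induction (with the trivial base case $n=0$). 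The main obstacle is getting the two sum characterizations right, especially the direct-sum one: the parity condition ``$\alpha=\varepsilon$ or $|\beta|$ even'' is precisely what propagates the ``$|\tau|$ even'' constraint identically to both sides, so its edge cases must be handled carefully; the skew-sum case is comparatively painless since no parity condition intervenes. Everything else is bookkeeping with the inverse-of-a-sum lemmas.
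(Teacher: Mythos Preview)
Your proof is correct. Both you and the paper induct on $n$ and decompose a $312$-avoider around the position of $1$, but the arguments diverge after that. The paper proves the two implications separately: starting from $\pi\in\mathfrak{J}_n(312)$ it writes $\pi=(\alpha 1)\oplus\bar\beta$, applies the induction hypothesis to the summands, and reassembles $\pi^{-1}$ as Jacobi using Lemma~\ref{l-rlminJacobi} (or Lemma~\ref{l-Jacobi}(b) in the degenerate case); then for the converse it starts from $\pi\in\mathfrak{J}_n(231)$, decomposes around $n$ instead, and repeats with a parallel case analysis. Your route is more symmetric: you isolate once and for all the two general facts ``$\alpha\oplus\beta$ Jacobi $\Leftrightarrow$ $\alpha,\beta$ Jacobi and ($\alpha=\varepsilon$ or $|\beta|$ even)'' and ``$\alpha\ominus\beta$ Jacobi $\Leftrightarrow$ $\alpha,\beta$ Jacobi'', then observe that $\pi=(\sigma\ominus 1)\oplus\tau$ and $\pi^{-1}=(1\ominus\sigma^{-1})\oplus\tau^{-1}$ have the \emph{same shape}, so the Jacobi criteria for $\pi$ and $\pi^{-1}$ reduce to identical conditions modulo the induction hypothesis on $\sigma$ and $\tau$. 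This buys you a single pass instead of two, with no case split on whether $\beta$ (or $\alpha$) is empty; the cost is that you must state and verify the two sum lemmas, whose ingredients are scattered across Lemmas~\ref{l-Jacobi}, \ref{l-rlminJacobi}, and~\ref{l-prefixJacobi} in the paper but never packaged in this form.
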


Throughout this proof and the next one, we adopt the notation $\bar{\pi}$
for the standardization $\std(\pi)$.
\begin{proof}
We proceed by induction, with the base case ($n=0$) being true by
convention. Suppose that the result holds for all lengths less than
$n$.

Let $\pi\in\mathfrak{J}_{n}(312)$. We write $\pi=\alpha1\beta$,
so that $\alpha$ and $\beta$ are $312$-avoiding Jacobi permutations,
$\beta$ is of even length, and every letter of $\alpha$ is smaller
than every letter of $\beta$. In fact, we can write $\pi$ as the
direct sum $\pi=(\alpha1)\oplus\bar{\beta}$, where both summands
$\alpha1$ and $\bar{\beta}$ are Jacobi. Let us consider two cases:
\begin{itemize}
\item Suppose that $\bar{\beta}$ is nonempty, so that the lengths of $\alpha1$
and $\bar{\beta}$ are both less than $n$. By the induction hypothesis,
both $(\alpha1)^{-1}$ and $\bar{\beta}^{-1}$ are Jacobi. Recall
that $\pi^{-1}=(\alpha1)^{-1}\oplus\bar{\beta}^{-1}=(\alpha1)^{-1}\beta^{-1}$
by Lemma \ref{l-invconcat1}. Taking $y$ to be the last letter of
$(\alpha1)^{-1}$, we see from Lemma~\ref{l-rlminJacobi} that $\pi^{-1}$
is Jacobi.
\item If $\beta$ is empty, then $\pi=\bar{\alpha}\ominus1$, so we have
$\pi^{-1}=1\ominus\bar{\alpha}^{-1}=n\bar{\alpha}^{-1}$ by Lemma
\ref{l-invconcat2}. Moreover, because $\alpha1$ is Jacobi, $\alpha$
and thus $\bar{\alpha}$ are Jacobi as well. The induction hypothesis
then tells us that $\bar{\alpha}^{-1}$ is Jacobi, so $\pi^{-1}=n\bar{\alpha}^{-1}$
is Jacobi by Lemma \ref{l-Jacobi} (b).
\end{itemize}
\indent Now, let $\pi\in\mathfrak{J}_{n}(231)$. Writing $\pi=\alpha n\beta$,
we have that $\alpha$ and $\beta$ are $231$-avoiding permutations
with every letter of $\alpha$ being smaller than every letter of
$\beta$. Thus, we can write $\pi=\alpha\oplus\overline{n\beta}$.
By Lemmas \ref{l-Jacobi} (a) and \ref{l-prefixJacobi} (b), $\overline{n\beta}$
and $\alpha$ are Jacobi. Dividing into cases again:
\begin{itemize}
\item If $\alpha$ is nonempty, then $\alpha$ and $\overline{n\beta}$
have lengths less than $n$, so the induction hypothesis tells us
that $\alpha^{-1}$ and $(\overline{n\beta})^{-1}$ are Jacobi. Moreover,
$n\beta$\textemdash and consequently $(n\beta)^{-1}$\textemdash have
even length according to Lemma \ref{l-prefixJacobi} (a). Similar
to before, we have $\pi^{-1}=\alpha^{-1}\oplus(\overline{n\beta})^{-1}=\alpha^{-1}(n\beta)^{-1}$
by Lemma \ref{l-invconcat1}, and taking $y$ to be the last letter
of $\alpha^{-1}$, Lemma~\ref{l-rlminJacobi} implies that $\pi^{-1}=\alpha^{-1}(n\beta)^{-1}$
is Jacobi.
\item If $\alpha$ is empty, then we have $\pi=n\beta=1\ominus\beta$, so
$\pi^{-1}=\beta^{-1}\ominus1=\beta^{-1}1$ by Lemma \ref{l-invconcat2}.
We know that $\beta$ is Jacobi because of Lemma \ref{l-Jacobi} (a),
so from the induction hypothesis, $\beta^{-1}$ is Jacobi as well.
Therefore, $\pi^{-1}=\beta^{-1}1$ is Jacobi.\qedhere
\end{itemize}
\end{proof}
Recall that Jacobi trees are the unlabeled trees corresponding to
the increasing binary trees of $312$-avoiding Jacobi permutations,
and that dual Jacobi trees are those corresponding to the decreasing
binary trees of $231$-avoiding Jacobi permutations. It is not obvious
from their definitions that Jacobi and dual Jacobi trees are in bijection,
but Theorem \ref{t-312-231-inv} shows that the inverse induces a
bijection between $\mathcal{J}_{n}$ and $\tilde{\mathcal{J}}_{n}$,
justifying Theorem \ref{t-dualjactree}.

In analogy to the notion of doubly alternating permutations\textemdash studied,
e.g., in \cite{Gessel2024,Guibert2000,Ouchterlony2005,Stanley2007}\textemdash let
us call a permutation $\pi$ \textit{doubly Jacobi} if both $\pi$
and $\pi^{-1}$ are Jacobi. Then Theorem~\ref{t-312-231-inv} implies
that all permutations in $\mathfrak{J}_{n}(312)$ and $\mathfrak{J}_{n}(231)$
are doubly Jacobi, and the next result tells us that the same is true
for $\mathfrak{J}_{n}(213)$.
\begin{thm}
\label{t-213-inv}For all $n\geq0$, the permutation $\pi^{-1}$ is
Jacobi whenever $\pi\in\mathfrak{J}_{n}(213)$.
\end{thm}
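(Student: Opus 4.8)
The plan is to induct on $n$, mirroring the structure of the proof of Theorem \ref{t-312-231-inv} but using the skew-sum decomposition of $213$-avoiding permutations. The base case $n=0$ is immediate. For the inductive step, let $\pi\in\mathfrak{J}_n(213)$ with $n\geq 1$; since both ``Jacobi'' and ``inverse'' are compatible with standardization, we may assume $\pi\in\mathfrak{S}_n$. Write $\pi=\alpha y\beta$ with $y=\min S=1$. Avoidance of $213$ forces every letter of $\alpha$ to exceed every letter of $\beta$ --- otherwise some letter of $\alpha$, the letter $1$, and a larger letter of $\beta$ would form a $213$ --- and makes $\alpha$ and $\beta$ (being subwords of $\pi$) $213$-avoiding; the recursive definition of Jacobi makes $\alpha,\beta$ Jacobi with $|\beta|$ even. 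Together these say $\pi=\bar\alpha\ominus(1\oplus\bar\beta)$, where $\bar\alpha=\std(\alpha)$ and $\bar\beta=\std(\beta)$.

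Next I would pass to the inverse with Lemmas \ref{l-invconcat2} and \ref{l-invconcat1}:
\[
\pi^{-1}=(1\oplus\bar\beta)^{-1}\ominus\bar\alpha^{-1}=(1\oplus\bar\beta^{-1})\ominus\bar\alpha^{-1}.
\]
Because $|\alpha|,|\beta|<n$, the induction hypothesis gives that $\bar\alpha^{-1}$ and $\bar\beta^{-1}$ are Jacobi, and $\bar\beta^{-1}$ has even length. The recursive definition of Jacobi then shows $1\oplus\bar\beta^{-1}$ is Jacobi: its minimum is its first letter, the factor preceding it is empty, and the factor following it has the same standardization as $\bar\beta^{-1}$ (hence is Jacobi) and even length. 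Thus $\pi^{-1}$ is a skew sum of two Jacobi permutations.

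So the crux is the (easy) fact that a skew sum $\gamma\ominus\delta$ of Jacobi permutations is again Jacobi, which I would verify directly from the definition of the words $\rho$. If a letter $x$ of $\gamma\ominus\delta$ lies in the $\delta$-block, then everything to its right also lies in the $\delta$-block, so $\rho_{\gamma\ominus\delta}(x)$ is a translate of $\rho_\delta(x)$; if $x$ lies in the $\gamma$-block, the run of letters immediately to its right that exceed $x$ cannot reach the $\delta$-block, since every letter of that block is smaller than every letter of the $\gamma$-block, so $\rho_{\gamma\ominus\delta}(x)$ is a translate of $\rho_\gamma(x)$. In either case the length is even, so $\gamma\ominus\delta$ is Jacobi; applying this with $\gamma=1\oplus\bar\beta^{-1}$ and $\delta=\bar\alpha^{-1}$ closes the induction (the degenerate cases $\alpha=\varepsilon$ or $\beta=\varepsilon$ are subsumed once we read $\gamma\ominus\varepsilon$ as $\gamma$). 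I do not expect a genuine obstacle here: the only things to get right are the decomposition $\pi=\bar\alpha\ominus(1\oplus\bar\beta)$ and the behaviour of $\rho$ under skew sums, after which the result falls out of the lemmas already in hand.
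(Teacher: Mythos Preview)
Your proof is correct and follows essentially the same inductive route as the paper's: decompose $\pi=\bar\alpha\ominus(1\beta)$, invert via Lemma~\ref{l-invconcat2}, and use the induction hypothesis on the pieces. The only cosmetic differences are that the paper applies the hypothesis to $1\beta$ rather than to $\bar\beta$ (necessitating a separate case when $\alpha=\varepsilon$), and that where you verify directly that a skew sum of Jacobi permutations is Jacobi, the paper cites Lemma~\ref{l-lrminJacobi}---which is exactly that fact in thin disguise, since the first letter of the lower block in a skew sum is a left-to-right minimum.
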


\begin{proof}
We follow the same inductive approach as the proof of Theorem \ref{t-312-231-inv}.
Again, the base case is trivial, so assume that the result holds for
all lengths less than $n$. Let $\pi\in\mathfrak{J}_{n}(213)$, and
write $\pi=\alpha1\beta$ where $\alpha$ and $\beta$ are $213$-avoiding
Jacobi permutations, $\beta$ is of even length, and every letter
of $\alpha$ is larger than every letter of $\beta$. In other words,
$\pi$ is the skew sum $\pi=\bar{\alpha}\ominus(1\beta)$. Note that
$1\beta$ is Jacobi because $\beta$ is Jacobi and has even length.
\begin{itemize}
\item If $\alpha$ is nonempty, then both $\bar{\alpha}$ and $1\beta$
have length less than $n$, so $\bar{\alpha}^{-1}$ and $(1\beta)^{-1}$
are Jacobi by the induction hypothesis. Because the first letter of
$\bar{\alpha}^{-1}$ corresponds to a left-to-right minimum of $\pi^{-1}=(1\beta)^{-1}\ominus\bar{\alpha}^{-1}$,
it follows from Lemma \ref{l-lrminJacobi} that $\pi^{-1}$ is Jacobi.
\item If $\alpha$ is empty, then $\pi^{-1}=1\oplus\bar{\beta}^{-1}=1\beta^{-1}$.
Observe that $\beta^{-1}$ is Jacobi due to the induction hypothesis,
and because $\beta^{-1}$ is of even length, $\pi^{-1}=1\beta^{-1}$
is Jacobi.\qedhere
\end{itemize}
\end{proof}

\subsection{Enumeration, descents, and left-to-right minima}

The fact that $\mathfrak{J}_{n}(312)$ and $\mathfrak{J}_{n}(231)$
are in bijection allows several of our results for $\mathfrak{J}_{n}(312)$
to directly translate over to $\mathfrak{J}_{n}(231)$.
\begin{thm}
\label{t-231}For all $n\geq0$, we have
\[
j_{2n}(231)=\frac{1}{2n+1}{3n \choose n}\quad\text{and}\quad j_{2n+1}(231)=\frac{1}{2n+1}{3n+1 \choose n+1}.
\]
\end{thm}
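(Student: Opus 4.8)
The plan is to obtain this as an immediate consequence of results already established earlier in the paper. By Theorem~\ref{t-312-231-inv}, the map $\pi\mapsto\pi^{-1}$ is a bijection from $\mathfrak{J}_{n}(312)$ onto $\mathfrak{J}_{n}(231)$, and hence $j_{n}(231)=j_{n}(312)$ for every $n\geq0$. Substituting the formulas $j_{2n}(312)=\frac{1}{2n+1}\binom{3n}{n}$ and $j_{2n+1}(312)=\frac{1}{2n+1}\binom{3n+1}{n+1}$ from Theorem~\ref{t-213-312} then gives the claimed values. (Equivalently, one may phrase this via Theorem~\ref{t-dualjactree}: the inverse induces a bijection $\mathcal{J}_{n}\to\tilde{\mathcal{J}}_{n}$, and $\mathfrak{J}_{n}(231)$ is in bijection with $\tilde{\mathcal{J}}_{n}$.) There is no real obstacle here; the only thing to be careful about is that Theorem~\ref{t-213-312} is stated for standardized permutations, so one should note at the outset that $\pi$ is Jacobi and $231$-avoiding precisely when $\std(\pi)$ is, which is part of the standing conventions in Sections~\ref{s-Jac} and~\ref{s-231}.

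For completeness I would remark that a self-contained proof avoiding the detour through $\mathfrak{J}_{n}(312)$ is also available: one could count dual Jacobi trees directly from the recursive decomposition of Figure~\ref{f-dualdecomp} (Propositions~\ref{p-dualodd}--\ref{p-dualrev}), introducing ordinary generating functions for even- and odd-sized dual Jacobi trees, extracting the resulting system of algebraic equations from the decomposition, and verifying that these equations match those satisfied by the generating functions for ternary trees and pairs of ternary trees used in the proof of Theorem~\ref{t-jactree}. The bookkeeping there is slightly more delicate because the block $T''$ appearing in the decomposition is the penundergrowth rather than the undergrowth, so its size parity changes the cases; but since Theorem~\ref{t-312-231-inv} already supplies the bijection, the short argument above is the one to present.
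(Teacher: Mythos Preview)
Your proof is correct and matches the paper's approach exactly: the paper's one-line proof cites Lemma~\ref{l-pavinv} together with Theorems~\ref{t-213-312} and~\ref{t-312-231-inv}, which is precisely your argument that the inverse gives a bijection $\mathfrak{J}_{n}(312)\to\mathfrak{J}_{n}(231)$ and then one substitutes the known formulas for $j_{n}(312)$. Your additional remarks about standardization and the alternative dual-Jacobi-tree route are accurate but unnecessary for the statement at hand.
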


\begin{proof}
This follows immediately from Lemma \ref{l-pavinv} along with Theorems
\ref{t-213-312} and \ref{t-312-231-inv}.
\end{proof}
\begin{thm}
\label{t-231-des}We have
\begin{align*}
j_{2n,k}^{\des}(231) & =\frac{1}{n}{n \choose k-n}{2n \choose k+1} & \text{for all }n\geq1\text{ and }k\geq0;\\
j_{2n+1,k}^{\des}(231) & =\frac{1}{n+1}{n+1 \choose k-n}{2n \choose k} & \text{for all }n,k\geq0.
\end{align*}
\end{thm}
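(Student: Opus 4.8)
The plan is to derive Theorem~\ref{t-231-des} from the $\des$-distribution over $\mathfrak{J}_{n}(312)$ recorded in Theorem~\ref{t-213-312-des}, transported through the bijection $\pi\mapsto\pi^{-1}$ from $\mathfrak{J}_{n}(312)$ to $\mathfrak{J}_{n}(231)$ supplied by Theorem~\ref{t-312-231-inv}. For this to work I need one extra input: that inversion preserves the number of descents on $312$-avoiding permutations, i.e.\ $\des(\pi)=\des(\pi^{-1})$ for every $\pi\in\mathfrak{S}_{n}(312)$. (This is the descent analogue of Lemma~\ref{l-lrminv}, which already gives the corresponding statement for left-to-right minima; note that $\des$ is in general \emph{not} invariant under inversion.) Granting this, the map $\pi\mapsto\pi^{-1}$ carries $\mathfrak{J}_{n}(312)$ bijectively onto $\mathfrak{J}_{n}(231)$ while fixing $\des$, so $j_{n,k}^{\des}(231)=j_{n,k}^{\des}(312)$ for all $n,k$, and the formulas are exactly those of Theorem~\ref{t-213-312-des}.

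Thus the substance of the proof is the auxiliary claim, which I would establish by induction on $\left|\pi\right|$. Write $\pi=\alpha1\beta$ with $1=\min$; avoidance of $312$ forces every letter of $\alpha$ to be smaller than every letter of $\beta$, so $\pi=(\alpha1)\oplus\bar{\beta}$, and, as a pattern, $\alpha1=\bar{\alpha}\ominus1$. By Lemmas~\ref{l-invconcat1} and~\ref{l-invconcat2}, $\pi^{-1}=(\alpha1)^{-1}\oplus\bar{\beta}^{-1}$ with $(\alpha1)^{-1}=1\ominus\bar{\alpha}^{-1}$. A direct sum introduces no descent at its junction whereas a skew sum introduces exactly one, so counting descents across these sums yields
\[
\des(\pi)=[\alpha\neq\varepsilon]+\des(\bar{\alpha})+\des(\bar{\beta}),\qquad\des(\pi^{-1})=[\alpha\neq\varepsilon]+\des(\bar{\alpha}^{-1})+\des(\bar{\beta}^{-1}).
\]
Since $\bar{\alpha}$ and $\bar{\beta}$ are shorter $312$-avoiding permutations, the induction hypothesis gives $\des(\bar{\alpha})=\des(\bar{\alpha}^{-1})$ and $\des(\bar{\beta})=\des(\bar{\beta}^{-1})$, and the claim follows; the base case $\pi=\varepsilon$ is trivial.

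The only delicate point is the descent bookkeeping above: one must track whether $\alpha$ or $\beta$ is empty, and use that standardization affects no descent count and that the empty permutation is its own inverse. I do not anticipate a genuine obstruction here. Should one prefer to avoid the auxiliary fact, an alternative is to work with $\mathfrak{J}_{n}(231)$ directly: decomposing a nonempty $231$-avoiding Jacobi permutation through its maximum as $\pi=\alpha n\beta$, one checks that this is a $231$-avoiding Jacobi permutation precisely when $\bar{\alpha},\bar{\beta}$ are $231$-avoiding Jacobi permutations, every letter of $\alpha$ lies below every letter of $\beta$, and $\alpha$ is empty or $\left|\beta\right|$ is odd, and that $\des(\pi)=\des(\bar{\alpha})+[\beta\neq\varepsilon]+\des(\bar{\beta})$. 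Passing to the ordinary generating functions $B_{\e}$ and $B_{\o}$ for the even- and odd-length classes by descents and separating by parity in $x$ then reproduces the functional equations of Lemma~\ref{l-jactreefunceq}(b) and~(d), whence $B_{\e}=F_{\e}$ and $B_{\o}=F_{\o}$ and the coefficient extraction performed in the proof of Theorem~\ref{t-213-312-des} applies unchanged.
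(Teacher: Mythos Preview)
Your proposal is correct and follows essentially the same route as the paper: transport the $\des$-distribution from $\mathfrak{J}_{n}(312)$ to $\mathfrak{J}_{n}(231)$ via the inversion bijection of Theorem~\ref{t-312-231-inv}, using that $\des(\pi)=\des(\pi^{-1})$ for all $\pi\in\mathfrak{S}_{n}(312)$. The only difference is that the paper cites this auxiliary fact from the literature (specifically \cite[Corollary~3.2]{Stump2008/09}), whereas you supply a clean self-contained inductive proof via the $\oplus/\ominus$ decomposition; your argument is correct and the descent bookkeeping is handled properly.
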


\begin{proof}
It is known that $\des(\pi)=\des(\pi^{-1})$ for any $\pi\in\mathfrak{S}_{n}(312)$;
see, e.g., \cite[Corollary 3.2]{Stump2008/09}. The desired result
follows from this fact, Lemma \ref{l-pavinv}, and Theorems \ref{t-213-312-des}
and \ref{t-312-231-inv}.
\end{proof}
\begin{thm}
\label{t-231-lrmin}For all $n\geq k\geq1$, we have 
\[
j_{n,k}^{\lrmin}(231)=\frac{2k}{3n-k}{\frac{3n-k}{2} \choose n}
\]
if $n$ and $k$ have the same parity, and $j_{n,k}^{\lrmin}(231)=0$
otherwise.
\end{thm}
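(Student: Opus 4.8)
The plan is to deduce this theorem from the corresponding result for $312$-avoiding Jacobi permutations (Theorem \ref{t-213-312-lrmin}) via the inverse map, in exactly the same spirit as the proof of Theorem \ref{t-231-des}. First I would recall from Theorem \ref{t-312-231-inv} that $\pi\mapsto\pi^{-1}$ is a bijection from $\mathfrak{J}_{n}(312)$ to $\mathfrak{J}_{n}(231)$. Hence, to conclude that $j_{n,k}^{\lrmin}(231)=j_{n,k}^{\lrmin}(312)$ for all $n,k$, it suffices to verify that this bijection preserves the number of left-to-right minima.

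The key point is that $\lrmin(\pi)=\lrmin(\pi^{-1})$ for \emph{every} permutation $\pi$, which is immediate from Lemma \ref{l-lrminv}: that lemma says $\pi_{k}$ is a left-to-right minimum of $\pi$ if and only if the index $k$ is a left-to-right minimum of $\pi^{-1}$, so the map $\pi_{k}\mapsto k$ is a bijection between the set of left-to-right minima of $\pi$ and the set of left-to-right minima of $\pi^{-1}$. Combining this with the bijection of Theorem \ref{t-312-231-inv}, for each fixed $k$ the inverse map restricts to a bijection
\[
\{\,\pi\in\mathfrak{J}_{n}(312):\lrmin(\pi)=k\,\}\ \longrightarrow\ \{\,\pi\in\mathfrak{J}_{n}(231):\lrmin(\pi)=k\,\},
\]
so $j_{n,k}^{\lrmin}(231)=j_{n,k}^{\lrmin}(312)$. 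The stated formula, including the parity condition on $n$ and $k$, then follows verbatim from Theorem \ref{t-213-312-lrmin}.

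There is essentially no obstacle here, since all of the substantive work has already been carried out in establishing Theorem \ref{t-312-231-inv} (the nontrivial fact that the inverse preserves the Jacobi property on these classes) and in Lemma \ref{l-lrminv}. The only care needed is purely bookkeeping: one should note that the equidistribution argument transfers the formula of Theorem \ref{t-213-312-lrmin} together with its vanishing statement for $n\not\equiv k\pmod 2$, so no separate verification of the boundary cases is required.
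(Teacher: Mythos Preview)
Your proposal is correct and takes essentially the same approach as the paper: both deduce the result from Theorem~\ref{t-213-312-lrmin} by using the bijection $\pi\mapsto\pi^{-1}$ of Theorem~\ref{t-312-231-inv} together with the fact (from Lemma~\ref{l-lrminv}) that $\lrmin(\pi)=\lrmin(\pi^{-1})$.
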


\begin{proof}
Lemma \ref{l-lrminv} implies that $\lrmin(\pi)=\lrmin(\pi^{-1})$
for all $\pi\in\mathfrak{S}_{n}$. The desired result follows from
this fact, Lemma \ref{l-pavinv}, and Theorems \ref{t-213-312-lrmin}
and \ref{t-312-231-inv}.
\end{proof}

\subsection{Last letter}

Unlike $\des$ and $\lrmin$, the distribution of $\last$ over $\mathfrak{J}_{n}(231)$
does not translate nicely from its distribution over $\mathfrak{J}_{n}(213)$
or $\mathfrak{J}_{n}(312)$. Instead, we will prove the next result
using dual Jacobi trees, which we recall are in bijection with $231$-avoiding
Jacobi permutations.
\begin{thm}
\label{t-231-last}For all $n,k\geq1$, we have 
\begin{align*}
j_{2n,k}^{\last}(231) & =\sum_{m=0}^{n-1}\frac{n+k-3m-1}{n+k-1}{n-m-1 \choose k-2m-1}{n+k-1 \choose m}\quad\text{and}\\
j_{2n+1,k}^{\last}(231) & =\sum_{m=0}^{n-1}\frac{n+k-3m-1}{n+k-1}{n-m \choose k-2m-1}{n+k-1 \choose m}.
\end{align*}
\begin{table}
\begin{centering}
\renewcommand{\arraystretch}{1.1}%
\begin{tabular}{|>{\centering}p{20bp}|>{\centering}p{20bp}|>{\centering}p{20bp}|>{\centering}p{20bp}|>{\centering}p{20bp}|>{\centering}p{20bp}|>{\centering}p{20bp}|>{\centering}p{20bp}|>{\centering}p{20bp}|}
\hline 
$n\backslash k$ & $1$ & $2$ & $3$ & $4$ & $5$ & $6$ & $7$ & $8$\tabularnewline
\hline 
$1$ & $1$ &  &  &  &  &  &  & \tabularnewline
\hline 
$2$ & $1$ &  &  &  &  &  &  & \tabularnewline
\hline 
$3$ & $1$ & $1$ &  &  &  &  &  & \tabularnewline
\hline 
$4$ & $1$ & $1$ & $1$ &  &  &  &  & \tabularnewline
\hline 
$5$ & $1$ & $2$ & $2$ & $2$ &  &  &  & \tabularnewline
\hline 
$6$ & $1$ & $2$ & $3$ & $3$ & $3$ &  &  & \tabularnewline
\hline 
$7$ & $1$ & $3$ & $5$ & $7$ & $7$ & $7$ &  & \tabularnewline
\hline 
$8$ & $1$ & $3$ & $6$ & $9$ & $12$ & $12$ & $12$ & \tabularnewline
\hline 
$9$ & $1$ & $4$ & $9$ & $16$ & $23$ & $30$ & $30$ & $30$\tabularnewline
\hline 
\end{tabular}
\par\end{centering}
\caption{\label{tb-231-last}The numbers $j_{n,k}^{\protect\last}(231)$ up
to $n=9$.}
\end{table}
\end{thm}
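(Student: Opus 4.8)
The plan is to reduce the problem to counting dual Jacobi trees by the size of their right branch. Since $\tilde{\Theta}$ restricts to a bijection from $\mathfrak{J}_{N}(231)$ onto $\tilde{\mathcal{J}}_{N}$ and Proposition \ref{p-231-last} gives $\last(\pi)=N-\rbrch(\tilde{\Theta}(\pi))+1$ for every $\pi\in\mathfrak{S}_{N}(231)$, the number $j_{N,k}^{\last}(231)$ equals the number of dual Jacobi trees on $N$ nodes whose right branch has size $N-k+1$. Hence it suffices to compute the generating function $P(t,x)\coloneqq\sum_{T}t^{\rbrch(T)}x^{\left|T\right|}$, summed over all nonempty dual Jacobi trees $T$, and then read off $j_{N,k}^{\last}(231)=[t^{N-k+1}x^{N}]\,P(t,x)$.

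First I would translate the recursive decomposition of dual Jacobi trees justified by Propositions \ref{p-dualodd}--\ref{p-dualrev} into a functional equation for $P$. Write $A=A(x)=\sum_{n\geq0}\frac{1}{2n+1}\binom{3n}{n}x^{2n}$ and $B=B(x)=\sum_{n\geq0}\frac{1}{2n+1}\binom{3n+1}{n+1}x^{2n+1}$ for the ordinary generating functions of dual Jacobi trees of even and odd size (Theorem \ref{t-dualjactree}). The four cases of the decomposition---a single node; a root with an even-sized dual Jacobi left subtree and a leaf as its right child; and the two gluing operations, which attach to a nonempty tree $T'$ either an odd-sized dual Jacobi tree together with one new right-branch node, or an even-sized dual Jacobi tree together with two new right-branch nodes---give $P=tx+t^{2}x^{2}A+(txB+t^{2}x^{2}A)P$, and hence $P=\frac{tx(1+txA)}{1-txB-t^{2}x^{2}A}$. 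I would also record the identities $A=1+x^{2}A^{3}$ and $B=xA^{2}$, which follow from Theorem \ref{t-dualjactree} together with the classical description of these coefficients via ternary trees (the series $\mathcal{T}$ with $\mathcal{T}=1+y\mathcal{T}^{3}$ satisfies $A(x)=\mathcal{T}(x^{2})$ and $B(x)=x\mathcal{T}(x^{2})^{2}$), so that $P=\frac{tx(1+txA)}{1-tx^{2}A^{2}-t^{2}x^{2}A}$.

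The last step, which I expect to be the main obstacle, is the coefficient extraction. Putting $u=x^{2}$ and letting $a=a(u)$ be the series with $a=1+ua^{3}$, so that $A(x)=a(x^{2})$, one gets $P=\frac{tx}{1-tua(a+t)}+\frac{t^{2}ua}{1-tua(a+t)}$, where the two summands collect exactly the odd-length and even-length contributions. Expanding $\frac{1}{1-tua(a+t)}=\sum_{j\geq0}\sum_{i=0}^{j}\binom{j}{i}t^{2j-i}u^{j}a^{j+i}$ and applying Lagrange inversion in the form $[u^{m}]\,a^{p}=\frac{p}{3m+p}\binom{3m+p}{m}$ to every power of $a$ that occurs expresses $[t^{N-k+1}x^{N}]\,P$ as an explicit double sum; setting $N=2n$ and reindexing by $m=n-1-j$, resp.\ $N=2n+1$ and reindexing by $m=n-j$, should then produce exactly the two displayed formulas, with the range $0\leq m\leq n-1$ enforced automatically by vanishing binomial coefficients. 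The parts requiring care will be the binomial algebra that matches the Lagrange kernel factor $\frac{n+k-3m-1}{n+k-1}$ with the desired prefactor, and checking that the decomposition of Propositions \ref{p-dualodd}--\ref{p-dualrev} partitions the nonempty dual Jacobi trees without overlap or omission, including the degenerate subcases where $T'$ or a glued-on tree is empty.
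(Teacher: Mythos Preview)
Your proposal is correct and follows essentially the same route as the paper's proof. The paper also reduces to the dual Jacobi tree decomposition of Propositions~\ref{p-dualodd}--\ref{p-dualrev} via Proposition~\ref{p-231-last}, derives the corresponding rational expression in $G_{\e}$ (your $A$) for the generating function, and then extracts coefficients using the convolution identity $[u^{m}]\,a^{p}=\frac{p}{3m+p}\binom{3m+p}{m}$; the only cosmetic difference is that the paper's generating function $J^{\last}(t,x;231)$ weights each tree by $t^{\last(\pi)}$ rather than by $t^{\rbrch(T)}$, which amounts to your $P$ under the substitution $t\mapsto 1/t$, $x\mapsto tx$ and an overall factor of $t$.
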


Our first step in proving Theorem \ref{t-231-last} is to establish
the following expression for the generating function
\[
J^{\last}(t,x;231)\coloneqq\sum_{n=1}^{\infty}\sum_{\pi\in\mathfrak{J}_{n}(231)}t^{\last(\pi)}x^{n}=\sum_{n=1}^{\infty}\sum_{k=1}^{n}j_{n,k}^{\last}(231)t^{k}x^{n}
\]
in terms of the generating function for even-sized dual Jacobi trees.
\begin{prop}
\label{p-231-lastgf}We have 
\begin{align*}
J^{\last}(t,x;231) & =\frac{tx(1+xG_{\e}(tx))}{1-x^{2}G_{\e}(tx)\left(1+tG_{\e}(tx)\right)}
\end{align*}
where $G_{\e}(x)=\sum_{n=0}^{\infty}\frac{1}{2n+1}{3n \choose n}x^{2n}$
is the generating function for dual Jacobi trees of even size \textup{(}equivalently,
Jacobi trees of even size, or 231-avoiding Jacobi permutations of
even length\textup{)}.
\end{prop}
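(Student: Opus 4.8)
The plan is to pass to dual Jacobi trees and read off a functional equation from the recursive decomposition of Figure~\ref{f-dualdecomp}. Recall that $\tilde{\Theta}$ restricts to a bijection $\mathfrak{J}_{n}(231)\to\tilde{\mathcal{J}}_{n}$, and that Proposition~\ref{p-231-last} gives $\last(\pi)=n-\rbrch(\tilde{\Theta}(\pi))+1$ for $231$-avoiding $\pi$. So I would weight each dual Jacobi tree $T$ by giving weight $x$ to every node on its right branch and weight $tx$ to every other node, i.e. set $w(T)\coloneqq x^{\rbrch(T)}(tx)^{|T|-\rbrch(T)}$. Then $t^{\last(\pi)}x^{|\pi|}=t\cdot w(\tilde{\Theta}(\pi))$, so with $H=H(t,x)\coloneqq\sum_{T}w(T)$ summed over all dual Jacobi trees (the empty tree included, with $w(\varepsilon)=1$), the goal reduces to computing $H$ and returning $t(H-1)$.

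Next I would translate the recursive decomposition of dual Jacobi trees (Propositions~\ref{p-dualodd}--\ref{p-dualrev}) into an equation for $H$, tracking how $w$ behaves in each case. A single node contributes $x$. In the construction of Proposition~\ref{p-dualodd} built from a \emph{nonempty} dual Jacobi tree $T'$ and an odd-size dual Jacobi tree $T''$, exactly one new node joins the right branch while all of $T''$ sits in a left subtree (hence off the right branch), so $w(T)=x\cdot w(T')\cdot(tx)^{|T''|}$; summing gives $x(H-1)G_{\o}(tx)$, where $G_{\o}(y)\coloneqq\sum_{T''\text{ odd}}y^{|T''|}=\sum_{n\geq0}\frac{1}{2n+1}\binom{3n+1}{n+1}y^{2n+1}$ by Theorem~\ref{t-dualjactree}. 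The remaining construction builds $T$ from a dual Jacobi tree $T'$ (possibly empty) and an even-size dual Jacobi tree $T''$ by adding two new right-branch nodes and hanging $T''$ off as a left subtree, so $w(T)=x^{2}\cdot w(T')\cdot(tx)^{|T''|}$; since $T'=\varepsilon$ here recovers exactly the ``root with even left subtree and leaf right child'' base case, summing over all $T'$ gives $x^{2}HG_{\e}(tx)$. As these families partition the nonempty dual Jacobi trees,
\[
H=1+x+x(H-1)G_{\o}(tx)+x^{2}HG_{\e}(tx).
\]
This is linear in $H$; solving and simplifying $t(H-1)$ yields
\[
J^{\last}(t,x;231)=\frac{tx\bigl(1+xG_{\e}(tx)\bigr)}{1-xG_{\o}(tx)-x^{2}G_{\e}(tx)}.
\]

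Finally I would eliminate $G_{\o}$ via the identity $G_{\o}(y)=yG_{\e}(y)^{2}$. This follows at once from the decomposition of an odd-size (dual) Jacobi tree used in the proof of Theorem~\ref{t-jactree} — its root has a left and a right subtree, both of even size — together with Theorem~\ref{t-dualjactree} (equivalently, it is the Fuss--Catalan identity $\bigl(\sum_{n\geq0}\frac{1}{2n+1}\binom{3n}{n}z^{n}\bigr)^{2}=\sum_{n\geq0}\frac{1}{2n+1}\binom{3n+1}{n+1}z^{n}$). Substituting $xG_{\o}(tx)=tx^{2}G_{\e}(tx)^{2}$ turns the denominator into $1-tx^{2}G_{\e}(tx)^{2}-x^{2}G_{\e}(tx)=1-x^{2}G_{\e}(tx)\bigl(1+tG_{\e}(tx)\bigr)$, which is precisely the claimed form. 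I expect the main obstacle to be the bookkeeping in the second paragraph: verifying from Figure~\ref{f-dualdecomp} that in each case the freshly created nodes are exactly those added to the right branch while the grafted tree $T''$ contributes only off-branch (weight-$tx$) nodes, and confirming that the three cases genuinely partition the nonempty dual Jacobi trees with no overlap (in particular that the base case is the $T'=\varepsilon$ instance of the last construction, so it is not double-counted).
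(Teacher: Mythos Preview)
Your proposal is correct and follows essentially the same approach as the paper: translate to dual Jacobi trees via $\tilde{\Theta}$ and Proposition~\ref{p-231-last}, read off a linear functional equation from the recursive decomposition of Figure~\ref{f-dualdecomp}, and then eliminate $G_{\o}$ using $G_{\o}(y)=yG_{\e}(y)^{2}$. The only cosmetic difference is that the paper weights the root by $tx$ and the remaining right-branch nodes by $x$ (working directly with $J^{\last}$), whereas you weight every right-branch node by $x$ and recover $J^{\last}=t(H-1)$; the two normalisations differ by a factor of $t$ and lead to the same intermediate formula $J^{\last}(t,x;231)=\dfrac{tx(1+xG_{\e}(tx))}{1-xG_{\o}(tx)-x^{2}G_{\e}(tx)}$.
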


\begin{proof}
Let 
\[
G_{\e}(x)\coloneqq\sum_{n=0}^{\infty}\vert\mathcal{\tilde{J}}_{2n}\vert x^{2n}\qquad\text{and}\qquad G_{\o}(x)\coloneqq\sum_{n=0}^{\infty}\vert\mathcal{\tilde{J}}_{2n+1}\vert x^{2n+1}
\]
be the generating functions for even- and odd-sized dual Jacobi trees,
respectively. By Proposition~\ref{p-231-last}, we have 
\begin{align*}
J^{\last}(t,x;231) & =\sum_{n=1}^{\infty}\sum_{T\in\tilde{\mathcal{J}}_{n}}t^{n-\rbrch(T)+1}x^{n}=\sum_{n=1}^{\infty}\sum_{T\in\tilde{\mathcal{J}}_{n}}x^{\rbrch(T)-1}(tx)^{n-\rbrch(T)+1};
\end{align*}
we can interpret this as a weighted count of dual Jacobi trees where
each node on the right branch, except the root, contributes $x$ to
the weight and all other nodes each contributes $tx$ to the weight.
From the recursive decomposition for dual Jacobi trees (see Figure
\ref{f-dualdecomp}), we obtain 
\[
J^{\last}(t,x;231)=tx+tx^{2}G_{\e}(tx)+xG_{\o}(tx)J^{\last}(t,x;231)+x^{2}G_{\e}(tx)J^{\last}(t,x;231),
\]
whence
\begin{align}
J^{\last}(t,x;231) & =\frac{tx+tx^{2}G_{\e}(tx)}{1-xG_{\o}(tx)-x^{2}G_{\e}(tx)}.\label{e-231last}
\end{align}
Since dual Jacobi trees are in bijection with (ordinary) Jacobi trees,
let us now interpret $G_{\text{e}}(x)$ and $G_{\text{\ensuremath{\o}}}(x)$
as being generating functions for the latter. Then $G_{\o}(x)=xG_{\text{e}}(x)^{2}$,
because every Jacobi tree of odd size consists of a root whose subtrees
are both of even size. Substituting $G_{\o}(tx)=txG_{\text{e}}(tx)^{2}$
into (\ref{e-231last}) yields the desired formula.
\end{proof}
We prove Theorem \ref{t-231-last} from Proposition \ref{p-231-lastgf}
by extracting coefficients from the generating function $J^{\last}(t,x;231)$.
\begin{proof}[Proof of Theorem \ref{t-231-last}]
We first expand the expression for $J^{\last}(t,x;231)$ given in
Proposition~\ref{p-231-lastgf}, with the help of the binomial theorem,
to obtain
\begin{align}
J^{\last}(t,x;231) & =\frac{tx(1+xG_{\e}(tx))}{1-x^{2}G_{\e}(tx)\left(1+tG_{\e}(tx)\right)}\nonumber \\
 & =tx(1+xG_{\e}(tx))\sum_{i=0}^{\infty}x^{2i}G_{\e}(tx)^{i}\left(1+tG_{\e}(tx)\right)^{i}\nonumber \\
 & =tx(1+xG_{\e}(tx))\sum_{i,l=0}^{\infty}{i \choose l}G_{\e}(tx)^{i+l}t^{l}x^{2i}\nonumber \\
 & =\sum_{i,l=0}^{\infty}{i \choose l}G_{\e}(tx)^{i+l+1}t^{l+1}x^{2i+2}+\sum_{i,l=0}^{\infty}{i \choose l}G_{\e}(tx)^{i+l}t^{l+1}x^{2i+1}.\label{e-J231lastgf1}
\end{align}
It is well known that the $k$-fold convolution of the sequence $\left\{ \frac{1}{2n+1}{3n \choose n}\right\} _{n\geq0}$
is given by $\left\{ \frac{k}{3n+k}{3n+k \choose n}\right\} _{n\geq0}$;
see, e.g., \cite[Section 3.3]{Gessel2016}. That is, we have
\[
G_{\e}(tx)^{k}=\sum_{m=0}^{\infty}\frac{k}{3m+k}{3m+k \choose m}t^{2m}x^{2m}
\]
for all $k\geq1$. Substituting this expression, for the appropriate
values of $k$, into (\ref{t-231-last}) yields
\begin{align}
J^{\last}(t,x;231) & =\sum_{i,l,m=0}^{\infty}{i \choose l}\frac{i+l+1}{3m+i+l+1}{3m+i+l+1 \choose m}t^{l+2m+1}x^{2i+2m+2}\nonumber \\
 & \qquad\qquad+tx+\sum_{\substack{i,l,m=0\\
i+l\neq0
}
}^{\infty}{i \choose l}\frac{i+l}{3m+i+l}{3m+i+l \choose m}t^{l+2m+1}x^{2i+2m+1}.\label{e-J231lastgf2}
\end{align}
We take the right-hand side of (\ref{e-J231lastgf2}), make the substitutions
$n=i+m+1$ and $k=l+2m+1$ in the first sum, and substitute $n=i+m$
and $k=l+2m+1$ in the second sum to get
\begin{align*}
J^{\last}(t,x;231) & =\sum_{n,k=1}^{\infty}\sum_{m=0}^{n-1}\frac{n+k-3m-1}{n+k-1}{n-m-1 \choose k-2m-1}{n+k-1 \choose m}t^{k}x^{2n}\\
 & \qquad\quad+tx+\sum_{n,k=1}^{\infty}\sum_{m=0}^{n-1}\frac{n+k-3m-1}{n+k-1}{n-m \choose k-2m-1}{n+k-1 \choose m}t^{k}x^{2n+1}.
\end{align*}
Extracting coefficients completes the proof.
\end{proof}

\section{\label{s-123}\texorpdfstring{$123$}{123}-avoiding Jacobi permutations}

Next, we study $123$-avoiding Jacobi permutations, whose enumeration
is given by the following.
\begin{thm}
\label{t-123}For all $n\geq1$, we have
\[
j_{n}(123)=\sum_{k=0}^{\left\lfloor (n-1)/2\right\rfloor }\frac{1}{2k+1}{n-k-1 \choose k}{n \choose 2k}.
\]
\end{thm}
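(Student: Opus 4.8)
The plan is to transport the problem to Dyck paths by means of a variant of Krattenthaler's bijection $\Psi\colon\mathfrak{S}_{n}(123)\to\mathcal{D}_{n}$, where $\mathcal{D}_{n}$ is the set of Dyck paths of semilength $n$. I would first recall (and lightly adapt) this bijection, using the standard structural description of a $123$-avoiding permutation: $\pi$ is completely determined by the positions and values of its left-to-right maxima, since the remaining entries must then appear in decreasing order. The crucial first step is to show that, under $\Psi$, a $123$-avoiding permutation $\pi$ is Jacobi if and only if $\Psi(\pi)$ lies in an explicitly described subfamily $\mathcal{D}_{n}^{\mathfrak{J}}\subseteq\mathcal{D}_{n}$ singled out by a parity condition on the path (I expect something like: every maximal descending run of $\Psi(\pi)$, apart from a fixed collection of forced steps, has even length, or equivalently that certain valleys occur at even height). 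To justify this translation I would use the observation that in a $123$-avoiding permutation the word $\rho_{\pi}(x)$ is always a \emph{decreasing} run of values exceeding $x$ placed immediately after $x$ (two consecutive larger values after $x$ in increasing order would create a $123$ pattern with $x$), so the Jacobi requirement ``$\lvert\rho_{\pi}(x)\rvert$ even for all $x$'' is purely a condition on the descent structure of $\pi$, which $\Psi$ records faithfully.

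The second step is to enumerate $\mathcal{D}_{n}^{\mathfrak{J}}$ and match the count with the stated formula. I would decompose a path in $\mathcal{D}_{n}^{\mathfrak{J}}$ according to its first return to the $x$-axis (or according to its first peak), obtain a functional equation for the corresponding generating function that encodes the parity constraint, solve it, and extract coefficients. Alternatively --- and this seems closer to the shape of the formula, which pairs a ballot-number factor $\frac{1}{2k+1}\binom{n}{2k}$ with a Fibonacci-type factor $\binom{n-k-1}{k}$ --- I would give a direct combinatorial decomposition of each path into two independent pieces: a choice governed by a Catalan/ballot condition selecting $2k$ distinguished steps among the $n$ up--down pairs, and a choice of $k$ pairwise non-adjacent blocks accounting for the even-length descending runs, whose counts multiply to the summand. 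Summing over $k$ (the number of such blocks, which ranges over $0\le k\le\lfloor(n-1)/2\rfloor$) then yields the theorem.

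The main obstacle I anticipate is the first step: pinning down exactly which Dyck paths arise from Jacobi permutations and proving the equivalence rigorously. Krattenthaler's map is somewhat intricate, and the Jacobi property is a simultaneous evenness condition on all the quantities $\lvert\rho_{\pi}(x)\rvert$; translating it cleanly requires a careful analysis of how $\rho_{\pi}(x)$ sits inside $\pi$ according to whether $x$ is a left-to-right maximum, together with tracking what each case becomes on the path side. Once $\mathcal{D}_{n}^{\mathfrak{J}}$ is correctly identified, the enumeration in the second step should be routine --- either through a solvable functional equation or through the cycle-lemma/Lagrange-inversion argument implicit in the presence of the ballot-number factor.
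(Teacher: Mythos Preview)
Your overall plan matches the paper's approach exactly: transport to Dyck paths via a Krattenthaler-type bijection, identify the image of $\mathfrak{J}_{n}(123)$ as an explicit subfamily of $\mathcal{D}_{n}$, and then count that subfamily. However, two concrete slips would derail the execution as written. First, you say a $123$-avoiding $\pi$ is determined by its \emph{left-to-right maxima}; this is false (a $123$-avoiding permutation has at most two left-to-right maxima, since three of them would form a $123$), and the correct decomposition uses left-to-right \emph{minima}: $\pi$ is a shuffle of the decreasing word of left-to-right minima and the decreasing word of the remaining letters. With this in hand, the key observation is that $\rho_{\pi}(x)=\varepsilon$ whenever $x$ is not a left-to-right minimum (else a smaller earlier letter, $x$, and the first letter of $\rho_{\pi}(x)$ form a $123$), so the Jacobi condition reduces to ``$i_{k+1}-i_{k}$ is odd for all consecutive left-to-right-minimum positions $i_{k}<i_{k+1}$''. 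Second, the resulting path condition is that every maximal down-run has \emph{odd} length (not even, and not a height condition on valleys); this is what the paper calls $\mathcal{D}_{n}^{(1)}$.

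Once you have $\mathcal{D}_{n}^{\mathfrak{J}}=\{\mu\in\mathcal{D}_{n}:\text{all descents of }\mu\text{ are odd}\}$, the paper simply cites OEIS entry A101785 for the formula $\sum_{k}\frac{1}{2k+1}\binom{n-k-1}{k}\binom{n}{2k}$, so your proposed functional-equation route (set $s=t=1$ in what the paper later calls $P$, giving $P=1+xP-x^{2}P^{2}+x^{2}P^{3}$, and apply Lagrange inversion) would actually supply a self-contained derivation that the paper omits.
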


These numbers appear in \cite[A101785]{oeis}, where it is stated
that the entries also count Dyck paths of semilength $n$ whose descents
are all of odd length. We will prove Theorem~\ref{t-123} by showing
that a certain bijection between $123$-avoiding permutations and
Dyck paths restricts to a bijection between $123$-avoiding Jacobi
permutations and Dyck paths with all descents odd. Furthermore, we
will determine the joint distribution of left-to-right minima and
ascents over $\mathfrak{J}_{n}(123)$ by counting these Dyck paths
with respect to certain subword statistics.
\begin{table}[H]
\begin{centering}
\renewcommand{\arraystretch}{1.1}%
\begin{tabular}{c|>{\centering}p{22bp}|>{\centering}p{22bp}|>{\centering}p{22bp}|>{\centering}p{22bp}|>{\centering}p{22bp}|>{\centering}p{22bp}|>{\centering}p{22bp}|>{\centering}p{22bp}|>{\centering}p{22bp}|>{\centering}p{22bp}|>{\centering}p{23bp}|>{\centering}p{23bp}}
$n$ & $0$ & $1$ & $2$ & $3$ & $4$ & $5$ & $6$ & $7$ & $8$ & $9$ & $10$ & $11$\tabularnewline
\hline 
$j_{n}(123)$ & $1$ & $1$ & $1$ & $2$ & $5$ & $12$ & $30$ & $79$ & $213$ & $584$ & $1628$ & $4600$\tabularnewline
\end{tabular}
\par\end{centering}
\caption{\label{tb-123}The numbers $j_{n}(123)$ up to $n=11$.}
\end{table}

\subsection{Dyck paths and \texorpdfstring{$123$}{123}-avoiding permutations}

A \textit{Dyck path} of \textit{semilength} $n$ is a lattice path
in $\mathbb{Z}^{2}$\textemdash consisting of $2n$ steps from the
step set $\{(1,1),(1,-1)\}$\textemdash which starts at the origin
$(0,0)$, ends at $(2n,0)$, and never traverses below the $x$-axis.
The steps $(1,1)$ are denoted by $U$ and called \textit{up steps},
whereas the $(1,-1)$ are denoted by $D$ and called \textit{down
steps}. Thus, Dyck paths can be represented as words on the alphabet
$\{U,D\}$ subject to certain restrictions.

Let $\mathcal{D}_{n}$ denote the set of Dyck paths of semilength
$n$. We shall define a bijection from $\mathfrak{S}_{n}(123)$ to
$\mathcal{D}_{n}$ for which it will be convenient to illustrate Dyck
paths as instead having steps from the step set $\{(0,-1),(1,0)\}$,
beginning and ending on the antidiagonal line $y=-x$, and never traversing
above this line. We can identify these paths with the Dyck paths defined
earlier by swapping the ``south steps'' $(0,-1)$ with up steps, and
the ``east steps'' $(1,0)$ with down steps. We will draw Dyck paths
in this latter way (with south and east steps), while still thinking
of them using the original definition (with up and down steps).

Define $\chi\colon\mathfrak{S}_{n}(123)\rightarrow\mathcal{D}_{n}$
in the following way. Given $\pi\in\mathfrak{S}_{n}(123)$, we first
draw the array of $\pi$ (as defined in Section \ref{ss-arrayinv}).
Then, $\chi(\pi)$ is the Dyck path drawn from the upper-left corner
to the lower-right corner of the array which leaves all of the points
$(k,\pi_{k})$ to its right but remains as close as possible to the
antidiagonal. Equivalently, if $\pi_{i_{1}}>\pi_{i_{2}}>\cdots>\pi_{i_{m}}$
are the left-to-right minima of $\pi$, then 
\begin{equation}
\chi(\pi)=U^{n+1-\pi_{i_{1}}}D^{i_{2}-i_{1}}U^{\pi_{i_{1}}-\pi_{i_{2}}}D^{i_{3}-i_{2}}\cdots U^{\pi_{i_{m-1}}-\pi_{i_{m}}}D^{n+1-i_{m}}.\label{e-chidef}
\end{equation}
See Figure \ref{f-kratbij} for an example.
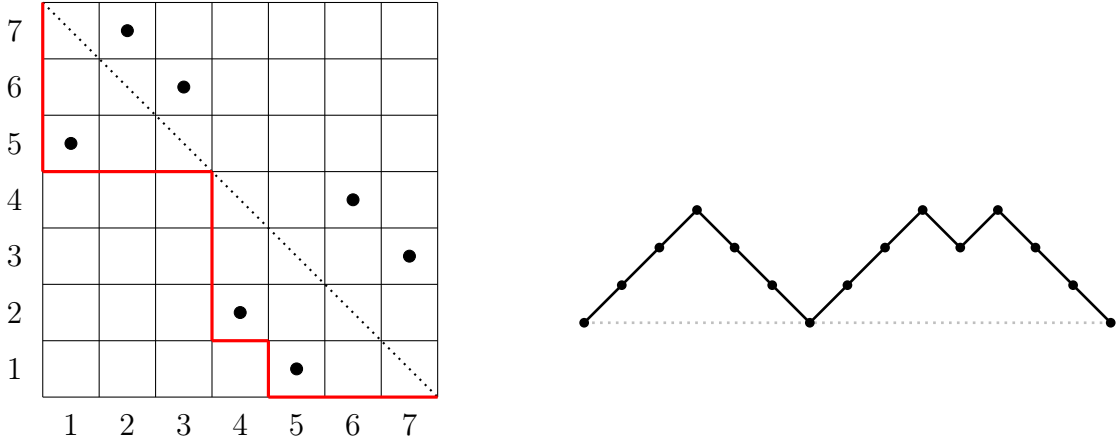
\begin{figure}
\begin{center}
\begin{tikzpicture}[scale=0.75]         
\foreach \x in {0,1,...,7}             
{                 
\draw (-3, -3 + \x) -- (4, -3 + \x);             
}
\foreach \x in {0,1,...,7}             
{                 
\draw (-3 + \x, -3) -- (-3 + \x, 4);             
}         
\foreach \x in {1,...,7}             
{                 
\draw (-3.5 + \x, -3.5) node {\x};             
}                      
\foreach \x in {1,...,7}             
{                 
\draw (-3.5, -3.5 + \x) node {\x};             
}
        
\filldraw[black](-2.5, 1.5) circle [radius=3pt]                    
(-1.5, 3.5) circle [radius=3pt]                    
(-.5, 2.5) circle [radius=3pt]                    
(.5, -1.5) circle [radius=3pt]                    
(1.5, -2.5) circle [radius=3pt]                    
(2.5, .5) circle [radius=3pt]         
(3.5, -.5) circle [radius=3pt];
\draw[dotted, thick] (-3, 4) -- (4, -3);
        
\draw[red, very thick] (-3, 4) -- (-3, 1);
\draw[red, very thick] (-3, 1) -- (0, 1);
\draw[red, very thick] (0, 1) -- (0, -2);
\draw[red, very thick] (0, -2) -- (1, -2);            
\draw[red, very thick] (1, -2) -- (1, -3);
\draw[red, very thick] (1, -3) -- (4, -3);               

\end{tikzpicture} \qquad\qquad
\begin{tikzpicture}[scale=0.5] 

\draw[pathcolorlight] (0,0) -- (14,0); 
\drawlinedots[pathdefault]{0,1,2,3,4,5,6,7,8,9,10,11,12,13,14}{0,1,2,3,2,1,0,1,2,3,2,3,2,1,0}

\node at (0,-3) {};

\end{tikzpicture}

\end{center}
\vspace{-15bp}\caption{\label{f-kratbij}The Dyck path $\chi(5762143)=UUUDDDUUUDUDDD$, drawn
in both ways.}
\end{figure}

A \textit{factor} of a Dyck path refers to a factor (i.e., a consecutive
subword) of the corresponding word. Given a Dyck path $\mu$ and a
word $\alpha$ on the alphabet $\{U,D\}$, let $\occ_{\alpha}(\mu)$
denote the number of $\alpha$-factors in $\mu$, that is, occurrences
of $\alpha$ as a factor of $\mu$. For example, if $\mu$ is the
Dyck path in Figure \ref{f-kratbij}, then $\occ_{UD}(\mu)=3$ and
$\occ_{UDD}(\mu)=2$.

To see why $\chi$ is a bijection, first note that every $123$-avoiding
permutation is a shuffle\footnote{Given permutations $\alpha$ and $\beta$ on disjoint sets of letters,
we say that $\pi$ is a \textit{shuffle} of $\alpha$ and $\beta$
if $\left|\pi\right|=\left|\alpha\right|+\left|\beta\right|$ and
both $\alpha$ and $\beta$ are subwords of $\pi$.} of two decreasing subwords: one consisting of its left-to-right minima,
and one consisting of all its other letters. The $UD$-factors of
$\chi(\pi)$ correspond precisely to the left-to-right minima of $\pi$.
So, given a Dyck path $\mu$, we can recover the permutation $\pi$
for which $\mu=\chi(\pi)$ by first identifying the left-to-right
minima of $\pi$ using the $UD$-factors of $\chi(\pi)$, which then
determines the rest of $\pi$ as the remaining letters must be placed
in descending order.

The bijection $\chi$ is a slight modification of a bijection due
to Krattenthaler \cite{Krattenthaler2001}. Krattenthaler's original
bijection is defined in the same way, except that the points $(k,\pi_{k})$
are to the left of the path (as opposed to the right), and the path
never goes below (as opposed to above) the antidiagonal. As such,
the $UD$-factors correspond to right-to-left maxima (as opposed to
left-to-right minima).

We use our bijection $\chi$ in lieu of Krattenthaler's because there
is a nice characterization of the Dyck paths corresponding to Jacobi
permutations under $\chi$. A \textit{descent} in a Dyck path is a
maximal consecutive subword of down steps. For example, the Dyck path
in Figure \ref{f-kratbij} has 3 descents; read from left to right,
the descents have lengths $3$, $1$, and $3$. Notice that the permutation
in Figure \ref{f-kratbij} is Jacobi, which is not a coincidence.

For convenience, we say that a descent is \textit{odd} if its length
is odd.
\begin{prop}
\label{p-123path}Let $\pi\in\mathfrak{S}_{n}(123)$. Then $\pi$
is Jacobi if and only if $\chi(\pi)$ has all descents odd.
\end{prop}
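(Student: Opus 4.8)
The plan is to analyze the structure of $\chi(\pi)$ in terms of the left-to-right minima of $\pi$, using the explicit formula \eqref{e-chidef}. Recall that every $123$-avoiding permutation $\pi$ is a shuffle of two decreasing subwords: the subword of its left-to-right minima $y_1,y_2,\dots,y_m$ (read left to right, so $y_1 > y_2 > \cdots > y_m$, with $y_1 = \pi_1$) and the subword of all remaining (non-minimal) letters. By Lemma~\ref{l-Jacobi}(c), $\pi$ is Jacobi if and only if each block $\tau^{(k)}$ of non-minimal letters sitting strictly between $y_k$ and $y_{k+1}$ (and the block after $y_m$) has even length. The key observation is that the descents of $\chi(\pi)$ encode exactly these block lengths: from \eqref{e-chidef}, reading the word $\chi(\pi) = U^{n+1-\pi_{i_1}} D^{i_2-i_1} U^{\cdots} D^{i_3-i_2}\cdots D^{n+1-i_m}$, the descent lengths (in order) are $i_2 - i_1,\ i_3 - i_2,\ \dots,\ i_m - i_{m-1},\ n+1-i_m$, where $i_k$ is the position of $y_k$ in $\pi$. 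But $i_{k+1} - i_k - 1$ is precisely the length of the block $\tau^{(k)}$ between consecutive left-to-right minima, and $n - i_m$ is the length of the final block $\tau^{(m)}$ after $y_m$.

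First I would make this bookkeeping precise: write $\pi = y_1\tau^{(1)}y_2\tau^{(2)}\cdots y_m\tau^{(m)}$ as in Lemma~\ref{l-Jacobi}(c), observe that the position of $y_k$ is $i_k = k + \sum_{j<k}|\tau^{(j)}|$, and conclude that the $k$th descent of $\chi(\pi)$ for $k < m$ has length $i_{k+1} - i_k = |\tau^{(k)}| + 1$, while the last (the $m$th) descent has length $n + 1 - i_m = |\tau^{(m)}| + 1$. Hence the $k$th descent of $\chi(\pi)$ has odd length if and only if $|\tau^{(k)}|$ is even, for every $k \in [m]$. Second, I would note that $\pi$ is $123$-avoiding, so each $\tau^{(k)}$ (being a block of non-left-to-right-minima arranged in decreasing order — actually any permutation of its letters that keeps $123$-avoidance, but in a $123$-avoiding permutation the non-minimal letters appear in strictly decreasing order) is automatically a decreasing, hence Jacobi, permutation; therefore by Lemma~\ref{l-Jacobi}(c), $\pi$ is Jacobi if and only if every $|\tau^{(k)}|$ is even. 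Combining the two equivalences gives: $\pi$ is Jacobi $\iff$ every $|\tau^{(k)}|$ is even $\iff$ every descent of $\chi(\pi)$ is odd.

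The main subtlety — and the step I'd be most careful about — is correctly counting the descents at the two ends of the path. The word \eqref{e-chidef} begins with an $U$-run $U^{n+1-\pi_{i_1}}$ (which could have length $0$ only if $\pi_1 = n$, but $\pi_1$ is a left-to-right minimum so this forces $n=1$; otherwise $\pi_1 < n$ and the run is nonempty), and it ends with the $D$-run $D^{n+1-i_m}$. I need to confirm that the maximal $D$-runs of $\chi(\pi)$ are in bijection with the indices $k \in [m]$ as claimed, i.e. that no spurious descents arise from $U$-runs of length $0$ in the interior; since $y_k > y_{k+1}$ strictly, each interior $U$-run $U^{\pi_{i_k} - \pi_{i_{k+1}}}$ has length $\geq 1$, so consecutive $D$-runs are genuinely separated and there are exactly $m$ descents. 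The edge case $n = 1$ (where $\pi = 1$, $\chi(\pi) = UD$, one descent of odd length $1$, and $\pi$ is Jacobi) should be checked to confirm the statement holds there as well, after which the general argument applies for $n \geq 2$.
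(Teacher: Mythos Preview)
Your approach is essentially the paper's: both read off the descent lengths of $\chi(\pi)$ as $i_{k+1}-i_k$ from \eqref{e-chidef} and then check the Jacobi condition via the blocks between consecutive left-to-right minima. One small caveat: Lemma~\ref{l-Jacobi}(c) only states the forward implication, so for the ``if'' direction you should argue directly (as the paper does) that $\rho_\pi(x)$ is empty whenever $x$ is not a left-to-right minimum and $|\rho_\pi(y_k)|=i_{k+1}-i_k-1$, rather than invoking that lemma as a biconditional.
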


\begin{proof}
Because $\pi$ avoids $123$, it is a shuffle of two decreasing subwords,
one consisting of the left-to-right minima of $\pi$, and one consisting
of all other letters of $\pi$. Let $\pi_{i_{1}}>\pi_{i_{2}}>\cdots>\pi_{i_{m}}$
denote the left-to-right minima of $\pi$, and let $i_{m+1}=n+1$.
Then $\left|\rho_{\pi}(\pi_{i_{k}})\right|=i_{k+1}-i_{k}-1$ for all
$k\in[m]$, and $\left|\rho_{\pi}(x)\right|=0$ whenever $x$ is not
a left-to-right minimum of $\pi$. Thus, $\pi$ is Jacobi if and only
if $i_{k+1}-i_{k}-1$ is even\textemdash or equivalently, $i_{k+1}-i_{k}$
is odd\textemdash for all $k\in[m]$, and we see from (\ref{e-chidef})
that this condition is equivalent to $\chi(\pi)$ having all descents
odd.
\end{proof}
We are ready to prove Theorem \ref{t-123}.
\begin{proof}[Proof of Theorem \ref{t-123}]
Proposition \ref{p-123path} implies that the bijection $\chi$ restricts
to a bijection between $\mathfrak{J}_{n}(123)$ and the subset of
paths in $\mathcal{D}_{n}$ with all descents odd. Since the latter
is counted by $\sum_{k=0}^{\left\lfloor (n-1)/2\right\rfloor }\frac{1}{2k+1}{n-k-1 \choose k}{n \choose 2k}$
\cite[A101785]{oeis}, it follows that $j_{n}(123)$ is equal to this
number.
\end{proof}
The next lemma allows us to translate left-to-right minima and ascents
in $\pi$ to occurrences of $UD$- and $UDD$-factors of $\chi(\pi)$.
\begin{lem}
\label{l-123pathstats}Let $\pi\in\mathfrak{S}_{n}(123)$. Then:
\begin{enumerate}
\item [\normalfont{(a)}]$\lrmin(\pi)=\occ_{UD}(\chi(\pi))$, and
\item [\normalfont{(b)}]$\asc(\pi)=\occ_{UDD}(\chi(\pi))$.
\end{enumerate}
\end{lem}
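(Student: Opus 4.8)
The plan is to read everything directly off the explicit formula \eqref{e-chidef} for $\chi(\pi)$. Write $\pi_{i_1}>\pi_{i_2}>\cdots>\pi_{i_m}$ for the left-to-right minima of $\pi$ and set $i_{m+1}=n+1$. Since $\pi_{i_k}>\pi_{i_{k+1}}$ and $i_k<i_{k+1}$ for each $k$, and since $\pi_{i_1}\le n$ and $i_m\le n$, every exponent appearing in \eqref{e-chidef} is at least $1$; hence $\chi(\pi)$ is a concatenation $B_1^{U}B_1^{D}B_2^{U}B_2^{D}\cdots B_m^{U}B_m^{D}$ of $m$ nonempty maximal up-runs alternating with $m$ nonempty maximal down-runs, the $k$th down-run $B_k^{D}$ having length $i_{k+1}-i_k$. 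Part~(a) is then immediate: a $UD$-factor can occur only at the junction of some $B_k^{U}$ with the following $B_k^{D}$, and each of the $m$ such junctions contributes exactly one $UD$-factor, so $\occ_{UD}(\chi(\pi))=m=\lrmin(\pi)$. (This is the observation already used above to see that $\chi$ is a bijection.)

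For part~(b), I would first note that a $UDD$-factor occurs exactly once inside each down-run $B_k^{D}$ of length at least $2$, and nowhere else: within a down-run there are no up steps, so the only up step that can begin a factor of the form $UD\cdots$ is the final step of some $B_k^{U}$, and that step is followed by a second down step precisely when $|B_k^{D}|\ge 2$. Hence $\occ_{UDD}(\chi(\pi))$ equals the number of indices $k\in[m]$ with $i_{k+1}-i_k\ge 2$. Recalling from the proof of Proposition~\ref{p-123path} that $|\rho_{\pi}(\pi_{i_k})|=i_{k+1}-i_k-1$ for all $k\in[m]$ (with $i_{m+1}=n+1$), this number is exactly the number of $k$ for which $\rho_{\pi}(\pi_{i_k})$ is nonempty.

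It then remains to show that this count equals $\asc(\pi)$, and this is the one place where $123$-avoidance is used. I would argue that $j\mapsto\pi_j$ is a bijection from the set of ascents of $\pi$ onto the set of left-to-right minima $\pi_{i_k}$ with $\rho_{\pi}(\pi_{i_k})$ nonempty. If $j$ is an ascent, then $\pi_j$ must be a left-to-right minimum: otherwise some $\pi_i<\pi_j$ with $i<j$ would make $\pi_i\pi_j\pi_{j+1}$ an occurrence of $123$; moreover the letter $\pi_{j+1}>\pi_j$ sits immediately to the right of $\pi_j$, so $\rho_{\pi}(\pi_j)$ is nonempty. Conversely, if $\pi_{i_k}$ is a left-to-right minimum with $\rho_{\pi}(\pi_{i_k})$ nonempty, the first letter of $\rho_{\pi}(\pi_{i_k})$ lies immediately to the right of $\pi_{i_k}$ and exceeds it, so $i_k$ is an ascent; while if $\rho_{\pi}(\pi_{i_k})$ is empty then $\pi_{i_k}$ is either the last letter of $\pi$ or is immediately followed by a smaller letter, so $i_k$ is not an ascent. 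Combining with the previous paragraph gives $\asc(\pi)=\occ_{UDD}(\chi(\pi))$.

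The bookkeeping with \eqref{e-chidef} is routine; the substantive point, which I would present most carefully, is the clean equivalence ``$j$ is an ascent $\iff$ $\pi_j$ is a left-to-right minimum with $\rho_{\pi}(\pi_j)$ nonempty,'' resting on the fact that a $123$-avoiding permutation is a shuffle of its sequence of left-to-right minima with the decreasing sequence of its remaining letters.
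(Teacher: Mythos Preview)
Your proof is correct and follows essentially the same approach as the paper's. The paper phrases the key equivalence for part~(b) as ``$\pi_i<\pi_{i+1}$ precisely when $\pi_i$ is a left-to-right minimum and $\pi_{i+1}$ is not,'' while you phrase it as ``$j$ is an ascent if and only if $\pi_j$ is a left-to-right minimum with $\rho_\pi(\pi_j)$ nonempty''; these are equivalent, and both translate directly to the presence of a $UDD$-factor at the $UD$ corresponding to $\pi_j$. Your version spells out the run decomposition of $\chi(\pi)$ from \eqref{e-chidef} more explicitly than the paper does, but the substance is the same.
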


\begin{proof}
We have already seen why (a) is true. For (b), notice that $\pi_{i}<\pi_{i+1}$
precisely when $\pi_{i}$ is a left-to-right minimum of $\pi$ and
$\pi_{i+1}$ is not a left-to-right minimum of $\pi$. The left-to-right
minimum $\pi_{i}$ corresponds to a $UD$-factor in $\chi(\pi)$,
and the fact that the next letter is not a left-to-right minimum means
that this $UD$-factor is followed by another down step. Hence, each
ascent of $\pi$ corresponds to a $UDD$-factor of $\chi(\pi)$. The
converse\textemdash each $UDD$-factor of $\chi(\pi)$ corresponds
to an ascent of $\pi$\textemdash is true by similar reasoning, and
(b) follows.
\end{proof}
One can also show that $\des(\pi)=\occ_{DU}(\chi(\pi))+\occ_{DDD}(\chi(\pi))$,
but it is easier to count Dyck paths by $UDD$-factors than by $DU$-
and $DDD$-factors simultaneously. For this reason, we will work with
ascents as opposed to descents in $123$-avoiding permutations.

\subsection{Left-to-right minima and ascents}

So far, we have only studied distributions of individual statistics,
but it turns out that we can find the joint distribution of $\lrmin$
and $\asc$ over $\mathfrak{J}_{n}(123)$ without much added difficulty.
To that end, let 
\[
j_{n,i,k}^{(\lrmin,\asc)}(123)\coloneqq\left|\{\,\pi\in\mathfrak{J}_{n}(123):\lrmin(\pi)=i\text{ and }\asc(\pi)=k\,\}\right|.
\]
Equivalently, $j_{n,i,k}^{(\lrmin,\asc)}(123)$ is the number of Dyck
paths of semilength $n$ with all descents odd and exactly $i$ $UD$-factors
and $k$ $UDD$-factors.
\begin{thm}
\label{t-123-lrmin-asc}For all $n$, $i$, and $k$ satisfying $n\geq i\geq k\geq0$
and $k\leq(n-i)/2$, we have 
\begin{equation}
j_{n,i,k}^{(\lrmin,\asc)}(123)={\displaystyle \frac{1}{n+1}{n+1 \choose k}{n+1-k \choose i-k}{\frac{n-i}{2}-1 \choose k-1}}\label{e-123-lrmin-asc}
\end{equation}
if $n$ and $i$ have the same parity, and $j_{n,i,k}^{(\lrmin,\asc)}(123)=0$
otherwise.
\end{thm}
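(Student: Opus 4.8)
The plan is to convert the statement, via the bijection $\chi$, into a problem about Dyck paths and then count those paths. By Proposition~\ref{p-123path} and Lemma~\ref{l-123pathstats}, the map $\chi$ restricts to a bijection between $\{\pi\in\mathfrak{J}_{n}(123):\lrmin(\pi)=i,\ \asc(\pi)=k\}$ and the set of Dyck paths of semilength $n$ all of whose descents are odd, having exactly $i$ occurrences of $UD$ and exactly $k$ occurrences of $UDD$ as factors. Since $i$ $UD$-factors means $i$ peaks, hence $i$ maximal descents, and these descents are all odd and sum to $n$, we get $n\equiv i\pmod 2$, which accounts for the parity condition and the ``$=0$ otherwise'' clause; moreover, because every descent is odd, a descent contributes a $UDD$-factor exactly when it has length $\geq 3$, so $k$ counts the descents of length $\geq 3$. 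It therefore suffices to enumerate these Dyck paths.

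Next I would encode such a path by its ascent composition $(a_{1},\dots,a_{i})$ and descent composition $(d_{1},\dots,d_{i})$: both are compositions of $n$ into $i$ positive parts, the Dyck condition is exactly the dominance $a_{1}+\cdots+a_{j}\geq d_{1}+\cdots+d_{j}$ for $1\leq j<i$, each $d_{j}$ is odd, and exactly $k$ of the $d_{j}$ exceed $1$. Substituting $a_{j}=1+b_{j}$ and $d_{j}=1+2c_{j}$ with $b_{j},c_{j}\geq 0$ turns this into the problem of counting pairs of nonnegative sequences with $\sum b_{j}=2m$ and $\sum c_{j}=m$, where $m=(n-i)/2$, exactly $k$ of the $c_{j}$ positive, and $b_{1}+\cdots+b_{j}\geq 2(c_{1}+\cdots+c_{j})$ for all $j$. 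The values of the $k$ positive $c_{j}$'s form a composition of $m$ into $k$ positive parts, which is already responsible for the factor $\binom{(n-i)/2-1}{k-1}$; the remaining task is to count the positions of the $k$ ``long'' descents among the $i$ slots together with the $b_{j}$'s, subject to the dominance condition, and this should produce the factor $\frac{1}{n+1}\binom{n+1}{k}\binom{n+1-k}{i-k}$. To carry out that last count I would either run a small (three–state) finite automaton on the step sequence of the path — one state for ``ascending'' and two states recording the parity of the current descent run, so that ``all descents odd'' is enforced — with a catalytic variable tracking the height, solve the resulting system by the kernel method, set the catalytic variable to $1$, and extract $[x^{n}u^{i}v^{k}]$ by Lagrange inversion; or, alternatively, identify the right cyclic word on $n+1$ letters (the $k$ long-descent markers, the $i-k$ short-descent markers, and the ``excess'' up-steps) and apply the cycle lemma, which is exactly what turns a multinomial coefficient on $n+1$ symbols into $\frac{1}{n+1}$ times that multinomial, i.e. into $\frac{1}{n+1}\binom{n+1}{k}\binom{n+1-k}{i-k}$ after regrouping. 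Assembling the three factors then gives the claimed formula, and the boundary cases (e.g. $k=0$, which forces $i=n$ and uses the convention $\binom{-1}{-1}=1$) are checked directly.

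The main obstacle is this final count: the dominance condition does not factor over the choice of which slots hold the long descents — the number of admissible $(b_{j})$ genuinely depends on where those slots sit — so the clean product form only emerges after either a careful generating-function/Lagrange-inversion computation or the correct cycle-lemma set-up. On the generating-function side the chief nuisance is that the class of Dyck paths with all descents odd is \emph{not} closed under the usual first-return decomposition (the closing down step of a prime factor lengthens its last descent by one), so one is forced to introduce an auxiliary generating function for Dyck paths whose last descent is even and track the last-descent length as part of the state; on the cycle-lemma side the difficulty is pinning down precisely which $n+1$-letter cyclic object has exactly one rotation per class corresponding to an admissible configuration. Once one of these is in place, the coefficient extraction and the identification with the stated binomial product are routine.
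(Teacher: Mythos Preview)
Your proposal is essentially the paper's own approach: translate to Dyck paths with all descents odd via $\chi$, introduce the auxiliary generating function $Q$ for paths whose last descent is even (exactly to repair the failure of closure under last-return decomposition that you flag), derive a cubic functional equation for $P$, and extract coefficients by Lagrange inversion. The paper does precisely this, writing $\bar{P}=xP$ so that $\bar{P}=xR(\bar{P})$ with $R(u)=1+su+stu^{3}/(1-u^{2})$, and the three binomials in the statement drop out of expanding $R(u)^{n+1}$.

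Two remarks. First, your heuristic that the factor $\binom{(n-i)/2-1}{k-1}$ ``is already responsible'' for the values of the long descents is suggestive but, as you yourself note, does not constitute a proof because the dominance constraint couples positions and values; in the paper this factor instead appears from the power-series expansion of $(1-u^{2})^{-k}$ inside the Lagrange computation, not from a prior combinatorial factorization. Second, your alternative cycle-lemma route is not pursued in the paper and would be a genuinely different argument if you can make it work; the delicate point is that the $n+1$ symbols you want to rotate must encode not only the $k$ long-descent markers and the $i-k$ short-descent markers but also the distribution of the excess up steps, and it is not obvious which cyclic alphabet yields exactly one good rotation per class. If you can pin that down, it would give a more bijective proof than the paper's.
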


The Dyck path interpretation of the numbers $j_{n,i,k}^{(\lrmin,\asc)}(123)$
will be essential to the proof of Theorem \ref{t-123-lrmin-asc}.
We will also need to consider Dyck paths with last descent even and
all other descents odd. Note that the empty path is excluded from
this family of Dyck paths since the empty path does not have a last
descent, but is vacuously a Dyck path with all descents odd.

Define the generating functions 
\begin{align*}
P=P(s,t,x) & \coloneqq\sum_{n=0}^{\infty}\sum_{\mu\in\mathcal{D}_{n}^{(1)}}s^{\occ_{UD}(\mu)}t^{\occ_{UDD}(\mu)}x^{n}\qquad\text{and}\\
Q=Q(s,t,x) & \coloneqq\sum_{n=1}^{\infty}\sum_{\mu\in\mathcal{D}_{n}^{(2)}}s^{\occ_{UD}(\mu)}t^{\occ_{UDD}(\mu)}x^{n},
\end{align*}
where $\mathcal{D}_{n}^{(1)}$ is the set of Dyck paths of semilength
$n$ with all descents odd, and $\mathcal{D}_{n}^{(2)}$ is the set
of Dyck paths of semilength $n$ with last descent even and all other
descents odd. 
\begin{lem}
\label{l-Pfunc}The generating function $P$ satisfies the functional
equation
\begin{equation}
P=1+sxP-x^{2}P^{2}+(x^{2}+stx^{3}-sx^{3})P^{3}.\label{e-Pfunc}
\end{equation}
\end{lem}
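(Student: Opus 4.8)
The plan is to use the first-return decomposition $\mu=U\beta D\,\nu$ of a nonempty Dyck path and track the triple of statistics (semilength, number of $UD$-factors, number of $UDD$-factors) through it. The one subtlety — and the main obstacle — is that the trailing down step $D$ of the first arch lengthens the last descent of $\beta$ by one: this simultaneously \emph{flips the parity} of that descent and, in the borderline case where $\beta$ ends in $UD$, creates a new $UDD$-factor. Because of this, no single functional equation for $P$ closes up, and we must work with the auxiliary series $Q$ (already introduced, counting the paths in $\mathcal{D}^{(2)}$) together with one further series
\[
R=R(s,t,x)\coloneqq\sum_{n=1}^{\infty}\sum_{\mu}s^{\occ_{UD}(\mu)}t^{\occ_{UDD}(\mu)}x^{n},
\]
the inner sum ranging over Dyck paths $\mu$ of semilength $n$ all of whose descents are odd and whose last descent has length exactly $1$ (equivalently, $\mu$ ends in $UD$). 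Here and below we write $\mathcal{D}^{(i)}$ for $\bigcup_n\mathcal{D}^{(i)}_n$.

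The first step is to analyze the first-return decomposition on $\mathcal{D}^{(1)}$. Since $\nu$, if nonempty, begins with $U$, no $UD$- or $UDD$-factor is created at the junction $D\nu$, so all three statistics are additive; and $\mu$ has all descents odd exactly when $\nu\in\mathcal{D}^{(1)}$ and the arch $U\beta D$ has all descents odd, which — because the trailing $D$ adds one to $\beta$'s last descent — happens precisely when $\beta$ is empty (arch $UD$, weight $sx$) or $\beta\in\mathcal{D}^{(2)}$ (arch weight $x$ times the weight of $\beta$). This gives
\[
P=1+(sx+xQ)P.
\]
Running the same decomposition on $\mathcal{D}^{(2)}$, one splits according to whether $\nu$ is empty: if $\nu$ is nonempty we need the arch to have all descents odd and $\nu\in\mathcal{D}^{(2)}$, contributing $(sx+xQ)Q$; if $\nu$ is empty we need $\beta\in\mathcal{D}^{(1)}$ nonempty, and the $UDD$-count gains an extra $1$ exactly when $\beta$'s last descent has length $1$, contributing $x\bigl((P-1)+(t-1)R\bigr)$. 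Hence
\[
Q=(sx+xQ)Q+x(P-1)+x(t-1)R.
\]
Finally, appending $UD$ gives a weight-respecting bijection (multiplying the weight by $sx$) from $\mathcal{D}^{(1)}$ onto the paths counted by $R$, so that $R=sxP$.

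It remains only to eliminate $Q$ and $R$. Substitute $R=sxP$ into the equation for $Q$, use the first equation in the form $xQP=P-1-sxP$ to replace $Q$, clear denominators, and simplify; this is purely routine algebra and collapses to
\[
P=1+sxP-x^{2}P^{2}+(x^{2}+stx^{3}-sx^{3})P^{3},
\]
as desired. I expect the combinatorial bookkeeping in the second step — handling the parity-of-last-descent cases and the factor interactions at junctions without error — to be the delicate part; the final elimination, and a sanity check against the first few terms $P=1+sx+s^{2}x^{2}+(s^{3}+st)x^{3}+\cdots$, are straightforward.
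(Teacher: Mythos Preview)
Your argument is correct. The paper follows the same strategy---set up a pair of functional equations for $P$ and $Q$ via a return decomposition, handle the extra $UDD$-factor created when the inner path ends in $UD$, then eliminate $Q$---but it uses the \emph{last} return decomposition $\mu=\nu\,U\eta D$ rather than your first return $\mu=U\beta D\,\nu$. That choice is a bit cleaner here: the even (last) descent of a path in $\mathcal{D}^{(2)}$ always sits inside the final arch, so the paper gets $Q=stx^{2}P^{2}+xP(P-1-sxP)$ in one shot without your case split on ``$\nu$ empty vs.\ nonempty'' and without naming the auxiliary $R$ (they simply observe inline that paths in $\mathcal{D}^{(1)}$ ending in $UD$ have generating function $sxP$). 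After simplification your expression $Q=xP(P-1)+sx^{2}(t-1)P^{2}$ agrees with theirs, and the final elimination is identical. So: same idea, with the last-return variant saving a little bookkeeping.
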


\begin{proof}
We first establish the system of functional equations 
\begin{align}
P & =1+sxP+xPQ\label{e-pathodd}\\
Q & =stx^{2}P^{2}+xP(P-1-sxP).\label{e-patheven}
\end{align}
To do so, we will use the \textit{last return decomposition} of Dyck
paths: every nonempty Dyck path $\mu$ can be uniquely written in
the form $\mu=\nu U\eta D$ where $\nu$ and $\eta$ are Dyck paths.

Let $\mu\in\mathcal{D}_{n}^{(1)}$. The empty path contributes $1$
to (\ref{e-pathodd}), so now assume that $\mu$ is nonempty. Then
$\mu=\nu U\eta D$ where $\nu$ has all descents odd, and $\eta$
is either the empty path or has all descents odd except for its last
one. The case where $\eta$ is the empty path contributes $sxP$ to
(\ref{e-pathodd}) as 
\[
\occ_{UD}(\mu)=\occ_{UD}(\nu)+1\quad\text{and}\quad\occ_{UDD}(\mu)=\occ_{UDD}(\nu),
\]
whereas if $\eta$ is nonempty, then 
\[
\occ_{UD}(\mu)=\occ_{UD}(\nu)+\occ_{UD}(\eta)\quad\text{and}\quad\occ_{UDD}(\mu)=\occ_{UDD}(\nu)+\occ_{UDD}(\eta),
\]
so this case contributes $xPQ$ to (\ref{e-pathodd}). Hence, (\ref{e-pathodd})
holds.

To prove (\ref{e-patheven}), let $\mu\in\mathcal{D}_{n}^{(2)}$.
Then $\mu=\nu U\eta D$ where both $\nu$ and $\eta$ have all descents
odd and $\eta$ is nonempty. If $\eta$ ends with a $UD$-factor,
then 
\[
\occ_{UD}(\mu)=\occ_{UD}(\nu)+\occ_{UD}(\eta)\quad\text{and}\quad\occ_{UDD}(\mu)=\occ_{UDD}(\nu)+\occ_{UDD}(\eta)+1;
\]
the generating function for these $\eta$ is $sxP$, so this case
provides the contribution $stx^{2}P^{2}$ to (\ref{e-patheven}).
On the other hand, if $\eta$ does not end with a $UD$-factor, then
\[
\occ_{UD}(\mu)=\occ_{UD}(\nu)+\occ_{UD}(\eta)\quad\text{and}\quad\occ_{UDD}(\mu)=\occ_{UDD}(\nu)+\occ_{UDD}(\eta);
\]
these $\eta$ have generating function $P-1-sxP$, thus accounting
for the $x(P-1-sxP)$ term in (\ref{e-patheven}). Therefore, (\ref{e-patheven})
is proven.

The desired equation is obtained by substituting (\ref{e-patheven})
into (\ref{e-pathodd}) and performing some algebraic manipulations.
\end{proof}
We now use Lemma \ref{l-Pfunc} to prove Theorem \ref{t-123-lrmin-asc}.
\begin{proof}[Proof of Theorem \ref{t-123-lrmin-asc}]
It follows from Lemma \ref{l-123pathstats} that 
\[
j_{n,i,k}^{(\lrmin,\asc)}(123)=[s^{i}t^{k}x^{n}]\,P;
\]
we will determine this coefficient using the Lagrange inversion formula
\cite{Gessel2016}. In doing so, it will be helpful to define $\bar{P}\coloneqq xP$,
so that we seek the coefficient $[s^{i}t^{k}x^{n+1}]\,\bar{P}$. Multiplying
both sides of (\ref{e-Pfunc}) by $x$ and performing some algebraic
manipulations gives us 
\[
\bar{P}=x\left(1+s\bar{P}+\frac{st\bar{P}^{3}}{1-\bar{P}^{2}}\right),
\]
so $\bar{P}=xR(\bar{P})$ if we take $R(u)\coloneqq1+su+stu^{3}/(1-u^{2})$.
Applying Lagrange inversion, along with the binomial theorem and the
well-known identity 
\[
\left(\frac{1}{1-x}\right)^{k}=\sum_{l=0}^{\infty}{l+k-1 \choose k-1}x^{l},
\]
yields {\allowdisplaybreaks
\begin{align*}
[s^{i}t^{k}x^{n+1}]\,\bar{P} & =\frac{1}{n+1}\,[s^{i}t^{k}u^{n}]\,\left(1+su+\frac{stu^{3}}{1-u^{2}}\right)^{n+1}\\
 & \hspace{-2em}=\frac{1}{n+1}\,[s^{i}t^{k}u^{n}]\,\sum_{k=0}^{n+1}{n+1 \choose k}\left(\frac{stu^{3}}{1-u^{2}}\right)^{k}(1+su)^{n+1-k}\\
 & \hspace{-2em}=\frac{1}{n+1}\,[s^{i}t^{k}u^{n}]\,\sum_{k=0}^{n+1}\sum_{m=0}^{n+1-k}{n+1 \choose k}{n+1-k \choose m}s^{m}u^{m}\sum_{l=0}^{\infty}s^{k}t^{k}u^{3k}{l+k-1 \choose k-1}u^{2l}\\
 & \hspace{-2em}=\frac{1}{n+1}\,[s^{i}t^{k}u^{n}]\,\sum_{k=0}^{n+1}\sum_{m=0}^{n+1-k}\sum_{l=0}^{\infty}{n+1 \choose k}{n+1-k \choose m}{l+k-1 \choose k-1}s^{m+k}t^{k}u^{m+2l+3k}\\
 & \hspace{-2em}=\frac{1}{n+1}\,[s^{i}t^{k}u^{n}]\,\sum_{k=0}^{n+1}\sum_{i=k}^{n+1}\sum_{l=0}^{\infty}{n+1 \choose k}{n+1-k \choose i-k}{l+k-1 \choose k-1}s^{i}t^{k}u^{i+2l+2k}.
\end{align*}
}We have $j_{n,i,k}^{(\lrmin,\asc)}(123)=0$ whenever $n$ and $i$
have opposite parity, as in that case $i+2l+2k$ cannot equal $n$.
Assuming that $n$ and $i$ have the same parity, note that $i+2l+2k=n$
is equivalent to $l=(n-i-2k)/2$, so
\[
j_{n,i,k}^{(\lrmin,\asc)}(123)=\frac{1}{n+1}{n+1 \choose k}{n+1-k \choose i-k}{\frac{n-i}{2}-1 \choose k-1}
\]
if $0\leq k\leq i\leq n$ and $(n-i-2k)/2\geq0$; the latter inequality
is equivalent to $k\leq(n-i)/2$.
\end{proof}
The distributions of the individual statistics $\lrmin$ and $\asc$
over $\mathfrak{J}_{n}(123)$ are given by the following two theorems,
which can be proven by either summing (\ref{e-123-lrmin-asc}) over
$i$ or $k$, or by setting $s=1$ or $t=1$ in (\ref{e-Pfunc}) prior
to performing Lagrange inversion.
\begin{thm}
\label{t-123lrmin}For all $n\geq k\geq0$, we have 
\[
j_{n,k}^{\lrmin}(123)=\frac{1}{n+1}{n+1 \choose k}{\frac{n+k}{2}-1 \choose k-1}
\]
if $n$ and $k$ have the same parity, and $j_{n,k}^{\lrmin}(123)=0$
otherwise.
\end{thm}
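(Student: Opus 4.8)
The plan is to mirror the argument for Theorem~\ref{t-123-lrmin-asc}, specializing the functional equation for $P$ before extracting coefficients. By Proposition~\ref{p-123path} and Lemma~\ref{l-123pathstats}(a), the bijection $\chi$ identifies $j_{n,k}^{\lrmin}(123)$ with the number of semilength-$n$ Dyck paths having all descents odd and exactly $k$ occurrences of the factor $UD$; that is, $j_{n,k}^{\lrmin}(123)=[s^{k}x^{n}]\,P(s,1,x)$. So the first step is to set $t=1$. Working from the form $\bar{P}=x\bigl(1+s\bar{P}+st\bar{P}^{3}/(1-\bar{P}^{2})\bigr)$ established in the proof of Theorem~\ref{t-123-lrmin-asc} (where $\bar{P}=xP$), the bracketed expression collapses at $t=1$, since $1+s\bar{P}+s\bar{P}^{3}/(1-\bar{P}^{2})=1+s\bar{P}/(1-\bar{P}^{2})$; hence $\bar{P}=xR(\bar{P})$ with $R(u)=1+su/(1-u^{2})$.

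Next I would apply the Lagrange inversion formula to $\bar{P}=xR(\bar{P})$, giving $[x^{n+1}]\,\bar{P}=\tfrac{1}{n+1}[u^{n}]\,R(u)^{n+1}$. Expanding by the binomial theorem and then using $(1-u^{2})^{-k}=\sum_{l\ge0}\binom{l+k-1}{k-1}u^{2l}$ yields
\[
R(u)^{n+1}=\sum_{k=0}^{n+1}\binom{n+1}{k}s^{k}u^{k}\sum_{l\ge0}\binom{l+k-1}{k-1}u^{2l},
\]
so $[s^{k}u^{n}]\,R(u)^{n+1}=\binom{n+1}{k}\binom{l+k-1}{k-1}$ where $2l=n-k$. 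When $n$ and $k$ have opposite parity there is no such $l$ and the coefficient vanishes; otherwise $l=(n-k)/2$, so $l+k-1=(n+k)/2-1$, and dividing by $n+1$ gives the claimed formula for $j_{n,k}^{\lrmin}(123)=[s^{k}x^{n}]\,P=[s^{k}x^{n+1}]\,\bar{P}$.

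Since all the required ingredients---the bijection $\chi$, Proposition~\ref{p-123path}, Lemma~\ref{l-123pathstats}, the functional equation~(\ref{e-Pfunc}), and Lagrange inversion---are already in hand, there is no serious obstacle; the only points needing care are the algebraic simplification of $R$ at $t=1$ and the parity bookkeeping in the coefficient extraction. A second route is to sum the joint-distribution formula~(\ref{e-123-lrmin-asc}) over the ascent index: the identity $\binom{n+1}{k}\binom{n+1-k}{i-k}=\binom{n+1}{i}\binom{i}{k}$ reduces the sum to $\sum_{k}\binom{i}{k}\binom{(n-i)/2-1}{k-1}$, which Vandermonde evaluates to $\binom{(n+i)/2-1}{i-1}$, recovering the same answer. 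I would mention this alternative but present the Lagrange-inversion computation as the main argument, as it runs parallel to the preceding proof.
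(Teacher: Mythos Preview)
Your proposal is correct and follows exactly the approach the paper outlines: the paper states (without giving details) that Theorem~\ref{t-123lrmin} can be proven either by setting $t=1$ in (\ref{e-Pfunc}) before Lagrange inversion or by summing (\ref{e-123-lrmin-asc}) over the ascent index, and you have carried out both computations cleanly. Your simplification $1+su+su^{3}/(1-u^{2})=1+su/(1-u^{2})$ and the parity bookkeeping are correct, as is the Vandermonde step in the alternative route.
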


\begin{thm}
\label{t-123asc}For all $n\geq k\geq0$, we have 
\[
j_{n,k}^{\asc}(123)=\frac{1}{n+1}\sum_{i=0}^{\left\lfloor (n-3k)/2\right\rfloor }{n+1 \choose k}{n-k+1 \choose 2i+2k+1}{i+k-1 \choose k-1}.
\]
\end{thm}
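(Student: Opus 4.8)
The plan is to recognize $j_{n,k}^{\asc}(123)$ as a coefficient of the generating function $P$ of Lemma~\ref{l-Pfunc}, specialized to $s=1$, and to extract that coefficient by Lagrange inversion, following the template of the proof of Theorem~\ref{t-123-lrmin-asc} but with one fewer variable to track. By Lemma~\ref{l-123pathstats}~(b), the bijection $\chi$ sends ascents of $\pi\in\mathfrak{J}_n(123)$ to $UDD$-factors of $\chi(\pi)$, so
\[
j_{n,k}^{\asc}(123)=[t^kx^n]\,P(1,t,x).
\]
Writing $\bar P\coloneqq xP(1,t,x)$, the relation $\bar P=x(1+s\bar P+st\bar P^3/(1-\bar P^2))$ obtained in the proof of Theorem~\ref{t-123-lrmin-asc} specializes at $s=1$ to $\bar P=xR(\bar P)$ with $R(u)=1+u+tu^3/(1-u^2)$, so it remains to compute $[t^kx^{n+1}]\,\bar P$.

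Applying the Lagrange inversion formula gives
\[
[t^kx^{n+1}]\,\bar P=\frac{1}{n+1}\,[t^ku^n]\left(1+u+\frac{tu^3}{1-u^2}\right)^{n+1},
\]
and I would evaluate the right-hand side through a three-stage expansion: the binomial theorem in the outer power to isolate the power of $t$, the identity $(1-u^2)^{-k}=\sum_{i\ge0}\binom{i+k-1}{k-1}u^{2i}$, and $(1+u)^{n+1-k}=\sum_{j\ge0}\binom{n+1-k}{j}u^j$. Concretely, the coefficient of $t^k$ in $R(u)^{n+1}$ equals $\binom{n+1}{k}u^{3k}(1-u^2)^{-k}(1+u)^{n+1-k}$, and extracting $u^n$ forces $j=n-3k-2i$ with $0\le i\le\lfloor(n-3k)/2\rfloor$; rewriting $\binom{n+1-k}{n-3k-2i}=\binom{n-k+1}{2i+2k+1}$ then produces precisely the claimed sum. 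The support condition needs no separate argument: when $n<3k$ the summation range is empty, consistent with $j_{n,k}^{\asc}(123)=0$, since each $UDD$-factor of a Dyck path with all descents odd sits inside a distinct descent of length at least $3$, so $k$ such factors require at least $3k$ down steps.

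As an alternative, one could sum the joint formula~(\ref{e-123-lrmin-asc}) over the number of left-to-right minima; this works but yields a sum indexed by $\lrmin$ rather than by the parameter called $i$ in the statement, so a further reindexing would be required, whereas the Lagrange inversion route gives the formula in the desired shape directly. There are no new ideas beyond those already used for Theorem~\ref{t-123-lrmin-asc}, so the only real obstacle is careful bookkeeping in the triple expansion; I expect the step most prone to error to be the reorganization of the $s=1$ specialization into the clean form $\bar P=xR(\bar P)$, though this is routine.
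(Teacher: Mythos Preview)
Your proposal is correct and follows exactly one of the two approaches the paper indicates: specializing $s=1$ in the functional equation for $\bar P$ and applying Lagrange inversion, just as in the proof of Theorem~\ref{t-123-lrmin-asc}. The paper does not spell out the computation but states that the result follows either by this route or by summing~(\ref{e-123-lrmin-asc}) over $i$, so your write-up is precisely what the paper intends.
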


\begin{table}[!h]
\begin{centering}
\renewcommand{\arraystretch}{1.1}%
\begin{tabular}{|>{\centering}p{20bp}|>{\centering}p{20bp}|>{\centering}p{20bp}|>{\centering}p{20bp}|>{\centering}p{20bp}|>{\centering}p{20bp}|>{\centering}p{20bp}|>{\centering}p{20bp}|>{\centering}p{20bp}|>{\centering}p{20bp}|>{\centering}p{20bp}|}
\hline 
$n\backslash k$ & $0$ & $1$ & $2$ & $3$ & $4$ & $5$ & $6$ & $7$ & $8$ & $9$\tabularnewline
\hline 
$0$ & $1$ &  &  &  &  &  &  &  &  & \tabularnewline
\hline 
$1$ & $0$ & $1$ &  &  &  &  &  &  &  & \tabularnewline
\hline 
$2$ & $0$ & $0$ & $1$ &  &  &  &  &  &  & \tabularnewline
\hline 
$3$ & $0$ & $1$ & $0$ & $1$ &  &  &  &  &  & \tabularnewline
\hline 
$4$ & $0$ & $0$ & $4$ & $0$ & $1$ &  &  &  &  & \tabularnewline
\hline 
$5$ & $0$ & $1$ & $0$ & $10$ & $0$ & $1$ &  &  &  & \tabularnewline
\hline 
$6$ & $0$ & $0$ & $9$ & $0$ & $20$ & $0$ & $1$ &  &  & \tabularnewline
\hline 
$7$ & $0$ & $1$ & $0$ & $42$ & $0$ & $35$ & $0$ & $1$ &  & \tabularnewline
\hline 
$8$ & $0$ & $0$ & $16$ & $0$ & $140$ & $0$ & $56$ & $0$ & $1$ & \tabularnewline
\hline 
$9$ & $0$ & $1$ & $0$ & $120$ & $0$ & $378$ & $0$ & $84$ & $0$ & $1$\tabularnewline
\hline 
\end{tabular}
\par\end{centering}
\caption{\label{tb-123-lrmin}The numbers $j_{n,k}^{\protect\lrmin}(123)$
up to $n=9$.}
\end{table}
\begin{table}[!h]
\begin{centering}
\renewcommand{\arraystretch}{1.1}%
\begin{tabular}{|>{\centering}p{20bp}|>{\centering}p{20bp}|>{\centering}p{20bp}|>{\centering}p{20bp}|>{\centering}p{20bp}|}
\hline 
$n\backslash k$ & $0$ & $1$ & $2$ & $3$\tabularnewline
\hline 
$0$ & $1$ &  &  & \tabularnewline
\hline 
$1$ & $1$ &  &  & \tabularnewline
\hline 
$2$ & $1$ &  &  & \tabularnewline
\hline 
$3$ & $1$ & $1$ &  & \tabularnewline
\hline 
$4$ & $1$ & $4$ &  & \tabularnewline
\hline 
$5$ & $1$ & $11$ &  & \tabularnewline
\hline 
$6$ & $1$ & $26$ & $3$ & \tabularnewline
\hline 
$7$ & $1$ & $57$ & $21$ & \tabularnewline
\hline 
$8$ & $1$ & $120$ & $92$ & \tabularnewline
\hline 
$9$ & $1$ & $247$ & $324$ & $12$\tabularnewline
\hline 
\end{tabular}
\par\end{centering}
\caption{\label{tb-123-asc}The numbers $j_{n,k}^{\protect\asc}(123)$ up to
$n=9$.}
\end{table}

Unfortunately, we do not have a formula for the distribution of the
last letter statistic over $\mathfrak{J}_{n}(123)$. Unlike with left-to-right
minima and ascents, which correspond to $UD$- and $UDD$-factors,
the last letter of $\pi$ does not seem to correspond nicely to a
feature of the path $\chi(\pi)$. The value of the last letter does
translate to the height of the last $UD$-factor under Krattenthaler's
original bijection, which can be used to find the distribution of
$\last$ over the full avoidance class $\mathfrak{S}_{n}(123)$. However,
we were unable to adapt this approach to the Jacobi permutation setting,
as we do not have a simple characterization of the Dyck paths corresponding
to Jacobi permutations under Krattenthaler's bijection.

\subsection{\texorpdfstring{$123$}{123}-avoiding doubly Jacobi permutations}

Recall from Section \ref{ss-213-231-312-doubly} that all permutations
in $\mathfrak{J}_{n}(213)$, $\mathfrak{J}_{n}(231)$, and $\mathfrak{J}_{n}(312)$
are doubly Jacobi. The same is not true for $\mathfrak{J}_{n}(123)$,
but the doubly Jacobi permutations in $\mathfrak{J}_{n}(123)$ have
a nice enumeration.

Define the numbers $s_{n}$ by 
\[
s_{n+1}=s_{n}+\sum_{k=1}^{n-1}s_{k}s_{n-1-k}
\]
for all $n\geq0$, with initial term $s_{0}=1$. The $s_{n}$ have
been called \textit{secondary structure numbers} in the literature
(e.g., in \cite{Deutsch2012}), as they count certain graphs which
are used to model secondary structures of RNA molecules. 
\begin{table}[H]
\begin{centering}
\renewcommand{\arraystretch}{1.1}%
\begin{tabular}{c|>{\centering}p{22bp}|>{\centering}p{22bp}|>{\centering}p{22bp}|>{\centering}p{22bp}|>{\centering}p{22bp}|>{\centering}p{22bp}|>{\centering}p{22bp}|>{\centering}p{22bp}|>{\centering}p{22bp}|>{\centering}p{22bp}|>{\centering}p{22bp}|>{\centering}p{22bp}}
$n$ & $0$ & $1$ & $2$ & $3$ & $4$ & $5$ & $6$ & $7$ & $8$ & $9$ & $10$ & $11$\tabularnewline
\hline 
$s_{n}$ & $1$ & $1$ & $1$ & $2$ & $4$ & $8$ & $17$ & $37$ & $82$ & $185$ & $423$ & $978$\tabularnewline
\end{tabular}
\par\end{centering}
\caption{\label{tb-secstruc}The secondary structure numbers $s_{n}$ up to
$n=11$.}
\end{table}

An \textit{ascent} of a Dyck path is a maximal consecutive subword
of up steps, and an ascent is \textit{odd} if it has odd length. Another
combinatorial interpretation for $s_{n}$, which we shall use for
our proof of the next theorem, is the number of Dyck paths of semilength
$n$ with every ascent and descent odd \cite[A004148]{oeis}.
\begin{thm}
\label{t-123doubly}For all $n\geq0$, the number of doubly Jacobi
permutations in $\mathfrak{J}_{n}(123)$ is equal to the secondary
structure number $s_{n}$.
\end{thm}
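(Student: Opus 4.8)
The plan is to run everything through the bijection $\chi\colon\mathfrak{S}_n(123)\to\mathcal{D}_n$ and Proposition \ref{p-123path}. Since $123^{-1}=123$, Lemma \ref{l-pavinv} gives $\pi\in\mathfrak{S}_n(123)\iff\pi^{-1}\in\mathfrak{S}_n(123)$, so a doubly Jacobi permutation in $\mathfrak{J}_n(123)$ is exactly a $\pi\in\mathfrak{S}_n(123)$ for which \emph{both} $\chi(\pi)$ and $\chi(\pi^{-1})$ have all descents odd (applying Proposition \ref{p-123path} to $\pi$ and to $\pi^{-1}$). The crux is therefore to understand what inversion does to $\chi(\pi)$.

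The key step I would carry out is to show that the multiset of descent lengths of $\chi(\pi^{-1})$ coincides with the multiset of ascent lengths of $\chi(\pi)$. The cleanest self-contained route is bookkeeping with \eqref{e-chidef}: if $\pi_{i_1}>\pi_{i_2}>\cdots>\pi_{i_m}$ are the left-to-right minima of $\pi$ at positions $i_1<i_2<\cdots<i_m$ (with $i_1=1$ and $\pi_{i_m}=1$), then from \eqref{e-chidef} the ascents of $\chi(\pi)$ have lengths $n+1-\pi_{i_1},\ \pi_{i_1}-\pi_{i_2},\ \pi_{i_2}-\pi_{i_3},\ \dots,\ \pi_{i_{m-1}}-\pi_{i_m}$. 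By Lemma \ref{l-lrminv}, the left-to-right minima of $\pi^{-1}$ are the values $i_1<i_2<\cdots<i_m$, and the value $i_j$ occupies position $\pi_{i_j}$ in $\pi^{-1}$; listed in decreasing order of value these positions are $\pi_{i_m}<\pi_{i_{m-1}}<\cdots<\pi_{i_1}$. Feeding this data into \eqref{e-chidef} for $\pi^{-1}$ shows that the descent lengths of $\chi(\pi^{-1})$ are $\pi_{i_{m-1}}-\pi_{i_m},\ \pi_{i_{m-2}}-\pi_{i_{m-1}},\ \dots,\ \pi_{i_1}-\pi_{i_2},\ n+1-\pi_{i_1}$, i.e.\ exactly the ascent lengths of $\chi(\pi)$ in reverse order. (Equivalently, one can observe geometrically that $\chi(\pi^{-1})$ is the reverse--complement of $\chi(\pi)$, obtained by reflecting the array of $\pi$ about the main diagonal; this reflection fixes the antidiagonal of the grid and swaps the roles of up/down steps and of ``leaving points to the right.'')

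Granting this, Proposition \ref{p-123path} applied to $\pi$ and $\pi^{-1}$ yields: $\pi\in\mathfrak{S}_n(123)$ is doubly Jacobi if and only if $\chi(\pi)$ has every descent odd (that is $\pi$ Jacobi) and every ascent odd (that is $\pi^{-1}$ Jacobi). Hence $\chi$ restricts to a bijection between the doubly Jacobi permutations in $\mathfrak{J}_n(123)$ and the set of Dyck paths of semilength $n$ all of whose ascents and descents are odd. The number of such paths is $s_n$ by \cite[A004148]{oeis}, and the case $n=0$ is the trivial base case (the empty permutation, the empty path, and $s_0=1$), completing the proof.

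The main obstacle is the middle step, pinning down the interaction of $\chi$ with inversion. The geometric ``reflect about the main diagonal'' picture is conceptually quick but needs care because $\chi$ is defined asymmetrically (path on one side of the points, hugging the antidiagonal from the other), so I expect the rigorous write-up to go via the explicit computation with \eqref{e-chidef} and Lemma \ref{l-lrminv} sketched above, where it suffices to note that the ``all odd'' condition is insensitive to the order in which the lengths are listed.
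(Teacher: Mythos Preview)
Your proposal is correct and follows essentially the same route as the paper: both arguments use Lemma~\ref{l-lrminv} to identify the left-to-right minima of $\pi^{-1}$, read off the relevant gap lengths from \eqref{e-chidef}, and conclude that $\pi^{-1}$ is Jacobi precisely when the ascents of $\chi(\pi)$ are all odd. The only cosmetic difference is that you explicitly compute $\chi(\pi^{-1})$ and then apply Proposition~\ref{p-123path} to it, whereas the paper skips this intermediate step and directly reruns the argument of Proposition~\ref{p-123path} on $\pi^{-1}$ to obtain the parity condition $\pi_{i_{k-1}}-\pi_{i_k}$ odd for all $k$, which it recognizes as the ascent lengths of $\chi(\pi)$.
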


\begin{proof}
Let $\pi\in\mathfrak{J}_{n}(123)$, so that $\chi(\pi)$ has all descents
odd (Proposition \ref{p-123path}). It suffices to prove that $\pi^{-1}$
is Jacobi if and only if $\chi(\pi)$ has all ascents odd.

Let $\pi_{i_{1}}>\pi_{i_{2}}>\cdots>\pi_{i_{m}}$ be the left-to-right
minima of $\pi$, so that $i_{m}>\cdots>i_{2}>i_{1}$ are the left-to-right
minima of $\pi^{-1}$ by Lemma \ref{l-lrminv}. Also, set $i_{0}=0$
and $\pi_{0}=n+1$. By the same reasoning as in the proof of Proposition
\ref{p-123path}, $\pi^{-1}$ is Jacobi if and only if $\pi_{i_{k-1}}-\pi_{i_{k}}$
is odd for all $k\in[m]$, but observe from (\ref{e-chidef}) that
this is precisely the condition under which $\chi(\pi)$ has all ascents
odd.
\end{proof}
One can study the distributions of $\lrmin$ and $\asc$ over $123$-avoiding
doubly Jacobi permutations by counting $UD$- and $UDD$-factors in
Dyck paths with all ascents and descents odd, but we will not do this
here.

\section{\label{s-132-321}\texorpdfstring{$132$}{132}- and \texorpdfstring{$321$}{321}-avoiding
Jacobi permutations}

The two remaining single-pattern Jacobi avoidance classes that we
have yet to enumerate are $\mathfrak{J}_{n}(132)$ and $\mathfrak{J}_{n}(321)$.
In this section, we shall find that $\mathfrak{J}_{n}(132)=\{n\cdots21\}$
for all $n\geq1$, and that the permutations in $\mathfrak{J}_{n}(321)$
are alternating.

\subsection{\texorpdfstring{$132$}{132}-avoidance}
\begin{prop}
\label{p-132}The only permutation in $\mathfrak{J}_{S}(132)$ is
the decreasing permutation of $S$.
\end{prop}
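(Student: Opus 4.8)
The plan is to prove this by induction on $|S|$, organizing the argument around the position of the \emph{largest} letter of $\pi$ rather than the smallest one used in the recursive definition of Jacobi permutations. For the base cases $|S|\le 1$ there is nothing to check. For the inductive step I would first record the easy direction: the decreasing permutation of $S$ does lie in $\mathfrak{J}_{S}(132)$, since it is Jacobi by iterating Lemma~\ref{l-Jacobi}~(b) (as noted immediately after that lemma) and it avoids every pattern except the decreasing one, in particular $132$. So the real content is uniqueness.

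Now let $\pi\in\mathfrak{J}_{S}(132)$ with $S\neq\emptyset$, put $M=\max S$, and write $\pi=\alpha\,M\,\beta$, where $\alpha$ and $\beta$ are the subwords of letters before and after $M$. The first observation is that $132$-avoidance forces every letter of $\alpha$ to exceed every letter of $\beta$: if a letter $a$ of $\alpha$ and a letter $c$ of $\beta$ satisfied $a<c$, then $a\,M\,c$ would be an occurrence of $132$, since $a<c<M$. The key step then uses the Jacobi hypothesis to force $\alpha=\varepsilon$. Indeed, if $\alpha$ were nonempty, let $\ell$ be its last letter; the letter right after $\ell$ in $\pi$ is $M>\ell$, and the next letter, if any, is the first letter of $\beta$, which is smaller than $\ell$ by the previous observation. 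Hence $\rho_{\pi}(\ell)=M$ has odd length $1$, contradicting $\pi\in\mathfrak{J}_{S}$. So $\pi=M\beta$.

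Finally I would check that $\beta$ inherits both hypotheses. For each letter $x$ of $\beta$, every letter appearing after $x$ in $\pi$ lies in $\beta$, so $\rho_{\pi}(x)=\rho_{\beta}(x)$; thus $\beta$ is Jacobi. And $M$, being the largest letter of $\pi$ and occupying the first position, lies in no occurrence of $132$, so $\beta$ avoids $132$. By the inductive hypothesis $\beta$ is the decreasing permutation of $S\setminus\{M\}$, whence $\pi=M\beta$ is the decreasing permutation of $S$, completing the induction.

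The argument is essentially routine; the one place that needs a moment of care is the key step, where one must treat the possibility $\beta=\varepsilon$ (still giving $\rho_{\pi}(\ell)=M$) on the same footing as $\beta\neq\varepsilon$, and this is precisely where the Jacobi condition is genuinely used. Choosing the right decomposition — around $\max S$ rather than $\min S$ — is the only mild insight required.
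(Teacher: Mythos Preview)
Your argument is correct. The one small suggestion is that the check that $\beta$ is Jacobi is already Lemma~\ref{l-Jacobi}~(a), so you could simply cite it rather than reproving it.

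Your route, however, differs from the paper's. The paper gives a one-shot, non-inductive argument: assuming $\pi$ is Jacobi but not decreasing, it takes the \emph{largest} ascent $k$, uses Lemma~\ref{l-Jacobi}~(d) to guarantee that $\pi_{k+1}$ is not the last letter, observes that $\pi_{k+1}>\pi_{k+2}$ by maximality of $k$, and then notes that $\pi_k<\pi_{k+2}$ (else $\rho_\pi(\pi_k)=\pi_{k+1}$ would have length~$1$), so $\pi_k\pi_{k+1}\pi_{k+2}$ is a $132$ occurrence. You instead decompose around $M=\max S$ and induct: the same ``$\rho$ of length~$1$'' obstruction is applied at the letter just before $M$ to force $M$ to the front, and then you recurse on the suffix. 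Both arguments hinge on the same local obstruction, but the paper exhibits a concrete $132$ pattern directly and avoids induction, while your approach trades that for a clean structural statement (the maximum must lead) that feeds naturally into the inductive hypothesis.
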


\begin{proof}
Any decreasing permutation is clearly Jacobi and $132$-avoiding,
so it suffices to show that any nondecreasing Jacobi permutation has
an occurrence of $132$. Let $\pi$ be a nondecreasing Jacobi permutation,
and let $k$ be its largest ascent. Then, by Lemma \ref{l-Jacobi}
(d), we know that $\pi_{k+1}$ is not the last letter of $\pi$. Since
$k$ is the largest ascent of $\pi$, we have $\pi_{k+1}>\pi_{k+2}$.
In addition, we have $\pi_{k}<\pi_{k+2}$ or else $\rho_{\pi}(\pi_{k})=\pi_{k+1}$
which is of odd length. Therefore, $\pi_{k}\pi_{k+1}\pi_{k+2}$ is
an occurrence of $132$ in $\pi$.
\end{proof}
\begin{cor}
\label{c-132}We have $j_{n}(132)=1$ for all $n\geq0$.
\end{cor}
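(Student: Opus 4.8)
The plan is to read off this corollary directly from Proposition~\ref{p-132}. Specializing that proposition to the set $S=[n]$ with $n\geq1$ shows that $\mathfrak{J}_{n}(132)=\mathfrak{J}_{[n]}(132)$ contains exactly one permutation, namely the decreasing permutation $n\cdots21$; hence $j_{n}(132)=\left|\mathfrak{J}_{n}(132)\right|=1$. For $n=0$, the only permutation of $\emptyset$ is the empty permutation $\varepsilon$, which is Jacobi and vacuously $132$-avoiding, so $j_{0}(132)=1$ as well. Thus $j_{n}(132)=1$ for all $n\geq0$, matching sequence A000012 listed in Table~\ref{tb-pa}.

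There is essentially no obstacle to overcome here: all the substantive work has already been carried out in Proposition~\ref{p-132}. Its proof exhibits the decreasing permutation as a member of $\mathfrak{J}_{S}(132)$ — it is Jacobi by iterating Lemma~\ref{l-Jacobi}~(b), and it manifestly avoids $132$ — and rules out every other Jacobi permutation of $S$ by locating an occurrence of $132$ straddling the largest ascent. The corollary is therefore simply the instance $S=[n]$ of that uniqueness statement, together with the trivial base case $n=0$, so the only thing worth spelling out is the brief reduction just described.
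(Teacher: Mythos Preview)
Your proposal is correct and matches the paper's approach exactly: the paper states the corollary with no separate proof, treating it as an immediate consequence of Proposition~\ref{p-132}. Your explicit handling of $n=0$ is harmless but unnecessary, since Proposition~\ref{p-132} already covers $S=\emptyset$ (the decreasing permutation of $\emptyset$ is $\varepsilon$).
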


\subsection{\label{ss-321}\texorpdfstring{$321$}{321}-avoidance: enumeration,
ascents, left-to-right minima}

The theorem below completely characterizes $321$-avoiding Jacobi
permutations.
\begin{thm}
\label{t-321}Let $\pi\in\mathfrak{S}_{S}(321)$, $n=\left|S\right|$,
and $y=\min S$.
\begin{enumerate}
\item [\normalfont{(a)}]If $n$ is even, then $\pi$ is Jacobi if and only
if $\pi$ is down-up alternating.
\item [\normalfont{(b)}]If $n$ is odd, then $\pi$ is Jacobi if and only
if $\pi$ is up-down alternating and $\pi_{1}=y$.
\end{enumerate}
\end{thm}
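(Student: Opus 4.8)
The plan is to prove parts (a) and (b) \emph{together} by induction on $n=\left|S\right|$, using the recursive definition of Jacobi permutations: if $\pi=\alpha y\beta$ with $y=\min S$, then $\pi$ is Jacobi if and only if $\alpha$ and $\beta$ are both Jacobi and $\left|\beta\right|$ is even. The base cases $n=0$ (the empty permutation, vacuously down-up) and $n=1$ ($\pi=y$, vacuously up-down with $\pi_{1}=y$) are immediate, so I would assume the result for all smaller sets.

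The one structural fact driving every case is that $321$-avoidance tightly constrains the prefix before the minimum. If $\pi\in\mathfrak{S}_{S}(321)$ and $\pi=\alpha y\beta$ with $y=\min S$, then $\alpha$ must be increasing, since an inversion in $\alpha$ together with $y$ would form a $321$ pattern. Writing $\alpha=\alpha_{1}<\alpha_{2}<\cdots<\alpha_{m}$, one then checks that $\rho_{\pi}(\alpha_{i})=\alpha_{i+1}\cdots\alpha_{m}$ for each $i$ (every $\alpha_{j}$ with $j>i$ exceeds $\alpha_{i}$, while $y$ does not), so $\left|\rho_{\pi}(\alpha_{i})\right|=m-i$. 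Hence, if $\pi$ is Jacobi, then $m-i$ is even for all $i\in[m]$, which forces $m\le1$: a $321$-avoiding Jacobi permutation has at most one letter before its minimum.

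For the ``only if'' directions, suppose $\pi\in\mathfrak{S}_{S}(321)$ is Jacobi. Then $\left|\beta\right|$ is even, so $\left|\alpha\right|$ has parity opposite to $n$; combined with the previous paragraph, $\left|\alpha\right|=1$ (i.e.\ $y=\pi_{2}$) when $n$ is even and $\left|\alpha\right|=0$ (i.e.\ $\pi_{1}=y$) when $n$ is odd. In both cases $\beta$ is a suffix of $\pi$, hence $321$-avoiding, and is Jacobi of even length ($n-2$ or $n-1$), so by the inductive hypothesis $\beta$ is down-up alternating. Prepending the missing one or two letters, and using $\pi_{1}>y=\pi_{2}<\pi_{3}$ when $n$ is even and $\pi_{1}=y<\pi_{2}$ when $n$ is odd, shows $\pi$ is down-up when $n$ is even and up-down with $\pi_{1}=y$ when $n$ is odd. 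Conversely, if $\pi$ is $321$-avoiding and alternating of the stated type, then the position of $y$ is again pinned down: at position $2$ when $n$ is even (it cannot be at position $1$, since $\pi_{1}>\pi_{2}$ forces $\pi_{1}\ne\min S$, nor later, since then the prefix would contain the inversion $\pi_{1}>\pi_{2}$) and at position $1$ when $n$ is odd by hypothesis. So $\pi=\alpha y\beta$ with $\alpha$ empty or a single letter (hence Jacobi) and $\beta$ a $321$-avoiding down-up permutation of even length; the inductive hypothesis makes $\beta$ Jacobi, and the recursive definition then makes $\pi$ Jacobi.

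I do not expect a serious obstacle here — the argument is a short induction. The points needing care are purely bookkeeping: tracking which parity of $n$ reduces to which part of the inductive hypothesis (both parities at length $n$ reduce to the \emph{even} case at smaller length), and, in the converse direction, genuinely \emph{deducing} the position of $\min S$ from the alternating pattern together with $321$-avoidance rather than assuming it.
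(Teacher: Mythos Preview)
Your proof is correct. The overall architecture matches the paper's: both pin down the position of $y$ (at position $2$ when $n$ is even, at position $1$ when $n$ is odd) and then reduce to the even case at smaller length.

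There are two small differences worth noting. First, your argument that $\left|\alpha\right|\le 1$ is slightly different from the paper's: you observe that $\alpha$ must be increasing (else a $321$ with $y$) and then compute $\left|\rho_{\pi}(\alpha_{i})\right|=m-i$ directly, whereas the paper instead invokes the fact that a Jacobi permutation of length $\ge 2$ ends with a descent (Lemma~\ref{l-Jacobi}~(d)) to produce the $321$ pattern $\alpha_{m-1}\alpha_{m}y$. Both work; yours is a bit more self-contained. Second, for the converse (``alternating $\Rightarrow$ Jacobi''), the paper gives a \emph{direct} non-inductive argument in the even case: for odd $k$ one has $\rho_{\pi}(\pi_{k})=\varepsilon$, and for even $k$ the down-up shape together with $321$-avoidance forces $\pi_{k}$ to be a right-to-left minimum, so $\rho_{\pi}(\pi_{k})$ is the full suffix of even length. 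Your inductive route via the recursive definition is equally valid and makes the whole proof more uniform; the paper's direct computation has the minor advantage of exhibiting the actual $\rho$-values.
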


Our proof of Theorem \ref{t-321} will require two lemmas. Together,
these lemmas tell us that an even-length $321$-avoiding Jacobi permutation
must have its smallest letter in its second position and its largest
letter in its penultimate position, and that an odd-length $321$-avoiding
Jacobi permutation must begin with its smallest letter.
\begin{lem}
\label{l-321-even}If $\pi\in\mathfrak{J}_{S}(321)$ where $\left|S\right|=2n>0$,
$y=\min S$, and $z=\max S$, then $\pi_{2}=y$ and $\pi_{2n-1}=z$.
\end{lem}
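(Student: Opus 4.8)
The plan is to establish the two assertions $\pi_2 = y$ and $\pi_{2n-1} = z$ separately, in each case combining the recursive structure of Jacobi permutations with the $321$-avoidance hypothesis.

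For the first assertion, I would write $\pi = \alpha y \beta$ as in the recursive definition of Jacobi permutations, so that $|\beta|$ is even; since $|\alpha| + |\beta| = 2n - 1$, this forces $|\alpha|$ to be odd, hence $|\alpha| \geq 1$, so $y$ is not the first letter of $\pi$. The crux is to show $|\alpha| = 1$. First, $\alpha$ must be increasing: if some letter of $\alpha$ were larger than a later letter of $\alpha$, those two letters together with $y$ would form an occurrence of $321$ in $\pi$. Writing $\alpha = a_1 a_2 \cdots a_m$ with $a_1 < a_2 < \cdots < a_m$, I would then suppose $m \geq 2$ and examine $\rho_\pi(a_1)$ and $\rho_\pi(a_2)$: since the letter immediately following $a_m$ in $\pi$ is $y$, which is smaller than both $a_1$ and $a_2$, these subwords are exactly $a_2 a_3 \cdots a_m$ and $a_3 a_4 \cdots a_m$, of lengths $m - 1$ and $m - 2$. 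These cannot both be even, contradicting the Jacobi property of $\pi$. Hence $m = 1$ (it is odd and positive), which gives $\pi_2 = y$.

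For the second assertion, let $k$ be the position with $\pi_k = z = \max S$. By Lemma~\ref{l-Jacobi}~(d), applicable since $2n \geq 2$, we have $\pi_{2n-1} > \pi_{2n}$; in particular $\pi_{2n} \neq z$, so $k \leq 2n - 1$. If we had $k \leq 2n - 2$, then both $\pi_{2n-1}$ and $\pi_{2n}$ would occur strictly after $\pi_k = z$ in $\pi$, and since $z > \pi_{2n-1} > \pi_{2n}$ (using Lemma~\ref{l-Jacobi}~(d) once more, together with $\pi_{2n-1}\neq z$), the subword $z\,\pi_{2n-1}\,\pi_{2n}$ would be an occurrence of $321$, contradicting the $321$-avoidance of $\pi$. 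Therefore $k = 2n - 1$, i.e., $\pi_{2n-1} = z$.

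Neither part involves genuine computation; each is a short case analysis. The points to get right are the parity bookkeeping in the first part — that $|\alpha|$ is odd and that the lengths $m - 1$ and $m - 2$ have opposite parities — and the observation that $\alpha$ being increasing is forced purely by $321$-avoidance, so no auxiliary structural fact about increasing Jacobi permutations is required. I expect the second part, which uses only Lemma~\ref{l-Jacobi}~(d) and the definition of a pattern occurrence, to be entirely routine.
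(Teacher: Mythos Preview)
Your proof is correct. The second assertion is argued exactly as in the paper. For the first assertion, the paper takes a slightly different tack: rather than first observing that $\alpha$ is increasing and then deriving a parity contradiction from $\rho_\pi(a_1)$ and $\rho_\pi(a_2)$, it uses that $\alpha$ is itself Jacobi (from the recursive definition) and applies Lemma~\ref{l-Jacobi}~(d) to conclude $\alpha_{m-1}>\alpha_m$ when $m\geq 2$, so that $\alpha_{m-1}\alpha_m y$ is a $321$ occurrence. Your route avoids invoking the Jacobi property of the subword $\alpha$ and works directly with $\rho_\pi$; the paper's route is a line shorter because the needed fact is already packaged in Lemma~\ref{l-Jacobi}~(d). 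Both are equally valid and the difference is cosmetic.
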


\begin{proof}
Write $\pi=\alpha y\beta$, so that $\alpha$ and $\beta$ are $321$-avoiding
Jacobi permutations and $\beta$ is of even length. Observe that $\alpha$
cannot be empty, or else $\pi=y\beta$ would be of odd length. On
the other hand, if $\alpha=\alpha_{1}\alpha_{2}\cdots\alpha_{m}$
where $m\geq2$, then $\alpha_{m-1}\alpha_{m}y$ would be an occurrence
of $321$ in $\pi$; see Lemma \ref{l-Jacobi}~(d). Therefore, $\alpha$
must have length 1, which implies $\pi_{2}=y$. The fact that $\pi_{2n-1}=z$
is also a consequence of Lemma \ref{l-Jacobi}~(d); $\pi$ cannot
end with $z$ because $\pi$ ends with a descent, and there cannot
be more than one letter in $\pi$ after $z$, or else $z$ and the
last two letters of $\pi$ would form an occurrence of $321$.
\end{proof}
We omit the proof of the next lemma because it is very similar to
that of the above.
\begin{lem}
\label{l-321-odd}If $\pi\in\mathfrak{J}_{S}(321)$ where $\left|S\right|=2n+1$
and $y=\min S$, then $\pi_{1}=y$.
\end{lem}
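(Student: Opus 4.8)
The plan is to mimic the proof of Lemma~\ref{l-321-even}, using the recursive characterization of Jacobi permutations together with Lemma~\ref{l-Jacobi}~(d). First I would write $\pi=\alpha y\beta$ where $y=\min S$. By the recursive definition of Jacobi permutations, $\alpha$ and $\beta$ are Jacobi and $|\beta|$ is even; moreover $\alpha$ and $\beta$ are $321$-avoiding, being subwords of the $321$-avoiding permutation $\pi$.

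Next I would bring in the parity of $|S|$. Since $|\alpha|+|\beta|+1=2n+1$ and $|\beta|$ is even, $|\alpha|=2n-|\beta|$ is even. Consequently $\alpha$ is either empty or of length at least $2$; in particular, there is no length-$1$ prefix to deal with, which is the feature that distinguishes this argument from the even-length case. It therefore suffices to rule out the possibility that $\alpha$ is nonempty.

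To do so, I would suppose $\alpha=\alpha_1\alpha_2\cdots\alpha_m$ with $m\geq2$ and derive a contradiction. Because $\alpha$ is Jacobi of length $m\geq 2$, Lemma~\ref{l-Jacobi}~(d) gives $\alpha_{m-1}>\alpha_m$. Since $y=\min S$ and $\alpha_m\neq y$, we also have $\alpha_m>y$, so the subword $\alpha_{m-1}\alpha_m y$ of $\pi$ is an occurrence of $321$, contradicting $\pi\in\mathfrak{S}_S(321)$. Hence $\alpha$ must be empty, and therefore $\pi_1=y$, as claimed. There is essentially no obstacle here; the only point worth flagging is that the parity of $|S|$ forces $|\alpha|$ to be even, so — unlike in Lemma~\ref{l-321-even} — one does not need to separately analyze the case $|\alpha|=1$ or say anything about the penultimate letter.
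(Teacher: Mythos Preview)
Your proof is correct and follows exactly the approach the paper intends: the paper omits the proof of Lemma~\ref{l-321-odd}, stating that it is very similar to that of Lemma~\ref{l-321-even}, and your argument is precisely that adaptation. The only difference from the even case, which you identify correctly, is that the parity of $|S|$ forces $|\alpha|$ to be even, so the length-$1$ case does not arise and one concludes $\alpha=\varepsilon$ rather than $|\alpha|=1$.
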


We now prove Theorem \ref{t-321}.
\begin{proof}[Proof of Theorem \ref{t-321}]
We first prove the forward direction of (a) using induction, with
the base case ($n=0$) beng trivial. Let $\pi$ be a $321$-avoiding
Jacobi permutation of even length $n$, and suppose that the forward
direction of (a) holds for all even lengths less than $n$. By Lemma
\ref{l-321-even}, $\pi=\pi_{1}y\beta$ where $\beta$ is a $321$-avoiding
Jacobi permutation of even length. Then $\beta$ is down-up according
to the induction hypothesis, so $\pi=\pi_{1}y\beta$ is down-up as
desired.

Conversely, suppose that $\pi$ is a $321$-avoiding down-up permutation
of even length. Because $\pi$ is down-up, we have $\left|\rho_{\pi}(\pi_{k})\right|=0$
for all odd $k\in[n]$. On the other hand, if $k\in[n]$ is even,
then $\pi_{k}$ is a right-to-left minimum of $\pi$; otherwise, if
there is a letter $x$ appearing after $\pi_{k}$ which is less than
$\pi_{k}$, then $\pi_{k-1}\pi_{k}x$ would be an occurrence of $321$
in $\pi$. Thus, $\rho_{\pi}(\pi_{k})=\pi_{k+1}\pi_{k+2}\cdots\pi_{n}$
for all even $k\in[n]$, and these subwords are all of even length.
It follows that $\pi$ is Jacobi, and we have proven (a).

To prove the forward direction of (b), let $\pi$ be a $321$-avoiding
Jacobi permutation of odd length $n$. Then, by Lemma \ref{l-321-odd},
we have $\pi=y\beta$ where $\beta$ is a $321$-avoiding Jacobi permutation
of even length. Then, applying part (a) of this theorem, we have that
$\beta$ is down-up. Hence, $\pi=y\beta$ is up-down.

Finally, let $\pi$ be a $321$-avoiding up-down permutation of odd
length $n$ with $\pi_{1}=y$. Also, let $\beta=\pi_{2}\pi_{3}\cdots\pi_{n}$,
so that $\pi=y\beta$. Observe that $\beta$ is a $321$-avoiding
down-up permutation of even length, so $\beta$ is Jacobi by part
(a). Then $\rho_{\pi}(\pi_{k})=\rho_{\beta}(\pi_{k})$ is of even
length for all $2\leq k\leq n$, and $\rho_{\pi}(y)=\beta$ is of
even length as well. We conclude that $\pi$ is Jacobi.
\end{proof}
Let 
\[
C_{n}\coloneqq\frac{1}{n+1}{2n \choose n}
\]
denote the $n$th \textit{Catalan number}. We have the following corollary
to Theorem \ref{t-321}.
\begin{cor}
\label{c-321}For all $n\geq0$, we have $j_{n}(321)=C_{\left\lfloor n/2\right\rfloor }$.
\end{cor}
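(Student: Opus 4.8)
The plan is to combine Theorem~\ref{t-321} with an explicit encoding of $321$-avoiding down-up permutations as Dyck paths. First, Theorem~\ref{t-321} reduces everything to the even case. When $n=2m$ is even, $\mathfrak{J}_{n}(321)$ is exactly the set of $321$-avoiding down-up permutations in $\mathfrak{S}_{2m}$. When $n=2m+1$ is odd, Theorem~\ref{t-321}~(b) says every $\pi\in\mathfrak{J}_{n}(321)$ is up-down with $\pi_{1}=1$; deleting this leading $1$ and standardizing sends $\pi=1\pi_{2}\cdots\pi_{2m+1}$ to a down-up permutation in $\mathfrak{S}_{2m}$, and this operation preserves $321$-avoidance since the global minimum in the first position can never participate in an occurrence of $321$. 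Using Theorem~\ref{t-321}~(b) in reverse to prepend a $1$, this is a bijection between $\mathfrak{J}_{2m+1}(321)$ and the $321$-avoiding down-up permutations in $\mathfrak{S}_{2m}$. As $\lfloor n/2\rfloor=m$ in both parities, it remains to show that the number of $321$-avoiding down-up permutations of $[2m]$ equals $C_{m}$.

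Next I would establish a structural characterization: a down-up permutation $\pi\in\mathfrak{S}_{2m}$ avoids $321$ if and only if both its odd-indexed subword $\pi_{1}\pi_{3}\cdots\pi_{2m-1}$ and its even-indexed subword $\pi_{2}\pi_{4}\cdots\pi_{2m}$ are increasing. For the forward direction, if $\pi_{2i-1}>\pi_{2j-1}$ with $i<j$, then together with the descent $\pi_{2j-1}>\pi_{2j}$ we get an occurrence of $321$ at positions $2i-1<2j-1<2j$; if instead $\pi_{2i}>\pi_{2j}$ with $i<j$, then the descent $\pi_{2i-1}>\pi_{2i}$ gives an occurrence of $321$ at positions $2i-1<2i<2j$. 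For the converse, any occurrence of $321$ would involve two positions of the same parity, on which $\pi$ is increasing, a contradiction.

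Finally, write $a_{i}\coloneqq\pi_{2i-1}$ and $b_{i}\coloneqq\pi_{2i}$. By the previous paragraph, the $321$-avoiding down-up permutations of $[2m]$ are in bijection with the ways to partition $[2m]$ into an $m$-element set $\{a_{1}<\cdots<a_{m}\}$ and an $m$-element set $\{b_{1}<\cdots<b_{m}\}$ with $a_{i}>b_{i}$ for all $i$ (the remaining down-up inequality $b_{i}<a_{i+1}$ being automatic from $a_{i+1}>a_{i}>b_{i}$). Encoding each value $k\in[2m]$ by an up step if $k\in\{b_{1},\dots,b_{m}\}$ and by a down step if $k\in\{a_{1},\dots,a_{m}\}$ yields a word in $\{U,D\}^{2m}$ with $m$ steps of each type, and the condition $a_{i}>b_{i}$ for all $i$ is equivalent to every prefix of this word having at least as many up steps as down steps. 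Hence the construction is a bijection onto $\mathcal{D}_{m}$, giving $j_{2m}(321)=\left|\mathcal{D}_{m}\right|=C_{m}$, which combined with the first paragraph proves the corollary.

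I do not expect a serious obstacle. The one point needing care is the equivalence between ``$a_{i}>b_{i}$ for every $i$'' and the ballot condition on the associated $\{U,D\}$-word: this is a standard fact (if $a_{i}>b_{i}$ for all $i$, then every value at or below $a_{p}$ that is a $b$-value lies below some $a_{i}\le a_{p}$, so the prefix ending at $a_{p}$ has at least $p$ up steps; conversely, a failure $a_{i}<b_{i}$ at the least such $i$ would make the prefix ending at $a_{i}$ have more down steps than up steps), and I would include a brief verification.
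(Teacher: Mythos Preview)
Your proof is correct. The reduction via Theorem~\ref{t-321} and the odd-to-even bijection (delete the leading $1$ and standardize) are exactly what the paper does. The only difference is in the final step: the paper simply cites the known fact that $321$-avoiding down-up permutations of length $2m$ are counted by $C_m$ \cite[Bijective Exercise~146]{Stanley2015}, whereas you supply an explicit self-contained bijection with Dyck paths via the characterization that both the odd-indexed and even-indexed subwords must be increasing. Your route is thus more elementary in the sense of not relying on an external reference, and the Dyck path encoding you give is essentially the standard one underlying that exercise; the paper's route is shorter but less self-contained.
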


\begin{proof}
It is known \cite[Bijective Exercise 146]{Stanley2015} that $C_{n}$
is the number of down-up permutations in $\mathfrak{S}_{2n}(321)$,
so $j_{2n}(321)=C_{n}$. In addition, the up-down permutations in
$\mathfrak{S}_{2n+1}(321)$ beginning with $1$ are in bijection with
the down-up permutations in $\mathfrak{S}_{2n}(321)$: simply remove
the first letter and standardize the resulting permutation. Thus,
$j_{2n+1}(321)=C_{n}$ as well.
\end{proof}
The next two corollaries are immediate consequences of Theorem \ref{t-321}.
\begin{cor}
\label{c-321-asc}For all $n\geq1$ and $\pi\in\mathfrak{J}_{2n-1}(321)\sqcup\mathfrak{J}_{2n}(321)$,
we have $\asc(\pi)=n-1$. 
\end{cor}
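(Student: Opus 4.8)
The plan is to apply Theorem~\ref{t-321} in each parity case and then read off the number of ascents directly from the alternating pattern it provides. First I would treat the even case: for $\pi\in\mathfrak{J}_{2n}(321)$, Theorem~\ref{t-321}~(a) tells us that $\pi$ is down-up alternating, i.e.\ $\pi_{1}>\pi_{2}<\pi_{3}>\cdots>\pi_{2n}$; consequently the positions $k\in[2n-1]$ with $\pi_{k}<\pi_{k+1}$ are exactly the even ones $2,4,\dots,2n-2$, so $\asc(\pi)=n-1$. Next I would treat the odd case: for $\pi\in\mathfrak{J}_{2n-1}(321)$, Theorem~\ref{t-321}~(b) tells us that $\pi$ is up-down alternating, i.e.\ $\pi_{1}<\pi_{2}>\pi_{3}<\cdots>\pi_{2n-1}$, so the ascent positions are now the odd ones $1,3,\dots,2n-3$, again $n-1$ of them. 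Combining the two cases gives $\asc(\pi)=n-1$ for every $\pi\in\mathfrak{J}_{2n-1}(321)\sqcup\mathfrak{J}_{2n}(321)$, which is the assertion.

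There is no genuine obstacle here—this is a direct corollary of Theorem~\ref{t-321}—and the only point requiring a moment's care is the parity bookkeeping: one must check that an up-down permutation of odd length $2n-1$ and a down-up permutation of even length $2n$ both have exactly $n-1$ ascents, which is immediate once one notes that ascents and descents alternate along the permutation and that the first and last adjacent pairs behave as the alternating condition dictates. (Alternatively, one could use $\asc(\pi)+\des(\pi)=\lvert\pi\rvert-1$ and count descents, but counting ascents directly is the cleanest route.)
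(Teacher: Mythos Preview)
Your proof is correct and is exactly the intended argument: the paper simply states that this corollary is an ``immediate consequence'' of Theorem~\ref{t-321} without spelling out the details, and your parity-by-parity count of ascent positions in the alternating patterns is precisely what makes it immediate.
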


\begin{cor}
\label{c-321-lrmin}For all $n\geq1$ and $\pi\in\mathfrak{J}_{n}(321)$,
we have 
\[
\lrmin(\pi)=\begin{cases}
2, & \text{if }n\text{ is even,}\\
1, & \text{if }n\text{ is odd.}
\end{cases}
\]
\end{cor}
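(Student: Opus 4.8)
The plan is to read the result off directly from Theorem~\ref{t-321}, together with Lemmas~\ref{l-321-even} and~\ref{l-321-odd}. In both cases the key observation is that the number of left-to-right minima of a permutation $\pi$ of $S$ is governed by how quickly the running minimum of its prefixes descends to $\min S$: once a prefix of $\pi$ attains the value $\min S$, no later letter can be a left-to-right minimum of $\pi$.

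First I would handle the odd case. Suppose $n=\left|S\right|$ is odd and $\pi\in\mathfrak{J}_{n}(321)$. By Lemma~\ref{l-321-odd} (equivalently, by the forward direction of Theorem~\ref{t-321}(b)), we have $\pi_{1}=\min S$. Hence $\pi_{1}$ is a left-to-right minimum, and since it is the smallest letter of $\pi$, no other $\pi_{k}$ can satisfy $\pi_{k}<\pi_{i}$ for all $i<k$. Therefore $\lrmin(\pi)=1$.

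Next the even case. Suppose $n$ is even and $\pi\in\mathfrak{J}_{n}(321)$. By Theorem~\ref{t-321}(a), $\pi$ is down-up alternating, so in particular $\pi_{1}>\pi_{2}$; thus both $\pi_{1}$ and $\pi_{2}$ are left-to-right minima of $\pi$. By Lemma~\ref{l-321-even}, $\pi_{2}=\min S$, so, exactly as in the odd case, no letter occurring after position~$2$ can be a left-to-right minimum. Hence $\lrmin(\pi)=2$.

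There is no real obstacle here: the substantive content has already been extracted in Theorem~\ref{t-321} and its supporting lemmas, and the argument reduces to the elementary remark that a permutation has exactly one left-to-right minimum before its smallest letter appears and none afterward. The one point to be careful about is invoking Lemma~\ref{l-321-even} to pin down that it is $\pi_{2}$---rather than some later letter---that equals $\min S$ in the even case; without this we would only be able to conclude $\lrmin(\pi)\geq 2$.
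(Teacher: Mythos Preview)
Your proof is correct and follows the same approach as the paper, which simply declares the corollary an immediate consequence of Theorem~\ref{t-321}. You spell out the details (including the appeal to Lemma~\ref{l-321-even} to pin down $\pi_{2}=\min S$ in the even case), but this is exactly the reasoning the paper leaves implicit.
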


\subsection{\texorpdfstring{$321$}{321}-avoidance: last letter}

While the distributions of $\asc$ and $\lrmin$ over $\mathfrak{J}_{n}(321)$
are not very interesting, it takes more work to prove the next theorem,
which gives the distribution of $\last$ over $\mathfrak{J}_{n}(321)$.
\begin{thm}
\label{t-321-last}For all $n\geq1$, we have
\[
j_{2n,k}^{\last}(321)=\begin{cases}
{\displaystyle \frac{2n-k}{n}{k-1 \choose n-1}}, & \text{if }n\leq k\leq2n-1,\\
0, & \text{otherwise},
\end{cases}
\]
and 
\[
j_{2n+1,k}^{\last}(321)=\begin{cases}
{\displaystyle \frac{2n-k+1}{n}{k-2 \choose n-1}}, & \text{if }n+1\leq k\leq2n,\\
0, & \text{otherwise}.
\end{cases}
\]
\end{thm}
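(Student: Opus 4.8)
The plan is to reduce the odd case to the even case and then settle the even case via a bijection with Dyck paths.

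\medskip
\emph{Reduction of the odd case.} By Theorem~\ref{t-321}(b), $\mathfrak{J}_{2n+1}(321)$ consists exactly of the $321$-avoiding up-down permutations of length $2n+1$ whose first letter is the smallest. Given such a $\pi$, deleting the leading $1$ and standardizing yields a permutation of $[2n]$ which is $321$-avoiding, down-up (since $\pi$ is up-down beginning at position $1$), and hence Jacobi by Theorem~\ref{t-321}(a); this map is a bijection onto $\mathfrak{J}_{2n}(321)$ whose inverse prepends a new smallest letter. Since deleting the minimum letter decreases every remaining letter by $1$, this bijection sends a permutation with last letter $k$ to one with last letter $k-1$. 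Therefore $j_{2n+1,k}^{\last}(321)=j_{2n,k-1}^{\last}(321)$, and the claimed odd-case formula is obtained from the even-case formula by the substitution $k\mapsto k-1$.

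\medskip
\emph{Structure of the even case.} By Theorem~\ref{t-321}(a), $\mathfrak{J}_{2n}(321)$ is the set of $321$-avoiding down-up permutations of length $2n$. The key structural observation is that a down-up permutation $\pi$ of $[2n]$ is $321$-avoiding if and only if both the odd-indexed subsequence $\pi_1,\pi_3,\dots,\pi_{2n-1}$ and the even-indexed subsequence $\pi_2,\pi_4,\dots,\pi_{2n}$ are increasing. The ``if'' direction is immediate: any three positions contain two of the same parity, which are in increasing order, so there is no decreasing subsequence of length $3$. For ``only if'': if $\pi_{2i-1}>\pi_{2j-1}$ for some $i<j$, then since $\pi$ is down-up we have $\pi_{2j-1}>\pi_{2j}$, so $\pi_{2i-1}\pi_{2j-1}\pi_{2j}$ is a $321$ occurrence; likewise, if $\pi_{2i}>\pi_{2j}$ for some $i<j$, then $\pi_{2i-1}>\pi_{2i}>\pi_{2j}$ gives a $321$ occurrence. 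Consequently such a $\pi$ is completely determined by the partition of $[2n]$ into $A=\{\pi_2,\pi_4,\dots,\pi_{2n}\}$ and $C=\{\pi_1,\pi_3,\dots,\pi_{2n-1}\}$ (each of size $n$): writing $a_1<\cdots<a_n$ and $c_1<\cdots<c_n$ for the sorted elements, one recovers $\pi_{2j}=a_j$ and $\pi_{2j-1}=c_j$, the down condition $\pi_{2j-1}>\pi_{2j}$ becomes $c_j>a_j$ for all $j$, and the up condition $\pi_{2j}<\pi_{2j+1}$ follows automatically from these inequalities. The condition $c_j>a_j$ for all $j$ is equivalent to $\#(A\cap[m])\ge\#(C\cap[m])$ for every $m\in[2n]$, so encoding $m\in A$ as an up-step $U$ and $m\in C$ as a down-step $D$ identifies $\mathfrak{J}_{2n}(321)$ with the set of Dyck paths of semilength $n$; in particular $|\mathfrak{J}_{2n}(321)|=C_n$, recovering Corollary~\ref{c-321}.

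\medskip
\emph{Reading off the last letter.} Under this bijection the last letter $\pi_{2n}=a_n=\max A$ equals $k$ precisely when step $k$ is an up-step and steps $k+1,\dots,2n$ are all down-steps; equivalently, the Dyck path factors as $w\,U\,D^{2n-k}$, where $w$ is a lattice path of length $k-1$ with steps $\pm1$ starting at height $0$, never going negative, and ending at height $2n-k-1$ (so that the terminal block $D^{2n-k}$ returns it to $0$). Such a path $w$ has $n-1$ up-steps and $k-n$ down-steps, and by the reflection principle the number of them is $\binom{k-1}{n-1}-\binom{k-1}{n}=\frac{2n-k}{n}\binom{k-1}{n-1}$, which is $0$ unless $n\le k\le 2n-1$. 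This is exactly the asserted value of $j_{2n,k}^{\last}(321)$, completing the even case and hence the theorem; as a sanity check, $k=n$ gives the single path $U^nD^n$ and $k=2n-1$ gives $C_{n-1}$.

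\medskip
The one nonobvious ingredient is the doubly-increasing characterization of $321$-avoiding down-up permutations of even length; once it is established, the remainder is a routine translation into Dyck-path language together with a standard ballot count. Care is only needed at the boundary values of $k$ and in checking that the small cases (e.g.\ $n=1$) are consistent with the stated ranges.
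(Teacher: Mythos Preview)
Your proof is correct and takes a genuinely different route from the paper's. The paper establishes the recurrence
\[
j_{2n,k}^{\last}(321)=\sum_{l=n-1}^{k-1}j_{2n-2,l}^{\last}(321)
\]
via a bijection $\mathfrak{J}_{2n,k}^{\last}(321)\to\bigsqcup_{l}\mathfrak{J}_{2n-2,l}^{\last}(321)$ that deletes the last two letters, and then matches this recurrence with the standard ballot-number recurrence $B_{n,k}=\sum_{i=0}^{k}B_{n-1,i}$ to identify $j_{2n,k}^{\last}(321)=B_{n-1,k-n}$. Your approach instead sets up a direct bijection between $\mathfrak{J}_{2n}(321)$ and Dyck paths of semilength $n$\textemdash via the doubly-increasing characterization of $321$-avoiding down-up permutations\textemdash under which the last letter becomes the position of the final up-step, and then counts these paths in closed form by the reflection principle. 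Both arguments handle the odd case identically (the paper's Lemma~\ref{l-321-lastrep} is your reduction). Your approach has the advantage of yielding the explicit formula in one stroke rather than through a recurrence, and of giving a transparent combinatorial reason for the bounds $n\le k\le 2n-1$ (namely, $a_n\ge n$ because $A$ has $n$ elements, and $a_n\le 2n-1$ because $c_n=2n$); the paper's approach, on the other hand, more explicitly exhibits the connection to the ballot-number triangle and avoids needing the doubly-increasing lemma.
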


\begin{table}
\begin{centering}
\renewcommand{\arraystretch}{1.1}%
\begin{tabular}{|>{\centering}p{20bp}|>{\centering}p{20bp}|>{\centering}p{20bp}|>{\centering}p{20bp}|>{\centering}p{20bp}|>{\centering}p{20bp}|>{\centering}p{20bp}|>{\centering}p{20bp}|>{\centering}p{20bp}|}
\hline 
$n\backslash k$ & $1$ & $2$ & $3$ & $4$ & $5$ & $6$ & $7$ & $8$\tabularnewline
\hline 
$1$ & $1$ &  &  &  &  &  &  & \tabularnewline
\hline 
$2$ & $1$ &  &  &  &  &  &  & \tabularnewline
\hline 
$3$ & $0$ & $1$ &  &  &  &  &  & \tabularnewline
\hline 
$4$ & $0$ & $1$ & $1$ &  &  &  &  & \tabularnewline
\hline 
$5$ & $0$ & $0$ & $1$ & $1$ &  &  &  & \tabularnewline
\hline 
$6$ & $0$ & $0$ & $1$ & $2$ & $2$ &  &  & \tabularnewline
\hline 
$7$ & $0$ & $0$ & $0$ & $1$ & $2$ & $2$ &  & \tabularnewline
\hline 
$8$ & $0$ & $0$ & $0$ & $1$ & $3$ & $5$ & $5$ & \tabularnewline
\hline 
$9$ & $0$ & $0$ & $0$ & $0$ & $1$ & $3$ & $5$ & $5$\tabularnewline
\hline 
\end{tabular}
\par\end{centering}
\caption{\label{tb-321-last}The numbers $j_{n,k}^{\protect\last}(321)$ up
to $n=9$.}
\end{table}

One may notice from Table \ref{tb-321-last} that the $j_{2n,k}^{\last}(321)$
and $j_{2n+1,k}^{\last}(321)$ are each a shifted version of the \textit{ballot
numbers} $B_{n,k}$, defined by 
\[
B_{n,k}\coloneqq\frac{n-k+1}{n+1}{n+k \choose n}
\]
for all $n\geq k\geq0$ \cite[A009766]{oeis}.

We first state a few lemmas needed for the proof of Theorem \ref{t-321-last}.
\begin{lem}
\label{l-321-lastbound}Let $n\geq1$. 
\begin{enumerate}
\item [\normalfont{(a)}]If $\pi\in\mathfrak{J}_{2n}(321)$, then $n\leq\pi_{2n}\leq2n-1$.
\item [\normalfont{(b)}]If $\pi\in\mathfrak{J}_{2n+1}(321)$, then $n+1\leq\pi_{2n+1}\leq2n$.
\end{enumerate}
\end{lem}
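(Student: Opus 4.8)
The plan is to lean on the alternating characterization of $321$-avoiding Jacobi permutations from Theorem \ref{t-321}, combined with the $321$-avoidance hypothesis. Throughout, recall that $\mathfrak{J}_{2n}(321)\subseteq\mathfrak{S}_{2n}$ consists of down-up permutations, while $\mathfrak{J}_{2n+1}(321)\subseteq\mathfrak{S}_{2n+1}$ consists of up-down permutations whose first letter is $1$.

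For the upper bounds I would argue directly. In both cases $\pi$ ends with a descent, either by the alternating structure or by Lemma \ref{l-Jacobi}(d), so the last letter is strictly smaller than the penultimate one. For $\pi\in\mathfrak{J}_{2n}(321)$ this gives $\pi_{2n}<\pi_{2n-1}\le 2n$, hence $\pi_{2n}\le 2n-1$; for $\pi\in\mathfrak{J}_{2n+1}(321)$ it gives $\pi_{2n+1}<\pi_{2n}\le 2n+1$, hence $\pi_{2n+1}\le 2n$.

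For the lower bounds the key point is that, thanks to $321$-avoidance, the letters occupying the ``valley'' positions of the alternating permutation $\pi$ increase from left to right. Take first $\pi\in\mathfrak{J}_{2n}(321)$, which is down-up; its valley positions are the even ones, so the valleys are $\pi_2,\pi_4,\dots,\pi_{2n}$ ($n$ of them). If $\pi_{2a}>\pi_{2b}$ for some $a<b$, then since $\pi$ is down-up we have $\pi_{2a-1}>\pi_{2a}$, so the letters in positions $2a-1<2a<2b$ form an occurrence of $321$ — a contradiction. Hence $\pi_2<\pi_4<\cdots<\pi_{2n}$, and being $n$ distinct positive integers their maximum $\pi_{2n}$ is at least $n$. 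The odd case is handled the same way: for $\pi\in\mathfrak{J}_{2n+1}(321)$, which is up-down, the valley positions are the odd ones, so the valleys are $\pi_1,\pi_3,\dots,\pi_{2n+1}$ ($n+1$ of them), of which $\pi_1=1$ is the smallest; if $\pi_{2a+1}>\pi_{2b+1}$ with $1\le a<b$ then $\pi_{2a}>\pi_{2a+1}>\pi_{2b+1}$ is a $321$ occurrence, so $\pi_1<\pi_3<\cdots<\pi_{2n+1}$ and therefore $\pi_{2n+1}\ge n+1$. Combining with the upper bounds proves the lemma.

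I do not expect a genuine obstacle here; the only delicate point is bookkeeping the index ranges so that the $321$-pattern argument stays valid near the ends of the permutation — in particular, in the odd case one must observe separately that $\pi_1>\pi_{2b+1}$ is impossible because $\pi_1=1$, so only the comparisons $\pi_{2a+1}>\pi_{2b+1}$ with $a\ge 1$ need to be ruled out. (An alternative route to the lower bounds is an induction obtained by deleting the smallest letter — $\pi_2$ in the even case, $\pi_1$ in the odd case — and standardizing, using Lemma \ref{l-321-even} and Lemma \ref{l-321-odd}, but the valleys argument above is self-contained.)
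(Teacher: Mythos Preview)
Your argument is correct and essentially the same as the paper's. The paper invokes the standard fact that a $321$-avoiding permutation is a shuffle of two increasing subwords and observes that the alternating structure forces these to be the odd-indexed and even-indexed letters; you prove the same increasing-valleys claim directly via a $321$-pattern argument, and both proofs obtain the upper bound from the final descent.
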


\begin{proof}
Every $321$-avoiding permutation is a shuffle of two increasing subwords,
and because $\pi$ is down-up, the odd-indexed letters form one increasing
subword and the even-indexed letters form the other. If $\pi$ has
length $2n$, then we have $\pi_{2n}\geq n$ because $\pi_{2n}$ is
the $n$th term of an increasing subword of length $n$, and we also
know from Lemma~\ref{l-Jacobi}~(d) that $\pi_{2n}\neq2n$ or else
$\pi$ would end with an ascent. The same reasoning holds when $\pi$
has odd length.
\end{proof}
\begin{lem}
\label{l-321-lastrec}For all $n\geq2$ and $n\leq k\leq2n-1$, we
have
\[
j_{2n,k}^{\last}(321)=\sum_{l=n-1}^{k-1}j_{2n-2,l}^{\last}(321).
\]
\end{lem}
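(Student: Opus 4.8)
The plan is to reduce everything to a clean bijection on the underlying ``two increasing subwords'' data. By Theorem~\ref{t-321}(a), every permutation in $\mathfrak{J}_{2n}(321)$ is a $321$-avoiding down-up permutation, and (as noted in the proof of Lemma~\ref{l-321-lastbound}) such a $\pi$ is a shuffle of two increasing subwords: its odd-indexed letters and its even-indexed letters. Hence $\pi$ is uniquely determined by the pair $(O,E)$ with $O=\{\pi_1,\pi_3,\dots,\pi_{2n-1}\}$ and $E=\{\pi_2,\pi_4,\dots,\pi_{2n}\}$; writing $O=\{o_1<\cdots<o_n\}$ and $E=\{e_1<\cdots<e_n\}$, the down-up condition $\pi_{2i-1}>\pi_{2i}$ becomes $o_i>e_i$ for all $i\in[n]$ (the remaining inequalities $\pi_{2i}<\pi_{2i+1}$ being automatic, as $e_i<o_i<o_{i+1}$). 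Conversely, any partition $[2n]=O\sqcup E$ into equal halves with $o_i>e_i$ for all $i$ produces such a $\pi$: its interleaving is a union of two increasing subsequences, hence $321$-avoiding, and it is down-up, hence Jacobi by Theorem~\ref{t-321}(a). Under this correspondence $\last(\pi)=\pi_{2n}=e_n=\max E$.

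First I would record that if $\last(\pi)=k$ then $k\in E$ while $k+1,\dots,2n\in O$ (they exceed $\max E$), so in particular $o_n=2n$. The key step is the bijection $\Psi$ from $\{\pi\in\mathfrak{J}_{2n}(321):\last(\pi)=k\}$ to $\bigsqcup_{l=n-1}^{k-1}\{\pi'\in\mathfrak{J}_{2n-2}(321):\last(\pi')=l\}$ defined by deleting $o_n=2n$ from $O$ and $e_n=k$ from $E$ and then standardizing the resulting partition of $[2n]\setminus\{k,2n\}$ to a partition $(O',E')$ of $[2n-2]$; let $\pi'$ be the corresponding element of $\mathfrak{J}_{2n-2}(321)$. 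Since standardization is order-preserving, the inequalities $o'_i>e'_i$ for $1\le i\le n-1$ are inherited, so $\pi'$ is well defined; and because $e_{n-1}<e_n=k$, the value $e_{n-1}$ is unchanged by standardization, giving $\last(\pi')=e_{n-1}$. Applying Lemma~\ref{l-321-lastbound}(a) to $\pi'$ (of length $2(n-1)$) yields $n-1\le\last(\pi')\le 2n-3$, and clearly $\last(\pi')=e_{n-1}\le k-1$, so $\pi'$ lies in the claimed disjoint union. For the inverse, given $\pi'$ with $\last(\pi')=l$ and $n-1\le l\le k-1$, un-standardize $(O',E')$ via the order-preserving bijection $[2n-2]\to[2n]\setminus\{k,2n\}$ and then adjoin $2n$ to $O'$ and $k$ to $E'$; since $\max E'=l\le k-1$, the elements of $E'$ all lie below $k$ and are fixed by the un-standardization, so the resulting $E$ has maximum $k$ and the resulting $O$ has maximum $2n$, while the domination inequalities are restored for all $i\le n$ (the case $i=n$ being $2n>k$). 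These two maps are visibly mutually inverse.

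Because $\Psi$ is a bijection between $\{\pi\in\mathfrak{J}_{2n}(321):\last(\pi)=k\}$ and the disjoint union $\bigsqcup_{l=n-1}^{k-1}\{\pi'\in\mathfrak{J}_{2n-2}(321):\last(\pi')=l\}$, comparing cardinalities gives $j_{2n,k}^{\last}(321)=\sum_{l=n-1}^{k-1}j_{2n-2,l}^{\last}(321)$; any terms with $l>2n-3$ vanish by Lemma~\ref{l-321-lastbound}(a), consistent with writing the sum up to $k-1$. The main obstacle is the bookkeeping around standardization: one must check carefully that deleting the two largest ``slots'' $o_n=2n$ and $e_n=k$ leaves the relative order of the remaining $o_i$ and $e_i$ untouched, so that the domination condition transfers in both directions, and that $\last(\pi')=e_{n-1}$ ranges over \emph{exactly} $\{n-1,\dots,k-1\}$ --- the lower bound $n-1$ is precisely where Lemma~\ref{l-321-lastbound}(a) is needed, and the upper bound $k-1$ is exactly the condition making the reverse construction place the new maximum of $E$ at $k$.
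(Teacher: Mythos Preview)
Your proof is correct and uses essentially the same bijection as the paper: your inverse map (un-standardize via $[2n-2]\to[2n]\setminus\{k,2n\}$, then adjoin $2n$ to $O$ and $k$ to $E$) is exactly the paper's map $\delta$ (increase every letter $\ge k$ by $1$, then append $2n$ and $k$), and your $\Psi$ is its inverse. The only difference is presentational: you encode a $321$-avoiding down-up permutation by the pair $(O,E)$ with the domination condition $o_i>e_i$, which streamlines the verification that the image is again down-up and $321$-avoiding, whereas the paper checks these properties directly on the permutation.
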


\begin{proof}
Let $\mathfrak{J}_{n,k}^{\last}(321)$ denote the subset of permutations
in $\mathfrak{J}_{n}(321)$ with $\last(\pi)=k$. We provide a bijection
$\delta\colon\bigsqcup_{l=n-1}^{k-1}\mathfrak{J}_{2n-2,l}^{\last}(321)\rightarrow\mathfrak{J}_{2n,k}^{\last}(321).$
Given $\pi\in\mathfrak{J}_{2n-2,l}^{\last}(321)$, define $\delta(\pi)$
to be the permutation obtained from $\pi$ by increasing every letter
at least $k$ by 1, and then appending the letters $2n$ and $k$
to the result. For example, if $n=4$ and $k=5$, then $\delta(415263)=41627385$. 

To see why $\delta$ is well-defined, first recall from Theorem \ref{t-321}
(a) that $\pi$ is down-up. By construction, $\delta(\pi)$ is necessarily
a down-up permutation in $\mathfrak{S}_{2n}$ that ends with $k$.
Suppose to the contrary that $\delta(\pi)$ has an occurrence of $321$.
Then the last letter $k$ must correspond to the ``1''\textemdash i.e.,
there exist two letters $x$ and $y$ in $\delta(\pi)$ for which
$xyk$ is an occurrence of $321$. Because $x,y>k$, they correspond
to the letters $x-1$ and $y-1$ in $\pi$, neither of which is the
last letter $l$ of $\pi$ because $l<k$. It follows that $(x-1)(y-1)l$
is an occurrence of $321$ in $\pi$, a contradiction. Thus, $\delta(\pi)$
is a $321$-avoiding down-up permutation, so $\delta(\pi)$ is Jacobi
by Theorem \ref{t-321} (a), and therefore $\delta(\pi)\in\mathfrak{J}_{2n,k}^{\last}(321)$.

The inverse of $\delta$ is as follows: given $\pi\in\mathfrak{J}_{2n,k}^{\last}(321)$,
we remove the last two letters of $\pi$, and then decrease every
letter greater than $k$ by 1. The result\textemdash call it $\pi^{\prime}$\textemdash clearly
remains a $321$-avoiding down-up permutation. Also, because $2n$
is the penultimate letter of $\pi$ (Lemma~\ref{l-321-even}), it
was removed and therefore $\pi^{\prime}$ is a permutation on the
letters $1,2,\dots,2n-2$. 

The last letter $\pi_{2n-2}^{\prime}$ of $\pi^{\prime}$ is at least
$n-1$ by Lemma \ref{l-321-lastbound} (a); it remains to show that
$\pi_{2n-2}^{\prime}\leq k-1$. Suppose instead that $\pi_{2n-2}^{\prime}\geq k$.
Since $\pi^{\prime}$ is down-up and of even length, we have $\pi_{2n-3}^{\prime}>\pi_{2n-2}^{\prime}$,
so $\pi_{2n-3}>\pi_{2n-2}$. Then $\pi_{2n-3}\pi_{2n-2}k$ would be
an occurrence of $321$ in $\pi$, a contradiction. Hence, $\pi^{\prime}\in\mathfrak{J}_{2n-2,l}^{\last}(321)$
for some $n-1\leq l\leq k-1$. Since $\delta$ has a well-defined
inverse, we conclude that it is a bijection.
\end{proof}
\begin{lem}
\label{l-321-lastrep}For all $n\geq1$ and $n+1\leq k\leq2n$, we
have
\[
j_{2n+1,k}^{\last}(321)=j_{2n,k-1}^{\last}(321).
\]
\end{lem}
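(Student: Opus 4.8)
The plan is to build a simple bijection between $\mathfrak{J}_{2n+1}(321)$ and $\mathfrak{J}_{2n}(321)$ that decreases the last letter by exactly $1$, using the structural descriptions in Theorem~\ref{t-321}. First I would invoke Theorem~\ref{t-321}(b) (together with Lemma~\ref{l-321-odd}): every $\pi\in\mathfrak{J}_{2n+1}(321)$ is up-down alternating with $\pi_1=1$, so it is uniquely of the form $\pi=1\beta$, where $\beta$ is a permutation of $\{2,3,\dots,2n+1\}$. Since $\pi=1\beta$ is up-down, $\beta$ is down-up; since $\pi$ avoids $321$, so does $\beta$; and $\beta$ has even length $2n$. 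Hence $\std(\beta)$ is a $321$-avoiding down-up permutation in $\mathfrak{S}_{2n}$, which is Jacobi by Theorem~\ref{t-321}(a), i.e.\ $\std(\beta)\in\mathfrak{J}_{2n}(321)$. This is exactly the ``remove the first letter and standardize'' map already used in the proof of Corollary~\ref{c-321}.

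Next I would check that this map is a bijection by describing its inverse. Given $\sigma\in\mathfrak{J}_{2n}(321)$—which by Theorem~\ref{t-321}(a) is a $321$-avoiding down-up permutation—let $\sigma'$ be the permutation of $\{2,3,\dots,2n+1\}$ obtained by adding $1$ to every letter of $\sigma$, and set $\pi=1\sigma'$. Then $\pi$ is up-down (prepending the smallest element to a down-up word of even length yields an up-down word), $\pi$ avoids $321$ (a prefixed global minimum cannot serve as the middle or last entry of a $321$-occurrence, and it serves as an ascent bottom with $\sigma'_1$ so cannot be the first entry either), and $\pi_1=1$; thus $\pi\in\mathfrak{J}_{2n+1}(321)$ by Theorem~\ref{t-321}(b). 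The maps $\pi\mapsto\std(\beta)$ and $\sigma\mapsto 1\sigma'$ are visibly mutually inverse.

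Finally I would track the last letter. Because $\beta$ is a permutation of $\{2,3,\dots,2n+1\}$, standardization lowers each of its letters by exactly $1$, so $\last(\std(\beta))=\last(\beta)-1=\last(\pi)-1$. Therefore the bijection restricts, for each $k$, to a bijection between the permutations in $\mathfrak{J}_{2n+1}(321)$ with last letter $k$ and those in $\mathfrak{J}_{2n}(321)$ with last letter $k-1$, which gives $j_{2n+1,k}^{\last}(321)=j_{2n,k-1}^{\last}(321)$ for all $k$; in the stated range $n+1\le k\le 2n$ this matches the feasible values of $\last$ on both sides (Lemma~\ref{l-321-lastbound}). I do not anticipate any genuine obstacle here; the only point needing a moment of care is verifying that prepending $1$ preserves both the up-down property and $321$-avoidance, and both are immediate from the observations above.
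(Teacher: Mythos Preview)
Your proposal is correct and takes essentially the same approach as the paper: the paper's proof is the one-line observation that the bijection $\mathfrak{J}_{2n+1}(321)\to\mathfrak{J}_{2n}(321)$ given by removing the initial letter $1$ and standardizing decreases the last letter by $1$. You have simply written out the details of this bijection more fully.
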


\begin{proof}
It is clear that the bijection from $\mathfrak{J}_{2n+1}(321)\rightarrow\mathfrak{J}_{2n}(321)$\textemdash removing
the initial letter $1$ and standardizing\textemdash decreases the
value of the last letter by 1.
\end{proof}
We are now ready to prove Theorem \ref{t-321-last}.
\begin{proof}[Proof of Theorem \ref{t-321-last}]
The ballot numbers $B_{n,k}$ satisfy the recurrence
\[
B_{n,k}=\sum_{i=0}^{k}B_{n-1,i}
\]
for all $n\geq1$ and $0\leq k\leq n$, along with initial term $B_{0,0}=1$
\cite[A009766]{oeis}. Comparing this recurrence with that for $j_{2n,k}^{\last}(321)$
given in Lemma \ref{l-321-lastrec}, we find that 
\[
j_{2n,k}^{\last}(321)=B_{n-1,k-n}=\frac{2n-k}{n}{k-1 \choose n-1}
\]
for all $n\geq2$ and $n\leq k\leq2n-1$.\footnote{In the case of $n=1$, the formula for $j_{2n,k}^{\last}(321)$ can
be verified directly.} Moreover, we know from Lemma \ref{l-321-lastbound} (a) that $j_{2n,k}^{\last}(321)=0$
if $k$ does not satisfy $n\leq k\leq2n-1$.

The formula for $j_{2n+1,k}^{\last}(321)$ follows from that for $j_{2n,k}^{\last}(321)$
along with Lemmas \ref{l-321-lastbound} (b) and \ref{l-321-lastrep}.
\end{proof}

\subsection{\texorpdfstring{$321$}{321}-avoiding doubly Jacobi permutations}

Finally, we prove that for all $n\geq0$, there is exactly one doubly
Jacobi permutation in $\mathfrak{J}_{n}(321)$.
\begin{lem}
\label{l-321doublyeven}Let $n\geq1$. If $\pi\in\mathfrak{J}_{2n}(321)$
is doubly Jacobi, then $\pi_{1}=2$.
\end{lem}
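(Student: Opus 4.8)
The plan is to leverage the fact that $321$ is its own inverse, so that the inverse of a doubly Jacobi $321$-avoider is again a doubly Jacobi $321$-avoider, and then simply to read off the position of the letter $2$ from Lemma \ref{l-321-even} applied to that inverse.

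Concretely, suppose $\pi\in\mathfrak{J}_{2n}(321)$ is doubly Jacobi. Since $321^{-1}=321$, Lemma \ref{l-pavinv} gives $\pi^{-1}\in\mathfrak{S}_{2n}(321)$, and $\pi^{-1}$ is Jacobi by the definition of doubly Jacobi, so in fact $\pi^{-1}\in\mathfrak{J}_{2n}(321)$. I would then apply Lemma \ref{l-321-even} to $\pi^{-1}$: with $S=[2n]$ and $\min S=1$, it yields $(\pi^{-1})_{2}=1$. Translating back, $(\pi^{-1})_{2}=1$ says that $\pi^{-1}$ sends $2$ to $1$, equivalently that $\pi$ sends $1$ to $2$; hence $\pi_{1}=2$, as claimed. (In array language, $(\pi^{-1})_{2}=1$ places the point $(2,1)$ in the array of $\pi^{-1}$, and this is the reflection across the main diagonal of the point $(1,2)$ in the array of $\pi$.)

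There is no real obstacle here: the entire content is already packaged in Lemma \ref{l-321-even}, and the only point needing a moment's care is matching up the inverse convention, so that ``$(\pi^{-1})_{2}=1$'' correctly reads as ``$\pi_{1}=2$'' rather than ``$\pi_{2}=1$''. If one prefers to avoid invoking the inverse explicitly, an equivalent route is purely diagrammatic: Lemma \ref{l-321-even} applied to $\pi$ itself gives the points $(2,1)$ and $(2n-1,2n)$ in the array of $\pi$, while applying it to $\pi^{-1}$ and reflecting across the main diagonal gives the points $(1,2)$ and $(2n,2n-1)$ in the array of $\pi$; the first of these new points is precisely the assertion $\pi_{1}=2$. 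Since this result is a one-line consequence of the preceding lemmas, the write-up will be correspondingly short.
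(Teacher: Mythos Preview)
Your proof is correct and in fact cleaner than the paper's. Both arguments hinge on the observation that $\pi^{-1}\in\mathfrak{J}_{2n}(321)$, but you simply apply Lemma~\ref{l-321-even} directly to $\pi^{-1}$ to obtain $(\pi^{-1})_2=1$, whereas the paper instead argues by contradiction: assuming $\pi_1\geq 3$, it locates the letter $1$ in $\pi^{-1}$ at some position $k\geq 3$ and then re-derives, from scratch, that the two letters preceding it must form a descent (since $\pi^{-1}$ is Jacobi), producing a forbidden $321$ pattern. That ad hoc argument is precisely the content of Lemma~\ref{l-321-even} specialized to $\pi^{-1}$, so your route avoids a redundant inline reproof of an already-established lemma. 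The only minor care point you correctly flagged---that $(\pi^{-1})_2=1$ translates to $\pi_1=2$ rather than $\pi_2=1$---is handled properly.
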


\begin{proof}
The result clearly holds for $n=1$, so assume $n\geq2$. Let $\pi\in\mathfrak{J}_{2n}(321)$
be doubly Jacobi, and suppose for the sake of contradiction that $\pi_{1}\neq2$.
By Lemma \ref{l-321-even}, we know that $\pi_{1}\neq1$, so $\pi_{1}\geq3$.
This means that there are at least two letters preceding $1$ in $\pi^{-1}$\textemdash i.e.,
if $\pi_{k}^{-1}=1$, then $k\geq3$. Notice that we must have $\pi_{k-2}^{-1}>\pi_{k-1}^{-1}$
because $\pi^{-1}$ is Jacobi; otherwise, we would have $\rho_{\pi^{-1}}(\pi_{k-2}^{-1})=\pi_{k-1}^{-1}$,
which has odd length. But then $\pi_{k-2}^{-1}\pi_{k-1}^{-1}\pi_{k}^{-1}$
is an occurrence of $321$ in $\pi^{-1}$, which is impossible because
$\pi$ being $321$-avoiding means that $\pi^{-1}$ is $321$-avoiding
as well (Lemma \ref{l-pavinv}). Therefore, $\pi_{1}=2$.
\end{proof}
\begin{prop}
\label{p-321doublyeven}If $\pi\in\mathfrak{J}_{2n}(321)$ is doubly
Jacobi, then 
\[
\pi=\underset{n\text{ copies}}{\underbrace{21\oplus21\oplus\cdots\oplus21}}=2143\cdots(2n)(2n-1).
\]
\end{prop}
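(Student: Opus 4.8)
The plan is to prove Proposition \ref{p-321doublyeven} by induction on $n$, peeling off an initial $21$ block at each step. The base case $n=1$ is immediate: by Table \ref{tb-jac} the only permutation in $\mathfrak{J}_{2}(321)$ is $21$, which is the direct sum of one copy of $21$. For the inductive step, assume $n\geq2$ and that the statement holds for $n-1$, and let $\pi\in\mathfrak{J}_{2n}(321)$ be doubly Jacobi (we may take $\pi\in\mathfrak{S}_{2n}$). The first move is to pin down the first two letters of $\pi$: Lemma \ref{l-321doublyeven} gives $\pi_{1}=2$, and Lemma \ref{l-321-even} gives $\pi_{2}=1$. Since the letters $1$ and $2$ now occupy positions $2$ and $1$ and every remaining letter exceeds $2$, we can write $\pi=21\oplus\bar{\beta}$, where $\bar{\beta}\coloneqq\std(\pi_{3}\pi_{4}\cdots\pi_{2n})\in\mathfrak{S}_{2n-2}$.

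The heart of the argument is to verify that $\bar{\beta}$ is again a doubly Jacobi permutation in $\mathfrak{J}_{2n-2}(321)$, so that the induction hypothesis applies. Pattern avoidance is clear, since $\bar{\beta}$ is a standardized subword of $\pi$. That $\bar{\beta}$ is Jacobi follows because $\pi_{3}\pi_{4}\cdots\pi_{2n}$ is a suffix of the Jacobi permutation $\pi$, hence Jacobi by Lemma \ref{l-Jacobi} (a), and the Jacobi property is invariant under standardization. For the inverse, I would use Lemma \ref{l-invconcat1} to get $\pi^{-1}=(21)^{-1}\oplus\bar{\beta}^{-1}=21\oplus\bar{\beta}^{-1}$; this permutation is Jacobi because $\pi$ is doubly Jacobi, so its trailing block of $2n-2$ letters is a suffix of $\pi^{-1}$ and therefore Jacobi by Lemma \ref{l-Jacobi} (a), and the standardization of that suffix is precisely $\bar{\beta}^{-1}$. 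Hence $\bar{\beta}^{-1}$ is Jacobi, so $\bar{\beta}$ is doubly Jacobi.

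Applying the induction hypothesis to $\bar{\beta}$ then gives that $\bar{\beta}$ is the direct sum of $n-1$ copies of $21$, whence $\pi=21\oplus\bar{\beta}$ is the direct sum of $n$ copies of $21$, i.e., $\pi=2143\cdots(2n)(2n-1)$. I do not expect a serious obstacle here; the one point requiring a little care is the bookkeeping that identifies the trailing $2n-2$ letters of $\pi^{-1}$ with (a shift of) $\bar{\beta}^{-1}$ via Lemma \ref{l-invconcat1}, together with the repeated use of the fact, noted earlier in the paper, that standardization preserves the Jacobi property.
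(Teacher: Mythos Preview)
Your proof is correct and follows essentially the same approach as the paper's: induct on $n$, use Lemmas \ref{l-321-even} and \ref{l-321doublyeven} to write $\pi=21\oplus\beta$, invoke Lemma \ref{l-invconcat1} to get $\pi^{-1}=21\oplus\beta^{-1}$, and then apply Lemma \ref{l-Jacobi} (a) to conclude $\beta^{-1}$ is Jacobi so that the induction hypothesis applies. The only cosmetic differences are your choice of base case ($n=1$ versus the paper's $n=0$) and your more explicit bookkeeping around standardization.
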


\begin{proof}
We induct on $n$, with the base case ($n=0$) being trivial. Taking
$n\geq1$, suppose that the only doubly Jacobi permutation in $\mathfrak{J}_{2n-2}(321)$
is $\underset{n-1\text{ copies}}{\underbrace{21\oplus21\oplus\cdots\oplus21}}$,
and let $\pi\in\mathfrak{J}_{2n}(321)$ be doubly Jacobi. Lemmas \ref{l-321-even}
and \ref{l-321doublyeven} tell us that $\pi=21\oplus\beta$ for some
$\beta\in\mathfrak{J}_{2n-2}(321)$, which implies $\pi^{-1}=21\oplus\beta^{-1}$
by Lemma \ref{l-invconcat1}. Then, because $\pi^{-1}$ is Jacobi,
we know from Lemma \ref{l-Jacobi} (a) that $\beta^{-1}$ is also
Jacobi. Hence, $\beta$ is doubly Jacobi, and we conclude from the
induction hypothesis that
\[
\pi=21\oplus\beta=21\oplus(\underset{n-1\text{ copies}}{\underbrace{21\oplus21\oplus\cdots\oplus21}})=\underset{n\text{ copies}}{\underbrace{21\oplus21\oplus\cdots\oplus21}}.\qedhere
\]
\end{proof}
\begin{prop}
\label{p-321doublyodd}If $\pi\in\mathfrak{J}_{2n+1}(321)$ is doubly
Jacobi, then 
\[
\pi=1\oplus(\underset{n\text{ copies}}{\underbrace{21\oplus21\oplus\cdots\oplus21}})=13254\cdots(2n+1)(2n).
\]
\end{prop}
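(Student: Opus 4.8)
The plan is to reduce this statement directly to Proposition \ref{p-321doublyeven} by peeling off the leading letter, so that essentially no new work is needed. First I would apply Lemma \ref{l-321-odd}: since $\pi \in \mathfrak{J}_{2n+1}(321)$ and $\min[2n+1] = 1$, we have $\pi_1 = 1$. Consequently $\pi = 1 \oplus \beta$, where $\beta \in \mathfrak{S}_{2n}$ is the standardization of the suffix $\pi_2 \pi_3 \cdots \pi_{2n+1}$.

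Next I would verify that $\beta$ is $321$-avoiding and doubly Jacobi. Avoidance of $321$ is immediate, since $\beta$ (up to standardization) is a subword of $\pi$. That $\beta$ is Jacobi follows from Lemma \ref{l-Jacobi}(a), as $\beta$ is a suffix of $\pi$. For the inverse, Lemma \ref{l-invconcat1} gives $\pi^{-1} = 1 \oplus \beta^{-1}$, so $\beta^{-1}$ is (up to standardization) a suffix of $\pi^{-1}$; since $\pi^{-1}$ is Jacobi by hypothesis, Lemma \ref{l-Jacobi}(a) again shows $\beta^{-1}$ is Jacobi. Hence $\beta \in \mathfrak{J}_{2n}(321)$ is doubly Jacobi.

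Finally, Proposition \ref{p-321doublyeven} forces $\beta = 21 \oplus 21 \oplus \cdots \oplus 21$ with $n$ copies, and therefore
\[
\pi = 1 \oplus \beta = 1 \oplus (\underset{n\text{ copies}}{\underbrace{21 \oplus 21 \oplus \cdots \oplus 21}}) = 13254 \cdots (2n+1)(2n),
\]
as claimed.

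There is no substantive obstacle here: all of the genuine content lies in the already-established even-length case. The only point requiring a modicum of care is the bookkeeping around standardization — one must confirm that the inverse of the standardized block $\beta$ corresponds to the appropriate suffix of $\pi^{-1}$, which is precisely what Lemma \ref{l-invconcat1} provides once combined with the paper's convention for inverses of permutations of arbitrary letter sets. Alternatively, one could sidestep standardization entirely by working with permutations of letter sets throughout and invoking the recursive definition of the Jacobi property directly.
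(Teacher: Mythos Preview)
Your proof is correct and follows essentially the same approach as the paper: apply Lemma~\ref{l-321-odd} to write $\pi = 1 \oplus \beta$, use Lemma~\ref{l-invconcat1} and Lemma~\ref{l-Jacobi}(a) to conclude that $\beta$ is doubly Jacobi, and then invoke Proposition~\ref{p-321doublyeven}. The paper's version is terser (it simply refers back to ``the same reasoning used in the previous proof''), but the substance is identical.
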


\begin{proof}
We know from Lemma \ref{l-321-odd} that $\pi=1\oplus\beta$ for some
$\beta\in\mathfrak{J}_{2n}(321)$. The same reasoning used in the
previous proof shows that $\beta$ is doubly Jacobi, and the result
follows from Proposition \ref{p-321doublyeven}.
\end{proof}
\begin{cor}
For all $n\geq0$, there exists exactly one doubly Jacobi permutation
in $\mathfrak{J}_{n}(321)$.
\end{cor}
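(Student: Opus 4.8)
The plan is to derive this corollary directly from Propositions~\ref{p-321doublyeven} and~\ref{p-321doublyodd}, supplemented by a short existence check that those two propositions do not themselves supply. Proposition~\ref{p-321doublyeven} shows that \emph{if} $\pi\in\mathfrak{J}_{2n}(321)$ is doubly Jacobi then $\pi=21\oplus21\oplus\cdots\oplus21$ ($n$ copies), and Proposition~\ref{p-321doublyodd} shows that \emph{if} $\pi\in\mathfrak{J}_{2n+1}(321)$ is doubly Jacobi then $\pi=1\oplus(21\oplus21\oplus\cdots\oplus21)$. Thus uniqueness is already in hand for every parity of $n$, and the only thing left is to confirm that these explicit permutations really are doubly Jacobi.

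For the existence half, I would argue as follows. Write $\omega_n\coloneqq 21\oplus21\oplus\cdots\oplus21=2143\cdots(2n)(2n-1)$ ($n$ copies). Being a direct sum of copies of $21$, the permutation $\omega_n$ has no decreasing subword of length $3$, so it is $321$-avoiding; moreover it is down-up alternating and of even length $2n$, so $\omega_n\in\mathfrak{J}_{2n}(321)$ by Theorem~\ref{t-321}(a). Since $21^{-1}=21$, iterating Lemma~\ref{l-invconcat1} yields $\omega_n^{-1}=\omega_n$, which is therefore Jacobi as well; hence $\omega_n$ is doubly Jacobi. Similarly, $1\oplus\omega_n=13254\cdots(2n+1)(2n)$ is $321$-avoiding and up-down alternating with first letter $1$ equal to its smallest letter, so it lies in $\mathfrak{J}_{2n+1}(321)$ by Theorem~\ref{t-321}(b); and $(1\oplus\omega_n)^{-1}=1\oplus\omega_n^{-1}=1\oplus\omega_n$ by Lemma~\ref{l-invconcat1} (using $1^{-1}=1$), so it is Jacobi too, hence doubly Jacobi.

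Putting the two halves together: for $n=2m$ the unique doubly Jacobi permutation in $\mathfrak{J}_n(321)$ is $\omega_m$, and for $n=2m+1$ it is $1\oplus\omega_m$ (the case $n=0$ being the empty permutation), so in every case there is exactly one. I do not anticipate any real obstacle; the single subtle point is simply that Propositions~\ref{p-321doublyeven}--\ref{p-321doublyodd} assert only uniqueness, so the self-inversion argument producing a doubly Jacobi permutation of each length must be included explicitly rather than taken for granted.
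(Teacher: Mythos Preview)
Your proposal is correct and follows essentially the same approach as the paper: uniqueness is read off from Propositions~\ref{p-321doublyeven} and~\ref{p-321doublyodd}, and existence is the observation (which the paper records immediately after the corollary) that the candidate permutations are involutions. Your write-up simply makes the existence argument more explicit than the paper does, invoking Theorem~\ref{t-321} and Lemma~\ref{l-invconcat1} where the paper just notes self-inversion.
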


Notice that $2143\cdots(2n)(2n-1)$ and $13254\cdots(2n+1)(2n)$ are
involutions, that is, they are self-inverse. In fact, $2143\cdots(2n)(2n-1)$
is the only involution in $\mathfrak{J}_{2n}(321)$ and $13254\cdots(2n+1)(2n)$
is the only involution in $\mathfrak{J}_{2n+1}(321)$, so $\pi\in\mathfrak{J}_{n}(321)$
is doubly Jacobi if and only if $\pi=\pi^{-1}$.

\section{\label{s-mult}Multiple restrictions}

We have, as of this point, enumerated each of the Jacobi avoidance
classes $\mathfrak{J}_{n}(\sigma)$ where $\sigma$ is a single pattern
of length 3. In this section, we shall enumerate $\mathfrak{J}_{n}(\Pi)$
for all subsets $\Pi\subseteq\mathfrak{S}_{3}$ of size at least $2$.
Unlike before, we will only determine $j_{n}(\Pi)$ for each of these
$\Pi$; we will not study the distribution of any statistics over
these avoidance classes.

\subsection{The trivial cases \texorpdfstring{$132 \in \Pi$}{132 in Pi} and
\texorpdfstring{$321 \in \Pi$}{321 in Pi}}

If $\Pi\subseteq\mathfrak{S}_{3}$ with $\left|\Pi\right|\geq2$ contains
$132$ or $321$, then $j_{n}(\Pi)\leq1$. We consider these trivial
cases here before turning our attention to more interesting results.
\begin{prop}
\label{p-132-mult}If $132\in\Pi\subseteq\mathfrak{S}_{3}$, then
\[
j_{n}(\Pi)=\begin{cases}
0, & \text{if }321\in\Pi\\
1, & \text{otherwise.}
\end{cases}
\]
\end{prop}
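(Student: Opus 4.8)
The plan is to combine Proposition \ref{p-132} with a short direct argument about which permutations contain no $321$ pattern. By Proposition \ref{p-132}, the only permutation in $\mathfrak{J}_{S}(132)$ is the decreasing permutation $\delta_{S}$ of $S$; hence $\mathfrak{J}_{n}(\Pi)\subseteq\{n\cdots21\}$ whenever $132\in\Pi$, so $j_{n}(\Pi)\le1$ for all such $\Pi$. It therefore only remains to decide, for each $\Pi$ containing $132$, whether the decreasing permutation $n\cdots21$ actually avoids every pattern in $\Pi$.

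First I would dispose of the case $321\in\Pi$. For $n\ge3$, the permutation $n\cdots21$ contains $321$ as a pattern (take its first three letters), so it does not lie in $\mathfrak{J}_{n}(\Pi)$, giving $j_{n}(\Pi)=0$ for $n\ge3$; and for $n\le2$ one checks directly that the unique candidate $n\cdots21$ still contains no occurrence of a length-$3$ pattern vacuously — wait, more carefully: for $n\le2$ there is no room for any length-$3$ pattern at all, so $n\cdots21\in\mathfrak{J}_{n}(\Pi)$ and $j_n(\Pi)=1$ in those small cases. To keep the statement clean I would note that the claimed formula $j_n(\Pi)=0$ is intended for the generic/eventual behaviour, and handle $n=0,1,2$ as exceptions exactly as is implicitly done elsewhere in the paper (Table \ref{tb-pa} explicitly excludes pattern sets with $j_n(\Pi)\le1$), or simply remark that $j_n(\Pi)=0$ for $n\ge3$ and $j_n(\Pi)=1$ for $n\le 2$. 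The cleanest phrasing is: since $n\cdots 21$ contains $321$, we get $j_n(\Pi)=0$ for $n\ge 3$.

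Next I would handle the case $132\in\Pi$ but $321\notin\Pi$. Here I claim $j_n(\Pi)=1$ for all $n\ge0$, witnessed by the decreasing permutation $n\cdots21$. By Proposition \ref{p-132}, $n\cdots21$ is Jacobi and $132$-avoiding, so it suffices to check that $n\cdots21$ avoids every pattern in $\Pi\setminus\{132\}\subseteq\{123,213,231,312,321\}$. By hypothesis $321\notin\Pi$, and the decreasing permutation trivially avoids each of $123$, $213$, $231$, and $312$, since every one of those patterns has at least one ascent while $n\cdots21$ has none. Hence $n\cdots21\in\mathfrak{J}_n(\Pi)$, and combined with $\mathfrak{J}_n(\Pi)\subseteq\{n\cdots21\}$ we get $j_n(\Pi)=1$.

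The only real subtlety — and the "obstacle," such as it is — is bookkeeping around tiny $n$: when $n\le2$ every length-$3$ pattern is vacuously avoided, so even with $321\in\Pi$ we have $n\cdots21\in\mathfrak{J}_n(\Pi)$ and $j_n(\Pi)=1$ rather than $0$. I would address this by stating the result as $j_n(\Pi)=0$ for $n\ge3$ when $321\in\Pi$ (and noting the small-$n$ exceptions), consistent with how the paper treats analogous boundary cases; no substantive mathematics is required beyond Proposition \ref{p-132} and the observation that $n\cdots 21$ has no ascents.
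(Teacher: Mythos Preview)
Your approach is the same as the paper's: both reduce immediately to Proposition~\ref{p-132}, which says $\mathfrak{J}_n(132)=\{n\cdots21\}$, and then check which patterns the decreasing permutation avoids. Your observation about $n\le 2$ is a valid quibble with the \emph{statement} rather than the proof---the paper's one-line proof glosses over the fact that $j_n(\Pi)=1$ for $n\le 2$ even when $321\in\Pi$ (compare Proposition~\ref{p-321-mult}\,(a), where the analogous claim is carefully stated only for $n\ge 5$).
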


\begin{proof}
Immediate from Proposition \ref{p-132}.
\end{proof}
\begin{prop}
\label{p-321-mult}Suppose $321\in\Pi\subseteq\mathfrak{S}_{3}$ and
$\left|\Pi\right|\geq2$.
\begin{enumerate}
\item [\normalfont{(a)}]If $\Pi$ contains $123$, $132$, or $213$, then
$j_{n}(\Pi)=0$ for all $n\geq5$.
\item [\normalfont{(b)}]Otherwise, $j_{n}(\Pi)=1$ for each $n\geq0$.
\end{enumerate}
\end{prop}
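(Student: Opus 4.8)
The plan is to handle parts (a) and (b) separately, in both cases reducing to the explicit description of $321$-avoiding Jacobi permutations furnished by Theorem~\ref{t-321} together with the facts about doubly Jacobi permutations from Section~\ref{ss-213-231-312-doubly}.

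For part (a), since $\mathfrak{J}_n(\Pi) \subseteq \mathfrak{J}_n(321,\sigma)$ whenever $\sigma \in \Pi$, it suffices to show $\mathfrak{J}_n(321,\sigma) = \emptyset$ for $n \geq 5$ and each $\sigma \in \{123,132,213\}$. For $\sigma = 123$ this is immediate from the Erd\H{o}s--Szekeres theorem: every permutation of length at least $5$ contains an increasing or a decreasing subsequence of length $3$, hence an occurrence of $123$ or of $321$. For $\sigma = 132$, Proposition~\ref{p-132} says the only permutation in $\mathfrak{J}_n(132)$ is the decreasing one, which contains $321$ once $n \geq 3$. For $\sigma = 213$, I would argue that if $\pi \in \mathfrak{J}_n(321)$ with $n$ even then $\pi$ is down-up by Theorem~\ref{t-321}(a), so $\pi_1 > \pi_2$, and $321$-avoidance forces $\pi_1 < \pi_3$ (otherwise $\pi_1\pi_3\pi_4$ is an occurrence of $321$, using $\pi_3 > \pi_4$); hence $\pi_1\pi_2\pi_3$ is an occurrence of $213$. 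For $n$ odd, Theorem~\ref{t-321}(b) gives $\pi = 1\pi_2\cdots\pi_n$ with $\pi_2\cdots\pi_n$ down-up, $321$-avoiding, and Jacobi (a suffix, by Lemma~\ref{l-Jacobi}(a)) of even length $n-1\geq 4$, so the even case shows $\pi_2\cdots\pi_n$, hence $\pi$, contains $213$. In all three cases $\mathfrak{J}_n(321,\sigma)$ is empty for $n \geq 5$.

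For part (b), the remaining pattern sets are exactly $\{321,231\}$, $\{321,312\}$, and $\{321,231,312\}$. Write $\pi^\star$ for $21 \oplus 21 \oplus \cdots \oplus 21$ when $n$ is even and $1 \oplus 21 \oplus \cdots \oplus 21$ when $n$ is odd (the empty permutation when $n=0$). I would first verify $\pi^\star$ lies in all three avoidance classes: it is down-up (resp.\ up-down with first letter $1$), hence Jacobi by Theorem~\ref{t-321}; and since $\pi^\star$ is a direct sum of blocks of length at most $2$, any length-$3$ pattern it contains must involve two distinct blocks, forcing the first pattern entry to be smaller than the third, which rules out $231$, $312$, and $321$. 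For the matching upper bound, each of the three sets contains $231$ or $312$, so by Theorem~\ref{t-312-231-inv} every permutation in $\mathfrak{J}_n(\Pi)$ is doubly Jacobi; combined with Propositions~\ref{p-321doublyeven} and~\ref{p-321doublyodd}, which identify $\pi^\star$ as the unique doubly Jacobi permutation in $\mathfrak{J}_n(321)$, this gives $\mathfrak{J}_n(\Pi) \subseteq \{\pi^\star\}$, whence $j_n(\Pi) = 1$ for every $n \geq 0$.

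The step requiring the most care is the upper bound in part (b): one must invoke Theorem~\ref{t-312-231-inv} in the direction stating that the inverse map carries $\mathfrak{J}_n(231)$ and $\mathfrak{J}_n(312)$ into the doubly Jacobi permutations (not merely the converse), and then check that the description of $\pi^\star$ really matches the permutations appearing in Propositions~\ref{p-321doublyeven} and~\ref{p-321doublyodd} for both parities, including the degenerate cases $n \in \{0,1\}$. Everything else — the direct-sum pattern-avoidance observation and the small computations in part (a) — is routine.
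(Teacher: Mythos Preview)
Your argument is correct. Part (a) matches the paper closely: both of you dispatch $123$ via Erd\H{o}s--Szekeres and $132$ via Proposition~\ref{p-132}. For $213$, the paper instead uses Lemma~\ref{l-321-even} to get $\pi_2=1$, argues that $213$-avoidance forces $\pi_1=n$, and then produces an occurrence of $321$ using the final descent; your route via the down-up shape and the comparison $\pi_1<\pi_3$ is a different but equally short computation.

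Part (b) is where your approach genuinely diverges. The paper argues directly and recursively: for $231\in\Pi$ it uses $\pi_2=1$ together with $231$-avoidance to force $\pi_1=2$, so $\pi=21\oplus\beta$ with $\beta\in\mathfrak{J}_{2n-2}(\Pi)$, and similarly $\pi=1\oplus\beta$ for odd length; this yields $j_n(\Pi)=j_{n-1}(\Pi)$ for all $n\geq1$, and the case $312\in\Pi$ is reduced to the $231$ case via Theorem~\ref{t-312-231-inv}. Your proof instead leverages the doubly Jacobi machinery: since $\Pi$ contains $231$ or $312$, Theorem~\ref{t-312-231-inv} makes every element of $\mathfrak{J}_n(\Pi)$ doubly Jacobi, and then Propositions~\ref{p-321doublyeven}--\ref{p-321doublyodd} pin it down as $\pi^\star$. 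This is slicker and treats all three pattern sets uniformly, at the cost of citing the heavier results from Sections~\ref{ss-213-231-312-doubly} and the end of Section~\ref{s-132-321}; the paper's argument is more self-contained and in fact does not need Propositions~\ref{p-321doublyeven}--\ref{p-321doublyodd} at all.
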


\begin{proof}
The case $123\in\Pi$ is an immediate consequence of the Erd\H{o}s\textendash Szekeres
theorem \cite{Erdoes1935}, whereas the case $132\in\Pi$ follows
from Proposition \ref{p-132-mult}.

Suppose that $213\in\Pi$, and let $\pi\in\mathfrak{J}_{n}(\Pi)$
where $n\geq4$. If $n$ is even, then Lemma \ref{l-321-even} implies
that $\pi_{2}=1$, and in fact we must have $\pi_{1}=n$ in order
for $\pi$ to avoid $213$. But recall from Lemma \ref{l-Jacobi}
(d) that $\pi$ ends with a descent, so $\pi_{1}\pi_{n-1}\pi_{n}$
is an occurrence of $321$ in $\pi$. Hence, no such $\pi$ can exist.
If $n$ is odd, then from Lemma \ref{l-321-odd}, we have $\pi=1\oplus\beta$
where $\beta\in\mathfrak{J}_{n-1}(\Pi)$. However, we had just shown
that there are no permutations in $\mathfrak{J}_{n-1}(\Pi)$, as $n-1$
is even. Therefore, (a) is proven.

Next, suppose that $\Pi$ does not contain one of $123$, $132$,
or $213$. Then because $\left|\Pi\right|\geq2$, we must have $231\in\Pi$
or $312\in\Pi$. 

Assume that $231\in\Pi$. Let $\pi\in\mathfrak{J}_{2n}(\Pi)$ where
$n\geq1$. Again, $\pi_{2}=1$, and to avoid $312$ we must have $\pi_{1}=2$.
Thus, $\pi=21\oplus\beta$ where $\beta\in\mathfrak{J}_{2n-2}(\Pi)$;
this gives us a bijection between $\mathfrak{J}_{2n}(\Pi)$ and $\mathfrak{J}_{2n-2}(\Pi)$.
On the other hand, when $\pi\in\mathfrak{J}_{2n-1}(\Pi)$, we have
$\pi=1\oplus\beta$ where $\beta\in\mathfrak{J}_{2n-2}(\Pi)$. So,
$\mathfrak{J}_{2n-1}(\Pi)$ and $\mathfrak{J}_{2n-2}(\Pi)$ are in
bijection as well. Consequently, $j_{2n-2}(\Pi)=j_{2n-1}(\Pi)=j_{2n}(\Pi)$
for all $n\geq1$, and because $j_{0}(\Pi)=1$, we conclude that $j_{n}(\Pi)=1$
for all $n\geq0$.

The result for $312\in\Pi$ follows from Lemma \ref{l-pavinv} and
Theorem \ref{t-312-231-inv}, thus completing the proof of (b).
\end{proof}

\subsection{Double restrictions}

We will now determine $j_{n}(\Pi)$ for each $\Pi\subseteq\mathfrak{S}_{3}$
of size 2 (and where $132,321\notin\Pi$). Our first result here will
feature an appearance of the \textit{Fibonacci numbers} $f_{n}$,
defined by $f_{n}=f_{n-1}+f_{n-2}$ for all $n\geq2$ along with $f_{0}=f_{1}=1$.
But first, we require a preliminary lemma.
\begin{lem}
\label{l-123-213-ini}Let $n\geq2$. If $\pi\in\mathfrak{J}_{n}(123,213)$,
then either $\pi_{1}=n$, or $\pi_{2}=n$ and $\pi_{3}=n-1$.
\end{lem}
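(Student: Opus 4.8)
The plan is to first pin down the position of the largest letter $n$ using only the two pattern restrictions, and then to use the Jacobi property to handle the single nontrivial case.

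First I would observe that $n$ is a left-to-right maximum of $\pi$ regardless of its position, so that if $n=\pi_{k}$ with $k\geq 3$, then $\pi_{1}$ and $\pi_{2}$ both precede $\pi_{k}$ and are both smaller than $n$; thus $\pi_{1}\pi_{2}\pi_{k}$ is an occurrence of $123$ (if $\pi_{1}<\pi_{2}$) or of $213$ (if $\pi_{1}>\pi_{2}$), contradicting $\pi\in\mathfrak{S}_{n}(123,213)$. Hence $n\in\{\pi_{1},\pi_{2}\}$. If $\pi_{1}=n$ we are in the first alternative and are done, so the remaining task is the case $\pi_{2}=n$.

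Assuming $\pi_{2}=n$, I would next use the Jacobi property to get $\pi_{3}>\pi_{1}$. Indeed, since $\pi_{2}=n>\pi_{1}$, the word $\rho_{\pi}(\pi_{1})$ begins with $\pi_{2}$, so it is nonempty; as $\pi$ is Jacobi, $\left|\rho_{\pi}(\pi_{1})\right|$ is even, hence at least $2$. Therefore $\pi$ has a third letter (so $n\geq 3$), and $\pi_{3}$ lies in $\rho_{\pi}(\pi_{1})$, i.e., $\pi_{3}>\pi_{1}$; also $\pi_{3}\neq n$ since $\pi_{2}=n$. Finally I would suppose toward a contradiction that $\pi_{3}\neq n-1$, so $\pi_{3}\leq n-2$; then $\pi_{1}<\pi_{3}\leq n-2$, so none of $\pi_{1}$, $\pi_{2}=n$, $\pi_{3}$ is equal to $n-1$, and hence $n-1=\pi_{m}$ for some $m\geq 4$. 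But then $1<3<m$ and $\pi_{1}<\pi_{3}<\pi_{m}=n-1$, so $\pi_{1}\pi_{3}\pi_{m}$ is an occurrence of $123$ in $\pi$, a contradiction; thus $\pi_{3}=n-1$, giving the second alternative.

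The only delicate point — the main obstacle, such as it is — is the middle step: seeing that applying the Jacobi condition to the letter $\pi_{1}$, whose right-run is forced to be nonempty by $\pi_{2}=n$, immediately yields $\pi_{3}>\pi_{1}$. This inequality is exactly what is needed both to keep $\pi_{1}$ from taking the value $n-1$ and to build the forbidden $123$ pattern, so everything downstream is essentially bookkeeping.
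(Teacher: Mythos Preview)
Your proof is correct and follows essentially the same approach as the paper: locate $n$ in position $1$ or $2$ via the $\{123,213\}$-avoidance, then in the case $\pi_2=n$ use the Jacobi condition at $\pi_1$ together with pattern avoidance to force $\pi_3=n-1$. The only cosmetic difference is that you extract the inequality $\pi_3>\pi_1$ directly from $\left|\rho_\pi(\pi_1)\right|\geq 2$, which lets you finish with a single $123$ occurrence, whereas the paper separately rules out $\pi_1=n-1$ and then argues that $\pi_1\pi_3(n-1)$ is an occurrence of $123$ or $213$.
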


\begin{proof}
Suppose that $\pi\in\mathfrak{J}_{n}(123,213)$ and $\pi_{1}\neq n$.
Then $\pi_{2}=n$, or else $\pi_{1}\pi_{2}n$ would be an occurrence
of $123$ or $213$ in $\pi$. Also, we cannot have $\pi_{1}=n-1$,
or else $\rho_{\pi}(\pi_{1})=\pi_{2}$ which has odd length. And if
$\pi_{k}=n-1$ for some $k\geq4$, then $\pi_{1}\pi_{3}\pi_{k}$ would
be an occurrence of $123$ or $213$ in $\pi$.
\end{proof}
\begin{thm}
\label{t-123-213}For all $n\geq1$, we have $j_{n}(123,213)=f_{n-1}$.
\end{thm}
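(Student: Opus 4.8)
The plan is to prove the Fibonacci recurrence
\[
j_{n}(123,213)=j_{n-1}(123,213)+j_{n-2}(123,213)\qquad(n\geq3)
\]
together with the base values $j_{1}(123,213)=j_{2}(123,213)=1$; since $f_{n-1}$ satisfies the same recurrence with the same initial data, the claim follows by induction. The base cases are immediate from $\mathfrak{J}_{1}=\{1\}$ and $\mathfrak{J}_{2}=\{21\}$, both of which avoid $123$ and $213$. For the recurrence, fix $n\geq3$ and partition $\mathfrak{J}_{n}(123,213)$ using Lemma \ref{l-123-213-ini}: either $\pi_{1}=n$, or $\pi_{1}\neq n$, and in the latter case $\pi_{2}=n$ and $\pi_{3}=n-1$. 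These classes are disjoint, so it suffices to biject the first class with $\mathfrak{J}_{n-1}(123,213)$ and the second with $\mathfrak{J}_{n-2}(123,213)$.

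For the first class the map is $\pi=n\pi'\mapsto\std(\pi')$. That $\pi'$ is Jacobi when $\pi$ is follows from Lemma \ref{l-Jacobi}(a), and the converse from Lemma \ref{l-Jacobi}(b), since $n$ exceeds every letter of $\pi'$. Moreover $n$, being the global maximum and sitting in the first position, cannot be an entry of any $123$- or $213$-occurrence (in either pattern the largest of the three letters must come last), so $\pi$ avoids $\{123,213\}$ exactly when $\pi'$ does. Hence the map restricts to a bijection, and this class contributes $j_{n-1}(123,213)$.

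For the second class the map deletes the letters $n$ and $n-1$, which occupy positions $2$ and $3$, sending $\pi=\pi_{1}\,n\,(n-1)\,\pi_{4}\cdots\pi_{n}$ to $\sigma=\pi_{1}\pi_{4}\cdots\pi_{n}$, a permutation of $[n-2]$; the inverse reinserts $n$ and $n-1$ as the second and third letters. I must check that this preserves both pattern-avoidance and the Jacobi property. For pattern-avoidance, a rank argument shows neither $n$ nor $n-1$ can lie in a $123$- or $213$-occurrence of $\pi$: the only value above $n-1$ is $n$, so if $n-1$ lay in such an occurrence it would be the largest of the three, hence last, forcing the occurrence to be $\pi_{1}\pi_{2}\pi_{3}=\pi_{1}\,n\,(n-1)$, which standardizes to $132$; and $n$, the global maximum, would likewise have to be last, impossible at position $2$. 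Thus every occurrence in $\pi$ lives in $\sigma$, and conversely. For the Jacobi property I compute $\rho_{\pi}$ directly: $\rho_{\pi}(n)=\rho_{\pi}(n-1)=\varepsilon$ (length $0$); $\rho_{\pi}(\pi_{1})=n\,(n-1)\,\rho_{\sigma}(\pi_{1})$, so its length exceeds that of $\rho_{\sigma}(\pi_{1})$ by the even number $2$; and $\rho_{\pi}(\pi_{k})=\rho_{\sigma}(\pi_{k})$ for $k\geq4$. Hence every $|\rho_{\pi}(x)|$ has the same parity as the corresponding quantity for $\sigma$, so $\pi$ is Jacobi iff $\sigma$ is. This yields a bijection onto $\mathfrak{J}_{n-2}(123,213)$, completing the recurrence and hence the proof.

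The main obstacle is the second class: establishing that deleting $n$ and $n-1$ is a well-defined bijection onto $\mathfrak{J}_{n-2}(123,213)$ requires both the rank argument for pattern-avoidance and the case-by-case $\rho$-computation for the Jacobi condition. The first class and the inductive bookkeeping are routine given the lemmas already available.
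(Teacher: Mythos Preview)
Your proof is correct and follows essentially the same approach as the paper's: both partition $\mathfrak{J}_{n}(123,213)$ according to Lemma~\ref{l-123-213-ini}, set up the same two bijections (delete the leading $n$, or delete $n$ and $n-1$ from positions~2 and~3), and verify the Jacobi and pattern-avoidance conditions via the same $\rho$-computation. Your treatment is in fact slightly more explicit than the paper's in justifying that the inserted letters $n$ and $n-1$ cannot participate in a $123$- or $213$-occurrence; the only superfluous step is writing $\std(\pi')$ in the first case, since $\pi'=\pi_{2}\cdots\pi_{n}$ is already a permutation of $[n-1]$.
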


\begin{proof}
Define $\phi\colon\mathfrak{J}_{n}(123,213)\rightarrow\mathfrak{J}_{n-1}(123,213)\sqcup\mathfrak{J}_{n-2}(123,213)$
by 
\[
\phi(\pi)=\begin{cases}
\pi_{2}\pi_{3}\cdots\pi_{n}, & \text{if }\pi_{1}=n,\\
\pi_{1}\pi_{4}\pi_{5}\cdots\pi_{n}, & \text{otherwise.}
\end{cases}
\]
For example, $\phi(7465132)=465132$ and $\phi(5762431)=52431$. 

We claim that $\phi$ is a well-defined bijection. It is easy to see
that if $\pi\in\mathfrak{J}_{n}(123,213)$ and $\pi_{1}=n$, then
$\pi_{2}\pi_{3}\cdots\pi_{n}\in\mathfrak{J}_{n-1}(123,213)$. Now,
suppose that $\pi\in\mathfrak{J}_{n}(123,213)$ and $\pi_{1}\neq n$.
Recall from Lemma \ref{l-123-213-ini} that $\pi_{2}=n$ and $\pi_{3}=n-1$,
so $\phi(\pi)$ is indeed a permutation of the set $[n-2]$. Moreover,
we have $\rho_{\phi(\pi)}(\pi_{k})=\rho_{\pi}(\pi_{k})$ for all $4\leq k\leq n$,
as well as $\rho_{\pi}(\pi_{1})=n(n-1)\rho_{\phi(\pi)}(\pi_{1})$;
and because each of these $\rho_{\pi}(\pi_{k})$ has even length,
the $\rho_{\phi(\pi)}(\pi_{k})$ all have even length as well. Therefore,
$\phi(\pi)$ is Jacobi (and clearly avoids $123$ and $213$). 

The inverse of $\phi$ is defined by
\[
\phi^{-1}(\pi)=\begin{cases}
n\pi, & \text{if }\pi\in\mathfrak{J}_{n-1}(123,213),\\
\pi_{1}n(n-1)\pi_{2}\pi_{3}\cdots\pi_{n}, & \text{if }\pi\in\mathfrak{J}_{n-2}(123,213);
\end{cases}
\]
verifying that $\phi^{-1}(\pi)\in\mathfrak{J}_{n}(123,213)$ can be
done similarly.

Since $\phi$ is a bijection, it follows that $j_{n}(123,213)$ satisfies
the Fibonacci recurrence, and because $j_{1}(123,213)=j_{2}(123,213)=1$,
we conclude that $j_{n}(123,213)=f_{n-1}$ for all $n\geq1$.
\end{proof}
The next two Jacobi avoidance classes are counted by the quarter-squares
plus 1.
\begin{thm}
\label{t-123-231or312}For all $n\geq0$ and $\sigma\in\{231,312\}$,
we have ${\displaystyle j_{n}(123,\sigma)=1+\left\lfloor \frac{(n-1)^{2}}{4}\right\rfloor }$.
\end{thm}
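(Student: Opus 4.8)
The plan is to handle the two patterns $231$ and $312$ separately, since although the count is the same, the structures are different. For each, I would give a recursive decomposition of permutations in $\mathfrak{J}_n(123,\sigma)$ that peels off letters near the front, in the spirit of Lemma \ref{l-123-213-ini} and the proof of Theorem \ref{t-123-213}, and then extract a recurrence for $j_n(123,\sigma)$ which I would solve to obtain $1+\lfloor(n-1)^2/4\rfloor$.

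\medskip
\emph{The case $\sigma=312$.} A permutation $\pi\in\mathfrak{S}_n(123,312)$ that also avoids $123$ is highly constrained: avoiding $123$ forces $\pi$ to be a shuffle of two decreasing sequences, and additionally avoiding $312$ forces even more rigidity. I would first establish (by the same kind of position-chasing argument as Lemma \ref{l-123-213-ini}, using Lemma \ref{l-Jacobi}(d) to rule out the penultimate-or-last ascent and to force certain gaps to be even) a structural description of $\mathfrak{J}_n(123,312)$: a permutation here looks like a short decreasing prefix, possibly one ``descent'' of a prescribed shape, followed by a decreasing run, where the one free parameter is essentially the length/position of that run. Concretely I expect that every $\pi\in\mathfrak{J}_n(123,312)$ is of the form $(n)(n-1)\cdots(j+1)\,\cdot\,j\,(j-1)\cdots$ interleaved in exactly one nontrivial way governed by an index, so that $|\mathfrak{J}_n(123,312)|$ counts a pair of indices subject to a parity constraint coming from the Jacobi condition. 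Summing over the free index should give a sum of the form $\sum_{i} \lfloor \text{something}/1\rfloor$ that telescopes to the quarter-square formula; alternatively I would set up a bijection with the set $\{(a,b): a,b\ge 0,\ a+b\le n-1\}$ modulo an involution, which has exactly $1+\lfloor(n-1)^2/4\rfloor$ elements.

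\medskip
\emph{The case $\sigma=231$.} Here I would invoke Theorem \ref{t-312-231-inv}: the map $\pi\mapsto\pi^{-1}$ is a bijection from $\mathfrak{J}_n(312)$ to $\mathfrak{J}_n(231)$. Since $123^{-1}=123$, Lemma \ref{l-pavinv} gives that $\pi$ avoids $123$ iff $\pi^{-1}$ avoids $123$. Therefore $\pi\mapsto\pi^{-1}$ restricts to a bijection $\mathfrak{J}_n(123,312)\to\mathfrak{J}_n(123,231)$, and $j_n(123,231)=j_n(123,312)$ follows at once from the $\sigma=312$ case. This is the cleaner half and should be essentially one paragraph.

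\medskip
\textbf{Main obstacle.} The hard part will be pinning down the exact structural description in the $\sigma=312$ case and being careful with the parity bookkeeping imposed by the Jacobi condition — in particular handling the separate sub-cases $\pi_1=n$ versus $\pi_1\ne n$ (as in Theorem \ref{t-123-213}) and making sure the resulting recurrence has the right initial conditions so that it genuinely solves to $1+\lfloor(n-1)^2/4\rfloor$ rather than a shifted version. I would verify the base cases $n=0,1,2,3,4$ against Table \ref{tb-pa} (values $1,1,1,2,3$) to make sure the recurrence and its solution line up. Once the recurrence $j_n(123,312)=j_{n-1}(123,312)+\lceil (n-2)/2\rceil$ (or whatever the precise form turns out to be — this is what the decomposition should yield, since quarter-squares have first differences $0,1,1,2,2,3,3,\dots$) is in hand, solving it is routine: $1+\lfloor(n-1)^2/4\rfloor = 1 + \sum_{k=1}^{n-1}\lceil (k-1)/2\rceil$ up to reindexing.
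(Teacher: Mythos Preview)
Your plan is workable and your transfer step via the inverse map is exactly what the paper does, just in the opposite direction: the paper proves $\sigma=231$ directly and then deduces $\sigma=312$ from Lemma~\ref{l-pavinv} and Theorem~\ref{t-312-231-inv}, whereas you propose to do $\sigma=312$ first and transfer to $\sigma=231$. Since $123^{-1}=123$, this is symmetric and either order is fine.

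The real difference is in the direct half. The paper does not use a recursion or peel letters off the front in the style of Lemma~\ref{l-123-213-ini}. Instead it decomposes $\pi\in\mathfrak{J}_n(123,231)$ around the letter $1$ as $\pi=\alpha\,1\,\beta$ and observes (using only the pattern restrictions and the Jacobi condition on $\beta$) that $\alpha$ must be decreasing, and $\beta$ must be a decreasing \emph{interval} of even length. The permutation is then completely determined by the choice of $\beta$, and a direct two-parameter count (length $2k$ of $\beta$ and its starting value) gives
\[
1+\sum_{k=1}^{\lfloor (n-1)/2\rfloor}(n-2k)=1+\left\lfloor\frac{(n-1)^2}{4}\right\rfloor
\]
in one line, with no recurrence to solve. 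The same decomposition around $1$ works equally cleanly for $\sigma=312$: there one finds $\alpha\subset\{2,\dots,k\}$ and $\beta\subset\{k+1,\dots,n\}$, with $\beta$ decreasing of even length; when $\beta\neq\varepsilon$ the cross-constraint with $123$ forces $\alpha$ to be decreasing too, and unfolding this immediately yields either your recurrence $j_n(123,312)=j_{n-1}(123,312)+\lfloor(n-1)/2\rfloor$ or, equivalently, the same closed-form count.

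So your outline is correct, but the ``main obstacle'' you flag (pinning down the structure for $\sigma=312$ via case analysis on $\pi_1=n$ versus $\pi_1\neq n$) is harder than necessary. Decomposing around $1$ rather than around $n$ sidesteps the delicate position-chasing entirely and gives the structural description in two or three sentences.
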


For the proof of this theorem, it will be helpful to have the notion
of an \textit{interval} of a permutation $\pi$, which is a consecutive
subword of $\pi$ consisting of consecutive integer letters. For example,
$3412$ is an interval of $\pi=5734126$.
\begin{proof}
First, we prove the result for $\sigma=231$. The formula clearly
holds for $n=0$, so assume that $n\geq1$. Notice that any nonempty
$\pi\in\mathfrak{J}_{n}(123,231)$ is of the form $\pi=\alpha1\beta$
where $\beta$ is a decreasing interval of even length and $\alpha$
is decreasing. The choice of $\beta$ completely determines $\alpha$,
as $\alpha$ must consist of the remaining letters arranged in decreasing
order; thus, it suffices to count the choices for $\beta$.

The length of $\beta$ can be any nonnegative even integer $2k$ up
to $n-1$, and assuming that $\left|\beta\right|=2k>0$, there are
$n-2k$ choices for the first letter $\beta_{1}$ of $\beta$. Because
$\beta$ is a decreasing interval, the choice of $\beta_{1}$ completely
determines $\beta$, so there are $1+\sum_{k=1}^{\left\lfloor (n-1)/2\right\rfloor }(n-2k)$
choices for $\beta$. A routine argument shows that 
\[
\sum_{k=1}^{\left\lfloor (n-1)/2\right\rfloor }(n-2k)=\left\lfloor \frac{(n-1)^{2}}{4}\right\rfloor ;
\]
this completes the proof for $\sigma=231$.

The result for $\sigma=312$ follows from Lemma \ref{l-pavinv} and
Theorem \ref{t-312-231-inv}.
\end{proof}
Lastly, we consider three Jacobi avoidance classes which are enumerated
by (repeated) powers of $2$.
\begin{thm}
\label{t-213-231or312}For all $n\geq1$ and $\sigma\in\{231,312\}$,
we have $j_{n}(213,\sigma)=2^{\left\lfloor (n-1)/2\right\rfloor }$.
\end{thm}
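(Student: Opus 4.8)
The plan is to first reduce the case $\sigma=312$ to the case $\sigma=231$, and then to analyze $\mathfrak{J}_n(213,231)$ by a direct first-letter recursion.

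Since $213^{-1}=213$ and $312^{-1}=231$, Lemma~\ref{l-pavinv} shows that $\pi$ avoids $\{213,312\}$ if and only if $\pi^{-1}$ avoids $\{213,231\}$, and Theorem~\ref{t-312-231-inv} shows that a $312$-avoiding permutation is Jacobi exactly when its ($231$-avoiding) inverse is. So the map $\pi\mapsto\pi^{-1}$ will give a bijection between $\mathfrak{J}_n(213,312)$ and $\mathfrak{J}_n(213,231)$, and it suffices to prove $j_n(213,231)=2^{\lfloor(n-1)/2\rfloor}$.

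Next I would record the structure of permutations avoiding $\{213,231\}$: for $n\ge 2$ such a permutation must have $\pi_1\in\{1,n\}$ (if $1<\pi_1<n$, pick a letter below $\pi_1$ and a letter above it among $\pi_2,\dots,\pi_n$; whichever occurs first forms a $213$ or a $231$ together with $\pi_1$), and $\pi_2\cdots\pi_n$ again avoids $\{213,231\}$; conversely a leftmost letter equal to the minimum or the maximum cannot be part of any occurrence of $213$ or $231$, since in both patterns the first entry is the middle value. I would then impose the Jacobi condition. If $\pi_1=n$, then $\rho_\pi(n)=\varepsilon$ and $\rho_\pi(\pi_k)=\rho_{\pi_2\cdots\pi_n}(\pi_k)$ for $k\ge 2$, so by Lemma~\ref{l-Jacobi}(a),(b), $\pi$ is Jacobi iff $\pi_2\cdots\pi_n$ is, and standardizing the suffix gives a bijection of this case with $\mathfrak{J}_{n-1}(213,231)$. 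If $\pi_1=1$, then writing $\pi=\alpha 1\beta$ with $\alpha=\varepsilon$, the recursive definition of Jacobi forces $\beta=\pi_2\cdots\pi_n$ to be Jacobi \emph{and} of even length $n-1$; so this case is empty when $n$ is even and bijects with $\mathfrak{J}_{n-1}(213,231)$ when $n$ is odd. Hence for $n\ge 2$,
\[
j_n(213,231)=\begin{cases}2\,j_{n-1}(213,231),& n\text{ odd},\\ j_{n-1}(213,231),& n\text{ even}.\end{cases}
\]

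Finally, from $j_1(213,231)=1$ this recursion gives $j_n(213,231)=2^{\lfloor(n-1)/2\rfloor}$ by an easy induction, the factor of $2$ being collected precisely at $n=3,5,7,\dots$. The only point needing real care is the asymmetry between the two branches: the $\pi_1=n$ branch always survives, while the $\pi_1=1$ branch requires $n-1$ to be even because the subword after the minimum must have even length. I expect this parity bookkeeping, together with carefully checking that the leftmost extremal letter never participates in a forbidden pattern, to be the main (though still routine) obstacle; everything else is direct verification, and the $\sigma=312$ statement then follows for free from the first paragraph.
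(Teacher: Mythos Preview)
Your proof is correct and essentially the same as the paper's argument: a first-letter recursion together with the inverse map to pass between the two choices of $\sigma$. The only difference is that the paper handles $\sigma=312$ directly (tracking the position of the letter $1$, which must be first or last) and then deduces $\sigma=231$ via Theorem~\ref{t-312-231-inv}, whereas you handle $\sigma=231$ directly (tracking the value of $\pi_1$, which must be $1$ or $n$) and deduce $\sigma=312$; these are inverse versions of the same recursion.
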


\begin{proof}
We take $\sigma=312$; the result for $\sigma=231$ will follow from
Lemma \ref{l-pavinv} and Theorem \ref{t-312-231-inv}.

Let $\pi\in\mathfrak{J}_{n}(213,312)$ be nonempty. Then $1$ is either
the first letter of $\pi$ or the last letter of $\pi$, so either
$\pi=\alpha\ominus1$ or $\pi=1\oplus\beta$, where $\alpha$ and
$\beta$ are Jacobi permutations that avoid $213$ and $312$, and
$\beta$ is of even length. When $n\geq3$ is odd, this gives us a
$1$-to-$2$ map from $\mathfrak{J}_{n-1}(213,312)$ to $\mathfrak{J}_{n}(213,312)$.
On the other hand, we cannot have $\pi=1\oplus\beta$ when $n\geq2$
is even, so in this case $\mathfrak{J}_{n-1}(213,312)$ and $\mathfrak{J}_{n}(213,312)$
are in bijection. In summary, we have 
\[
j_{n}(213,312)=\begin{cases}
2j_{n-1}(213,312), & \text{if }n\geq3\text{ is odd,}\\
j_{n-1}(213,312), & \text{if }n\geq2\text{ is even,}
\end{cases}
\]
which along with $j_{1}(213,312)=1$ implies $j_{n}(213,312)=2^{\left\lfloor (n-1)/2\right\rfloor }$
for all $n\geq1$.
\end{proof}
\begin{thm}
\label{t-231-312}For all $n\geq1$, we have $j_{n}(231,312)=2^{\left\lfloor (n-1)/2\right\rfloor }$.
\end{thm}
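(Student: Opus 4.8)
The plan is to characterize $\mathfrak{J}_n(231,312)$ completely and then reduce the enumeration to a routine count of compositions. Write $\delta_c$ for the decreasing permutation $c(c-1)\cdots 21$ of length $c$. The first step is the standard structural fact that a permutation $\pi$ of $[n]$ avoids both $231$ and $312$ if and only if $\pi=\delta_{c_1}\oplus\delta_{c_2}\oplus\cdots\oplus\delta_{c_k}$ for some composition $(c_1,\dots,c_k)$ of $n$. I would prove the forward direction by writing $\pi=\alpha 1\beta$ with $1=\min$: avoidance of $312$ forces every letter of $\alpha$ to be smaller than every letter of $\beta$ (otherwise $a\,1\,b$ with $b<a$ is a $312$), and then avoidance of $231$ forces $\alpha$ to be decreasing (otherwise $a\,a'\,1$ with $a<a'$ is a $231$); inducting on $\beta$ gives the direct-sum form. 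Conversely, every direct sum of decreasing permutations avoids $231$ and $312$.

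The second step is to pin down which of these are Jacobi. For $\pi=\delta_{c_1}\oplus\cdots\oplus\delta_{c_k}$ I would compute $\rho_\pi(x)$ directly: if $x$ is not the minimum of its block, then the letter immediately to its right is smaller, so $\rho_\pi(x)=\varepsilon$; and if $x$ is the minimum of block $i$, then every letter to its right exceeds $x$, so $\rho_\pi(x)$ consists of all of blocks $i+1,\dots,k$ and hence $|\rho_\pi(x)|=c_{i+1}+\cdots+c_k$. Therefore $\pi$ is Jacobi if and only if $c_{i+1}+\cdots+c_k$ is even for every $i$, which—working down from $i=k-1$—is equivalent to $c_2,c_3,\dots,c_k$ all being even, with $c_1\geq 1$ otherwise unconstrained. (Alternatively one can derive the same condition by iterating the recursive definition of Jacobi permutations on $\pi=\alpha 1\beta$, using Lemma~\ref{l-Jacobi} and the fact that decreasing permutations are Jacobi.)

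It then remains to count compositions $(c_1,\dots,c_k)$ of $n$ with $k\geq1$, $c_1\geq1$, and $c_2,\dots,c_k$ positive and even. The ordinary generating function for such compositions is $\frac{x}{1-x}\cdot\frac{1}{1-\frac{x^2}{1-x^2}}=\frac{x(1+x)}{1-2x^2}$, whose coefficient of $x^n$ is $2^{\lfloor(n-1)/2\rfloor}$ for $n\geq1$; equivalently, summing over the choice of $c_1$ (with the remaining length $n-c_1$ an even number split into positive parts) gives $1+\sum_{\ell=1}^{\lfloor(n-1)/2\rfloor}2^{\ell-1}=2^{\lfloor(n-1)/2\rfloor}$. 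A variant that parallels the proof of Theorem~\ref{t-213-231or312} is to read off a recursion from the composition description instead: the map $(c_1,c_2,\dots,c_k)\mapsto(c_1+1,c_2,\dots,c_k)$ is a bijection $\mathfrak{J}_{n-1}(231,312)\to\mathfrak{J}_n(231,312)$ when $n$ is even, while for odd $n$ the two maps $(c_1,\dots,c_k)\mapsto(c_1+1,c_2,\dots,c_k)$ and $(c_1,\dots,c_k)\mapsto(1,c_1,\dots,c_k)$ together form a $1$-to-$2$ correspondence; combined with $j_1(231,312)=1$ this again yields $j_n(231,312)=2^{\lfloor(n-1)/2\rfloor}$.

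The argument is largely mechanical, and the one place that needs care is the second step: getting the parity bookkeeping right (that $c_1$ is free in itself, but since $c_2,\dots,c_k$ are all even its parity is forced by $c_1\equiv n\pmod 2$), and correctly handling the degenerate case $k=1$, namely the single decreasing permutation $\delta_n=n\cdots 21$, which is always Jacobi and corresponds to the composition $(n)$.
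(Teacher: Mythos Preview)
Your proof is correct and rests on the same structural decomposition as the paper's: writing $\pi=\alpha 1\beta$ with $\alpha$ decreasing and $\beta$ an even-length $\{231,312\}$-avoiding Jacobi permutation. The only difference is presentational---the paper works with this one-step recursion via the generating-function equation $J_{\e}=1+\frac{x^{2}}{1-x^{2}}J_{\e}$, whereas you iterate the decomposition to obtain the explicit characterization $\mathfrak{J}_{n}(231,312)=\{\delta_{c_{1}}\oplus\cdots\oplus\delta_{c_{k}}:c_{2},\dots,c_{k}\text{ even}\}$ and then count the resulting compositions directly; your generating function $\frac{x}{1-x}\cdot\frac{1}{1-x^{2}/(1-x^{2})}$ is exactly what one obtains by unrolling the paper's recursion.
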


\begin{proof}
Let $\pi\in\mathfrak{J}_{n}(231,312)$ be nonempty. Then we can write
$\pi=\alpha1\beta$ where $\alpha$ is of the form $k(k-1)\cdots2$
and $\beta$ is a Jacobi permutation of even length on the letters
$k+1,k+2,\dots,n$ that avoids $231$ and $312$. Taking $J_{\e}\coloneqq\sum_{n=0}^{\infty}j_{2n}(231,312)x^{2n}$,
we thus have the equation 
\[
J_{\e}=1+\frac{x^{2}}{1-x^{2}}J_{\e},
\]
which leads to
\[
J_{\e}=1+\frac{x^{2}}{1-2x^{2}}=1+\sum_{n=1}^{\infty}2^{n-1}x^{2n}.
\]
Furthermore, if $J_{\o}\coloneqq\sum_{n=1}^{\infty}j_{2n-1}(231,312)x^{2n-1}$,
then 
\[
J_{\o}=\frac{x}{1-x^{2}}J_{\e}=\frac{x}{1-2x^{2}}=\sum_{n=1}^{\infty}2^{n-1}x^{2n-1}.
\]
Adding these expressions for $J_{\e}$ and $J_{\o}$ gives us
\[
\sum_{n=0}^{\infty}j_{n}(231,312)x^{n}=1+\sum_{n=1}^{\infty}2^{n-1}x^{2n}+\sum_{n=1}^{\infty}2^{n-1}x^{2n-1}=1+\sum_{n=1}^{\infty}2^{\left\lfloor (n-1)/2\right\rfloor }x^{n},
\]
and comparing coefficients completes the proof.
\end{proof}

\subsection{Triple and quadruple restrictions}

We end this section by stating several results for $\mathfrak{J}_{n}(\Pi)$
where $\left|\Pi\right|=3$ or $\left|\Pi\right|=4$. Together with
the results presented earlier, these complete the enumeration of $\mathfrak{J}_{n}(\Pi)$
over all subsets $\Pi\subseteq\mathfrak{S}_{3}$.
\begin{thm}
\label{t-123-213-231or312}For all $n\geq1$ and $\sigma\in\{231,312\}$,
we have $j_{n}(123,213,\sigma)=\left\lceil n/2\right\rceil $. 
\end{thm}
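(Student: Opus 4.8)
The plan is to reduce to $\sigma = 312$ and then carry out an explicit structural analysis. Since $123$ and $213$ are involutions and $312^{-1} = 231$, Lemma~\ref{l-pavinv} together with Theorem~\ref{t-312-231-inv} shows that $\pi \mapsto \pi^{-1}$ is a bijection from $\mathfrak{J}_n(123,213,312)$ onto $\mathfrak{J}_n(123,213,231)$, so it suffices to prove $j_n(123,213,312) = \lceil n/2 \rceil$. For $n = 1$ this is clear since $\mathfrak{J}_1 = \{1\}$, so assume $n \geq 2$. The key input is Lemma~\ref{l-123-213-ini}, which says that every $\pi \in \mathfrak{J}_n(123,213)$ satisfies either $\pi_1 = n$, or else $\pi_2 = n$ and $\pi_3 = n-1$ (the latter being possible only when $n \geq 3$). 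I would impose $312$-avoidance on each of these cases.

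In the first case ($\pi_1 = n$), any two letters in increasing order among $\pi_2, \ldots, \pi_n$ would, together with $\pi_1 = n$, form an occurrence of $312$; hence $\pi_2 \cdots \pi_n$ is decreasing and $\pi = n(n-1)\cdots 21$. So the first case contributes exactly one permutation. In the second case ($\pi_2 = n$, $\pi_3 = n-1$, $n \geq 3$), the same reasoning applied to $\pi_2 = n$ forces $\pi_3 \cdots \pi_n$ to be decreasing. Since $\pi_1 \neq n$, and $\pi_1 = n-1$ is impossible (it would make $\rho_\pi(\pi_1) = n$ have odd length), the permutation is completely determined by the value $a := \pi_1 \in \{1, \ldots, n-2\}$: explicitly $\pi = \pi^{(a)} := a\, n\, \tau$, where $\tau$ is the decreasing arrangement of $\{1, \ldots, n-1\} \setminus \{a\}$.

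It remains to decide which $\pi^{(a)}$ lie in the avoidance class. I would check that every $\pi^{(a)}$ automatically avoids $123$, $213$, and $312$: three entries in positions $i < j < k$ either include at most one of $a$ and $n$ and otherwise sit inside a decreasing block, which rules out $123$ and $213$, or else have the form $(a, n, c)$ with $c < n$, whose pattern is $132$ or $231$. For the Jacobi condition, a direct computation of maximal runs gives $\rho_{\pi^{(a)}}(a) = n(n-1)\cdots(a+1)$, of length $n - a$, while $\rho_{\pi^{(a)}}(n) = \varepsilon$ and $\rho_{\pi^{(a)}}(x) = \varepsilon$ for every letter $x$ of $\tau$; hence $\pi^{(a)}$ is Jacobi if and only if $a \equiv n \pmod 2$. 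Counting the admissible $a \in \{1, \ldots, n-2\}$ of the same parity as $n$, of which there are $(n-2)/2$ when $n$ is even and $(n-1)/2$ when $n$ is odd, and adding the single permutation from the first case, yields $n/2$ and $(n+1)/2$ respectively, which equals $\lceil n/2 \rceil$ in both parities; the formula also holds at $n = 1$ and $n = 2$. None of the steps is deep. The points demanding care are making the implication that $312$-avoidance forces a decreasing tail rigorous in each case, treating the small-$n$ boundary correctly, and the parity bookkeeping at the end.
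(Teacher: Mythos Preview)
Your proof is correct, but it follows a different route from the paper's. The paper argues by locating the letter $1$: since $\pi$ avoids both $213$ and $312$, the letter $1$ must be first or last; if last then $\pi=\alpha\ominus 1$ with $\alpha\in\mathfrak{J}_{n-1}(123,213,312)$, while if first then $123$-avoidance forces $\pi=1\,n(n-1)\cdots 2$, which is Jacobi only for odd $n$. This yields the recurrence $j_n=j_{n-1}+[n\text{ odd}]$ and hence $\lceil n/2\rceil$. You instead locate the letter $n$ via Lemma~\ref{l-123-213-ini} and classify directly: either $\pi_1=n$ (giving the decreasing permutation) or $\pi=a\,n\,(n{-}1)\cdots(a{+}1)(a{-}1)\cdots 1$ with $a\equiv n\pmod 2$. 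Your approach trades a short recursion for an explicit list of all permutations in the class, which is arguably more informative; the paper's approach is quicker and avoids the pattern-avoidance casework you sketch in the middle paragraph. One minor point of presentation: your phrase ``include at most one of $a$ and $n$ and otherwise sit inside a decreasing block'' is slightly imprecise for triples containing $a$ but not $n$ (these are not literally inside a decreasing block), though the conclusion is still correct since any such triple $(a,\pi_j,\pi_k)$ with $j,k\geq 3$ has $\pi_j>\pi_k$ and hence pattern $321$, $231$, or $132$.
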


\begin{proof}
We prove the result for $\sigma=312$. As before, the result for $\sigma=231$
will then follow from Lemma~\ref{l-pavinv} and Theorem \ref{t-312-231-inv}.

Let $\pi\in\mathfrak{J}_{n}(123,213,312)$ with $n\ge2$. Then $1$
is either the first letter of $\pi$ or the last letter of $\pi$,
so either $\pi=\alpha\ominus1$ where $\alpha\in\mathfrak{J}_{n-1}(123,213,312)$,
or $\pi=1n(n-1)\cdots2$; the latter is only possible when $n$ is
odd. Thus, we have 
\[
j_{n}(123,213,312)=\begin{cases}
j_{n-1}(123,213,312), & \text{if }n\text{ is even,}\\
j_{n-1}(123,213,312)+1, & \text{if }n\text{ is odd.}
\end{cases}
\]
Because $j_{1}(123,213,312)=1$, the result follows.
\end{proof}
\begin{thm}
\label{t-123-231-312}For all $n\geq1$, we have $j_{n}(123,231,312)=\left\lceil n/2\right\rceil $.
\end{thm}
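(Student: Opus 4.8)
The plan is to describe the avoidance class completely. For $1\le m\le n$, let
\[
\sigma_m\coloneqq m(m-1)\cdots 1\;n(n-1)\cdots(m+1)\in\mathfrak{S}_n,
\]
the concatenation of the decreasing arrangement of $\{1,\dots,m\}$ with that of $\{m+1,\dots,n\}$ (so $\sigma_n=n(n-1)\cdots1$ and $\sigma_1=1\,n(n-1)\cdots2$). I claim that $\mathfrak{J}_n(123,231,312)=\{\sigma_m: 1\le m\le n,\ m\equiv n\pmod 2\}$. Since distinct $m$ give distinct first letters, the $\sigma_m$ in this set are pairwise distinct, and the number of $m\in[n]$ with $m\equiv n\pmod 2$ is $\lceil n/2\rceil$; so the theorem follows at once from the claim.

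First I would check that each $\sigma_m$ lies in $\mathfrak{J}_n(123,231,312)$. As $\sigma_m$ is a concatenation of two decreasing words occupying the value ranges $\{1,\dots,m\}$ and $\{m+1,\dots,n\}$ respectively, every ascent of $\sigma_m$ runs from a letter $\le m$ to a letter $>m$; in particular $\sigma_m$ has no increasing subsequence of length $3$, so it avoids $123$. For $231$ and $312$: an occurrence of either pattern has a unique ascent, whose two letters would lie in the first and second block respectively, so that letter pair runs from a value $\le m$ up to a value $>m$; the third letter of the pattern — occurring after the ascent in a $231$ and before it in a $312$ — is then forced to be $>m$ (in the $231$ case) or $\le m$ (in the $312$ case), contradicting the value it must take relative to the ascent. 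For the Jacobi property, note that since both blocks are decreasing we have $\rho_{\sigma_m}(x)=\varepsilon$ for every letter $x\ne 1$, while $\rho_{\sigma_m}(1)$ is exactly the second block, of length $n-m$; hence $\sigma_m$ is Jacobi if and only if $n-m$ is even. (Alternatively, the ``if'' direction follows from Lemma \ref{l-rlminJacobi} with $y=1$.)

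Next I would prove the reverse inclusion: any $\pi\in\mathfrak{J}_n(123,231,312)$ equals $\sigma_m$ with $m=\pi_1$. Because $\pi$ avoids $312$, two letters smaller than $m$ cannot appear in increasing order after position $1$ (else they would form a $312$ with $\pi_1$), so the letters $1,\dots,m-1$ occur, in that relative order, as $(m-1)(m-2)\cdots1$. Because $\pi$ avoids $231$, no letter exceeding $m$ can precede a letter smaller than $m$ among $\pi_2\cdots\pi_n$ (else we would get a $231$ with $\pi_1$); hence $\pi=m\,(m-1)(m-2)\cdots1\,\delta$ where $\delta$ is a permutation of $\{m+1,\dots,n\}$. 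Finally, if $\delta$ had an ascent $\delta_a<\delta_b$, then $(m,\delta_a,\delta_b)$ would be an occurrence of $123$ in $\pi$; so $\delta$ is decreasing and $\pi=\sigma_m$. Combining with the preceding paragraph, $\pi\in\mathfrak{J}_n$ forces $n-m$ even, i.e.\ $m\equiv n\pmod 2$, completing the characterization.

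The only substantive step is the structural argument in the last paragraph, where the three forbidden patterns together pin down $\pi$ from its first letter; I do not expect a genuine obstacle, though I would take a little care with the degenerate case $m=n$, in which the second block of $\sigma_m$ and the word $\delta$ are empty.
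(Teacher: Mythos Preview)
Your proof is correct and arrives at the same explicit characterization of $\mathfrak{J}_n(123,231,312)$ as the paper: both show the class consists precisely of the permutations $m(m-1)\cdots 1\,n(n-1)\cdots(m+1)$ with $m\equiv n\pmod 2$. The only difference is organizational: the paper decomposes $\pi=\alpha 1\beta$ around the minimum (reusing the structure established in the proof of Theorem~\ref{t-123-231or312} for $\{123,231\}$, then noting that the added $312$ constraint forces $\beta$ to be empty or begin with $n$), whereas you decompose around the first letter $m=\pi_1$ and use the three pattern conditions directly to pin down the rest of $\pi$. Your version is self-contained rather than relying on the earlier theorem, but the underlying idea is the same.
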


\begin{proof}
Let $\pi\in\mathfrak{J}_{n}(123,231,312)$ be nonempty. Recall from
the proof of Theorem \ref{t-123-231or312} that $\pi=\alpha1\beta$
where $\beta$ is a decreasing interval of even length and $\alpha$
is decreasing, but since $\pi$ also avoids $312$ here, $\beta$
must either be empty or begin with $n$. Adapting that proof accordingly,
we have $1+\left\lfloor (n-1)/2\right\rfloor =\left\lceil n/2\right\rceil $
choices for $\beta$, which again completely determines $\pi$.
\end{proof}
\begin{thm}
\label{t-213-231-312}For all $n\geq2$, we have 
\[
j_{n}(213,231,312)=j_{n}(123,213,231,312)=\begin{cases}
1, & \text{if }n\text{ is even,}\\
2, & \text{if }n\text{ is odd.}
\end{cases}
\]
\end{thm}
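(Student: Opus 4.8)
The plan is to give an explicit description of the set $\mathfrak{J}_{n}(213,231,312)$ and then observe that it coincides with $\mathfrak{J}_{n}(123,213,231,312)$. First I would take a nonempty $\pi\in\mathfrak{J}_{n}(213,231,312)$ and apply the recursive characterization of Jacobi permutations to write $\pi=\alpha 1\beta$, where $\alpha$ and $\beta$ are Jacobi and $\left|\beta\right|$ is even. Because $1$ is the smallest letter of $\pi$, avoiding $231$ forces $\alpha$ to be decreasing: any pair of letters $\alpha_{i}<\alpha_{j}$ with $i<j$ would, together with the letter $1$, form an occurrence of $231$. Moreover, if $\alpha$ and $\beta$ were both nonempty, then for $a$ a letter of $\alpha$ and $b$ a letter of $\beta$, the subword $a\,1\,b$ would be an occurrence of $213$ (if $a<b$) or of $312$ (if $a>b$). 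Hence at least one of $\alpha,\beta$ is empty.

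This splits into two cases. If $\beta=\varepsilon$, then $\alpha$ is a decreasing permutation of the remaining letters, so $\pi$ is the decreasing permutation $n(n-1)\cdots 21$. If $\alpha=\varepsilon$, then $\pi=1\beta$ with $\left|\beta\right|=n-1$ even, so $n$ is odd, and $\std(\beta)$ lies in $\mathfrak{J}_{n-1}(213,231,312)$: it is Jacobi, and it avoids $213,231,312$ since $\beta$ is a subword of $\pi$ and standardization preserves pattern avoidance. Conversely, $n(n-1)\cdots 21$ is Jacobi by iterating Lemma~\ref{l-Jacobi}~(b) and avoids every length-$3$ pattern other than $321$; and for any $\gamma\in\mathfrak{J}_{n-1}(213,231,312)$ with $n-1$ even, the permutation $1\oplus\gamma$ is Jacobi (directly from the recursive definition, with $1$ as the minimum and the even-length part $\gamma$ shifted up on the right) and avoids $213,231,312$, since any occurrence of a length-$3$ pattern involving the leading letter $1$ would be a pattern beginning with its own smallest entry, hence $123$ or $132$.

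It then remains to count by parity. For even $n\geq 2$, the second case cannot occur, so $\mathfrak{J}_{n}(213,231,312)=\{n(n-1)\cdots 21\}$ and $j_{n}(213,231,312)=1$. For odd $n\geq 3$, the first case contributes $n(n-1)\cdots 21$, and the second contributes exactly the permutations $1\oplus\gamma$ with $\gamma\in\mathfrak{J}_{n-1}(213,231,312)$; since $n-1\geq 2$ is even, the even case just established gives $\mathfrak{J}_{n-1}(213,231,312)=\{(n-1)(n-2)\cdots 21\}$, so the second case contributes only the single permutation $1\,n\,(n-1)\cdots 2$, which is distinct from $n(n-1)\cdots 21$. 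Therefore $j_{n}(213,231,312)=2$. Finally, both $n(n-1)\cdots 21$ and $1\,n\,(n-1)\cdots 2$ also avoid $123$ — the former being decreasing, and the latter having a single ascent and being decreasing afterward, hence no increasing subword of length $3$ — so $\mathfrak{J}_{n}(123,213,231,312)=\mathfrak{J}_{n}(213,231,312)$, which yields the same count.

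This argument involves no real obstacle; the only points requiring care are the routine passage between a Jacobi permutation of an arbitrary letter set and its standardization, and citing the correct earlier facts (the recursive definition of Jacobi permutations for the forward decomposition $\pi=\alpha 1\beta$, and Lemma~\ref{l-Jacobi}~(b) for Jacobi-ness of the decreasing permutation and of $1\oplus\gamma$).
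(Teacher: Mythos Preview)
Your argument is correct and follows essentially the same route as the paper: the avoidance of $213$ and $312$ forces $1$ to sit at an endpoint, the even case leaves only the decreasing permutation, and the odd case yields that permutation together with $1\,n(n-1)\cdots 2$, both of which also avoid $123$. One tiny slip in your closing summary: Lemma~\ref{l-Jacobi}~(b) is about prepending a \emph{largest} letter, so it justifies the decreasing permutation being Jacobi but not $1\oplus\gamma$; for the latter you had already correctly invoked the recursive definition earlier in the proof.
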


\begin{proof}
The proof is the same as that of Theorem \ref{t-213-231or312}, except
that when $n$ is even, $\pi$ is forced to be $\pi=n\cdots21$ as
we are now requiring $\pi$ to avoid $231$. Hence, there is only
one Jacobi permutation of length $n$ avoiding the prescribed patterns
for each even $n\geq2$, and twice as many for each subsequent odd
$n$.
\end{proof}

\section{\label{s-conclusion}Future directions for research}

To summarize, our work provides the first in-depth study of Jacobi
permutations since their introduction by Viennot. We achieved a complete
enumeration of the Jacobi avoidance classes $\mathfrak{J}_{n}(\Pi)$
where $\Pi$ ranges over all subsets of $\mathfrak{S}_{3}$, and we
have also studied the distribution of several statistics\textemdash the
number of ascents/descents, the number of left-to-right minima, and
the last letter\textemdash over the single-pattern avoidance classes
and as well as the full set $\mathfrak{J}_{n}$. 

Among those that we considered, the only distribution that eluded
us was that of $\last$ over $\mathfrak{J}_{n}(123)$, so we leave
this as an open problem.
\begin{problem}
Determine the distribution of $\last$ over $\mathfrak{J}_{n}(123)$.
\end{problem}

Along the way, we discovered that all permutations in $\mathfrak{J}_{n}(213)$,
$\mathfrak{J}_{n}(231)$, and $\mathfrak{J}_{n}(231)$ are doubly
Jacobi, which led us to investigate doubly Jacobi permutations in
the other single-pattern avoidance classes. However, we did not enumerate
doubly Jacobi permutations in $\mathfrak{J}_{n}$ (without pattern
avoidance restrictions).
\begin{problem}
Enumerate doubly Jacobi permutations in $\mathfrak{J}_{n}$.
\end{problem}

Doubly alternating permutations, which motivated our notion of doubly
Jacobi permutations, were also first studied with pattern avoidance
restrictions \cite{Guibert2000,Ouchterlony2005}. The enumeration
of all doubly alternating permutations in $\mathfrak{S}_{n}$ was
obtained later by Stanley \cite{Stanley2007} using symmetric function
theory. Stanley's proof relies on the fact that whether a permutation
is alternating is completely determined by its descent set. Since
the Jacobi property is not determined by the descent set, Stanley's
approach is inapplicable to our setting.

Given a permutation $\pi$ of length at least 2, define $\cpen(\pi)$
to be the penultimate (second-to-last) letter of the complement of
$\pi$. For example, the complement of $\pi=263154$ is $\pi^{c}=514623$,
so $\cpen(\pi)=2$. Recall from Theorem \ref{t-last} that the distribution
of $\last$ over $\mathfrak{J}_{n}$ is given by the Entringer numbers.
We conjecture that the same is true for $\cpen$.
\begin{conjecture}
\label{cj-cpen}For all $n\geq2$ and $k\geq1$, we have $j_{n,k}^{\cpen}=E_{n,k}$.
\end{conjecture}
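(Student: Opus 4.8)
The plan is to first reformulate Conjecture \ref{cj-cpen} as an equidistribution statement about a boundary statistic that can be attacked with the recursive tools of Section \ref{s-Jac}. Since $(\pi^{c})_{n-1}=n+1-\pi_{n-1}$ for every $\pi\in\mathfrak{S}_{n}$, we have $\cpen(\pi)=n+1-\pi_{n-1}$, so the conjecture is equivalent to the claim that the penultimate-letter statistic $\pi\mapsto\pi_{n-1}$ satisfies $\left|\{\pi\in\mathfrak{J}_{n}:\pi_{n-1}=j\}\right|=E_{n,n+1-j}$. By complementing up-down permutations, $E_{n,n+1-j}$ is exactly the number of down-up permutations of $[n]$ whose first letter is $j$. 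Hence it is enough to produce a bijection $\Psi$ from $\mathfrak{J}_{S}$ to the set of down-up permutations of $S$ with the property that $\first(\Psi(\pi))=\pi_{n-1}$ whenever $\pi$ has length $n\geq2$; specializing to $S=[n]$ and unwinding the complement then yields Conjecture \ref{cj-cpen}, in the same spirit that Theorem \ref{t-last} follows from the bijection $\Phi$.

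I would construct $\Psi$ by mimicking the recursion used for $\Phi$ in the proof of Theorem \ref{t-last}. Write $\pi=\alpha y\beta$ with $y=\min S$, so that $\alpha$ and $\beta$ are Jacobi and $|\beta|$ is even; as $|\beta|\neq1$, the penultimate letter of $\pi$ equals the penultimate letter of $\beta$ when $\beta\neq\varepsilon$ and equals $\last(\alpha)$ when $\beta=\varepsilon$. This dichotomy dictates the recursion: in the first case $\Psi(\pi)$ should be built from $\Psi(\beta)$ (which, inductively, already begins with the penultimate letter of $\beta$), and in the second case from $\Phi(\alpha)$ (which begins with $\last(\alpha)$), in each case weaving in the remaining block together with $y$ so that the result is a down-up permutation of $S$ of the correct length whose first letter is left untouched. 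The delicate point is precisely this weaving: just as $\Phi$ appends $y\,\Phi(\alpha)^{c}$ after $\Phi(\beta)$ without disturbing $\first(\Phi(\beta))$, one must splice $y$ and the inert block ($\alpha$ in the first case, nothing in the second) into the recursively built permutation so that the alternation is preserved and the first letter does not change; a plain concatenation $\Psi(\beta)\,\mu$ will not generally respect the down-up condition, so some interleaving or order-reversal of $y\alpha$ will be needed. Once the splicing rule is pinned down, checking invertibility and the alternation pattern should be a routine induction, as in Theorem \ref{t-last}.

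Two fallback strategies are worth keeping available. The first is purely enumerative: put $a_{n,j}:=\left|\{\pi\in\mathfrak{J}_{n}:\pi_{n-1}=j\}\right|$ and derive a recurrence by splitting $\pi=\alpha y\beta$ on whether $\beta$ is empty. Because $\pi_{n-1}$ and $\pi_{n}$ lie in the same block of this decomposition, it is probably necessary to track the joint distribution of $(\pi_{n-1},\pi_{n})$ (equivalently of $\cpen$ and $\last$ together) in order for the recursion to close, and then to match the outcome against the Entringer recurrence $E_{n,k}=E_{n-1,n-k}+E_{n,k+1}$. The second is a bijective route through André permutations, building on Corollary \ref{c-jacandreasc}: one would seek a bijection $\mathfrak{J}_{n}\to\mathfrak{A}_{n}$ that not only transports $\asc$ to $\des$ but also sends $\pi_{n-1}$ to $\sigma_{n-1}+1$ (this normalization is forced because André permutations of $[n]$ end in $n$, so $\sigma_{n-1}$ ranges over $\{1,\dots,n-1\}$ while $\pi_{n-1}$ ranges over $\{2,\dots,n\}$), and then invoke the Foata--Han identity $\left|\{\sigma\in\mathfrak{A}_{n}:\sigma_{n-1}=n-k\}\right|=E_{n,k}$ recalled in Section \ref{s-Jac}.

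In all three approaches the same difficulty is the crux: in the recursive or convolutional step the distinguished letter $j$ sits inside a permutation on a proper subset of $[n]$, and to recognize the Entringer numbers one must keep exact track of how the rank of $j$ shifts under restriction and of how that shift interacts with the reflection $k\mapsto n+1-k$ that is built into both the down-up reformulation and the Entringer recurrence. I expect that once the weaving in the bijective approach is chosen so as to be compatible with this reflection, everything else — the alternation pattern, the length bookkeeping, and the construction of the inverse — will follow by a straightforward induction closely parallel to the proof of Theorem \ref{t-last}.
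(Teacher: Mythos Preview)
The paper does not prove this statement; it appears in Section~\ref{s-conclusion} as an open conjecture, so there is no proof in the paper to compare against.

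Your proposal is likewise not a proof but an outline of possible approaches. The reformulation in your first paragraph is correct: since $\cpen(\pi)=n+1-\pi_{n-1}$, the conjecture is equivalent to exhibiting a bijection $\Psi$ from $\mathfrak{J}_{S}$ to the down-up permutations of $S$ satisfying $\first(\Psi(\pi))=\pi_{|\pi|-1}$, and the case split on whether $\beta$ is empty correctly identifies where the penultimate letter lives.

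The genuine gap is the one you yourself flag: the ``weaving'' rule is never specified. In the proof of Theorem~\ref{t-last}, the concatenation $\Phi(\beta)\,y\,\Phi(\alpha)^{c}$ works because $y=\min S$ sits between an up-down word of even length and a down-up word, and these glue cleanly. For your proposed $\Psi$, you need a down-up output whose first letter is already determined by the recursive image of one block, and you must insert $y$ together with the other block without disturbing that first letter or the alternation. You observe that plain concatenation fails but do not supply a replacement; until an explicit rule is written down and checked to be invertible, there is no argument here. The two fallback strategies have the same status: tracking the joint distribution of $(\pi_{n-1},\pi_n)$ is a reasonable idea (and indeed Conjecture~\ref{cj-cpen-last} in the paper predicts exactly what that joint distribution should be), but you do not derive the recurrence or match it to the Entringer recursion; and the bijection to $\mathfrak{A}_{n}$ carrying $\pi_{n-1}\mapsto\sigma_{n-1}+1$ is only hypothesized, not constructed.

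In short, you have correctly isolated the target and the obstacle, but the obstacle remains. The statement is open in the paper and remains open after your proposal.
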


Furthermore, empirical evidence suggests the following conjecture
about the joint distribution of $\cpen$ and $\last$ over $\mathfrak{J}_{n}$.
Let 
\[
j_{n,i,k}^{(\cpen,\last)}\coloneqq\left|\{\,\pi\in\mathfrak{J}_{n}:\cpen(\pi)=i\text{ and }\last(\pi)=k\,\}\right|.
\]

\begin{conjecture}
\label{cj-cpen-last}For all $n\geq3$ and $i,k\geq1$, we have $j_{n,i,k}^{(\cpen,\last)}=E_{n-1,n+1-i-k}$.
\end{conjecture}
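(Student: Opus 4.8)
The plan is to first reduce the statement to one about the last two letters of a Jacobi permutation. For $\pi\in\mathfrak{J}_n$, write $a=\pi_{n-1}$ and $b=\pi_n$; then $\cpen(\pi)=n+1-a$ and $\last(\pi)=b$, so that $E_{n-1,n+1-\cpen(\pi)-\last(\pi)}=E_{n-1,a-b}$. Hence, setting
\[
g_n(a,b)\coloneqq\left|\{\,\pi\in\mathfrak{J}_n:\pi_{n-1}=a\text{ and }\pi_n=b\,\}\right|,
\]
Conjecture \ref{cj-cpen-last} is equivalent to the assertion that $g_n(a,b)=E_{n-1,a-b}$ for all $n\geq3$ and $a>b\geq1$. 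This is consistent with Lemma \ref{l-Jacobi} (d) (a Jacobi permutation ends in a descent, so $g_n(a,b)=0=E_{n-1,a-b}$ for $a\leq b$) and with Theorem \ref{t-last}: summing over $a$ for fixed $b$ gives $\sum_a E_{n-1,a-b}=E_{n,b}$ by the telescoped Entringer identity $E_{m,j}=\sum_{i=1}^{m-j}E_{m-1,i}$, which equals $j_{n,b}^{\last}$. Summing over $b$ for fixed $a$ would likewise establish Conjecture \ref{cj-cpen}, so any proof of Conjecture \ref{cj-cpen-last} also settles that one.

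The strategy I would pursue is induction on $n$, showing that $g_n(a,b)$ satisfies the Entringer recurrence $E_{n-1,a-b}=E_{n-2,(n-1)-(a-b)}+E_{n-1,(a-b)+1}$. The case $b=1$ is immediate: a Jacobi permutation ends in $1$ exactly when $\pi=\alpha\,1$ with $\alpha$ a Jacobi permutation of $\{2,\dots,n\}$, whence $g_n(a,1)=j_{n-1,a-1}^{\last}=E_{n-1,a-1}$ by Theorem \ref{t-last}. For $b\geq2$ the letter $1$ is interior, and the natural tool is the decomposition $\pi=\gamma\,1\,\delta$ with $\delta$ nonempty of even length (or its dual, in which $\pi$ is assembled by attaching even-length Jacobi blocks after successive right-to-left minima, so that $(a,b)$ is literally the last pair of the innermost block). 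I would try to use such a decomposition to identify the count at $(a,b)$ with the count at $(a,b-1)$ plus a correction equal to $E_{n-2,(n-1)-(a-b)}$, realized by a bijection that either deletes the final letter $b$ (after neutralizing the resulting parity defects in the affected $\rho_\pi(x)$) or transposes $b$ with a neighboring value. An alternative route is the bijection $\Phi\colon\mathfrak{J}_n\to\mathfrak{U}_n$ of Theorem \ref{t-last}: since $\first(\Phi(\pi))=\last(\pi)=b$, it remains to pin down the image of $a$; a calculation with the recursive definition of $\Phi$ gives $\Phi(\pi)_1=b$ and $\Phi(\pi)_2=m+m'-a$, where $m$ and $m'$ are the least and greatest letters of a certain inner factor of $\pi$, so $\Phi(\pi)_2$ is not determined by $(a,b)$ alone but its distribution over the fiber $g_n(a,b)$ may still be tractable.

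The main obstacle in either route is that the recursive decompositions of Jacobi permutations act on \emph{non-standardized} letter sets, whereas $a-b$ is a global quantity: the integers strictly between $b$ and $a$ need not all lie in the inner factor one recurses into, so $a-b$ is not preserved by the natural reduction and the induction does not close directly. I expect that circumventing this will require either strengthening the inductive statement to also record how the values in the interval $[b,a]$ are split between the inner and outer parts of the decomposition (so that the correct convolution identity can be assembled step by step), or routing through Andr\'e permutations, for which refined two-index Entringer-type distributions are available (see \cite[Theorem 1.1]{Foata[20142016]}) and which already share the one-index distributions of $\asc/\des$, $\lrmin$, and $\last$ with $\mathfrak{J}_n$ (Corollary \ref{c-jacandreasc} and Theorem \ref{t-last}). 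A guiding sanity check is that $E_{n-1,a-b}$ is insensitive to $b$ once $a-b$ is fixed, mirroring the elementary fact that over $\mathfrak{U}_m$ the number of permutations with prescribed first two letters $(\tau_1,\tau_2)=(p,q)$ depends only on $q$; making this ``independence of $b$'' combinatorially transparent on the Jacobi side is likely the crux of the argument.
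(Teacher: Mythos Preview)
This statement is presented in the paper as a \emph{conjecture}, not a theorem: the paper offers only empirical evidence and gives no proof. So there is nothing to compare your argument against.

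That said, what you have written is not a proof either, and you are candid about this. Your reduction to the equivalent form $g_n(a,b)=E_{n-1,a-b}$ (with $a=\pi_{n-1}$, $b=\pi_n$) is correct and useful, and your treatment of the boundary case $b=1$ via Theorem~\ref{t-last} is valid. But for $b\geq 2$ you only sketch possible lines of attack and then identify the central obstruction yourself: the recursive decompositions of Jacobi permutations act on non-standardized letter sets, so the global gap $a-b$ is not stable under the natural inductive step, and neither the $\Phi$-bijection route nor the direct recurrence route closes without a genuinely new idea. Your suggestions for strengthening the induction hypothesis or routing through Andr\'e permutations are reasonable heuristics, but they remain speculative. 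In short, the gap you name is real, and the proposal does not bridge it; the conjecture remains open, as in the paper.
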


In particular, Conjecture \ref{cj-cpen-last} implies that $\cpen$
and $\last$ have a symmetric joint distribution over $\mathfrak{J}_{n}$.

Recall from Corollary \ref{c-jacandreasc} and Theorem \ref{t-last}
that the distribution of $\asc$ over $\mathfrak{J}_{n}$ is equal
to that of $\des$ over the set $\mathfrak{A}_{n}$ of Andr\'{e}
permutations, and also that the distribution of $\last$ over $\mathfrak{J}_{n}$
is equal to the distribution of $\first$ over $\mathfrak{A}_{n}$.
Our next conjecture asserts that the joint distribution of $(\asc,\last)$
over $\mathfrak{J}_{n}$ is equal to that of $(\des,\first)$ over
$\mathfrak{A}_{n}$. Let 
\begin{align*}
j_{n,i,k}^{(\asc,\last)} & \coloneqq\left|\{\,\pi\in\mathfrak{J}_{n}:\asc(\pi)=i\text{ and }\last(\pi)=k\,\}\right|\qquad\text{and}\\
a_{n,i,k}^{(\des,\first)} & \coloneqq\left|\{\,\pi\in\mathfrak{A}_{n}:\des(\pi)=i\text{ and }\first(\pi)=k\,\}\right|.
\end{align*}

\begin{conjecture}
\label{cj-asc-last}For all $n\geq1$, $i\geq0$, and $k\geq1$, we
have $j_{n,i,k}^{(\asc,\last)}=a_{n,i,k}^{(\des,\first)}$.
\end{conjecture}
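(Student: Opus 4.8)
The plan is to prove the conjecture by exhibiting a bijection $\Psi\colon\mathfrak{J}_{n}\to\mathfrak{A}_{n}$ with $\asc(\pi)=\des(\Psi(\pi))$ and $\last(\pi)=\first(\Psi(\pi))$; such a map would subsume both Corollary \ref{c-jacandreasc} and the $\mathfrak{A}_{n}$ half of Theorem \ref{t-last}. It is most convenient to work with the increasing binary trees $\Theta(\pi)$. A permutation $\pi$ of $[n]$ is Jacobi if and only if every right subtree of $\Theta(\pi)$ has even size, and by Corollary \ref{c-branchild} (b) together with the traversal argument in the proof of Proposition \ref{p-213-312-last} (a) we have $\asc(\pi)=n-1-\lchd(\Theta(\pi))$, the number of right children of $\Theta(\pi)$, while $\last(\pi)$ is the label of the terminus of $\Theta(\pi)$. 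On the André side, $\sigma$ is an André permutation exactly when, at every node of $\Theta(\sigma)$, the largest label in the right subtree exceeds the largest label in the left subtree (with $\max\emptyset=0$); here $\des(\sigma)=\lchd(\Theta(\sigma))$ and $\first(\sigma)$ is the label of the bottommost node of the left branch. So it suffices to construct a bijection between these two classes of increasing trees on $[n]$ that interchanges the numbers of left and right children and carries the terminus label to the left-branch-bottom label.

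First I would try the recursive left-right mirror $\mu$, which exchanges the two subtrees at every node: it preserves increasingness, exactly swaps left- and right-child counts, and sends the terminus to the bottom of the left branch, so $\asc$ and $\last$ are matched automatically. However $\mu$ does not land in the André class: applied to the trees of $132$ and $2431$ it produces the trees of $231$ and $1342$, which are not André. The idea is to post-compose $\mu$ with a correction that restores the André condition while fixing neither the left-child count nor the left-branch-bottom label. In $\mu(\Theta(\pi))$ the violations are nodes $v$ with a left child but no right child; by the even-size condition such a $v$ has a left subtree of even size $\geq 2$, and the proposed local move detaches a suitable node $d$ from the left spine below $v$ and reattaches it as the right child of $v$ so that $d$ becomes the terminus of the new right subtree of $v$ (this is exactly what turns the tree of $231$ into that of $213$, and the tree of $1342$ into that of $1324$). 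One must then show that this move terminates, produces a genuine André tree, is invertible, and leaves $\lchd$ and the left-branch-bottom label unchanged.

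The hard part is this verification: the even-size constraint for Jacobi trees and the max-location constraint for André trees are anchored at opposite ends of the min-decomposition, namely the right factor $\beta$ in $\pi=\alpha y\beta$ versus the right factor $\beta'$ in $\sigma=\alpha' y\beta'$, which under the statistic correspondence plays the role of the \emph{left} factor, so there is no term-by-term match of the two recursions, and the correction must be shown to trade one constraint for the other globally. If a clean combinatorial correction resists, the fallback is algebraic: refine the André polynomials of (\ref{e-Ddes}) by first letter into an Entringer-type family tracking $(\first,\des)$ and extract a functional equation from $\sigma=\alpha' y\beta'$; on the Jacobi side track $(\last,\asc)$ either by adjoining a catalytic variable for the last letter to the differential system of Theorem \ref{t-asc} or by pushing the last-letter statistic through the bijection $\Phi$ of Theorem \ref{t-last}; then match the two generating functions. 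Either way the crux remains reconciling the even-length and max-location constraints, which is what makes the required deformation non-obvious.
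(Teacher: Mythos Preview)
The paper does \emph{not} prove this statement: it is listed as Conjecture~\ref{cj-asc-last} in Section~\ref{s-conclusion} and is explicitly left open (the authors even pose the related Problem of computing the ``Andr\'{e}\textendash Entringer distribution''). So there is no proof in the paper to compare against; your proposal is an attempt to resolve an open conjecture.

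As a plan your setup is sound\textemdash the tree translations of $\asc$, $\des$, $\last$, and $\first$ are correct, and the mirror map $\mu$ does match the two statistics\textemdash but the proposal contains a concrete error and is otherwise admittedly incomplete. You assert that in $\mu(\Theta(\pi))$ the Andr\'{e} violations are exactly the nodes with a left child and no right child. This is false: there can also be nodes with \emph{both} children where the maximum label sits in the left subtree. For instance, take $\pi=42153\in\mathfrak{J}_{5}$. Its tree $\Theta(\pi)$ has root $1$ with children $2$ (left) and $3$ (right), $2$ has left child $4$, and $3$ has left child $5$. Mirroring gives the tree of $35124$: here the root $1$ has both children, yet $\max(\text{left subtree})=5>4=\max(\text{right subtree})$, so the Andr\'{e} condition fails at a node your correction step would not touch. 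Thus the proposed local repair (detaching a node from the left spine below a childless-on-the-right node) is not even triggered at the relevant location, and the argument that ``by the even-size condition such a $v$ has a left subtree of even size $\geq 2$'' does not cover all obstructions.

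Beyond this specific gap, the proposal is a research outline rather than a proof: you yourself flag that termination, invertibility, and statistic preservation of the correction remain to be verified, and the algebraic fallback is only sketched. Given that the paper leaves the conjecture open, any complete argument would need substantially more than what is written here.
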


One might call this conjecturally-shared joint distribution the \textit{Andr\'{e}\textendash Entringer}
\textit{distribution}, since it simultaneously refines the distributions
given by the Andr\'{e} polynomials and the Entringer numbers. To
the best of our knowledge, the Andr\'{e}\textendash Entringer distribution
has not appeared in the literature, which leads us to pose the following.
\begin{problem}
Find a formula for computing the Andr\'{e}\textendash Entringer distribution.
\end{problem}

Finally, motivated by the various notions of ``$r$-alternating permutation''
in the literature,\footnote{See, for example, Mendes\textendash Remmel \cite[p.\ 126]{Mendes2015}
and Stanley \cite[p.\ 17]{Stanley2010}.} we propose a notion of ``$r$-Jacobi permutation''. Given a positive
integer $r$, we say that a permutation $\pi\in\mathfrak{S}_{S}$
is \textit{$r$-Jacobi} if $\left|\rho_{\pi}(x)\right|$ is divisible
by $r$ for each $x\in S$. Notice that all permutations are $1$-Jacobi,
and 2-Jacobi permutations are precisely the Jacobi permutations.

Let $j_{n}^{(r)}$ denote the number of $r$-Jacobi permutations in
$\mathfrak{S}_{n}$; see Table \ref{tb-kJacobi}. The $j_{n}^{(r)}$
can be thought of as generalizing the Euler numbers. We believe that
$r$-Jacobi permutations and the generalized Euler numbers $j_{n}^{(r)}$
are worthy of study.
\begin{table}[!h]
\begin{centering}
\renewcommand{\arraystretch}{1.1}%
\begin{tabular}{|>{\centering}p{34bp}|>{\centering}p{34bp}|>{\centering}p{34bp}|>{\centering}p{34bp}|>{\centering}p{34bp}|>{\centering}p{34bp}|>{\centering}p{34bp}|>{\centering}p{34bp}|>{\centering}p{34bp}|>{\centering}p{34bp}|}
\hline 
$r\backslash n$ & $1$ & $2$ & $3$ & $4$ & $5$ & $6$ & $7$ & $8$ & $9$\tabularnewline
\hline 
$1$ & $1$ & $2$ & $6$ & $24$ & $120$ & $720$ & $5040$ & $40320$ & $362880$\tabularnewline
\hline 
$2$ & $1$ & $1$ & $2$ & $5$ & $16$ & $61$ & $272$ & $1385$ & $7936$\tabularnewline
\hline 
$3$ & $1$ & $1$ & $1$ & $2$ & $6$ & $16$ & $52$ & $234$ & $1018$\tabularnewline
\hline 
$4$ & $1$ & $1$ & $1$ & $1$ & $2$ & $7$ & $22$ & $57$ & $184$\tabularnewline
\hline 
$5$ & $1$ & $1$ & $1$ & $1$ & $1$ & $2$ & $8$ & $29$ & $85$\tabularnewline
\hline 
\end{tabular}
\par\end{centering}
\caption{\label{tb-kJacobi}The numbers $j_{n}^{(r)}$ up to $r=5$ and $n=9$.}
\end{table}

\vspace{10bp}

\noindent \textbf{Acknowledgments.} We thank Bernard Deconinck for
helpful e-mail correspondence. The authors were partially supported
by NSF grant DMS-2316181.

\bibliographystyle{plain}
\addcontentsline{toc}{section}{\refname}\bibliography{bibliography}

\end{document}